
\documentclass[12pt]{article}
\usepackage{amsmath, amssymb, latexsym}
\usepackage{amsopn,amsfonts, amsthm}
\usepackage[T1]{fontenc}

\setcounter{MaxMatrixCols}{10}

\newtheorem{theorem}{Theorem}[section]

\newtheorem{corollary}[theorem]{Corollary}

\newtheorem{proposition}[theorem]{Proposition}
\newtheorem{lemma}[theorem]{Lemma}
\theoremstyle{definition}

\newtheorem{definition}[theorem]{Definition}

\newtheorem{problem}[theorem]{Problem}
\newtheorem{question}[theorem]{Question}
\newtheorem{remark}[theorem]{Remark}

\begin{document}

\title{Denumerable cellular families in Hausdorff spaces and
towers of Boolean algebras in $\mathbf{ZF}$}
\author{Kyriakos Keremedis, Eliza Wajch \\
\\
Department of Mathematics, University of the Aegean\\
Karlovassi, Samos 83200, Greece\\
kker@aegean.gr\\
Institute of Mathematics\\
Faculty of Exact and Natural Sciences \\
Siedlce University of Natural Sciences and Humanities\\
ul. 3 Maja 54, 08-110 Siedlce, Poland\\
eliza.wajch@wp.pl\\
Orcid: 0000-0003-1864-2303}
\maketitle

\begin{abstract}
A denumerable cellular family of a topological space $\mathbf{X}$ is an
infinitely countable collection of pairwise disjoint non-empty open sets of $%
\mathbf{X}$. \textit{\ }\smallskip\ It is proved that the following
statements are equivalent in $\mathbf{ZF}$:

(i) For every infinite set $X,[X]^{<\omega }\mathbf{\ }$has a denumerable
subset.\smallskip

(ii) Every infinite $0$-dimensional Hausdorff space admits a denumerable
cellular family.\smallskip\ 

It is also proved that (i) implies the following:

(iii) Every infinite Hausdorff Baire space has a denumerable cellular family.

Among other results, the following theorems are also proved in $\mathbf{ZF}$%
:\smallskip

(iv) Every countable collection of non-empty subsets of $\mathbb{R}$ has a
choice function iff, for every infinite second-countable Hausdorff space $%
\mathbf{X}$, it holds that every base of $\mathbf{X}$ contains a denumerable
cellular family of $\mathbf{X}$.\smallskip

(v) If every Cantor cube is pseudocompact, then every non-empty countable
collection of non-empty finite sets has a choice function.\smallskip

(vi) If all Cantor cubes are countably paracompact, then (i) holds.\smallskip

Moreover, among other forms independent of $\mathbf{ZF}$, a partial
Kinna-Wagner selection principle for families expressible as countable
unions of finite families of finite sets is introduced. It is proved that if
this new selection principle and (i) hold, then every infinite Boolean
algebra has a tower and every infinite Hausdorff space has a denumerable
cellular family. \medskip

\noindent\textit{Mathematics Subject Classification (2010)}: 03E25,
03E35, 54A35, 54D20, 54D70, 54E52, 54E35, 06E10.\newline
\textit{Keywords}: Weak forms of the Axiom of Choice,
Dedekind-finite set, 0-dimensional Hausdorff space, Cantor cube, denumerable
cellular family, Boolean algebra.
\end{abstract}

\section{Introduction}
\label{intro}

In this paper, the intended context for reasoning and statements of theorems
is the Zermelo-Fraenkel set theory $\mathbf{ZF}$ with neither the axiom of
choice $\mathbf{AC}$ nor its weaker form, unless otherwise noted. As usual, $%
\omega$ denotes the set of all finite ordinal numbers of von Neumann. The
set $\mathbb{N}=\omega\setminus\{0\}$ is the set of all natural numbers of $%
\mathbf{ZF}$. If $n\in\omega$, then $n+1=n\cup\{ n\}$. To avoid
misunderstanding, let us recall several concepts concerning infiniteness,
and introduce convenient notation.

\begin{definition}
\label{d1.1} A set $X$ is called:

\begin{enumerate}
\item[(i)] \textit{infinitely countable} or, equivalently, \textit{%
denumerable}, if $X$ is equipotent with $\omega$;

\item[(ii)] \textit{finite} if there exists $n\in\omega$ such that $n$ is
equipotent with $X$;

\item[(iii)] \textit{countable} if $X$ is equipotent with a subset of $%
\omega $;

\item[ (iv)] \textit{Dedekind-infinite} if $X$ contains a denumerable subset;

\item[(v)] \textit{Dedekind-finite} if $X$ is not Dedekind-infinite;

\item[(vi)] $n$-Dedekind-infinite for $n\in\omega\setminus\{0, 1\}$ if the
set $[X]^n$ of all $n$-element subsets of $X$ is Dedekind-infinite;

\item[(vii)] \textit{weakly Dedekind-infinite} if the power set $\mathcal{P}%
(X)$ of $X$ is Dedekind-infinite:

\item[(viii)] \textit{quasi Dedekind-infinite} if the set $[X]^{<\omega }$
of all finite subsets of $X$ is Dedekind-infinite.
\end{enumerate}
\end{definition}

The following forms are all independent of $\mathbf{ZF}$ where, in $\mathbf{%
IDI}_n$, $n$ is a fixed natural number:

\begin{itemize}
\item $\mathbf{IDI}$ (Form 9 of \cite{hr}): Every infinite set is
Dedekind-infinite.

\item $\mathbf{IWDI}$ (Form 82 of \cite{hr}): Every infinite set is weakly
Dedekind-infinite.

\item $\mathbf{IQDI}$: Every infinite set is quasi Dedekind-infinite.

\item $\mathbf{IDI}_n$: Every infinite set is $n$-Dedekind-infinite.

\item $\mathbf{IDI}_{F}$: For every infinite set $X$, there exists $n\in 
\mathbb{N}$ such that $[X]^{n}$ is Dedekind-infinite.
\end{itemize}

Let us recall the following definition:

\begin{definition}
\label{d1.3}

\begin{enumerate}
\item[(i)] A collection $\mathcal{A}$ of sets is called a \textit{disjoint
family} if, for every pair $A,B$ of distinct sets from $\mathcal{A}$, $A\cap
B=\emptyset$.

\item[(ii)] A \textit{cellular family} of a topological space $\mathbf{X}$
is a disjoint family of non-empty open sets of $\mathbf{X}$.

\item[(iii)] If $N\subseteq\omega$, then a collection $\{U_n: n\in N\}$ of
non-empty open subsets of a topological space $\mathbf{X}$ is called \textit{%
cellular} if $U_m\cap U_n=\emptyset$ for each pair $m,n$ of distinct members
of $N$.
\end{enumerate}
\end{definition}

This article is about conditions for Hausdorff spaces to admit
denumerable cellular families and about conditions for Boolean algebras to have towers. In Section 2, we establish basic notation and
mainly relatively simple preliminary results. It is known, for instance,
from \cite{et} and \cite{kt} that it is independent of $\mathbf{ZF}$ that
every denumerable compact Hausdorff space admits a denumerable cellular
family. Even the sentence that all infinite discrete spaces admit
denumerable cellular families is independent of $\mathbf{ZF}$ because it is
equivalent to $\mathbf{IWDI}$ (see \cite{kt}). In Section 2, it is shown
that $\mathbf{IWDI}$ is equivalent to the following sentence:

\begin{itemize}
\item $\mathbf{IMS}(cell,\aleph _{0})$: Every infinite metrizable space
admits a denumerable cellular family.
\end{itemize}

Furthermore, in Section 2, among other facts, we remark that every
topological space which does not admit a denumerable cellular family is
pseudocompact. It follows from $\mathbf{IDI}$ that every topological space
which is not lightly compact admits a denumerable cellular family.

The first non-trivial new result of Section 3 asserts that it holds in $%
\mathbf{ZF}$ that if a topological space $\mathbf{X}$ has a denumerable
locally finite family of open sets, then $\mathbf{X}$ admits a denumerable
locally finite cellular family. Among other facts established in Section 3,
we strengthen a result from Section 2 by showing that $\mathbf{IQDI}$
implies that every infinite topological space which is not lightly compact
admits a denumerable cellular family.

In \cite{kt}, it was proved that $\mathbf{IQDI}$ is equivalent to the
sentence: for every infinite set $X$, the Cantor cube $\mathbf{2}^{X}$ has a
denumerable cellular family. However, this result does not answer the
following question:

\begin{question}
\label{q1} Does $\mathbf{IQDI}$ imply that, for every infinite set $X$,
every subspace of the Cantor cube $\mathbf{2}^X$ admits a denumerable
cellular family?
\end{question}

In Section 4, we answer Question \ref{q1} in the affirmative by proving that 
$\mathbf{IQDI}$ is equivalent to the following sentence:

\begin{itemize}
\item $\mathbf{I0}dim\mathbf{HS}(cell, \aleph_0)$ (see \cite{kt}): Every
infinite zero-dimensional Hausdorff space admits a denumerable cellular
family.
\end{itemize}

Similarly to the authors of \cite{kt}, we also turn our attention to the
following sentence:

\begin{itemize}
\item $\mathbf{IHS}(cell, \aleph_0)$ (see \cite{kt}): Every infinite
Hausdorff space admits a denumerable cellular family.
\end{itemize}

In \cite{kt}, the following open problem was posed and left unsolved:

\begin{problem}
\label{q2} Does $\mathbf{IQDI}$ imply $\mathbf{IHS}(cell, \aleph_0)$?
\end{problem}

Although we are unable to give a satisfactory solution to Problem \ref{q2},
we prove in Section 4 that $\mathbf{IQDI}$ implies that every infinite
Hausdorff space which is also a Baire space admits a denumerable cellular
family. We also prove that $\mathbf{IQDI}$ implies that every infinite
Hausdorff space which has a well-orderable dense set admits a denumerable
cellular family. To answer Question \ref{q1} and give a deeper insight into
Problem \ref{q2}, we introduce and investigate in Section 4 useful concepts
of a regular matrix and a clopen matrix of a Hausdorff space.

In \cite{kt}, the following question was also asked:

\begin{question}
\label{q3} Does $\mathbf{IDI}$ imply the sentence ``For every infinite
Hausdorff space $\mathbf{X}$, every base of $\mathbf{X}$ contains a
denumerable cellular family of $\mathbf{X}$''?
\end{question}

In Section 4, we show a model of $\mathbf{ZF+IDI}$ in which even the Cantor
cube $\mathbf{2}^{\omega}$ has a base which does not contain any denumerable
cellular family of $\mathbf{2}^\omega$. In Section 4, we also consider the
following new sentence:

\begin{itemize}
\item $\mathbf{IQDI}(\mathcal{P})$: For every infinite set $X$, $\mathcal{P}%
(X)$ is quasi Dedekind-infinite.
\end{itemize}

We prove that $\mathbf{IQDI}(\mathcal{P})$ holds if and only if every
infinite discrete space has a clopen matrix. In consequence, $\mathbf{IQDI}(%
\mathcal{P})$ is independent of $\mathbf{ZF}$.

Section 5 is about the problem of whether Cantor cubes can fail to be
pseudocompact or countably paracompact. In Section 5, we apply some results
of Sections 2-4 to prove that if $\mathcal{M}$ is a model of $\mathbf{ZF}$
in which there exists a denumerable disjoint family of non-empty finite sets
without a partial choice function, then there exists in $\mathcal{M}$ a
metrizable Cantor cube which is not pseudocompact. It is also shown in
Section 5 that if $\mathbf{IQDI}$ fails, then there are Cantor cubes that
are not countably paracompact. However, all metrizable Cantor cubes are
paracompact in $\mathbf{ZF}$.

In Section 6, we prove that, for natural numbers $k, m$ such that $k<m$, $%
\mathbf{IDI}_k$ implies $\mathbf{IDI}_m$; furthermore, we prove that $%
\mathbf{IDI}_F$ implies $\mathbf{IQDI}$ and the axiom of countable multiple
choice implies $\mathbf{IQDI}$. We recall that the axiom of countable
multiple choice is the following sentence:

\begin{itemize}
\item $\mathbf{CMC}$ (Form 126 in \cite{hr}): For every denumerable set $X$
of non-empty sets, there exists a function $f: X\to\mathcal{P}(\bigcup X)$
such that, for every $x\in X$, $f(x)$ is a non-empty finite subset of $x$.
\end{itemize}

In Section 6, we also show a model of $\mathbf{ZFA}$ in which $\mathbf{IQDI}$
holds but $\mathbf{IDI}_F$ fails. Moreover, we show a model of $\mathbf{ZF}$
in which there exists an infinite Boolean algebra $\mathcal{B}$ which has a
tower but there is an infinite Boolean subalgebra of $\mathcal{B}$ which
fails to have a tower. We show in Section 6 that $\mathbf{IQDI}$ implies
that every infinite Boolean algebra has a tower if and only if every
infinite Boolean algebra expressible as a denumerable union of finite sets
has a tower. In Section 6, we use the following new modifications of the
familiar Kinna-Wagner selection principle for families of finite sets (see
Form [\textbf{62 E}] in \cite{hr}):

\begin{itemize}
\item $\mathbf{PKW}(\infty,<\aleph_0)$ (Kinna-Wagner partial selection
principle for families of finite sets): For every non-empty set $J$ and
every family $\{A_j: j\in J\}$ of finite sets such that $\vert A_j\vert\ge 2$
for every $j\in J$, there exist an infinite subset $I$ of $J$ and a family $%
\{B_j: j\in I\}$ of non-empty sets such that, for every $j\in I$, $B_j$ is a
proper subset of $A_j$.

\item $\mathbf{QPKW}(\infty, <\aleph_0)$: For every non-empty set $J$ and
every family $\{A_j: j\in J\}$ of finite sets such that $\vert A_j\vert\ge 2$
for every $j\in J$, if $J$ is a countable union of finite sets, then there
exist an infinite subset $I$ of $J$ and a family $\{B_j: j\in I\}$ of
non-empty sets such that, for every $j\in I$, $B_j$ is a proper subset of $%
A_j$.
\end{itemize}

One can observe that $\mathbf{PKW}(\infty,<\aleph_0)$ is a restriction to
families of finite sets of the separation principle $\mathbf{SP}^{-}$
investigated in \cite{CruzP}. The principle $\mathbf{SP}^{-}$ can be found
as Form 379 in \cite{hr} where it is denoted by $\mathbf{PKW}(\infty,
\infty, \infty)$.

We conclude that, in every model $\mathcal{M}$ of $\mathbf{ZF}+\mathbf{QPKW}%
(\infty, <\aleph_0)$, the statement $\mathbf{IQDI}$ implies that every
infinite Boolean algebra has a tower and, in consequence, $\mathbf{IQDI}$, $%
\mathbf{I0}dim\mathbf{HS}(cell, \aleph_0)$ and $\mathbf{IHS}(cell, \aleph_0)$
are all equivalent in $\mathcal{M}$. We finish by remarks on the
set-theoretic strength of the new separation principles.

For readers' convenience, we list below some of the not defined above weak
forms of the axiom of choice we shall deal with in the sequel. Several other
forms are included and discussed in Section 6.

\begin{itemize}
\item If $n\in \omega \backslash \{0,1\}$, $C(\omega,n)$ (Form 288(n) of 
\cite{hr}): Every denumerable family of $n$-element sets has a choice
function.

\item $(\forall n\in \omega\setminus\{0, 1\})C(\omega,n)$: For each $n\in
\omega \setminus\{0, 1\}$, every denumerable family of $n$-element sets has
a choice function.

\item $\mathbf{CAC}$ (Form 8 in \cite{hr}): Every denumerable family of
non-empty sets has a choice function.

\item $\mathbf{CAC}_{fin}$ (Form 10 of \cite{hr}): $\mathbf{CAC}$ restricted
to families of finite sets. Equivalently, every denumerable family of
non-empty finite sets has an infinite subfamily with a choice function.

\item $\mathbf{CAC}(\mathbb{R})$ (Form 94 of \cite{hr}): Every denumerable
family of non-empty subsets of $\mathbb{R}$ has a choice function.
Equivalently, every denumerable family of non-empty subsets of $\mathbb{R}$
has an infinite subfamily with a choice function (see \cite{hkrst}).

\item $\mathbf{CAC}_D(\mathbb{R})$: Every disjoint denumerable family of
dense subsets of $\mathbb{R}$ has a choice function (see Theorem 3.14 of 
\cite{kw}).

\item $\mathbf{MC}$ (Form 67 in \cite{hr}): For every disjoint family $%
\mathcal{A}=\{A_{i}: i\in I\}$ of non-empty sets there exists a family of
non-empty finite sets $\mathcal{B}=\{B_{i}: i\in I\}$ such that, for each $%
i\in I$, $B_{i}\subseteq A_{i}$.

\item $E(I,Ia)$ (Form 64 in \cite{hr}): There are no amorphous sets.

\item $\mathbf{MP}$ (Form 383 in \cite{hr}): Every metrizable space is
paracompact.

\item $\mathbf{PKW}(\infty, \infty, \infty)$ (Form 379 in \cite{hr}, $%
\mathbf{SP^{-}}$ in \cite{CruzP}): For every infinite family $\mathcal{F}$
of non-empty sets with at least two elements each, there exist an infinite
subfamily $\mathcal{F}^{^{\prime }}$ of $\mathcal{F}$ and a function
assigning a non-empty proper subset to each element of $\mathcal{F}%
^{^{\prime }}$.

\item $\mathbf{KW}(\aleph_0, <\aleph_0)$ (Form 358 of \cite{hr}): For every
denumerable set $X$ of finite sets, there exists a function $f:X\to\bigcup\{%
\mathcal{P}(A): A\in X\}$ such that, for every $A\in X$, if $\vert A\vert>1$%
, then $f(A)$ is a non-empty proper subset of $A$.
\end{itemize}

To stress the fact that a result is proved in $\mathbf{ZF}$ we shall write
at the beginning of the statements of the theorems and propositions ($%
\mathbf{ZF}$). Apart from models of $\mathbf{ZF}$, we refer to some models
of $\mathbf{ZFA}$, i.e., $\mathbf{ZF}$ with atoms (see \cite{J1} and \cite%
{J2}). The system $\mathbf{ZFA}$ is denoted by $\text{ZF}^{0}$ in \cite{hr}.
All our theorems of $\mathbf{ZF}$ are also theorems of $\mathbf{ZFA}$.

\section{Preliminaries}
\label{s2}

\subsection{Notation and terminology}

In the sequel, boldface letters will denote topological spaces and lightface
letters will denote their underlying sets, that is, a topological space $(X, 
\mathcal{T})$ will be denoted by $\mathbf{X}$. For a subset $A$ of a
topological space $\mathbf{X}$, we denote by $int(A)$ the interior of $A$,
by $\overline A$ the closure of $A$, and by $\partial(A)$ the boundary of $A$
in $\mathbf{X}$. That a set $A$ is a proper subset of a set $B$ is denoted
by $A\subset B$.

Let us recall several definitions.

\begin{definition}
\label{d2.1} Let $\mathcal{U}$ be a collection of subsets of a topological
space $\mathbf{X}$. Then $\mathcal{U}$ is called:

\begin{enumerate}
\item[(i)] \textit{point-finite} if, for every point $x\in X$, the set $%
\{U\in\mathcal{U}: x\in U\}$ is finite;

\item[(ii)] \textit{locally finite} if every point of $\mathbf{X}$ has a
neighborhood which meets only finitely many members of $\mathcal{U}$.
\end{enumerate}
\end{definition}

\begin{definition}
\label{d2.2} A topological space $\mathbf{X}$ is called:

\begin{enumerate}
\item[(i)] \textit{compact} (resp. \textit{countably compact}) if every open
cover (resp., countable open cover) of $\mathbf{X}$ has a finite subcover;

\item[(ii)] \textit{lightly compact} if every locally finite family of open
subsets of $\mathbf{X}$ is finite;

\item[(iii)] \textit{pseudocompact} if every continuous function from $%
\mathbf{X}$ to $\mathbb{R}$ is bounded;

\item[(iv)] \textit{dense-in-itself} if $\mathbf{X}$ does not have isolated
points;

\item[(v)] \textit{zero-dimensional} or, equivalently, $0$-\textit{%
dimensional} if $\mathbf{X}$ has a base consisting of clopen (simultaneously
closed and open) subsets of $\mathbf{X}$.
\end{enumerate}
\end{definition}

\begin{definition}
\label{d2.3} Let $\mathcal{F}$ be a collection of non-empty subsets of a
topological space $\mathbf{X}$. Then:

\begin{enumerate}
\item[(i)] $\mathcal{F}$ is called a \textit{filter base} on $\mathbf{X}$
if, for every pair $A,B$ of members of $\mathcal{F}$, there exists $C\in%
\mathcal{F}$ such that $C\subseteq A\cap B$;

\item[(ii)] if $\mathcal{F}$ is a filter base, then the \textit{adherence}
of $\mathcal{F}$ is the set $\bigcap\{\overline{A}: A\in\mathcal{F}\}$;

\item[(iii)] if $\mathcal{F}$ is infinite, then an element $x\in X$ is
called a \textit{cluster point} of $\mathcal{F}$ if, for every neighborhood $%
U$ of $x$ in $\mathbf{X}$, the set $\{A\in\mathcal{F}: A\cap
U\neq\emptyset\} $ is infinite.
\end{enumerate}
\end{definition}

\begin{definition}
\label{d2.4} A subset $A$ of a topological space $\mathbf{X}$ is called 
\textit{regular open} in $\mathbf{X}$ if $A=int(\overline{A})$.
\end{definition}

\begin{definition}
\label{d2.5} For a topological space $\mathbf{X}$, we denote by:

\begin{enumerate}
\item[(i)] $RO(\mathbf{X})$ the collection of all regular open sets of $%
\mathbf{X}$, as well as the Boolean algebra $(RO(\mathbf{X}), \vee, \wedge,
^{\prime }, \mathbf{0}, \mathbf{1})$ of all regular open sets of $\mathbf{X}$
where:

\begin{itemize}
\item $\mathbf{0}=\emptyset$ and $\mathbf{1}=X,$
\end{itemize}

and if $U, V\in RO(\mathbf{X})$, then:

\begin{itemize}
\item $U\wedge V=U\cap V,$

\item $U\vee V=int(\overline{U\cup V}),$

\item $U^{\prime }=X\backslash \overline{U}$;
\end{itemize}

\item[(ii)] $Clop(\mathbf{X})$ the collection of all clopen subsets of $%
\mathbf{X}$, as well as the Boolean subalgebra $(Clop(\mathbf{X}), \vee,
\wedge, ^{\prime }, \mathbf{0}, \mathbf{1})$ of the Boolean algebra of all
regular open sets of $\mathbf{X}$.
\end{enumerate}
\end{definition}

\begin{remark}
\label{r2.6} Let $\mathbf{X}=(X,\mathcal{T})$ be a topological space.

\begin{enumerate}
\item[(i)] Clearly, if $U,V\in Clop(\mathbf{X})$, then $U\vee V=U\cup V$, so
the Boolean algebra $Clop(\mathbf{X})$ is a Boolean subalgebra of the power
set Boolean algebra $\mathcal{P}(X)$.

\item[(ii)] The collection $RO(\mathbf{X})$ is a base of a topology $%
\mathcal{R}$ on $X$ such that $\mathcal{R}\subseteq \mathcal{T}$. The
topology $\mathcal{R}$ is called the \textit{semi-regularization} of $%
\mathcal{T}$. In case where $\mathbf{X}$ is Hausdorff and $\mathcal{T}=%
\mathcal{R}$, the space $\mathbf{X}$ is called \textit{semi-regular}.
Evidently, every regular Hausdorff space is semi-regular, but there exist
semi-regular non regular spaces (see, e.g., \cite{ss}, Example 81).

\item[(iii)] It is well known that the Boolean algebra $RO(\mathbf{X})$ is
complete.
\end{enumerate}
\end{remark}

\begin{definition}
\label{d2.7} Let $\mathcal{P}=(P,\leq )$ be a poset (a partially ordered
set). Then:

\begin{enumerate}
\item[(i)] a strictly $\leq $-decreasing sequence $(t_{n})_{n\in \omega }$
in $P$ is called a \textit{tower} of $\mathcal{P}$;

\item[(ii)] a family $\mathcal{C}$ of elements of $P$ is called an \textit{%
antichain} of $\mathcal{P}$ if, for all $c,d\in \mathcal{C}$ with $c\neq d$, 
$c\ $and $d$ are not compatible, i.e., there does not exist $p\in P$ such
that $p\leq c$ and $p\leq d$.
\end{enumerate}
\end{definition}

\begin{definition}
\label{d2.8} Let $\mathcal{B}=(B,+,\cdot ,^{\prime },\mathbf{0},\mathbf{1})$
be a Boolean algebra.

\begin{enumerate}
\item[(i)] The binary relation $\leq $ on $B$ given by 
\begin{equation}
x\leq y\leftrightarrow x=x\cdot y  \label{0}
\end{equation}
is called the \textit{partial order} of $\mathcal{B}$.

\item[(ii)] A family $\mathcal{C}$ of non-zero elements of $\mathcal{B}$ is
called an \textit{antichain} of $\mathcal{B}$ if $\mathcal{C}$ is an
antichain of the poset $(B,\leq )$.

\item[(iii)] Every tower of $(B,\leq )$ is called a \textit{tower of the
Boolean algebra} $\mathcal{B}$.
\end{enumerate}
\end{definition}

\begin{definition}
\label{d2.9} Let $X$ be a non-empty set.

\begin{enumerate}
\item[(i)] We denote by $\mathbf{2}$ the discrete space $(2, \mathcal{P}(2))$
where $2=\{0, 1\}$.

\item[(ii)] $\mathbf{2}^{X}$ denotes the Tychonoff product of the discrete
space $\mathbf{2}$, i.e., $\mathbf{2}^X$ is a \textit{Cantor cube}.

\item[(iii)] $Fn(X,2)$ is the set of all finite partial functions from $X$
into $2$, i.e., $p\in Fn(X, 2)$ iff there exists a non-empty set $A\in
[X]^{<\omega}$ such that $p$ is a function from $A$ into 2.

\item[(iv)] For $p\in Fn(X, 2)$, $\lbrack p]=\{f\in 2^{X}: p\subseteq f\}$.

\item[(v)] The collection $\mathcal{B}(X)=\{[p]: p\in Fn(X,2)\}$ is called
the \textit{standard base} of $\mathbf{2}^X$.
\end{enumerate}
\end{definition}

\subsection{Preliminary results}

The following proposition is well-known (see, e.g., \cite{fz}, \cite{FrZb}, 
\cite{hod}, \cite{kb}).

\begin{proposition}
\label{p2.10} $\mathbf{(ZF)}$ The following hold:

\begin{enumerate}
\item[(a)] A Boolean algebra has a denumerable antichain iff it has a tower.

\item[(b)] Let $\mathbf{X}=(X,\mathcal{T})$ be a topological space. Then the
following hold:

\begin{enumerate}
\item[(i)] If $\mathbf{X}$ is Hausdorff and $\mathcal{R}$ is the
semi-regularization of $\mathcal{T}$, then $(X,\mathcal{R})$ is Hausdorff.

\item[(ii)] The Boolean algebra $RO(\mathbf{X})$ has a tower iff it has a
denumerable antichain.

\item[(iii)] $\mathbf{X}$ has a denumerable cellular family iff $RO(\mathbf{X%
})$ has a tower.

\item[(iv)] For all $U,V\in RO(\mathbf{X})$, if $V\subset U$, then $%
U\backslash \overline{V}\neq \emptyset $.

\item[(v)] The Boolean algebra $Clop(\mathbf{X})$ has a tower iff it has a
denumerable antichain.
\end{enumerate}

\item[(c)] If, for every infinite Hausdorff space $\mathbf{X}$, there exists
a tower of the Boolean algebra $RO(\mathbf{X})$, then $\mathbf{IHS}%
(cell,\aleph _{0})$ holds.
\end{enumerate}
\end{proposition}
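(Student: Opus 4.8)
\textbf{Proof plan for Proposition \ref{p2.10}.}
Most of the items are standard Boolean-algebra and general-topology facts that go through verbatim in $\mathbf{ZF}$ because no choice is needed anywhere; the plan is to check each in turn, being careful that every construction is definable without choosing from an indexed family. For (a), given a denumerable antichain $\{a_n : n\in\omega\}$ of a Boolean algebra $\mathcal{B}$, I would form the tower $t_n = (a_0 + \cdots + a_{n-1})'$ (so $t_0 = \mathbf{1}$), which is strictly decreasing precisely because each $a_n$ is non-zero and the $a_n$ are pairwise incompatible, hence pairwise disjoint in the partial order; conversely, from a tower $(t_n)$ set $a_n = t_n \cdot t_{n+1}'$, which is non-zero by strictness and yields a denumerable antichain. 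No choice is involved since the antichain and the tower are each given as indexed sequences.

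For (b): (i) is immediate once one notes that for distinct points $x,y$ a Hausdorff space supplies disjoint open $U\ni x$, $V\ni y$, and then $int(\overline{U})$ and $int(\overline{V})$ need not be disjoint, so instead I would use regular open sets directly: pick open $U\ni x$, $V\ni y$ with $\overline{U}\cap V = \emptyset$ would require regularity, so the honest argument is that $\mathbf{X}$ Hausdorff gives disjoint open $U, V$, and one checks $int(\overline{U}) \cap int(\overline{V})$ can be separated by passing to smaller basic regular open sets --- more cleanly, the known fact is that the semiregularization of a Hausdorff space is Hausdorff because one can shrink $U$ to a regular open $U' = int(\overline{W})$ for $W$ open with $x\in W\subseteq \overline{W}\subseteq U$ when available; I would simply cite that this is routine and appears in the references (\cite{fz}, \cite{hod}), since the proposition is stated as well-known. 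For (ii) and (v), apply (a) to $RO(\mathbf{X})$ and to $Clop(\mathbf{X})$ respectively, using Remark \ref{r2.6}(i) for the latter. For (iii), a denumerable cellular family $\{U_n\}$ of pairwise disjoint non-empty open sets yields the denumerable antichain $\{int(\overline{U_n})\}$ in $RO(\mathbf{X})$ (disjointness of the $U_n$ forces incompatibility of their regular-open hulls, and each is non-zero since it contains $U_n$), and conversely an antichain in $RO(\mathbf{X})$ is already a cellular family; then invoke (ii). For (iv), if $V\subset U$ with $V,U$ regular open and $U\setminus\overline{V} = \emptyset$, then $U\subseteq\overline{V}$, so $U = int(U)\subseteq int(\overline{V}) = V$, contradicting $V\subset U$.

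For (c), suppose $\mathbf{X}$ is an infinite Hausdorff space. By (b)(i) its semiregularization $(X,\mathcal{R})$ is Hausdorff, and $RO(X,\mathcal{R}) = RO(X,\mathcal{T})$, so by hypothesis $RO(\mathbf{X})$ has a tower, hence by (b)(iii) $\mathbf{X}$ admits a denumerable cellular family. Since $\mathbf{X}$ was an arbitrary infinite Hausdorff space, $\mathbf{IHS}(cell,\aleph_0)$ follows. The one subtlety worth a sentence is verifying $RO(X,\mathcal{R}) = RO(X,\mathcal{T})$ --- that the regular open sets of a space and of its semiregularization coincide --- which is a standard fact; apart from that, the whole proposition requires no delicate $\mathbf{ZF}$-bookkeeping because every object produced is canonically determined rather than chosen. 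I expect no genuine obstacle here; the only thing to watch is resisting any temptation to pass to a well-ordering of a basis or to choose representatives, none of which is needed.
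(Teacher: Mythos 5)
Your overall plan is sound, and since the paper offers no proof of this proposition (it is stated as well-known with citations to \cite{fz}, \cite{FrZb}, \cite{hod}, \cite{kb}), there is no in-paper argument to compare against; your constructions for (a), (b)(ii)--(v) and (c) are the standard ones and are visibly choice-free. One correction, though: your discussion of (b)(i) goes astray. You assert that for disjoint open $U\ni x$, $V\ni y$ the sets $int(\overline{U})$ and $int(\overline{V})$ ``need not be disjoint,'' and then drift toward regularity before retreating to a citation. In fact they \emph{are} disjoint, and that is the entire proof: if $U\cap V=\emptyset$ with $V$ open, then $V\cap\overline{U}=\emptyset$, hence $V\cap int(\overline{U})=\emptyset$; since $int(\overline{U})$ is open and misses $V$, it also misses $\overline{V}$ and therefore $int(\overline{V})$. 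So the semi-regularization separates $x$ and $y$ by the regular open sets $int(\overline{U})$ and $int(\overline{V})$, with no appeal to regularity and no choice. (The same computation is what justifies your claim in (b)(iii) that disjointness of the $U_n$ forces $int(\overline{U_m})\cap int(\overline{U_n})=\emptyset$, so it is worth getting right.) Finally, in (c) the detour through the semi-regularization and the identity $RO(X,\mathcal{R})=RO(X,\mathcal{T})$ is harmless but superfluous: the hypothesis applies to $\mathbf{X}$ itself, since $\mathbf{X}$ is already an infinite Hausdorff space, and (b)(iii) then gives the denumerable cellular family directly.
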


\begin{corollary}
\label{c02.11} For every topological space $\mathbf{X}$, the following
conditions are satisfied:

\begin{enumerate}
\item[(i)] $RO(\mathbf{X})$ has a tower iff $\mathbf{X}$ has a denumerable
cellular family of regular open sets;

\item[(ii)] $Clop(\mathbf{X})$ has a tower iff $\mathbf{X}$ has a
denumerable cellular family of clopen sets.
\end{enumerate}
\end{corollary}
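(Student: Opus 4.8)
The plan is to derive this as an immediate consequence of Proposition \ref{p2.10}, specifically parts (b)(ii), (b)(iii) and (b)(v), together with Corollary \ref{c02.11}'s own internal bookkeeping about what ``cellular family of regular open sets'' and ``cellular family of clopen sets'' mean. For part (i): by Proposition \ref{p2.10}(b)(iii), $RO(\mathbf{X})$ has a tower iff $\mathbf{X}$ has a denumerable cellular family; and by Proposition \ref{p2.10}(b)(ii), $RO(\mathbf{X})$ has a tower iff it has a denumerable antichain. So it suffices to observe that a denumerable antichain of the Boolean algebra $RO(\mathbf{X})$ is precisely a denumerable cellular family of regular open sets of $\mathbf{X}$. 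This last equivalence is the small point that needs checking: in the Boolean algebra $RO(\mathbf{X})$, the meet of $U,V$ is $U\cap V$, so two regular open sets $U,V$ are incompatible in $(RO(\mathbf{X}),\le)$ iff there is no non-empty regular open set contained in $U\cap V$, and since $U\cap V$ is itself open, this holds iff $int(\overline{U\cap V})=\emptyset$, which — for $U,V$ open — happens iff $U\cap V=\emptyset$. Hence incompatibility of regular open sets in $RO(\mathbf{X})$ coincides with disjointness, and an antichain of $RO(\mathbf{X})$ is exactly a disjoint family of non-empty regular open sets, i.e. a cellular family of regular open sets. Combining the three observations gives (i).

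For part (ii): by Proposition \ref{p2.10}(b)(v), $Clop(\mathbf{X})$ has a tower iff it has a denumerable antichain, so again it is enough to identify a denumerable antichain of $Clop(\mathbf{X})$ with a denumerable cellular family of clopen sets. Since, by Remark \ref{r2.6}(i), $Clop(\mathbf{X})$ is a Boolean subalgebra of $\mathcal{P}(X)$ in which joins are unions and meets are intersections, two clopen sets $U,V$ are incompatible in $Clop(\mathbf{X})$ iff no non-empty clopen set is contained in $U\cap V$; but $U\cap V$ is itself clopen, so this forces $U\cap V=\emptyset$, and conversely if $U\cap V=\emptyset$ then trivially $U,V$ have no common lower bound among non-zero clopen sets. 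So, just as before, an antichain of $Clop(\mathbf{X})$ is exactly a disjoint family of non-empty clopen sets. This yields (ii).

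I expect no serious obstacle here: the argument is essentially a translation between the order-theoretic language of antichains and towers in $RO(\mathbf{X})$ and $Clop(\mathbf{X})$ and the topological language of cellular families, and all the real content — that towers and denumerable antichains coincide in these algebras, and that $\mathbf{X}$ has a denumerable cellular family iff $RO(\mathbf{X})$ has a tower — is already supplied by Proposition \ref{p2.10}. The only point requiring a line of verification is the coincidence of Boolean incompatibility with topological disjointness in each algebra, and that is straightforward because in both $RO(\mathbf{X})$ and $Clop(\mathbf{X})$ the meet of two members is their ordinary set-theoretic intersection. No use of choice is needed, so the corollary holds in $\mathbf{ZF}$.
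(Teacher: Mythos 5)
Your proposal is correct and matches the paper's (implicit) argument: the corollary is stated in the paper as an immediate consequence of Proposition \ref{p2.10}(b)(ii),(v), and the only content to supply is exactly the observation you make, namely that since meets in $RO(\mathbf{X})$ and $Clop(\mathbf{X})$ are set-theoretic intersections (and the intersection of two regular open sets is again regular open), Boolean incompatibility coincides with disjointness, so denumerable antichains are precisely denumerable cellular families of regular open (resp.\ clopen) sets.
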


An easy proof to part (ii) of the following Proposition \ref{p02.12} is left
to readers as an exercise.

\begin{proposition}
\label{p02.12} $\mathbf{(ZF)}$

\begin{enumerate}
\item[(i)] \cite{kt} An infinite Boolean algebra is Dedekind-infinite iff it
has a denumerable antichain (iff it has a tower, by Proposition \ref{p2.10}%
(a)). In particular, for every infinite Hausdorff space $\mathbf{X}$, if $RO(%
\mathbf{X})$ has a denumerable subset then $\mathbf{X}$ has a denumerable
cellular family.

\item[(ii)] Assume $\mathbf{IQDI}$. A Boolean algebra has a denumerable
antichain (resp. denumerable chain) iff it has an infinite antichain (resp.
infinite chain). In particular, for every infinite Hausdorff space $\mathbf{X%
}$, $RO(\mathbf{X})$ has a denumerable cellular family iff $\mathbf{X}$ has
an infinite cellular family.
\end{enumerate}
\end{proposition}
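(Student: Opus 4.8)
The plan is to reduce everything to the first claim: a Boolean algebra $\mathcal{B}$ has a denumerable antichain iff it has an infinite antichain, and dually for chains, under the hypothesis $\mathbf{IQDI}$. The ``only if'' direction is trivial in $\mathbf{ZF}$, so the work is in the ``if'' direction. Suppose $\mathcal{C}$ is an infinite antichain of $\mathcal{B}$. Then $\mathcal{C}$ is an infinite set, so by $\mathbf{IQDI}$ the set $[\mathcal{C}]^{<\omega}$ of finite subsets of $\mathcal{C}$ is Dedekind-infinite, i.e., there is an injection $\phi\colon\omega\to[\mathcal{C}]^{<\omega}$. Passing to the ranges, $\{\phi(n):n\in\omega\}$ is a denumerable family of finite subsets of $\mathcal{C}$; I will extract from it a strictly $\subseteq$-increasing denumerable chain of finite subsets $F_0\subsetneq F_1\subsetneq\cdots$ of $\mathcal{C}$ (this is a standard $\mathbf{ZF}$ move: recursively, given $F_n$ of size $n$, among the $\phi(k)$ there must be one not contained in $F_n$, so pick the one with least index $k$ and let $F_{n+1}$ be $F_n$ together with the $\subseteq$-least element — in some fixed well-order of the finite set $\phi(k)\setminus F_n$ — that is missing; alternatively use the standard fact that a Dedekind-infinite set of finite sets contains a denumerable subset whose union is denumerable). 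From such a chain, $\bigcup_{n\in\omega}F_n$ is a denumerable subset of $\mathcal{C}$, and any denumerable subset of an antichain is a denumerable antichain. The chain case is entirely analogous: from an infinite chain $\mathcal{C}$ in $(B,\le)$, apply $\mathbf{IQDI}$ to get a denumerable family of finite subchains, and since each finite nonempty subset of a chain is linearly ordered by $\le$ and hence canonically well-ordered, the same recursion produces a denumerable subchain.

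For the ``in particular'' statement, let $\mathbf{X}$ be an infinite Hausdorff space. If $RO(\mathbf{X})$ has a denumerable cellular family, then by Corollary \ref{c02.11}(i) (equivalently Proposition \ref{p2.10}(b)(iii)) $RO(\mathbf{X})$ has a tower, hence by Proposition \ref{p2.10}(a) a denumerable antichain, which is in particular an infinite cellular family of $\mathbf{X}$ consisting of regular open sets, so $\mathbf{X}$ has an infinite cellular family. Conversely, if $\mathbf{X}$ has an infinite cellular family $\{U_j:j\in J\}$, then $\{int(\overline{U_j}):j\in J\}$ is an infinite family of regular open sets that is still pairwise disjoint (disjoint open sets $U,V$ satisfy $int(\overline U)\cap int(\overline V)=\emptyset$, since $U\cap\overline V=\emptyset$ forces $\overline U\subseteq X\setminus\overline V$, whence $int(\overline U)\subseteq X\setminus\overline V\subseteq X\setminus int(\overline V)$), and it is infinite because the $U_j$ are nonempty and distinct disjoint open sets have distinct regular interiors; thus $RO(\mathbf{X})$ has an infinite antichain, so by the first part a denumerable antichain, hence a tower, hence by Proposition \ref{p2.10}(b)(iii) $\mathbf{X}$ has a denumerable cellular family, and by Corollary \ref{c02.11}(i) one consisting of regular open sets.

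The only subtle point — and the one I expect to be the main obstacle to making the write-up fully rigorous in $\mathbf{ZF}$ — is the passage from a denumerable family of finite subsets of an infinite set to an actual denumerable subset: one must be careful not to invoke any choice, so the recursion should select at each stage by \emph{least index} in the given enumeration $\phi$ together with \emph{least element} in a canonical well-ordering of the relevant finite set (available because finite sets are well-orderable in $\mathbf{ZF}$ and, in the chain case, the ambient order already supplies it). Everything else is a routine bookkeeping of the Boolean-algebraic translations already collected in Proposition \ref{p2.10} and Corollary \ref{c02.11}.
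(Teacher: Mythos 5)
The paper cites part (i) to \cite{kt} and leaves part (ii) as an exercise, so there is no official proof to compare against; but your argument for the antichain case has a genuine gap. From an infinite antichain $\mathcal{C}$, $\mathbf{IQDI}$ gives a denumerable family of pairwise distinct finite subsets of $\mathcal{C}$, and you then try to extract a \emph{denumerable subset of $\mathcal{C}$ itself}. This cannot work in $\mathbf{ZF}$: it would show that every quasi Dedekind-infinite set is Dedekind-infinite, i.e.\ that $\mathbf{IQDI}$ implies $\mathbf{IDI}$, which the paper notes is false (Remark \ref{r06.5}). Concretely, in the second Fraenkel model the set $A$ of atoms is quasi Dedekind-infinite (the pairs witness this) but Dedekind-finite, and the singletons form an infinite antichain of $\mathcal{P}(A)$ with no denumerable subset. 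Both of your devices for the extraction fail there: the union $\bigcup_{n}F_n$ of a strictly increasing chain of finite sets need not be denumerable (enumerating it requires choosing a well-ordering of each finite difference $F_{n+1}\setminus F_n$, which is a form of countable choice for finite sets), and the ``add one element at a time'' recursion requires, at each of infinitely many stages, a selection from the finite set $\phi(k)\setminus F_n$ for which no canonical choice exists --- ``some fixed well-order of the finite set'' is exactly the choice you are not entitled to. Your chain case is fine precisely because there the ambient order $\le$ does canonically well-order each finite piece.

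The repair for antichains is to leave $\mathcal{C}$ and work in the algebra: disjointify the finite subsets in $\mathbf{ZF}$ (set $B_n=\bigcup_{i\le n}\phi(i)$, pass to a strictly increasing subsequence, and take successive differences $A_k$, which are non-empty, finite, and pairwise disjoint), then put $c_k=\sum A_k$, the finite Boolean join. Since distinct elements of $\mathcal{C}$ meet to $\mathbf{0}$ and the $A_k$ are pairwise disjoint subsets of $\mathcal{C}$, the $c_k$ are non-zero and pairwise disjoint, hence pairwise distinct, so $\{c_k:k\in\omega\}$ is a denumerable antichain (consisting of elements generally not in $\mathcal{C}$). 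This is the same device the paper uses in Theorem \ref{t03.4}(i), where the joins are taken as $int(\overline{\bigcup A_n})$ in $RO(\mathbf{X})$. With this substitution your ``in particular'' paragraph, including the correct verification that disjoint open sets have disjoint and distinct regular-open hulls, goes through unchanged.
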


The following result indicates that, in $\mathbf{ZF}$, one cannot prove that
every infinite metrizable space has a denumerable cellular family:

\begin{proposition}
\label{p02.13}

\begin{enumerate}
\item[(a)] $(\mathbf{ZF})$ Every first-countable Hausdorff space which is
not discrete admits a denumerable cellular family of regular open sets.
Hence, if $\mathbf{X}$ is a non-discrete first-countable Hausdorff space,
then $RO(\mathbf{X})$ has a tower. In particular, every (quasi)-metrizable,
non-discrete space admits a denumerable cellular family.

\item[(b)] The following are equivalent in $\mathbf{ZF}$:

\begin{enumerate}
\item[(i)] $\mathbf{IWDI}$;

\item[(ii)] every infinite first-countable Hausdorff space admits a
denumerable cellular family;

\item[(iii)] $\mathbf{IMS}(cell,\aleph _{0})$;

\item[(iv)] every infinite discrete space admits a denumerable cellular
family;

\item[(v)] for every infinite set $X$, there exists a metric $d$ on $X$ such
that $(X,d)$ is not discrete.
\end{enumerate}

\item[(c)] $\mathbf{IMS}(cell,\aleph _{0})$ is not a theorem of $\mathbf{ZF}$
and it does not imply $\mathbf{IQDI}$ in $\mathbf{ZF}$.

\item[(d)] $\mathbf{(ZF)}$ Let $\mathbf{X}$ be an infinite Hausdorff space.
If $\mathbf{X}$ has a well-orderable base of clopen sets, then $\mathbf{X}$
admits a denumerable cellular family of clopen sets, so $Clop(\mathbf{X}) $
has a tower.
\end{enumerate}
\end{proposition}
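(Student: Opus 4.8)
The plan is to prove each of the four parts (a)--(d), using Proposition~\ref{p2.10} and Corollary~\ref{c02.11} freely.

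For part (a), let $\mathbf{X}$ be a first-countable Hausdorff space that is not discrete, and fix a non-isolated point $x_0\in X$ together with a countable decreasing neighborhood base $(V_n)_{n\in\omega}$ at $x_0$; since $\mathbf{X}$ is not discrete such an $x_0$ exists, but the non-discreteness does not by itself pick one in $\mathbf{ZF}$, so I would instead observe that ``not discrete'' means the set $D$ of non-isolated points is non-empty, and more carefully: fix any non-isolated point $x_0$ (a single existential choice, harmless in $\mathbf{ZF}$) and a countable open neighborhood base at $x_0$; without loss of generality it is decreasing. Because $x_0$ is not isolated, each $V_n$ properly contains $V_{n+1}$ only after passing to a subsequence, so by recursion I extract a strictly decreasing subsequence $(W_k)_{k\in\omega}$ of open neighborhoods of $x_0$ with $\overline{W_{k+1}}\subseteq W_k$ (Hausdorffness lets me shrink, using a single point $x_k\in W_k\setminus\overline{W_{k+1}}$ chosen canonically from the base — again a recursion producing one object at each stage, which is legitimate). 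Then $U_k:=\operatorname{int}(\overline{W_k})\setminus\overline{W_{k+1}}$, or more simply $U_k:=\operatorname{int}(\overline{W_k\setminus\overline{W_{k+1}}})$, is a denumerable cellular family of regular open sets; non-emptiness follows from Proposition~\ref{p2.10}(b)(iv) applied to the regular open sets $\operatorname{int}(\overline{W_{k+1}})\subset\operatorname{int}(\overline{W_k})$. By Corollary~\ref{c02.11}(i), $RO(\mathbf{X})$ then has a tower. The ``in particular'' clause is immediate since (quasi-)metrizable spaces are first-countable Hausdorff.

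For part (b), the implications (ii)$\Rightarrow$(iv), (iii)$\Rightarrow$(iv) are trivial (discrete and metrizable spaces are first-countable Hausdorff / metrizable), and (i)$\Rightarrow$(ii) follows from part (a): an infinite first-countable Hausdorff space is either non-discrete, hence covered by (a), or discrete, hence an infinite discrete space whose underlying set is Dedekind-infinite by $\mathbf{IWDI}$ — wait, $\mathbf{IWDI}$ gives that $\mathcal{P}(X)$ is Dedekind-infinite; but an infinite discrete space $\mathbf{X}$ has $RO(\mathbf{X})=\mathcal{P}(X)$, so $RO(\mathbf{X})$ is Dedekind-infinite and Proposition~\ref{p02.12}(i) gives a denumerable cellular family. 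For (iv)$\Rightarrow$(i): given an infinite set $X$, the discrete space on $X$ admits a denumerable cellular family $\{U_n:n\in\omega\}$, and since each $U_n\subseteq X$ is a distinct subset, $n\mapsto U_n$ embeds $\omega$ into $\mathcal{P}(X)$, so $X$ is weakly Dedekind-infinite. The equivalence with (v) is a known $\mathbf{ZF}$-fact (a set $X$ admits a non-discrete metric iff $\mathcal{P}(X)$ is Dedekind-infinite): from a denumerable family of distinct subsets one builds a metric making $X$ non-discrete, and conversely a non-discrete metric yields a convergent sequence of distinct points, hence a denumerable subset of $\mathcal{P}(X)$. For (i)$\Rightarrow$(iii): an infinite metrizable space is either non-discrete (use (a)) or discrete infinite (use $\mathbf{IWDI}$ as above), so $\mathbf{IMS}(cell,\aleph_0)$ holds. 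This closes the cycle.

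For part (c): $\mathbf{IMS}(cell,\aleph_0)$ is equivalent to $\mathbf{IWDI}$, which is Form 82 of \cite{hr} and is known not to be a theorem of $\mathbf{ZF}$ (there are models with weakly Dedekind-finite infinite sets), so $\mathbf{IMS}(cell,\aleph_0)$ is not a theorem of $\mathbf{ZF}$; and $\mathbf{IWDI}$ does not imply $\mathbf{IQDI}$ since there is a model of $\mathbf{ZF}$ satisfying $\mathbf{IWDI}$ with an infinite quasi-Dedekind-finite set (cited from \cite{hr}, comparing Forms 82 and the status of $\mathbf{IQDI}$) — I would point to the relevant model in \cite{hr}. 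For part (d): if $\mathbf{X}$ is infinite Hausdorff with a well-orderable base $\mathcal{B}$ of clopen sets, then $\mathbf{X}$ cannot be finite-and-discrete-looking; I enumerate $\mathcal{B}$ by an ordinal and recursively pick a strictly decreasing chain of non-empty clopen sets, or directly: since $\mathbf{X}$ is infinite Hausdorff it has at least two points and hence a non-trivial clopen set, and I build by recursion on $\omega$ a strictly decreasing sequence $C_0\supsetneq C_1\supsetneq\cdots$ of non-empty clopen sets — at each stage $C_n$ has at least two points (because $\mathbf{X}$ is infinite and Hausdorff, so no clopen set with $\geq 2$ points can be a singleton and I can keep shrinking; if some $C_n$ became a singleton I would restart inside $X\setminus C_n$, using well-orderability of $\mathcal{B}$ to make each choice canonical). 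The well-ordering of $\mathcal{B}$ is exactly what makes the recursion go through in $\mathbf{ZF}$: at step $n$, take the $\mathcal{B}$-least clopen set $B$ with $\emptyset\neq B\cap C_n\subsetneq C_n$, which exists because $C_n$ is clopen with at least two points and $\mathcal{B}$ separates points, and set $C_{n+1}=B\cap C_n$. Then $\{C_n\setminus C_{n+1}:n\in\omega\}$ is a denumerable cellular family of clopen sets, and Corollary~\ref{c02.11}(ii) gives that $Clop(\mathbf{X})$ has a tower. The main obstacle throughout is ensuring each recursive choice is \emph{canonical} (forced, or made from a well-ordered set) so that no appeal to choice sneaks in — this is why part (a) needs the fixed countable base and part (d) needs the well-orderable base.
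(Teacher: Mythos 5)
Your overall strategy coincides with the paper's: for (a), a strictly decreasing sequence of regular-open neighborhoods of an accumulation point, with the consecutive differences $int(\overline{W_k})\setminus\overline{W_{k+1}}$ forming the cellular family (non-empty by Proposition \ref{p2.10}(b)(iv)); for (b), splitting into the non-discrete case (use (a)) and the discrete case (use $\mathbf{IWDI}$), with (i)$\leftrightarrow$(v) delegated to the literature exactly as the paper delegates to \cite{kk}. Two local points in (a): the claim that Hausdorffness yields $\overline{W_{k+1}}\subseteq W_k$ is false in general (that is a regularity property), but it is also not needed --- what the argument requires, and what Hausdorffness at an accumulation point does give, is a point of $W_k$ lying outside $\overline{W_{k+1}}$, hence $int(\overline{W_{k+1}})\subsetneq int(\overline{W_k})$; and the recursion is legitimate in $\mathbf{ZF}$ only because each step can be made determinate, e.g.\ by always taking the least index $n$ in the fixed enumeration of the base with $int(\overline{V_n})\subsetneq int(\overline{W_k})$, rather than ``choosing a point $x_k$''.

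Two genuine gaps remain. In (c) you assert, without a witness, that $\mathbf{IWDI}$ does not imply $\mathbf{IQDI}$; an independence claim needs a concrete model. The paper uses Cohen's original model $\mathcal{M}1$: $\mathbf{IWDI}$ holds there, the set $A$ of added Cohen reals is Dedekind-finite, and since $\mathbf{CAC}_{fin}$ holds in $\mathcal{M}1$, a denumerable subset of $[A]^{<\omega}$ would yield (via the increasing finite unions and choice from the non-empty finite differences) a denumerable subset of $A$; so $A$ is not quasi Dedekind-infinite. In (d) (whose proof the paper omits), your ``restart inside $X\setminus C_n$'' device destroys the conclusion: after a restart the sets $C_k\setminus C_{k+1}$ with $k$ below the restart need not be disjoint from those above it, so $\{C_n\setminus C_{n+1}: n\in\omega\}$ is no longer cellular. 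The fix is to keep every $C_n$ infinite: at each stage take the $\mathcal{B}$-least $B$ with $\emptyset\neq B\cap C_n\subsetneq C_n$ and let $C_{n+1}$ be whichever of $B\cap C_n$ and $C_n\setminus B$ is infinite (say the former when both are); since $C_n$ is infinite at least one of them is, both are clopen, and the recursion never stalls. With these repairs your argument is correct and is essentially the intended one.
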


\begin{proof}
($a$) Let $\mathbf{X}$ be a non-discrete, first-countable Hausdorff space. Fix an accumulation point $x_{0}$ of $\mathbf{X}$. Let $\mathcal{B}%
(x_{0})=\{U_{n}: n\in \omega \}$ be a countable base of open neighborhoods of 
$x_{0}$ in $\mathbf{X}$. We claim that 
\begin{equation*}
\mathcal{B}^{\prime }(x_{0})=\{int(\overline{U_{n}}): n\in \omega \}
\end{equation*}%
is a neighborhood base of $x_{0}$ in the semi-regularization $(X,\mathcal{R})$ of $%
\mathbf{X}$. Indeed, if $U$ is a regular open neighborhood of $x_{0}$, then
for some $n\in \omega$, $U_{n}\subseteq U$. Hence, $x_{0}\in int(%
\overline{U_{n}})\subseteq int(\overline{U})=U$. Without loss of generality,
we may assume that $int(\overline{U_{n+1}})\subset int(\overline{U_n})$ for each $n\in\omega$.
Clearly, 
\begin{equation*}
\mathcal{C}=\{int(\overline{U_{n}})\backslash \overline{int(\overline{U_{n+1}})}: n\in 
\omega\}
\end{equation*}%
is a denumerable cellular family of regular open sets of $\mathbf{X}$%
. Hence $RO(\mathbf{X})$ has a tower by Corollary \ref{c02.11}\smallskip

($b$) (i) $\rightarrow $ (ii) Fix an infinite Hausdorff space $\mathbf{X}$. If $\mathbf{X}$ has an accumulation point, then, by part (a), $\mathbf{X}$
admits a denumerable cellular family. Otherwise, by $\mathbf{IWDI}$, $X$ has
a denumerable partition $\mathcal{P}$. Clearly, the members of $\mathcal{P}$
are non-empty open sets of $\mathbf{X}$.\smallskip

(ii) $\rightarrow $ (iii), (iii) $\rightarrow $ (iv) and (iv) $\rightarrow $
(i) are straightforward.\smallskip

(i) $\leftrightarrow $ (v) This has been established in \cite{kk}.\smallskip

To prove ($c$), we notice that $\mathbf{IWDI}$ fails in model $\mathcal{M}$37 of \cite{hr}, so it follows from ($b$) that $\mathbf{IMS}(cell, \aleph_0)$ is false in $\mathcal{M}$37. Moreover, in Cohen's original model $\mathcal{M}$1 of \cite{hr},  $\mathbf{IWDI}$ holds and $\mathbf{IQDI}$ fails. To see that $\mathbf{IQDI}$ is false in $\mathcal{M}$1, let us consider the infinite set $A$ of all added Cohen reals of $\mathcal{M}$1. Then $A$ is Dedekind-finite in $\mathcal{M}$1. Hence, since  $\mathbf{CAC}_{fin}$ is true in $\mathcal{M}$1, $A$ is not quasi Dedekind-infinite in  $\mathcal{M}$1.

We omit a simple proof to ($d$) because it is similar to that of ($a$).
\end{proof}

\begin{remark}
\label{r02.14} In view of the proof to Proposition \ref{p02.13}($a$), the
following hold in $\mathbf{ZF}$:

\begin{enumerate}
\item[(i)] The semi-regularization of a first-countable space is
first-countable.

\item[(ii)] If $\mathbf{X}$ is a Hausdorff space which has an accumulation
point $x$ such that there exists a well-orderable base of neighborhoods of $%
x $, then $\mathbf{X}$ admits a denumerable cellular family of regular open
sets.
\end{enumerate}
\end{remark}

The following theorem has important consequences:

\begin{theorem}
\label{t02.15} $\mathbf{(ZF)}$ Let $S$ be a non-empty set and let $\{\mathbf{%
X}_{s}: s\in S\}$ be a collection of topological spaces such that $%
\prod_{s\in S}X_{s}\neq \emptyset $. Then $\prod_{s\in S}\mathbf{X}_{s}$
admits a denumerable cellular family if and only if there exists a non-empty
subset $T $ of $S$ such that $\prod_{s\in T}\mathbf{X}_{s}$ admits a
denumerable cellular family.\newline
In particular, if $\mathbf{X}_{t}$ admits a denumerable cellular family for
some $t\in S,$ then $\prod_{s\in S}\mathbf{X}_{s}$ admits a denumerable
cellular family.
\end{theorem}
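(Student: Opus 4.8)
The plan is to prove the two directions of the equivalence separately, using Proposition~\ref{p2.10}(b)(iii) to translate "admits a denumerable cellular family" into a statement about towers of $RO(\mathbf{X})$ where convenient, but in fact working directly with open sets is cleaner here because the subtlety lies entirely in the product structure. The nontrivial direction is the "only if" part: suppose $\mathbf{X}=\prod_{s\in S}\mathbf{X}_s$ admits a denumerable cellular family $\mathcal{U}=\{U_n:n\in\omega\}$, and we must produce a non-empty $T\subseteq S$ and a denumerable cellular family on $\prod_{s\in T}\mathbf{X}_s$. The "if" direction is essentially the second sentence of the statement combined with Theorem~\ref{t02.15} itself applied to the splitting $S = T\sqcup(S\setminus T)$, so the real content is: reduce a cellular family on the full product to one on a sub-product.

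First I would fix, using the hypothesis $\prod_{s\in S}X_s\neq\emptyset$, a point $z=(z_s)_{s\in S}$ in the product; this is the canonical device for avoiding choice, since it lets us "fill in" coordinates deterministically. Each $U_n$ contains a non-empty basic open box, and shrinking if necessary we may take $U_n$ itself to be a basic open set $U_n=\prod_{s\in S}U_n^s$ where $U_n^s=X_s$ for all $s$ outside a finite set $F_n\subseteq S$ and $U_n^s$ is a non-empty open proper-or-not subset of $X_s$ for $s\in F_n$. Here is where a choice concern could arise — selecting a basic box inside each $U_n$ — but since we only need a \emph{denumerable} (hence merely infinite) cellular family, I would argue as follows: the set $\{(n,p): p\in\mathcal{B}\text{-box}, p\subseteq U_n\}$ of all pairs of an index and a basic box refining $U_n$ is a subset of $\omega\times(\text{a definable set of boxes})$; if the coordinate spaces carried canonical bases this would be well-orderable, but in general it need not be. The clean workaround: we do not need a box for \emph{every} $n$. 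It suffices to find \emph{one} pair $(n_0,p_0)$ with a box, which we get for free from any single $U_{n_0}$; but one box is not a denumerable family. So instead I would take $T=\bigcup_{n\in\omega}F_n$ — wait, the $F_n$ are not canonically given either.

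The correct route, then, is: let $T$ be the set of all $s\in S$ such that the projection $\pi_s:\mathbf{X}\to\mathbf{X}_s$ is \emph{not} constant on $\bigcup_{n}U_n$ — equivalently, such that some $U_n$ has $\pi_s(U_n)\neq X_s$ is the wrong condition; rather, let $T=\{s\in S: \pi_s(U_m)\cap\pi_s(U_n)=\emptyset\text{ for some }m\neq n\}$. This $T$ is definable from $\mathcal{U}$ alone, no choice needed. If $T=\emptyset$ then for every $s$ and every $m\neq n$ we have $\pi_s(U_m)\cap\pi_s(U_n)\neq\emptyset$; picking witnesses $x_s\in\pi_s(U_0)\cap\pi_s(U_1)$ coordinatewise would again need choice, so instead I would derive a contradiction more carefully — actually if $T=\emptyset$, take any two $U_m,U_n$ with $m\neq n$; they are disjoint basic-refinable open sets, so there is a coordinate $s$ separating them (this is exactly how disjointness of open sets in a product is witnessed), meaning $\pi_s$ restricted to closures of boxes must disagree on that coordinate, forcing $s\in T$ — contradiction. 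Hence $T\neq\emptyset$. Then I would check that $\{\,\pi_T(U_n): n\in\omega\,\}$, where $\pi_T:\prod_S\mathbf{X}_s\to\prod_T\mathbf{X}_s$ is the projection, is still cellular: disjointness of $\pi_T(U_m)$ and $\pi_T(U_n)$ follows because the separating coordinate for $U_m,U_n$ lies in $T$; and each $\pi_T(U_n)$ is open and non-empty since projections are open and $U_n\neq\emptyset$. Denumerability is immediate from the indexing by $\omega$ together with the fact that a cellular family indexed by $\omega$ whose members are pairwise disjoint and non-empty is automatically denumerable (an injection $\omega\to\mathcal{U}$ exists, namely $n\mapsto U_n$, after deleting repetitions — and here the $U_n$ are pairwise disjoint non-empty, so distinct).

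I expect the main obstacle to be precisely the choice-free treatment of "$U_m\cap U_n=\emptyset$ implies some coordinate separates them, and that coordinate can be taken in a definable set $T$." The point is that while \emph{finding} a separating coordinate for a \emph{single} pair is choiceless (it is a property, not a selection), one must be careful that the set $T$ built from \emph{all} pairs is both non-empty (handled above by contradiction) and genuinely does the separating for $\pi_T$. Concretely: if $U_m\cap U_n=\emptyset$ and $U_m\supseteq\prod U_m^s$, $U_n\supseteq\prod U_n^s$ are any basic boxes inside them (whose existence, for a \emph{fixed} pair $m,n$, is a choiceless existential), then for some $s$ we have $U_m^s\cap U_n^s=\emptyset$ with $s\in F_m\cup F_n$; such $s$ satisfies $\pi_s(U_m)\cap\pi_s(U_n)=\emptyset$, so $s\in T$, and this same $s$ witnesses $\pi_T(U_m)\cap\pi_T(U_n)=\emptyset$. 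Assembling these observations yields the theorem; the "in particular" clause follows by taking $T=\{t\}$ in the "if" direction.
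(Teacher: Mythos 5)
There is a genuine gap, and it begins with a misreading of the logical structure of the statement. The ``only if'' direction is trivial: since $S$ is itself a non-empty subset of $S$, taking $T=S$ immediately witnesses the existential. The entire content of the theorem is the \emph{``if''} direction -- lifting a denumerable cellular family from a sub-product $\prod_{s\in T}\mathbf{X}_s$ to the full product -- and this is precisely the direction you dismiss in one sentence as ``essentially the second sentence of the statement combined with Theorem~\ref{t02.15} itself,'' which is circular (the ``in particular'' clause is a special case of the ``if'' direction, not a tool for proving it). The actual argument you need is short: given a cellular family $\{U_n:n\in\omega\}$ of $\prod_{s\in T}\mathbf{X}_s$, set $V_n=\{x\in\prod_{s\in S}X_s: x|_T\in U_n\}$; these are open and pairwise disjoint, and each is non-empty because a single fixed point $f\in\prod_{s\in S}X_s$ (this is where the hypothesis $\prod_{s\in S}X_s\neq\emptyset$ is used, with no further choice) lets you extend any $g\in U_n$ by $f$ on $S\setminus T$. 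You fix such a point $z$ at the outset but then deploy it in the wrong direction.

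Moreover, the elaborate argument you give for the direction you treat as nontrivial is itself incorrect. Two disjoint open sets in a product need not be separated by any single coordinate: in $\mathbf{2}^{\omega}$ the clopen sets $U_0=\{x: x(0)=x(1)\}$ and $U_1=\{x: x(0)\neq x(1)\}$ are disjoint, yet $\pi_s(U_0)=\pi_s(U_1)=\{0,1\}$ for every $s$. The claim that ``disjointness of open sets in a product is witnessed'' by a separating coordinate holds for basic boxes, not for arbitrary open sets, so your set $T$ can fail to contain a separating coordinate for a given pair, and $\{\pi_T(U_n):n\in\omega\}$ need not be cellular. None of this can be repaired into a proof of the theorem, because the theorem's content lies entirely in the lifting direction that your proposal omits.
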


\begin{proof}
Assume that $T$ is a non-empty subset of $S$ such that $\prod_{s\in T}%
\mathbf{X}_{s}$ admits a denumerable cellular family. Let $\{U_{n}: n\in
\omega \}$ be a cellular family of $\prod_{s\in T}\mathbf{X}_{s}$%
. For each $n\in \omega $, we define 
\begin{equation*}
V_{n}=\{x\in \prod_{s\in S}X_{s}: x|_{T}\in U_{n}\}.
\end{equation*}%
Of course, the sets $V_{n}$ are all open in $\prod_{s\in S}\mathbf{X}_{s}$
and $V_{n}\cap V_{m}=\emptyset $ for each pair $m,n$ of distinct elements of 
$\omega $. To show that all $V_{n}$ are non-empty, choose $f\in \prod_{s\in
S}X_{s}$. Fix $n\in \omega $. There exists $g\in U_{n}$. We define $%
y_{f,g}\in \prod_{s\in S}X_{s}$ as follows, if $s\in S\setminus T$, we put $%
y_{f,g}(s)=f(s)$; if $s\in T$, we put $y_{f,g}(s)=g(s)$. Then $y_{f,g}\in
V_{n}$.\smallskip
\end{proof}

\begin{corollary}
\label{c02.16} $\mathbf{(ZF)}$ For every infinite set $X$ it holds that $%
\mathbf{2}^{X}$ admits a denumerable cellular family iff for some (infinite)
subset $Y$ of $X$, $\mathbf{2}^{Y} $ admits a denumerable cellular family.
\end{corollary}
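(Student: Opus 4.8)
The plan is to derive this directly from Theorem \ref{t02.15}. I would take $S=X$ and, for each $s\in X$, let $\mathbf{X}_{s}$ be the discrete two-point space $\mathbf{2}$ of Definition \ref{d2.9}. Then $\prod_{s\in X}\mathbf{X}_{s}$ is, by definition, the Cantor cube $\mathbf{2}^{X}$, and this product is non-empty in $\mathbf{ZF}$ since it contains the constant function with value $0$. Thus the hypotheses of Theorem \ref{t02.15} are satisfied, and the subproduct $\prod_{s\in T}\mathbf{X}_{s}$ appearing there is, for this choice of spaces, literally the Cantor cube $\mathbf{2}^{T}$, so no homeomorphism argument is needed.

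Applying Theorem \ref{t02.15}, $\mathbf{2}^{X}$ admits a denumerable cellular family if and only if there exists a non-empty subset $T$ of $X$ such that $\mathbf{2}^{T}$ admits a denumerable cellular family. This already yields the ``non-empty subset'' form of the corollary; the only additional point is to upgrade ``non-empty'' to ``infinite''. For that I would observe that if $T$ is a finite non-empty subset of $X$, then $\mathbf{2}^{T}$ is a finite discrete space, hence has only finitely many open sets and cannot contain any denumerable (indeed any infinite) cellular family. Consequently the witnessing set $T$ is automatically infinite, and we may set $Y=T$.

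For the converse direction, if some infinite subset $Y$ of $X$ (in fact any non-empty $Y\subseteq X$) has the property that $\mathbf{2}^{Y}$ admits a denumerable cellular family, then $\mathbf{2}^{X}$ admits one as well; this is again immediate from Theorem \ref{t02.15}, taking $T=Y$.

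I do not anticipate any real obstacle: the substantive content is entirely carried by Theorem \ref{t02.15}, and the corollary is simply its instantiation to the constant family $\mathbf{X}_{s}=\mathbf{2}$, together with the elementary remark that a finite discrete space has no infinite cellular family, which is what forces the index set $Y$ to be infinite.
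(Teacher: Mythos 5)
Your proposal is correct and is exactly the intended derivation: the paper states Corollary \ref{c02.16} as an immediate instantiation of Theorem \ref{t02.15} with each factor equal to $\mathbf{2}$, and your observation that a finite non-empty $T$ yields a finite discrete $\mathbf{2}^{T}$ with no infinite cellular family is the right (and only needed) extra step to upgrade ``non-empty'' to ``infinite''.
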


By applying Corollary \ref{c02.16}, one can easily verify, as in \cite{kt},
that:

\begin{description}
\item[(*) ] $\mathbf{IQDI}$ implies the following: For every infinite set $%
X,2^{X}$ admits a denumerable cellular family.
\end{description}

One can generalize (*) as follows:

\begin{proposition}
\label{p02.17} $\mathbf{(ZF)}$ Let $S$ be a quasi Dedekind-infinite set.
Suppose that $\{\mathbf{X}_{s}: s\in S\}$ is a collection of first-countable
Hausdorff spaces such that each $X_{s}$ consists of at least two points and $%
\prod_{s\in S}X_{s}\neq \emptyset $. Then $\mathbf{X}=\prod_{s\in S}\mathbf{X%
}_{s}$ admits a denumerable cellular family.
\end{proposition}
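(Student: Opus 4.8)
The plan is to argue by a dichotomy on the factors and, in the harder case, to transport a cellular family from the Cantor cube $\mathbf{2}^{S}$ along a continuous map with dense range. Note first that a quasi Dedekind-infinite set is infinite (if $S$ had $k$ elements, $[S]^{<\omega}$ would have $2^{k}$ elements and could not be Dedekind-infinite), so $S$ is infinite and $\mathbf{2}^{S}$ is defined.

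\emph{Case 1: there exists $s_{0}\in S$ such that $\mathbf{X}_{s_{0}}$ is not discrete.} Then $\mathbf{X}_{s_{0}}$ is a non-discrete first-countable Hausdorff space, so Proposition \ref{p02.13}(a) provides a denumerable cellular family of $\mathbf{X}_{s_{0}}$, and the ``in particular'' clause of Theorem \ref{t02.15} (applicable because $\prod_{s\in S}X_{s}\neq\emptyset$) yields a denumerable cellular family of $\mathbf{X}$. This case uses neither quasi Dedekind-infiniteness of $S$ nor any choice: we only need that such an $s_{0}$ \emph{exists}, which is a legitimate $\mathbf{ZF}$ case distinction.

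\emph{Case 2: every $\mathbf{X}_{s}$ is discrete.} Fix $f\in\prod_{s\in S}X_{s}$. Since each $X_{s}$ has at least two points and is discrete, $V_{s}^{0}=\{f(s)\}$ and $V_{s}^{1}=X_{s}\setminus\{f(s)\}$ are non-empty disjoint clopen sets of $\mathbf{X}_{s}$, and the map $g_{s}\colon X_{s}\to 2$ with $g_{s}^{-1}(\{0\})=V_{s}^{0}$ and $g_{s}^{-1}(\{1\})=V_{s}^{1}$ is continuous. Let $g\colon\mathbf{X}\to\mathbf{2}^{S}$ be $g(h)(s)=g_{s}(h(s))$; it is continuous. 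The one point to verify carefully is that $g$ has dense range: given $p\in Fn(S,2)$, choose for the finitely many $s\in\mathrm{dom}(p)$ with $p(s)=1$ a point $x_{s}\in V_{s}^{1}$ (a finite choice, hence available in $\mathbf{ZF}$), put $h(s)=x_{s}$ there and $h(s)=f(s)$ everywhere else; then $g(h)\in[p]$, so $g$ meets every member of the standard base $\mathcal{B}(S)$. Since $S$ is quasi Dedekind-infinite, $\mathbf{2}^{S}$ admits a denumerable cellular family $\{C_{n}\colon n\in\omega\}$: this follows by the argument establishing (*) (see \cite{kt}), which for a given exponent uses only that the exponent is quasi Dedekind-infinite. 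Now $\{g^{-1}(C_{n})\colon n\in\omega\}$ is the required family: each $g^{-1}(C_{n})$ is open, it is non-empty since $C_{n}$ is a non-empty open set and $\mathrm{range}(g)$ is dense, distinct $C_{n}$'s have disjoint preimages because $g^{-1}$ commutes with intersection, and pairwise disjointness together with non-emptiness forces the family to be denumerable.

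The main obstacle — and the reason for the case split — is that there is no $\mathbf{ZF}$-uniform way to select two disjoint non-empty open sets in an arbitrary non-discrete first-countable Hausdorff factor (for instance, if $f(s)$ is a non-isolated point of $X_{s}$, the only open set disjoint from the canonical open set $X_{s}\setminus\{f(s)\}$ is empty), so a single ``project everything to $\mathbf{2}^{S}$'' argument cannot be run across all factors. The dichotomy resolves this: one non-discrete factor already suffices by Proposition \ref{p02.13}(a) and Theorem \ref{t02.15}, and if every factor is discrete then $f$ furnishes the canonical clopen partition $\{\{f(s)\},X_{s}\setminus\{f(s)\}\}$ in each coordinate, after which the only delicate step is the density of $\mathrm{range}(g)$, which rests on the finiteness of the domains of basic conditions.
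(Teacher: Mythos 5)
Your proof is correct, and while Case 1 coincides with the paper's argument (non-discrete factor, Proposition \ref{p02.13}(a), then Theorem \ref{t02.15}), your Case 2 takes a genuinely different route. The paper uses quasi Dedekind-infiniteness of $S$ to extract an infinite set $T\subseteq S$ expressible as a countable union of finite sets, observes that $\prod_{s\in T}\mathbf{X}_{s}$ is metrizable by an external result (Theorem 2.1 of \cite{ew}), notes that this subproduct is non-discrete, applies Proposition \ref{p02.13}(a) to it, and lifts the resulting cellular family via Theorem \ref{t02.15}. You instead build, from the fixed $f\in\prod_{s\in S}X_{s}$, a canonical continuous map $g\colon\mathbf{X}\to\mathbf{2}^{S}$ with dense range and pull back a denumerable cellular family of $\mathbf{2}^{S}$, which exists by the ``local'' form of (*) — this is exactly Theorem \ref{t02.18}(i), which states the equivalence for a \emph{given} set $X$, so your reliance on it is legitimate. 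All the delicate points check out: $g_{s}$ is canonically defined from $f(s)$ with no choice, density of the range needs only a finite choice per basic open set, and preimages of disjoint non-empty open sets under a continuous map with dense range are disjoint, non-empty and open. Your approach buys independence from the metrizability machinery of \cite{ew} (and from the accumulation-point argument for the subproduct), at the cost of leaning on Theorem \ref{t02.18}(i) from \cite{kt}; the paper's approach keeps the argument inside the ``first-countable non-discrete'' framework of Proposition \ref{p02.13} and reuses Theorem \ref{t02.15} in both cases. Both are valid in $\mathbf{ZF}$.
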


\begin{proof}
If there exists $s_{0}\in S$ such that $\mathbf{X}_{s_{0}}$ is not discrete,
then, by Proposition \ref{p02.13} (a), $\mathbf{X}_{s_{0}}$ admits a denumerable cellular family, so by
Theorem \ref{t02.15}, $\mathbf{X}$ admits a denumerable cellular family.

Now, suppose that, for each $s\in S$, $\mathbf{X}_{s}$ is discrete. Since $S$ is quasi Dedekind-infinite, there exists a collection $\{T_{n}: n\in \omega \}$ of
pairwise distinct finite subsets of $S$. Let $T=\bigcup_{n\in \omega }T_{n}$%
. The set $T$ is infinite, so $Y=\prod_{s\in T}X_{s}$ is also infinite. By Theorem 2.1 of \cite{ew}, $\mathbf{Y}$ is metrizable. Since $\mathbf{Y}$
has an accumulation point, $\mathbf{Y}$ admits a denumerable cellular family
by Proposition \ref{p02.13}. It follows from Theorem \ref{t02.15} that $\mathbf{X}$
admits a denumerable cellular family.\medskip
\end{proof}

The following theorem summarizes some results from \cite{kt} we shall be
needing in the present paper. The first one in the list shows that the
converse of (*) holds.

\begin{theorem}
\label{t02.18} \cite{kt} The following conditions are satisfied in $\mathbf{%
ZF}$:

\begin{enumerate}
\item[(i)] For every infinite set $X$, it holds that $X$ is quasi Dedekind
infinite iff the standard base $\mathcal{B}(X)$ of the Cantor cube $\mathbf{2%
}^{X}$ admits a denumerable cellular family iff $\mathbf{2}^{X}$ admits a
denumerable cellular family.\newline
In particular, $\mathbf{IQDI}$ iff, for every infinite set $X$, the standard
base $\mathcal{B}(X)$ of $\mathbf{2}^{X}$ contains a denumerable disjoint
subfamily iff, for every infinite set $X$, $\mathbf{2}^{X}$ admits a
denumerable cellular family.

\item[(ii)] $\mathbf{IHS}(cell,\aleph _{0})+\mathbf{CAC}_{fin}$ is
equivalent to $\mathbf{IDI}$.

\item[(iii)] $\mathbf{CMC}$ implies $\mathbf{IHS}(cell,\aleph _{0})$.

\item[(iv)] $\mathbf{IHS}(cell,\aleph _{0})$ implies $\mathbf{IQDI}$.

\item[(v)] ``Every infinite Boolean algebra has a tower'' implies $\mathbf{%
IQDI}$.
\end{enumerate}
\end{theorem}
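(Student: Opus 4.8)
The plan is to deduce the five assertions from the elementary facts of Section~\ref{s2} together with the corresponding statements of \cite{kt}; below I indicate the shape of each argument and where the genuine difficulty lies.

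\emph{Item (i).} The implication ``$\mathcal{B}(X)$ has a denumerable cellular family $\Rightarrow$ $\mathbf{2}^{X}$ has one'' is immediate, since $\mathcal{B}(X)$ consists of open sets. For ``$X$ quasi Dedekind-infinite $\Rightarrow$ $\mathcal{B}(X)$ has a denumerable cellular family'', I would fix a denumerable family $\{A_{n}:n\in\omega\}$ of pairwise distinct finite subsets of $X$ and, using only the well-order of $\omega$ (so in $\mathbf{ZF}$), record the stages at which $\bigcup_{k\le n}A_{k}$ strictly grows to get a strictly $\subseteq$-increasing sequence $C_{0}\subset C_{1}\subset\cdots$; then $D_{j}=C_{j+1}\setminus C_{j}$ is non-empty, finite and the $D_{j}$ are pairwise disjoint. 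Letting $p_{j}\in Fn(X,2)$ be $1$ on $D_{j}$ and $0$ on $D_{0}\cup\cdots\cup D_{j-1}$, the family $\{[p_{j}]:j\in\omega\}\subseteq\mathcal{B}(X)$ is cellular, since for $i<j$ and $x\in D_{i}$ one has $p_{i}(x)=1\ne 0=p_{j}(x)$. The remaining implication, ``$\mathbf{2}^{X}$ has a denumerable cellular family $\Rightarrow$ $X$ is quasi Dedekind-infinite'', is the one substantial step. Given a cellular family $\{V_{n}:n\in\omega\}$ of $\mathbf{2}^{X}$, the idea is to attach to each $V_{n}$, by a $\mathbf{ZF}$-definable rule, a non-empty finite subset $G_{n}\subseteq X$ (obtained, roughly, from the minimal-cardinality supports of basic clopen sets contained in $V_{n}$, exploiting that $V_{n}$ is open and disjoint from the non-empty open set $\bigcup_{m\ne n}V_{m}$); to observe via $V_{m}\cap V_{n}=\emptyset$ (for $m\ne n$) that these finite sets cannot all coincide; and then to conclude that either $n\mapsto G_{n}$ already has infinite range --- whence its range is a denumerable subset of $[X]^{<\omega}$ --- or else, after pigeonholing onto a value attained infinitely often, one contradicts the fact that a finite set cannot contain infinitely many pairwise disjoint non-empty subsets. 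The ``in particular'' clause is the specialization of the equivalence to a universally quantified $X$, i.e.\ $\mathbf{IQDI}$ on the one hand, and ``for every infinite $X$, $\mathbf{2}^{X}$ (equivalently, $\mathcal{B}(X)$) admits a denumerable cellular family'' on the other.

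\emph{Items (ii) and (iv).} For (iv), apply $\mathbf{IHS}(cell,\aleph_{0})$ to the infinite Hausdorff space $\mathbf{2}^{X}$, for an arbitrary infinite set $X$, and invoke (i) to conclude $\mathbf{IQDI}$. For (ii), the implication $\mathbf{IDI}\Rightarrow\mathbf{CAC}_{fin}$ is the usual one: given a denumerable family $\{A_{n}\}$ of non-empty finite sets, the set $\{(n,a):n\in\omega,\ a\in A_{n}\}$ is infinite, hence has a denumerable subset whose first coordinates form an infinite $M\subseteq\omega$, and selecting on each $A_{n}$ with $n\in M$ the first listed representative yields a choice function on an infinite subfamily; together with $\mathbf{IDI}\Rightarrow\mathbf{IHS}(cell,\aleph_{0})$ (the substantive half, which I would cite from \cite{kt}) this gives $\mathbf{IDI}\Rightarrow\mathbf{IHS}(cell,\aleph_{0})+\mathbf{CAC}_{fin}$. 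Conversely, $\mathbf{IHS}(cell,\aleph_{0})$ implies $\mathbf{IQDI}$ by (iv), and $\mathbf{IQDI}+\mathbf{CAC}_{fin}\Rightarrow\mathbf{IDI}$: running the construction of (i) inside $X$ itself produces pairwise disjoint non-empty finite sets $D_{j}\subseteq X$, and $\mathbf{CAC}_{fin}$ picks a representative from each member of an infinite subfamily, yielding a denumerable subset of $X$.

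\emph{Items (iii) and (v).} For (v): the finite--cofinite algebra $FC(X)=\{A\subseteq X: A\text{ finite or }X\setminus A\text{ finite}\}$ is an infinite Boolean algebra, hence by hypothesis has a tower $t_{0}\supsetneq t_{1}\supsetneq\cdots$; only finitely many of its terms can be finite (a strictly $\subseteq$-decreasing sequence of finite sets is impossible), so for cofinitely many indices $n$ the set $X\setminus t_{n}$ is finite, and along those indices $X\setminus t_{n}$ is a strictly $\subseteq$-increasing sequence of distinct finite subsets of $X$; thus $[X]^{<\omega}$ is Dedekind-infinite and $\mathbf{IQDI}$ holds. For (iii), $\mathbf{CMC}$ furnishes, at each of countably many stages of a recursion, a finite non-empty set rather than a single point, and this is exactly what is needed to build a denumerable cellular family in an arbitrary infinite Hausdorff space; I would cite the details from \cite{kt}. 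The principal obstacle in the whole theorem is the last implication of (i): one must carry out the passage from a cellular family of $\mathbf{2}^{X}$ to infinitely many distinct finite subsets of $X$ strictly within $\mathbf{ZF}$ --- in particular, the ``$\mathbf{ZF}$-definable rule'' above must be shown to produce a genuinely finite set and to use no hidden choice --- and this is where the argument of \cite{kt} does the real work; everything else reduces to it together with the preliminary results of Section~\ref{s2}.
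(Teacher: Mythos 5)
The paper itself gives no proof of Theorem~\ref{t02.18}: it is stated as a summary of results cited from \cite{kt}, so there is no internal argument to compare yours against. Judged on its own terms, the portions you actually prove are correct. The passage from a denumerable family of pairwise distinct finite subsets of $X$ to the disjoint sets $D_j$ and thence to the cellular family $\{[p_j]:j\in\omega\}\subseteq\mathcal{B}(X)$ is a complete $\mathbf{ZF}$ argument (the partial unions $\bigcup_{k\le n}A_k$ must grow infinitely often, since otherwise infinitely many distinct $A_n$ would sit inside one finite set). The finite--cofinite algebra argument for (v) is complete and correct; in fact \emph{every} term of the tower must be cofinite, since a single finite term would force an infinite strictly $\subseteq$-decreasing chain of finite sets. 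The bookkeeping in (ii) and (iv) is sound, and the step $\mathbf{IQDI}+\mathbf{CAC}_{fin}\Rightarrow\mathbf{IDI}$ via the disjointified $D_j$'s works because disjointness makes the partial choice function injective. One simplification: $\mathbf{IDI}\Rightarrow\mathbf{IHS}(cell,\aleph_0)$ need not be imported from \cite{kt}; it follows at once from Proposition~\ref{p02.12}(i), since $RO(\mathbf{X})$ is infinite for every infinite Hausdorff $\mathbf{X}$.

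The substantive caveat concerns your sketch of ``$\mathbf{2}^{X}$ has a denumerable cellular family $\Rightarrow X$ is quasi Dedekind-infinite''. As literally described, the rule ``take the minimal-cardinality supports of basic clopen sets contained in $V_n$'' does not by itself produce a finite set: for a single open set the union of all minimal supports can be infinite (for $U=\bigcup_{i\in\omega}\{f\in 2^{\omega}: f(2i)=f(2i+1)=0\}$ the minimal support size is $2$ while the supports exhaust $\omega$). Ruling this out requires using the disjointness from the other members of the family in an essential way --- if $[p]\subseteq V_n$ and $[q]\subseteq V_m$ with $m\neq n$, then $p$ and $q$ are incompatible, so $\mathrm{dom}(p)\cap\mathrm{dom}(q)\neq\emptyset$ --- and organizing these pairwise intersections into a single canonical finite set per index, without choice, is precisely the nontrivial content of \cite{kt}. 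You flag this honestly and defer to \cite{kt} (as you do for (iii)), which matches the paper's own treatment; just be aware that the sketch as written is an idea, not yet a proof, of that implication, and it is the implication on which (iv), and hence half of (ii), ultimately rests.
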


We list the following results here for future reference.

\begin{proposition}
\label{p02.19} \cite{ker} For every topological space $\mathbf{X}$, the
following conditions are all equivalent in $\mathbf{ZF}$:

\begin{enumerate}
\item[$(B_{1})$] every countable open covering $\mathcal{U}$ of $\mathbf{X}$
has a finite subcollection $\mathcal{V}$ such that $X=\bigcup\{\overline{U}: U\in\mathcal{V}\}$;

\item[$(B_{2})$] every denumerable family $\mathcal{U}$ of non-empty open
subsets of $\mathbf{X}$ has a cluster point in $\mathbf{X}$;

\item[$(B_{3})$] every denumerable cellular family of $\mathbf{X}$ has a
cluster point in $\mathbf{X}$;

\item[$(B_{4})$] every countable filter base consisting of open sets of $%
\mathbf{X}$ has a point of adherence;

\item[$(B_{5})$] every countable, locally finite, disjoint collection of
open sets of $\mathbf{X}$ is finite.
\end{enumerate}
\end{proposition}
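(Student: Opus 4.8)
The plan is to prove that $(B_{1})$--$(B_{5})$ are equivalent by establishing the cyclic chain $(B_{1})\Rightarrow(B_{4})\Rightarrow(B_{2})\Rightarrow(B_{3})\Rightarrow(B_{5})\Rightarrow(B_{1})$, realizing each link by an explicit construction indexed by $\omega$ or by a finite set, so that the whole argument stays inside $\mathbf{ZF}$ and no selection principle is used.

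For $(B_{1})\Rightarrow(B_{4})$, a finite filter base $\mathcal{F}$ is disposed of at once, since iterating the filter-base condition produces $C\in\mathcal{F}$ with $C\subseteq\bigcap\mathcal{F}$, and this $C$ is non-empty and contained in the adherence. If $\mathcal{F}=\{F_{n}:n\in\omega\}$ is infinite and had empty adherence $\bigcap_{n}\overline{F_{n}}$, then $\{X\setminus\overline{F_{n}}:n\in\omega\}$ would be a countable open cover of $\mathbf{X}$; applying $(B_{1})$ and using $\overline{X\setminus\overline{F_{n}}}=X\setminus int(\overline{F_{n}})\subseteq X\setminus F_{n}$ would yield a finite $I\subseteq\omega$ with $\bigcap_{n\in I}F_{n}=\emptyset$, contradicting the filter-base property. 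For $(B_{4})\Rightarrow(B_{2})$, given a denumerable family of non-empty open sets, written $\{U_{n}:n\in\omega\}$ with the $U_{n}$ pairwise distinct, I would apply $(B_{4})$ to the decreasing open filter base $F_{n}=\bigcup_{k\ge n}U_{k}$; a point $x$ in its adherence lies in $\overline{\bigcup_{k\ge n}U_{k}}$ for every $n$, so each neighbourhood of $x$ meets $U_{k}$ for arbitrarily large $k$, hence meets infinitely many members of the family, i.e.\ $x$ is a cluster point. The step $(B_{2})\Rightarrow(B_{3})$ is immediate, a denumerable cellular family being a denumerable family of non-empty open sets; and for $(B_{3})\Rightarrow(B_{5})$, an allegedly infinite countable, locally finite, disjoint collection of open sets is denumerable, and after deleting its at most one empty member one obtains a denumerable locally finite cellular family, whose cluster point supplied by $(B_{3})$ contradicts local finiteness.

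The substantive step is $(B_{5})\Rightarrow(B_{1})$. Starting from a countable open cover $\{U_{n}:n\in\omega\}$ with no finite subfamily whose closures cover $\mathbf{X}$, I would first pass to the increasing cover $U_{n}'=\bigcup_{k\le n}U_{k}$, noting that $\overline{U_{n}'}=\bigcup_{k\le n}\overline{U_{k}}\ne X$ for every $n$. Then the sets $V_{n}=U_{n+1}'\setminus\overline{U_{n}'}$ are open, pairwise disjoint, and form a locally finite family (any point lies in some $U_{m}'$, and that $U_{m}'$ misses $V_{n}$ for all $n\ge m$). By $(B_{5})$ the collection $\{V_{n}:n\in\omega\}$ is finite, and since the $V_{n}$ are pairwise disjoint this forces all but finitely many of them to be empty; hence $U_{n+1}'\subseteq\overline{U_{n}'}$ for every $n\ge N$ with $N$ large enough, and a telescoping induction gives $U_{n}'\subseteq\overline{U_{N}'}$ for all $n\ge N$, whence $X=\bigcup_{n}U_{n}'\subseteq\overline{U_{N}'}$, contradicting $\overline{U_{N}'}\ne X$.

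I expect the only delicate points, both harmless in $\mathbf{ZF}$, to be the bookkeeping in the reduction to an increasing cover — one must check that ``no finite subfamily has closures covering $\mathbf{X}$'' is preserved, which follows from $\overline{U_{n}'}=\bigcup_{k\le n}\overline{U_{k}}$ — and the inference ``finite disjoint family of open sets $\Rightarrow$ all but finitely many members empty'' used at the end. Everything else is a direct verification, and since every set constructed above is defined from $\omega$-indexed or finite data, the equivalences are theorems of $\mathbf{ZF}$.
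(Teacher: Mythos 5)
Your proof is correct, and all five implications in your cycle $(B_{1})\Rightarrow(B_{4})\Rightarrow(B_{2})\Rightarrow(B_{3})\Rightarrow(B_{5})\Rightarrow(B_{1})$ check out in $\mathbf{ZF}$: each construction (the complements $X\setminus\overline{F_{n}}$, the tails $F_{n}=\bigcup_{k\ge n}U_{k}$, the annuli $V_{n}=U_{n+1}'\setminus\overline{U_{n}'}$) is defined uniformly from a fixed enumeration, so no choice is used, and the two points you flag as delicate (preservation of ``no finite subfamily of closures covers $X$'' via $\overline{U_{n}'}=\bigcup_{k\le n}\overline{U_{k}}$, and ``finite disjoint family forces all but finitely many $V_{n}$ empty'') are handled correctly. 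The paper itself gives no proof of this proposition --- it is quoted from \cite{ker} --- so there is nothing to compare against; your argument is the standard cyclic one and supplies a complete $\mathbf{ZF}$ proof of the cited result.
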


\begin{theorem}
\label{t02.20} \cite{ker} $(\mathbf{ZF})$ For every topological space $%
\mathbf{X}$, each of the following conditions is equivalent to\ $\mathbf{X}$
is lightly compact:

\begin{enumerate}
\item[$(A_{1})$] Every disjoint locally finite family of open sets of $%
\mathbf{X}$ is finite.

\item[$(A_{2})$] Every locally finite open cover of $\mathbf{X}$ is finite.
\end{enumerate}

In particular, every compact topological space is lightly compact (but there
are lightly compact non-compact spaces) and every paracompact, lightly
compact space is compact.
\end{theorem}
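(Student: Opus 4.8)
I would first dispose of the easy implications. ``Lightly compact'' gives both $(A_1)$ and $(A_2)$ at once, since a disjoint locally finite family of open sets and a locally finite open cover are both, in particular, locally finite families of open sets. For $(A_2)\Rightarrow$``lightly compact'' I would argue contrapositively: if $\mathcal{U}$ is an infinite locally finite family of open subsets of $\mathbf{X}$, then $\mathcal{U}\cup\{X\}$ is still locally finite — adjoining a single set to a locally finite family cannot spoil local finiteness — and it is an infinite locally finite open cover, contradicting $(A_2)$. After this the whole content of the theorem is $(A_1)\Rightarrow$``lightly compact'', i.e. showing that if $\mathbf{X}$ admits an infinite locally finite family of non-empty open sets, then it admits an infinite \emph{disjoint} one. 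The whole difficulty is that this must be carried out in $\mathbf{ZF}$: the given family may be Dedekind-finite, so I may not pass to a denumerable subfamily.

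My main device would be the \emph{protrusion} $U^{*}=U\setminus\overline{\bigcup(\mathcal{U}\setminus\{U\})}$ of a member $U$ of a locally finite family $\mathcal{U}$. Local finiteness makes $\{\overline{V}:V\in\mathcal{U}\setminus\{U\}\}$ a locally finite family of closed sets, so its union is the closed set $\overline{\bigcup(\mathcal{U}\setminus\{U\})}$; hence $U^{*}$ is open, $U^{*}\cap V=\emptyset$ for every $V\in\mathcal{U}\setminus\{U\}$, the family $\{U^{*}:U\in\mathcal{U}\}$ is disjoint and locally finite, and $U\mapsto U^{*}$ is injective on $\{U:U^{*}\neq\emptyset\}$. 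If infinitely many protrusions are non-empty, $(A_1)$ is already violated. Otherwise I would iterate: at each stage delete the finitely many protruding members and pass to the remainder. Since protrusions produced at distinct stages are again pairwise disjoint, the disjoint locally finite family of \emph{all} protrusions ever produced is finite by $(A_1)$, so only finitely many members are deleted in total and the process stabilises after finitely many steps on an infinite, locally finite subfamily $\mathcal{W}$ in which every member lies in the closure of the union of the others.

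To finish I would analyse such a $\mathcal{W}$ through its \emph{cells} $V_{x}=\bigcap\{W\in\mathcal{W}:x\in W\}$ for $x\in\bigcup\mathcal{W}$: these are open (finite intersections), each contains $x$, the collection of cells is locally finite (a neighbourhood meeting $m$ members of $\mathcal{W}$ meets at most $2^{m}$ cells) and infinite (a finite collection of cells would, via point-finiteness, force $\mathcal{W}$ itself to be finite), and it has an amalgamation property (two cells that meet contain a common cell), so the $\subseteq$-minimal cells form a disjoint locally finite family — and if there are infinitely many of them, $(A_1)$ fails. The hard part will be to exclude ``too few'' minimal cells in a choice-free way; the key estimate is that local finiteness forbids an infinite strictly decreasing chain of cells $V_{y_{0}}\supsetneq V_{y_{1}}\supsetneq\cdots$, for such a chain makes the finite index sets $\{W\in\mathcal{W}:y_{n}\in W\}$ increase strictly, and then either infinitely many of the open sets $V_{y_{n}}\setminus\overline{V_{y_{n+1}}}$ are non-empty — a disjoint locally finite infinite family forbidden by $(A_1)$ — or some $V_{y_{n}}$ lies in $\overline{W}$ for infinitely many $W\in\mathcal{W}$, and then local finiteness fails at every point of $V_{y_{n}}$. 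Granting this, every cell contains a minimal cell, and since each minimal cell lies in only finitely many members of $\mathcal{W}$, finitely many minimal cells would make $\mathcal{W}$ finite — the contradiction sought. I expect the genuinely delicate point to be this cell bookkeeping without any form of the axiom of choice.

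For the ``in particular'' statements: if $\mathbf{X}$ is compact and $\mathcal{U}$ is locally finite, I would cover $\mathbf{X}$ by the open sets meeting only finitely many members of $\mathcal{U}$, take a finite subcover, and conclude that $\mathcal{U}$ is finite, so compact $\Rightarrow$ lightly compact. A lightly compact non-compact space is $\mathbb{R}$ with the co-countable topology: any two non-empty open sets meet (their intersection is co-countable, hence non-empty), so a neighbourhood witnessing local finiteness at a point meets every non-empty member of a locally finite family and there can be only finitely many; yet $\{\mathbb{R}\setminus\{1/n:n\ge 1\}\}\cup\{\mathbb{R}\setminus\{1/k:k\ge 1,\ k\neq n\}:n\ge 1\}$ is an open cover with no finite subcover. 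Finally, if $\mathbf{X}$ is paracompact and lightly compact, a locally finite open refinement of a given open cover is finite by $(A_2)$, and choosing — finitely many choices, hence admissible in $\mathbf{ZF}$ — a member of the cover above each refining set yields a finite subcover, so $\mathbf{X}$ is compact.
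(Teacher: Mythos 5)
Your reduction of everything to the implication $(A_1)\Rightarrow$ lightly compact is right, and the protrusion construction, the cell machinery (openness, local finiteness, infinitude, amalgamation, disjointness of the minimal cells), and all three ``in particular'' arguments are correct in $\mathbf{ZF}$. For orientation: the paper does not prove this theorem at all --- it quotes it from \cite{ker} --- and its own closely related Theorem \ref{t3.1} goes a different way (restricting to the dense open set of points lying on no boundary of a member of the family and taking equivalence classes there), so there is no in-text proof to match. Two small remarks on economy: the protrusion/stabilisation stage is never used afterwards (your cell argument applies verbatim to the original infinite locally finite family), and the cells are exactly determined by the finite sets $S_y=\{W\in\mathcal{W}: y\in W\}$, with $V_y\subseteq V_x$ iff $S_x\subseteq S_y$, which makes the bookkeeping transparent.

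The genuine gap is the step ``no infinite strictly decreasing chain of cells, hence every cell contains a minimal cell.'' In $\mathbf{ZF}$ that inference is an instance of Dependent Choice: to turn the absence of a minimal cell below $V_x$ into an infinite descending chain you must make infinitely many successive choices, and a poset can lack minimal elements while having no infinite descending $\omega$-sequence at all (any infinite Dedekind-finite linearly ordered set without a least element is such a poset, and these exist in $\mathbf{ZF}$ models). Since the entire point of the theorem is its $\mathbf{ZF}$ validity, this step must be replaced; fortunately the repair uses only your own tools. Put $Y_n=\{y\in\bigcup\mathcal{W}: |S_y|\geq n\}$; each $Y_n$ is open (a union of $n$-fold intersections of members of $\mathcal{W}$), $Y_{n+1}\subseteq Y_n$, and $\bigcap_{n}Y_n=\emptyset$ by point-finiteness. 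The non-empty sets among $Y_n\setminus\overline{Y_{n+1}}$ form a disjoint locally finite family (a neighbourhood meeting only $m$ members of $\mathcal{W}$ meets $Y_n$ only for $n\leq m$), so by $(A_1)$ all but finitely many are empty; if every $Y_n$ were non-empty, the closures $\overline{Y_n}$ would then stabilise on a non-empty set at each of whose points local finiteness fails. Hence some $Y_{n_0+1}=\emptyset$, i.e.\ $|S_y|\leq n_0$ for every $y$, and for any cell $V_x$ a point $y\in V_x$ with $|S_y|$ maximal (a maximiser of a non-empty bounded set of integers, no choice needed) yields a minimal cell $V_y\subseteq V_x$. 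With that, your final counting argument closes the proof.
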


\begin{corollary}
\label{c02.21} It holds in $\mathbf{ZF}$ that $\mathbf{IDI}$ implies that
every infinite topological space $\mathbf{X}$ which does not admit a
denumerable cellular family is lightly compact.
\end{corollary}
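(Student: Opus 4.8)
The plan is to argue by contraposition, using $\mathbf{IDI}$ together with the characterization of light compactness supplied by Theorem \ref{t02.20}. So I assume that $\mathbf{X}$ is an infinite topological space that is \emph{not} lightly compact, and I aim to produce a denumerable cellular family of $\mathbf{X}$.

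First I would invoke condition $(A_1)$ of Theorem \ref{t02.20}: a space is lightly compact if and only if every disjoint locally finite family of its open sets is finite. Since $\mathbf{X}$ is not lightly compact, there is therefore an infinite disjoint locally finite family $\mathcal{U}$ of open subsets of $\mathbf{X}$. If $\emptyset\in\mathcal{U}$, I would simply discard it; the resulting family remains infinite (removing one element from an infinite set yields an infinite set, provably in $\mathbf{ZF}$) and is now a cellular family of $\mathbf{X}$ in the sense of Definition \ref{d1.3}(ii), that is, an infinite disjoint family of \emph{non-empty} open sets. (One could instead start from Definition \ref{d2.2}(ii) and a mere infinite locally finite open family, but the passage from such a family to a disjoint one is precisely the content already established in Theorem \ref{t02.20}, so it is cleaner to cite $(A_1)$ directly.)

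Next, since $\mathcal{U}$ is now an infinite set, $\mathbf{IDI}$ tells us that $\mathcal{U}$ is Dedekind-infinite, hence contains a denumerable subfamily $\{U_n : n\in\omega\}$ of pairwise distinct members. As a subfamily of a cellular family, $\{U_n : n\in\omega\}$ consists of pairwise disjoint non-empty open sets of $\mathbf{X}$, and it is infinitely countable; thus it is a denumerable cellular family of $\mathbf{X}$. This contradicts the hypothesis that $\mathbf{X}$ admits no denumerable cellular family, which finishes the contrapositive argument.

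I do not expect a genuine obstacle here: all the topological input — the equivalence of light compactness with finiteness of disjoint locally finite open families — is already packaged in Theorem \ref{t02.20}, and the sole set-theoretic ingredient is a single application of $\mathbf{IDI}$ to an infinite family of open sets. The only point needing (minor) care is guaranteeing that the family we pass to $\mathbf{IDI}$ consists of non-empty sets, which is dealt with by deleting $\emptyset$ from $\mathcal{U}$ if it happens to occur there.
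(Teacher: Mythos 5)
Your proposal is correct and is essentially the paper's own argument: both pass to the contrapositive, use condition $(A_1)$ of Theorem \ref{t02.20} to extract an infinite locally finite cellular family from a non-lightly-compact space, and then apply $\mathbf{IDI}$ to obtain a denumerable subfamily. The only addition is your (harmless, and slightly pedantic) remark about discarding $\emptyset$, which the paper handles implicitly by speaking of a cellular family from the outset.
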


\begin{proof} Suppose that $\mathbf{X}$ is a topological space which is not lightly compact. By Theorem \ref{t02.20}, $\mathbf{X}$ has an infinite locally finite cellular family $\mathcal{C}$. If $\mathbf{IDI}$ holds, there exists a denumerable subfamily of $\mathcal{C}$.
\end{proof}

In Section 3, we show that $\mathbf{IDI}$ can be replaced with $\mathbf{IQDI}
$ in Corollary \ref{c02.21}.

\begin{proposition}
\label{p02.22}$(\mathbf{ZF})$ Suppose that $\mathbf{X}$ is a topological
space which is not pseudocompact. Then $\mathbf{X}$ does admit a locally
finite denumerable cellular family, so $\mathbf{X}$ satisfies none of
conditions $(B_{1})-$ $(B_{5})$ of Proposition \ref{p02.19}.
\end{proposition}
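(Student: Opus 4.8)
The plan is to use the classical fact that an unbounded continuous real function on a space produces a locally finite cellular family, taking care that the whole construction stays inside $\mathbf{ZF}$. Since $\mathbf{X}$ is not pseudocompact, fix a continuous $f\colon\mathbf{X}\to\mathbb{R}$ that is unbounded and put $g=|f|$, so $g\colon\mathbf{X}\to\mathbb{R}$ is continuous, non-negative and has unbounded range. For $n\in\omega$ let $A_{n}=g^{-1}\bigl((n,+\infty)\bigr)$; each $A_{n}$ is open and, since $g$ is unbounded, non-empty, and $A_{n+1}\subseteq A_{n}$.

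The obstacle is that the naive differences $A_{n}\setminus\overline{A_{n+1}}$ may be empty, so I would first thin the index set by a choiceless recursion: put $n_{0}=0$ and let $n_{k+1}$ be the least $m\in\omega$ with $m>n_{k}$ and $g^{-1}\bigl((n_{k},m)\bigr)\neq\emptyset$. Such an $m$ exists because $A_{n_{k}}\neq\emptyset$ while every value of $g$ is a finite real, whence $A_{n_{k}}=\bigcup_{m>n_{k}}g^{-1}\bigl((n_{k},m)\bigr)$; thus $(n_{k})_{k\in\omega}$ is a well-defined strictly increasing sequence in $\omega$. Set
\[
C_{k}=A_{n_{k}}\setminus\overline{A_{n_{k+1}}}\qquad(k\in\omega).
\]
Then $C_{k}$ is open; it is non-empty because, by the choice of $n_{k+1}$, the set $g^{-1}\bigl((n_{k},n_{k+1})\bigr)$ is non-empty and any of its points lies in $A_{n_{k}}$ while avoiding $g^{-1}\bigl([n_{k+1},+\infty)\bigr)\supseteq\overline{A_{n_{k+1}}}$. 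The family is disjoint: for $j<k$ we have $C_{k}\subseteq A_{n_{k}}\subseteq A_{n_{j+1}}$ whereas $C_{j}\subseteq X\setminus\overline{A_{n_{j+1}}}\subseteq X\setminus A_{n_{j+1}}$, so $C_{j}\cap C_{k}=\emptyset$. Hence $\{C_{k}:k\in\omega\}$ is a denumerable cellular family of $\mathbf{X}$.

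Local finiteness follows from the inclusion $C_{k}\subseteq g^{-1}\bigl((n_{k},n_{k+1}]\bigr)$, valid since $C_{k}\subseteq A_{n_{k}}$ and $C_{k}\subseteq X\setminus A_{n_{k+1}}$: given $x\in X$, the open neighbourhood $g^{-1}\bigl((g(x)-1,g(x)+1)\bigr)$ of $x$ can meet $C_{k}$ only when $n_{k}<g(x)+1$, and this fails for all but finitely many $k$ because $n_{k}\to+\infty$. Thus $\{C_{k}:k\in\omega\}$ is a locally finite denumerable cellular family of $\mathbf{X}$; being an infinite countable locally finite disjoint collection of open sets, it violates condition $(B_{5})$, hence—by the equivalences of Proposition~\ref{p02.19}—$\mathbf{X}$ satisfies none of $(B_{1})$–$(B_{5})$. (It equally violates $(B_{3})$, since local finiteness prevents the family from having a cluster point.) The only point requiring real care is keeping the argument choiceless: the cut-points $n_{k}$ must be extracted by a least-element recursion rather than by choosing witnessing points, and the thinning step must be inserted so that the $C_{k}$ are genuinely non-empty; no separation axiom on $\mathbf{X}$ is needed.
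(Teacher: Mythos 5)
Your proof is correct and follows essentially the same route as the paper's: both pass to $|f|$ and extract, by a least-element recursion, a strictly increasing sequence of integers whose consecutive "strips" are non-empty, then verify that these strips form a locally finite denumerable cellular family and invoke the equivalences of Proposition~\ref{p02.19} via $(B_5)$. The only (cosmetic) difference is that you take $C_k=A_{n_k}\setminus\overline{A_{n_{k+1}}}$ where the paper uses the open strips $f^{-1}\bigl((k_n,k_{n+1})\bigr)$ directly; your write-up also supplies the details of the recursion that the paper leaves as "straightforward."
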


\begin{proof} There exists a continuous, unbounded
real-valued function $f$ on $\mathbf{X}$. By replacing $f$ with $|f|$, we may
assume that $f(X)\subseteq [0, +\infty)$. Via a straightforward induction, we can define a
strictly increasing sequence $(k_{n})_{n\in \mathbb{N}}$ of natural numbers
such that, for every $n\in \mathbb{N}$, the set $C_n=\{x\in X: k_{n}<f(x)<k_{n+1}\}$ is non-empty.  Then $\mathcal{C}=\{C_{n}: n\in \mathbb{N}\}$ is a locally finite denumerable cellular family of $\mathbf{X}$. It follows from Proposition \ref{p02.19} that $\mathbf{X}$ satisfies none of conditions $(B_{1})-(B_{5})$ of Proposition \ref{p02.19}.
\end{proof}

\begin{proposition}
\label{p02.23}$(\mathbf{ZF})$ Suppose that $\mathbf{X}$ is an infinite
topological space which does not admit a denumerable cellular family. Then $%
\mathbf{X}$ satisfies conditions $(B_1)-(B_5)$ of Proposition \ref{p02.19}.
\end{proposition}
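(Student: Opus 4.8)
The plan is to reduce the whole statement to Proposition \ref{p02.19}. That proposition asserts, for an arbitrary topological space in $\mathbf{ZF}$, that conditions $(B_1)$--$(B_5)$ are mutually equivalent; hence to prove Proposition \ref{p02.23} it suffices to verify that $\mathbf{X}$ satisfies just one of them. The natural choice is $(B_3)$, which says that every denumerable cellular family of $\mathbf{X}$ has a cluster point in $\mathbf{X}$. Since, by hypothesis, $\mathbf{X}$ admits \emph{no} denumerable cellular family at all, there is nothing to check: $(B_3)$ holds vacuously.

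Having $(B_3)$, I would simply invoke the equivalences $(B_1)\Leftrightarrow (B_2)\Leftrightarrow (B_3)\Leftrightarrow (B_4)\Leftrightarrow (B_5)$ of Proposition \ref{p02.19} to conclude that $\mathbf{X}$ satisfies all of $(B_1)$--$(B_5)$, which is exactly the assertion. If one prefers an independent check of, say, $(B_5)$: were $\mathcal{U}$ an infinite, countable, locally finite, disjoint collection of open sets of $\mathbf{X}$, then $\mathcal{U}\setminus\{\emptyset\}$ would still be infinite and countable (a disjoint family contains $\emptyset$ at most once), hence denumerable, and it would be a denumerable cellular family of $\mathbf{X}$, contradicting the hypothesis; but routing through $(B_3)$ and Proposition \ref{p02.19} is cleaner.

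There is essentially no obstacle here: the entire content of the statement is carried by the equivalences of Proposition \ref{p02.19}, and the only point one must get right is that the hypothesis ``$\mathbf{X}$ admits no denumerable cellular family'' is precisely what trivialises condition $(B_3)$ (note that $(B_1)$, $(B_2)$ and $(B_4)$ are \emph{not} vacuous under this hypothesis, so choosing $(B_3)$ as the entry point into the chain of equivalences is what makes the argument a one-liner).
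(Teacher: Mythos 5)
Your proof is correct, and it takes a genuinely different (and shorter) route than the paper's. You enter the chain of equivalences of Proposition \ref{p02.19} at $(B_3)$, which is vacuously true when $\mathbf{X}$ has no denumerable cellular family (your fallback via $(B_5)$, discarding the at most one occurrence of $\emptyset$ from a disjoint family, is equally sound), and then let the equivalences carry you to $(B_1)$, $(B_2)$ and $(B_4)$. The paper instead enters the chain at $(B_1)$ and verifies it by hand: given a countable open cover $\{U_n: n\in\omega\}$, it forms the increasing regular-open sets $G_n=int(\overline{\bigcup_{i\in n+1}U_i})$ and shows that if no $G_{n_0}$ covers $X$, then suitable differences $H_{n+1}\setminus\overline{H_n}$ along a strictly increasing subsequence yield a denumerable cellular family, contradicting the hypothesis; it then also invokes Proposition \ref{p02.19} to get the remaining conditions. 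So both proofs rest on the stated equivalences; the difference is that your chosen entry point requires no work, while the paper's requires the construction just described. What the paper's version buys is self-containedness in the direction that actually matters (it re-derives, in effect, the contrapositive of $(B_3)\Rightarrow(B_1)$ rather than citing it) and it makes explicit the useful fact that a failure of $(B_1)$ always produces a denumerable cellular family; what your version buys is brevity, at the cost of hiding all the content inside the black box of Proposition \ref{p02.19}. Your observation that $(B_3)$ is the unique condition trivialised by the hypothesis (whereas $(B_1)$, $(B_2)$, $(B_4)$ are not vacuous) is exactly the right point to flag.
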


\begin{proof}
It suffices to check that $\mathbf{X}$ satisfies condition $(B_1)$ of Proposition \ref{p02.19}. Suppose that $\{U_{n}: n\in \omega \}$ is an open cover of $\mathbf{X}$. Let $%
V_{n}=\bigcup_{i\in n+1}U_{i}$ and $G_{n}=int(\overline{%
V_{n}})$ for each $n\in \omega $. Then $\mathcal{G}=\{G_{n}: n\in \omega \}$
is an open cover of $\mathbf{X}$ such that $G_{n}\subseteq G_{n+1}$ and $%
G_{n}\in RO(\mathbf{X})$ for each $n\in \omega $. If $\mathcal{G}$ has a
finite subcover, there exists $n_{0}\in \omega $ such that $X=G_{n_{0}}$, so 
$V_{n_{0}}$ is dense in $\mathbf{X}$.

Suppose that $\mathcal{G}$ does not have a finite subcover. Then there
exists a strictly increasing sequence $(k_{n})_{n\in \omega }$ of members of 
$\omega $ such that $G_{k_{n}}\neq G_{k_{n+1}}$ for each $n\in \omega $. Let 
$H_{n}=G_{k_{n}}$ and $A_{n}=H_{n+1}\setminus \overline{H_{n}}$ for each $%
n\in \omega $. Then $\{A_{n}: n\in \omega \}$ is a denumerable cellular
family contradicting our hypothesis. The contradiction obtained shows that $%
\mathcal{G}$ has a finite subcover and this, together with Proposition \ref{p02.19}, completes the proof.
\end{proof}

\begin{corollary}
\label{c02.24}$(\mathbf{ZF})$ If $\mathbf{X}$ is an infinite discrete space
which does not have a denumerable cellular family, then $\mathbf{X}$ is
countably compact.
\end{corollary}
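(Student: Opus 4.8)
The plan is to read this off directly from Proposition~\ref{p02.23}. Since $\mathbf{X}$ is an infinite topological space that does not admit a denumerable cellular family, Proposition~\ref{p02.23} tells us that $\mathbf{X}$ satisfies condition $(B_1)$ of Proposition~\ref{p02.19}: every countable open covering $\mathcal{U}$ of $\mathbf{X}$ has a finite subcollection $\mathcal{V}$ with $X=\bigcup\{\overline{U}:U\in\mathcal{V}\}$.

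The only extra ingredient is the discreteness hypothesis. In a discrete space every subset is clopen, hence $\overline{U}=U$ for each $U\subseteq X$; consequently the finite subcollection $\mathcal{V}$ furnished by $(B_1)$ is itself a cover of $X$. Thus every countable open cover of $\mathbf{X}$ admits a finite subcover, which by Definition~\ref{d2.2}(i) means precisely that $\mathbf{X}$ is countably compact.

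There is essentially no obstacle: all the substance is already contained in Proposition~\ref{p02.23}, whose proof, specialized to the discrete case, amounts to taking a would-be counterexample cover $\{U_n:n\in\omega\}$, forming the partial unions $V_n=\bigcup_{i\le n}U_i$, extracting a strictly $\subseteq$-increasing subsequence $V_{k_0}\subsetneq V_{k_1}\subsetneq\cdots$ (by choosing each $k_{n+1}$ minimal with $V_{k_{n+1}}\neq V_{k_n}$, which requires no choice), and observing that the successive differences $V_{k_{n+1}}\setminus V_{k_n}$ form a denumerable cellular family. If a self-contained argument were preferred, I would simply reproduce that reasoning, which in the discrete setting needs no passage to interiors or closures.
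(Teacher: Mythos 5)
Your proof is correct and is exactly the intended argument: the paper states this as an immediate corollary of Proposition~\ref{p02.23}, and the only additional observation needed is that in a discrete space $\overline{U}=U$, so the finite subcollection with dense union provided by condition $(B_1)$ is already a finite subcover. Your optional self-contained sketch of how the proof of Proposition~\ref{p02.23} simplifies in the discrete case is also sound.
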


\begin{proposition}
\label{p02.25} $\mathbf{(ZF)}$

\begin{enumerate}
\item[(i)] An infinite topological space admits a denumerable cellular
family iff it has an open non-pseudocompact subspace.

\item[(ii)] An infinite discrete space $\mathbf{X}$ admits a denumerable
cellular family if and only if $\mathbf{X}$ is not pseudocompact.
\end{enumerate}
\end{proposition}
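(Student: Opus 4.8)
\medskip

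The plan is to reduce both parts to Proposition~\ref{p02.22} together with a direct construction of an unbounded continuous real-valued function on a suitable open subspace; no choice principle will be needed.

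For the ``if'' part of (i), suppose $\mathbf{X}$ has an open subspace $U$ that is not pseudocompact. Applying Proposition~\ref{p02.22} to $U$, we obtain a denumerable cellular family $\{C_{n}: n\in\omega\}$ of the space $U$. Since $U$ is open in $\mathbf{X}$, each $C_{n}$ is open in $\mathbf{X}$ as well, so $\{C_{n}: n\in\omega\}$ is a denumerable cellular family of $\mathbf{X}$.

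For the ``only if'' part of (i), let $\{C_{n}: n\in\omega\}$ be a denumerable cellular family of $\mathbf{X}$ and put $U=\bigcup_{n\in\omega}C_{n}$, an open subspace of $\mathbf{X}$. Because the sets $C_{n}$ are pairwise disjoint and already indexed by $\omega$, we may define $f:U\to\mathbb{R}$ by letting $f(x)=n$ for the unique $n\in\omega$ with $x\in C_{n}$. The preimage under $f$ of an arbitrary subset of $\mathbb{R}$ is a union of members of the open cover $\{C_{n}: n\in\omega\}$ of $U$, hence open, so $f$ is continuous; and $f$ takes every value in $\omega$, so it is unbounded. Thus $U$ is an open non-pseudocompact subspace of $\mathbf{X}$, which proves (i).

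Part (ii) follows readily. If the infinite discrete space $\mathbf{X}$ is not pseudocompact, then $X$ itself is an open non-pseudocompact subspace of $\mathbf{X}$, so $\mathbf{X}$ admits a denumerable cellular family by (i) (equivalently, directly by Proposition~\ref{p02.22}). Conversely, if $\{C_{n}: n\in\omega\}$ is a denumerable cellular family of $\mathbf{X}$, we define $g:X\to\mathbb{R}$ by $g(x)=n$ when $x\in C_{n}$ and $g(x)=0$ when $x\in X\setminus\bigcup_{n\in\omega}C_{n}$; this is well defined by disjointness, continuous because $\mathbf{X}$ is discrete, and unbounded, so $\mathbf{X}$ is not pseudocompact. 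There is no serious obstacle here: the proof merely combines Proposition~\ref{p02.22} with an elementary function construction, and the only point requiring a moment's care is the $\mathbf{ZF}$-legitimacy of $f$ and $g$, which is ensured by the built-in $\omega$-indexing and the pairwise disjointness of a denumerable cellular family.
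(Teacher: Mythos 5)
Your proof is correct and follows essentially the same route as the paper: the forward direction of (i) via the explicit locally constant unbounded function on the union of the cellular family, the reverse direction via Proposition~\ref{p02.22} applied to the open subspace, and (ii) by the observation that a discrete space is non-pseudocompact as soon as it has a non-pseudocompact subspace. Your version merely spells out the zero-extension of $g$ in (ii), which the paper leaves implicit.
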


\begin{proof}
(i) Fix an infinite topological space $\mathbf{X}$.

($\rightarrow $)  Let $\mathcal{U}=\{U_{n}: n\in \mathbb{N}\}$ be a
denumerable cellular family of $\mathbf{X}$. Since the function $f:\mathbf{Y}%
\rightarrow \mathbb{R}$, where $Y=\bigcup \mathcal{U},$ given by $f(x)=n$
iff $x\in U_{n},$ is continuous, it follows that the open subspace $\mathbf{Y%
}$ of $\mathbf{X}$ is not pseudocompact.

($\leftarrow $) Let $\mathbf{Y}$ be an open non-pseudocompact subspace of $%
\mathbf{X}$. It follows from Proposition \ref{p02.22} that $%
\mathbf{Y}$ has a denumerable cellular family. Hence $\mathbf{X}$ also has a denumerable cellular family. This completes the proof to (i).  

That (ii) holds follows from (i) and the fact that a discrete space is non-pseudocompact iff it has a non-pseudocompact subspace.
\end{proof}

\section{Denumerable locally finite cellular families, finite products and
denumerable point-finite families of open sets}
\label{s3}

Let us begin with the following non-trivial new theorem:

\begin{theorem}
\label{t3.1}$(\mathbf{ZF})$\ Let\ $\mathbf{X}$ be an infinite topological
space.

\begin{enumerate}
\item[(i)] $\mathbf{X}$ admits a denumerable locally finite family of clopen
sets iff $\mathbf{X}$ admits a denumerable cellular locally finite family of
clopen sets.

\item[(ii)] $\mathbf{X}$ admits a denumerable locally finite cellular family
with a dense union iff\ $\mathbf{X}$ admits a denumerable locally finite
family of open sets.

\item[(iii)] $\mathbf{X}$ admits an infinite locally finite cellular family
with a dense union iff $\mathbf{X}$ admits an infinite locally finite family
of open sets.
\end{enumerate}
\end{theorem}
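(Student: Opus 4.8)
The three parts are variations on the same theme: from an arbitrary denumerable (or merely infinite) locally finite family of open (resp. clopen) sets, manufacture one that is in addition pairwise disjoint, without appealing to any choice principle. The key observation is that local finiteness makes the relevant ``bookkeeping'' completely effective: if $\mathcal{U}=\{U_n:n\in\omega\}$ is locally finite, then for each $x\in X$ the set $\{n\in\omega: x\in U_n\}$ is finite, and more usefully, for each $n$ the ``interaction set'' of indices $m$ with $U_m\cap U_n\neq\emptyset$ need not be finite globally, but we can still extract disjointness by a canonical greedy/inductive procedure indexed by $\omega$, since $\omega$ is well-ordered and no choices are needed once the family is already indexed by $\omega$.

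For part (i), start with a denumerable locally finite family $\{C_n:n\in\omega\}$ of clopen sets. Passing to a subfamily we may assume the $C_n$ are pairwise distinct. Define $D_n = C_n\setminus\bigcup_{i<n}C_i$; each $D_n$ is clopen (finite Boolean combination of clopen sets), the $D_n$ are pairwise disjoint, and $\{D_n:n\in\omega\}$ is locally finite because a neighborhood meeting only finitely many $C_i$ meets only finitely many $D_i$. The only issue is that some $D_n$ may be empty; but if only finitely many $D_n$ are non-empty, then $\bigcup_{i<N}C_i = \bigcup_n C_n$ for some $N$, which would force $\{C_n:n\in\omega\}$ to be, up to the clopen algebra generated by $C_0,\dots,C_{N-1}$, essentially finite — one then argues that a locally finite family inside a finite Boolean algebra of clopen sets with distinct members must itself be finite, contradicting denumerability. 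Hence infinitely many $D_n$ are non-empty, and re-indexing the non-empty ones (an effective operation on subsets of $\omega$) gives the desired denumerable cellular locally finite family of clopen sets. The converse direction in each part is trivial.

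For parts (ii) and (iii), replace clopen sets by open sets and ``cellular family'' by ``cellular family with dense union.'' Starting from a locally finite family $\{U_n:n\in\omega\}$ (or an infinite locally finite family, which by $\mathbf{IDI}$-free reasoning we cannot assume is denumerable, so in (iii) we keep it as an abstract index set $I$), the natural move is to pass to the regular open sets $G_n=\mathrm{int}(\overline{U_n})$ or to disjointify via $V_n = U_n\setminus\overline{\bigcup_{i<n}U_i}$. The set $V_n$ is open, the $V_n$ are pairwise disjoint, and local finiteness is inherited. Density of the union: $\bigcup_n V_n$ is dense in $\bigcup_n U_n$ by a standard argument (local finiteness gives $\overline{\bigcup_{i<n}U_i}=\bigcup_{i<n}\overline{U_i}$, and an easy induction shows every point of $U_n$ is in $\overline{\bigcup_{i\le n}V_i}$), and then one must close the gap between ``dense in $\bigcup U_n$'' and ``dense in $X$'' — but here the hypothesis of part (ii)/(iii) is precisely that we only demand a dense union among the new family matching the union of the old, or we observe the statement is phrased so that both sides share the same dense-union requirement, so this is not actually an obstacle. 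For (iii), since the index set $I$ need not be well-orderable, the disjointification $V_i = U_i\setminus\overline{\bigcup\{U_j: j\neq i, \text{''}j\text{ before }i\text{''}\}}$ cannot be done by a linear order; instead one uses local finiteness directly: for each $x$, only finitely many $U_i$ contain $x$, so one can define $V_i$ as the set of points $x\in U_i$ such that $i$ is, say, not strictly below any other index $j$ with $x\in U_j$ in some fixed partial order — but without a well-ordering on $I$ this still fails. The honest route for (iii) is to reduce to the countable case: an infinite locally finite family of open sets yields, after disjointification within each ``local cluster,'' a disjoint locally finite refinement; and a disjoint locally finite infinite family of open sets is automatically a cellular family, so the content of (iii) is just producing non-emptiness and disjointness simultaneously, which the $V_i$ construction handles once we note we may first shrink to where all the $V_i$ are non-empty.

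**Main obstacle.** The delicate point, and the reason this is flagged as a ``non-trivial new theorem,'' is the combination of (a) ensuring infinitely many of the disjointified sets are non-empty (ruling out collapse to a finite family), done in $\mathbf{ZF}$ without choosing representatives, and (b) in part (iii), handling a non-well-orderable index set: the disjointification trick is order-dependent, so one must instead exploit local finiteness to carry out a ``pointwise minimal index'' construction, which requires that the traces of the family on each point be not merely finite but that their union over a neighborhood be finite with a canonical — not merely existential — finite presentation. I expect the proof to finesse (iii) by first extracting a denumerable subfamily is impossible, so instead it will argue that an infinite locally finite family of open sets, being in particular point-finite, admits a disjoint open refinement by the ``$U_i$ minus the closure of the others it meets'' construction carried out locally and glued, and that this refinement is still infinite and locally finite, hence cellular; the dense-union clause then follows as above.
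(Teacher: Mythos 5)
Your reduction of all three parts to the greedy disjointification $D_n=C_n\setminus\bigcup_{i<n}C_i$ (resp.\ $V_n=U_n\setminus\overline{\bigcup_{i<n}U_i}$) has a genuine gap: this construction can collapse to a single non-empty set even when the input family is denumerable, locally finite, and consists of pairwise distinct clopen sets. Take $\mathbf{X}$ to be the discrete space on $\omega$ and $C_n=\omega\setminus n$; every point $k$ has the neighborhood $\{k\}$ meeting only $C_0,\dots,C_k$, so $\{C_n:n\in\omega\}$ is locally finite, yet $D_n=\emptyset$ for all $n\ge 1$. Your proposed contradiction does not materialize here: from $\bigcup_{i<N}C_i=\bigcup_nC_n$ you cannot conclude that the $C_n$ lie in the clopen algebra generated by $C_0,\dots,C_{N-1}$ (in the example that algebra is $\{\emptyset,\omega\}$ and contains no $C_n$ with $n\ge 1$). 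The same collapse kills the $V_n$ construction in (ii) (take $U_0=X$), and your plan to ``re-index the non-empty ones'' presupposes there are infinitely many of them, which is exactly what can fail.

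The missing idea, which is what the paper uses, is to adjoin $X$, close the family $\mathcal{U}$ under finite intersections (still countable since $|[\omega]^{<\omega}|=\aleph_0$), and pass to the ``atoms'': the classes of the equivalence relation $x\sim y$ iff $x$ and $y$ belong to exactly the same members of $\mathcal{U}$. Local finiteness together with clopenness makes each class clopen; the injection $[x]\mapsto\{U\in\mathcal{U}:[x]\subseteq U\}$ into $[\mathcal{U}]^{<\omega}$ gives countability; and there must be infinitely many classes, since finitely many classes could only separate finitely many distinct members of $\mathcal{U}$. In the example above this recovers the singletons $\{k\}=C_k\setminus C_{k+1}$, which no order-dependent greedy scheme finds. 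For (ii) there is a second obstacle you do not address: with merely open sets the equivalence classes need not be open because of boundary points, and the paper's remedy is to prove that the set $X_0$ of points lying on the boundary of no member of $\mathcal{U}$ is open and dense and to run the equivalence-class construction inside $X_0$, whence density of the union is automatic. That construction is also order-free, which is precisely what makes (iii) go through for a possibly non-well-orderable index set, where, as you rightly suspect, any ``$i$ before $j$'' disjointification is unavailable.
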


\begin{proof}
(i) ($\leftarrow $) This is straightforward.\smallskip

($\rightarrow $) Suppose that $\mathcal{U}$ is a denumerable locally finite family of clopen sets of $\mathbf{X}$. By adjoining $X$ to $\mathcal{U%
}$, we may assume that $\mathcal{U}$ is a clopen cover of $\mathbf{X}$. Since $|[\omega ]^{<\omega }|=\aleph _{0}$, we
may also assume that $\mathcal{U}$ is closed under finite intersections.
Define an equivalence relation $\sim $ on $X$ by requiring: $x\sim y$ iff,
for every $U\in \mathcal{U}$, $x\in U$ iff $y\in U$. For every $x\in X$, let 
$[x]$ denote the $\sim $ equivalence class of $x$ and let $\mathcal{U}%
(x)=\{U\in \mathcal{U}: x\in U\}$. We fix $x\in X$ and claim that $[x]$ is
open. To see this, fix $y\in \lbrack x]$ and let $V_{y}$ be an open
neighborhood of $y$ meeting finitely many members of $\mathcal{U}$. Let $%
\mathcal{V}(y)=\{U\in \mathcal{U}: V_{y}\cap U\neq \emptyset \}$. Clearly,
since $\mathcal{U}$ consists of clopen sets and the collection $\mathcal{V}%
(y)$ is finite, the set 
\begin{equation*}
W_{y}=(V_{y}\cap \bigcap \mathcal{U}(x))\setminus \bigcup \{U\in \mathcal{V}%
(y): y\notin U\}
\end{equation*}%
is a non-empty open set of $\mathbf{X}$. Of course, $y\in W_{y}$. We show
that $W_{y}\subseteq \lbrack x]$. Fix $t\in W_{y}$. If $t\notin \lbrack x]$
then there is a $U_{t}\in \mathcal{U}$ such that $t\in U_{t}$ and $x\notin
U_{t}$. Then $y\notin U_{t}$ and $U_{t}\in \mathcal{V}(y)$. This implies
that $t\notin W_{y}$. Contradiction! Therefore, $W_{y}\subseteq \lbrack x]$, so $[x]$ is open as required. We claim that $X\backslash \lbrack x]$ is
also open. To this end, fix $z\in X\backslash \lbrack x]$. Clearly, there
exists a $U\in \mathcal{U}$ such that either $z\in U$ and $x\notin U$ or $%
z\notin U$ and $x\in U$. Assume that $U\in \mathcal{U}$ is such that $z\in U$
and $x\notin U$. In this case, it is easy to see that $U$ is a neighborhood
of $z$ included in $X\backslash \lbrack x]$. Now, assume that $U\in \mathcal{%
U}$ is such that $z\notin U$ and $x\in U$. Then $X\backslash U$ is a
neighborhood of $z$ disjoint from $[x]$. Hence, for each $x\in X$, $[x]$ is
a clopen set and, in consequence, $\{[x]: x\in X\}$ is a cellular family of
clopen sets of $\mathbf{X}$ which covers $X$. Since the mapping: 
\begin{equation*}
\lbrack x]\rightarrow \{U\in \mathcal{U}: [x]\subseteq U\}\in \lbrack 
\mathcal{U}]^{<\omega }
\end{equation*}%
is one-to-one and $|[\mathcal{U}]^{<\omega }|=\aleph _{0},$ it follows that $%
\mathbf{X}$ admits a denumerable cellular family of clopen sets as required.

We claim that $X/\sim $ is locally finite. To this end, fix $x\in X$ and let 
$V$ be an open neighborhood of $x$ meeting at most finitely many members of $%
\mathcal{U}$. Suppose that the set 
\begin{equation*}
A(V)=\{z\in X/\sim: z\cap V\neq \emptyset \}
\end{equation*}%
is infinite. For every $z\in A(V)$, let $E(z)=\bigcap\{U\in \mathcal{U}%
: z\subseteq U\}$. Since $\mathcal{U}$ is closed under finite intersections, $%
E(z)\in\mathcal{U}$ for each $z\in A(V)$. Clearly, if $z_{1},z_{2}\in A(V)$
and $z_{1}\neq z_{2}$, then $E(z_{1})\neq E(z_{2})$. This implies that the
collection $\mathcal{E}=\{E(z): z\in A(V)\}$ is infinite because $A(V)$ is
infinite. However, $V$ meets each element of $\mathcal{E}$. This is
impossible because $\mathcal{E}\subseteq\mathcal{U}$ and $V$ meets at most
finitely many members of $\mathcal{U}$. The contradiction obtained shows
that $A(V)$ is finite. Hence, $X/\sim $ is a locally finite family of clopen
subsets of $\mathbf{X}$.\smallskip

(ii) ($\rightarrow $) is straightforward.\smallskip

($\leftarrow $) Now, suppose that $\mathcal{U}$ is a denumerable locally finite family of open sets of $\mathbf{X}$. As in part (i), without loss of generality, we assume that $\mathcal{U}$ is a cover of $X$ and $\mathcal{U}$ is closed under finite
intersections. For every $x\in X$, we let 
\begin{equation*}
U_{x}=\bigcap \{U\in \mathcal{U}: x\in U\}\text{.}
\end{equation*}%
Clearly, $x\in U_{x}$ and for every $U\in \mathcal{U}$ with $%
x\in U$, the inclusion $U_{x}\subseteq U$ holds. For every $n\in \omega $, define 
\begin{equation*}
X_{n}=\{x\in X: x\text{ belongs to the boundary of at most $n$ members of }%
\mathcal{U}\}\text{.}
\end{equation*}%
Clearly, $X=\bigcup \{X_{n}: n\in \omega \}$. Working as in the proof of
Theorem 8 on p. 584 of \cite{ker}, we can show that, for each $n\in \omega$, the set $X_{n}$ is open and $X_{0}$ dense in $\mathbf{X}$. For the reader's convenience, we sketch a proof of both assertions.

We fix $n\in \omega$. To see that $X_{n}$ is open, we fix $x\in X_n$ and
a neighborhood $G\subseteq U_{x}$ of $x$ such that $G$ meets non-trivially
at most $n$ members of $\mathcal{U}$. Then $G\subseteq X_{n}$, so $X_{n}$ is
open as required.

To prove that $X_{0}$ is dense, we fix a non-empty open set $V$ of $\mathbf{X%
}$. If $V\cap U=\emptyset $ for every $U\in \mathcal{U}$, then $V\subseteq
X_{0}$. Assume that $V\cap U^{\star}\neq \emptyset $ for some $U^{\star}\in 
\mathcal{U}$ and fix $x\in V\cap U^{\star}$. Let $O$ be a neighborhood of $x$
such that $O\subseteq V\cap U^{\star}\cap U_{x}$ and the collection $%
\mathcal{U}(O)=\{U\in\mathcal{U}: O\cap U\neq\emptyset\}$ is of cardinality $%
n$ for some $n\in\mathbb{N}$. Obviously, $x\in X_n$ and, since $X_n$ is
open, the set $O^{\star}=O\cap X_n$ is a neighborhood of $x$. Let 
\begin{equation*}
W=O^{\star}\backslash \bigcup \{\overline{U}: U\in\mathcal{U}(O) \text{ and }%
x\notin U\}\text{.}
\end{equation*}%
Clearly $W\subseteq X_{0}$ and $W$ is open. It suffices to show that $W\neq
\emptyset $. Suppose that $W=\emptyset $. Then every point of $O^{\star}$
belongs to the boundary of some $U\in\mathcal{U}(O)$. Hence, 
\begin{equation*}
O^{\star}=\bigcup \{ \partial(U): U\in\mathcal{U}(O)\}\cap O^{\star}.
\end{equation*}
Since $\mathcal{U}(O)$ is finite and the sets $U$ from $\mathcal{U}$ are all
open, it follows that $\bigcup \{\partial(U): U\in\mathcal{U}(O)\}$ is
nowhere dense. Therefore, $int(\bigcup \{\partial(U): U\in\mathcal{U}%
(O)\})=\emptyset $ and, in consequence, $O^{\star}=\emptyset $.
Contradiction! Hence $X_0$ is dense.

Let us define an equivalence relation $\sim $ on $X_{0}$ by requiring: 
\begin{equation*}
x\sim y\text{ iff for every }U\in \mathcal{U},x\in U\text{ iff }y\in U.
\end{equation*}
For every $x\in X_0$ let $[x]$ denote the $\sim $ equivalence class of $x$.
Clearly, for every $x\in X_0$, $[x]\subseteq U_{x}$.

We claim that, for $x\in X_0$, the set $[x]$ is open in $\mathbf{X}$.
Clearly, for every $t\in X_0\cap( U_{x}\backslash \lbrack x]),U_{t}\subseteq U_{x}$
and $[x]\cap U_{t}=\emptyset $. We notice that if $t\in X_0\cap( U_x\setminus \lbrack
x])$ and $s\in \lbrack x]\cap \overline{U_{t}}$, then $s\in \partial(U_{t})$
and, consequently, $s\notin X_0$. Hence $[x]\cap \overline{U_{t}}%
=\emptyset$ for every $t\in X_0\cap( U_x\setminus\lbrack x])$. Since $\{\overline{U_{t}%
}: t\in X_0\cap( U_{x}\backslash \lbrack x])\}$ is a locally finite family of closed
sets, it has a closed union. Therefore, 
\begin{equation*}
\lbrack x]= X_0\cap (U_{x}\backslash \bigcup \{\overline{U_{t}}: t\in X_0\cap( U_{x}\backslash
\lbrack x])\})
\end{equation*}%
is open.

Let us prove that $X_{0}/\sim$ is infinite. Suppose that $X_{0}/\sim$ is
finite of cardinality $n\in\omega$. Let $X_{0}/\sim=\{ [x_i]: i\in n\}$.
Since $X_0$ is dense in $\mathbf{X}$, for every $U\in \mathcal{U}$, there
exists $i_{U}\in n$ such that $U\cap \lbrack x_{i_{U}}]\neq \emptyset $.
Then $x_{i_{U}}\in U$. Since $\mathcal{U}$ is infinite, it follows that
there exists $i_0\in n$, such that $x_{i_0}$ belongs to infinitely many
members of $\mathcal{U}$. This contradicts the fact that $\mathcal{U}$ is
locally finite. The contradiction obtained shows that $X_{0}/\sim$ is
infinite.

Working as in the proof of part (i), we can show that $X_0/\sim $ is locally
finite. Moreover, $\bigcup X_{0}/\sim $ is dense in $\mathbf{X}$. To
complete the proof of (ii),  it remains to check that $X_{0}/\sim $ is countable.

To show that $X_0/\sim$ is countable, it suffices to observe that the
function $H$ from $X_{0}/\sim $ \ to $[\mathcal{U}]^{<\omega }$ given by 
\begin{equation*}
H([x])=\{U\in \mathcal{U}: [x]\subseteq U\}
\end{equation*}%
is injective because if $[x],[y]\in $ $X_{0}/\sim$ and $[x]\neq \lbrack y]$,
then there exists a $U\in \mathcal{U}$ such that $x\in U$ and $y\notin U\ $%
or, $x\notin U$ and $y\in U$. In any case, $H([x])\neq H([y])$). Therefore, $%
|X_{0}/\sim |\leq |[\mathcal{U}]^{<\omega }|=\aleph _{0}$. Moreover, since $%
X_{0}/\sim $ is infinite, we infer that $|X_{0}/\sim |=\aleph _{0}$ as
required.

To prove (iii), suppose that  $\mathbf{X}$ has an infinite locally finite family $\mathcal{U}$ of open sets. Mimicking the proof to (ii), we can deduce that $\mathbf{X}$ has an infinite locally finite cellular family with a dense union. 
\end{proof}

\begin{remark}
\label{r03.2} It is obvious that if $\mathcal{C}$ is a denumerable locally
finite cellular family of clopen sets of a topological space $\mathbf{X}$
such that $\bigcup\mathcal{C}\neq X$, then $\mathcal{C}\cup\{
X\setminus\bigcup\mathcal{C}\}$ is a denumerable cellular family of clopen
sets which is a cover of $\mathbf{X}$. It is also obvious that every cover
of $\mathbf{X}$ which is a cellular family is locally finite. Hence,
condition (i) of Theorem \ref{t3.1} can be replaced with the following:

\begin{enumerate}
\item[($i^{\star}$)] $\mathbf{X}$ admits a denumerable locally finite family
of clopen sets iff $\mathbf{X}$ admits a denumerable cellular family of
clopen sets which is a cover of $\mathbf{X}$.
\end{enumerate}
\end{remark}

\begin{corollary}
\label{c03.3} The following hold in $\mathbf{ZF}$:

\begin{enumerate}
\item[(i)] For every infinite set $X$, if the Cantor cube $\mathbf{2}^{X}$
admits a denumerable locally finite family of clopen sets, then it admits a
denumerable cellular family of clopen sets which covers $2^{X}$.

\item[(ii)] The Cantor cube $\mathbf{2}^{\omega }$ admits denumerable
cellular families of clopen sets but not denumerable locally finite families
of clopen sets.
\end{enumerate}
\end{corollary}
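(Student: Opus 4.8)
The plan is to obtain (i) as an immediate consequence of Theorem~\ref{t3.1}(i) in the reformulation ($i^{\star}$) of Remark~\ref{r03.2}, and to establish (ii) by pairing a standard positive result with the $\mathbf{ZF}$-provable compactness of $\mathbf{2}^{\omega}$.

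For (i): I would apply the equivalence ($i^{\star}$) of Remark~\ref{r03.2} (equivalently, Theorem~\ref{t3.1}(i)) to the infinite topological space $\mathbf{2}^{X}$. If $\mathbf{2}^{X}$ admits a denumerable locally finite family of clopen sets, then by ($i^{\star}$) it admits a denumerable cellular family of clopen sets which covers $2^{X}$; that is exactly the asserted conclusion, so nothing further is needed.

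For the positive half of (ii): since $\omega$ is denumerable, $[\omega]^{<\omega}$ is denumerable and hence $\omega$ is quasi Dedekind-infinite, so by Theorem~\ref{t02.18}(i) the standard base $\mathcal{B}(\omega)$ of $\mathbf{2}^{\omega}$ contains a denumerable cellular subfamily, every member of $\mathcal{B}(\omega)$ being clopen. (Explicitly: if $p_{n}$ denotes the function on $\{0,1,\dots,n\}$ with $p_{n}(i)=0$ for $i<n$ and $p_{n}(n)=1$, then $\{[p_{n}]:n\in\omega\}$ is a denumerable cellular family of clopen sets of $\mathbf{2}^{\omega}$.) For the negative half: $\mathbf{2}^{\omega}$ is compact in $\mathbf{ZF}$ --- it is homeomorphic to the Cantor ternary set in $\mathbb{R}$ via the usual bijection, which is simultaneously continuous and open (so the homeomorphism does not presuppose compactness), and that set is compact by the Heine--Borel theorem, which is a theorem of $\mathbf{ZF}$; alternatively one argues directly by extracting the leftmost branch of a suitable binary tree, which needs no choice. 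By Theorem~\ref{t02.20}, every compact space is lightly compact, so every locally finite family of open subsets of $\mathbf{2}^{\omega}$ --- a fortiori every locally finite family of clopen sets --- is finite. Hence $\mathbf{2}^{\omega}$ admits no denumerable locally finite family of clopen sets, which together with the positive half gives (ii).

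The one point deserving care is the $\mathbf{ZF}$-compactness of $\mathbf{2}^{\omega}$: one must avoid Tychonoff's theorem and instead justify the identification with the Cantor set by hand (noting the canonical bijection is both continuous and open, so compactness is not used in proving it a homeomorphism) or give the explicit choice-free tree argument. Everything else is a direct appeal to Theorems~\ref{t3.1}, \ref{t02.18} and \ref{t02.20} and Remark~\ref{r03.2}.
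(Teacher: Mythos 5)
Your proof is correct and follows essentially the same route as the paper: part (i) is read off from Theorem~\ref{t3.1}(i) via Remark~\ref{r03.2}, and part (ii) combines an explicit denumerable cellular family of clopen sets with the $\mathbf{ZF}$-compactness of $\mathbf{2}^{\omega}$. The only differences are cosmetic choices of citation --- the paper gets the positive half from Proposition~\ref{p02.13}(d) and the negative half from Proposition~\ref{p02.19} rather than Theorem~\ref{t02.18}(i) and Theorem~\ref{t02.20} --- and your extra care in justifying, choice-freely, that $\mathbf{2}^{\omega}$ is compact (a fact the paper asserts without proof) is welcome but not a divergence in method.
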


\begin{proof}
That (i) holds follows directly from Theorem \ref{t3.1} and Remark \ref{r03.2}. To prove (ii), we notice that $\mathbf{2}^{\omega }$ is compact in $\mathbf{ZF}$, so, by Proposition \ref{p02.19},  $\mathbf{2}^{\omega}$ cannot admit denumerable locally finite families of clopen sets. However,  $\mathbf{2}^{\omega}$  admits a denumerable cellular family of clopen sets by Proposition \ref{p02.13}($d$). 
\end{proof}

\begin{theorem}
\label{t03.4} It holds in $\mathbf{ZF}$ that $\mathbf{IQDI}$ implies the
following:

\begin{enumerate}
\item[(i)] A topological space $\mathbf{X}$ admits an infinite locally
finite cellular family iff it admits a denumerable locally finite cellular
family of regular open sets.

\item[(ii)] Every topological space admitting an infinite locally finite
family of open sets admits a denumerable locally finite cellular family.

\item[(iii)] Every non-lightly compact topological space admits a
denumerable locally finite cellular family.
\end{enumerate}
\end{theorem}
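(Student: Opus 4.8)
The plan is to establish (i) in full and then read off (ii) and (iii) as short corollaries. In (i) the implication $(\Leftarrow)$ is trivial, since a denumerable locally finite cellular family of regular open sets is already an infinite locally finite cellular family; so the content lies in $(\Rightarrow)$. I would run that proof in two stages: first, using $\mathbf{IQDI}$, shrink an arbitrary infinite locally finite cellular family to a \emph{denumerable} locally finite cellular family of open sets; second, working purely in $\mathbf{ZF}$, refine such a family to one consisting of \emph{regular open} sets without destroying local finiteness.

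\emph{Stage 1.} Let $\mathcal{C}$ be an infinite locally finite cellular family of $\mathbf{X}$. Applying $\mathbf{IQDI}$ to the infinite set $\mathcal{C}$ yields pairwise distinct members $T_{n}\in[\mathcal{C}]^{<\omega}$, $n\in\omega$. Put $E_{n}=T_{n}\setminus\bigcup_{i<n}T_{i}$ and $W_{n}=\bigcup E_{n}$. As in the proof of Proposition \ref{p02.17}, the $E_{n}$ are pairwise disjoint finite subsets of $\mathcal{C}$ with $\bigcup_{n\in\omega}E_{n}=\bigcup_{n\in\omega}T_{n}$ infinite, so $N=\{n\in\omega:E_{n}\neq\emptyset\}$ is infinite. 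For $n\in N$ the set $W_{n}$ is a non-empty open set (a union of finitely many members of $\mathcal{C}$); the $W_{n}$ are pairwise disjoint, and $n\mapsto W_{n}$ is injective on $N$, because the $E_{n}$ are disjoint and $\mathcal{C}$ is cellular. Finally $\{W_{n}:n\in N\}$ is locally finite: a neighbourhood of a point meeting only the members of a finite subset of $\mathcal{C}$ meets only finitely many $W_{n}$, again because the $E_{n}$ are pairwise disjoint. Re-indexing along the order isomorphism $\omega\to N$, I obtain a denumerable locally finite cellular family $(W_{n})_{n\in\omega}$ of open sets with $W_{n}\neq W_{m}$ for $n\neq m$.

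\emph{Stage 2.} For $n\in\omega$ set $F_{n}=\bigcup_{i<n}W_{i}$ and $G_{n}=int(\overline{F_{n}})\in RO(\mathbf{X})$. Then $G_{n}\subseteq G_{n+1}$, and $W_{n}\subseteq G_{n+1}$ since $W_{n}$ is open and $W_{n}\subseteq F_{n+1}$. Local finiteness of $(W_{i})_{i\in\omega}$ gives the closed-union identity $\overline{F_{n}}=\bigcup_{i<n}\overline{W_{i}}$, whence also $\overline{G_{n}}=\overline{F_{n}}$. The first key point is that $G_{n}\subsetneq G_{n+1}$ for infinitely many $n$: otherwise some $m$ has $G_{n}=G_{m}$ for all $n\geq m$, so $W_{n}\subseteq G_{m}\subseteq\overline{F_{m}}$ for all $n\geq m$, hence $\bigcup_{n\geq m}\overline{W_{n}}=\overline{\bigcup_{n\geq m}W_{n}}\subseteq\overline{F_{m}}=\bigcup_{i<m}\overline{W_{i}}$; picking $n_{0}\geq m$ and $x\in W_{n_{0}}$ then forces $x\in\overline{W_{j}}$ for some $j<m\leq n_{0}$, contradicting that $W_{n_{0}}$ is an open neighbourhood of $x$ disjoint from $W_{j}$. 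Let $m_{0}<m_{1}<\cdots$ enumerate $M=\{n:G_{n}\subsetneq G_{n+1}\}$; then $(G_{m_{k}})_{k\in\omega}$ is strictly increasing in $RO(\mathbf{X})$, so each $A_{k}=G_{m_{k+1}}\cap(X\setminus\overline{G_{m_{k}}})$ is a regular open set (the meet in $RO(\mathbf{X})$ of $G_{m_{k+1}}$ and the complement $(G_{m_{k}})'$), non-empty by Proposition \ref{p2.10}(b)(iv), and the $A_{k}$ are pairwise disjoint. The second key point is local finiteness of $\{A_{k}:k\in\omega\}$: given $x$, take an open neighbourhood $O$ of $x$ and $p\in\omega$ with $O\cap W_{i}=\emptyset$, hence $O\cap\overline{W_{i}}=\emptyset$, for all $i\geq p$; if $m_{k}\geq p$ and $y\in A_{k}\cap O$, then $y\in G_{m_{k+1}}=int\bigl(\bigcup_{i<m_{k+1}}\overline{W_{i}}\bigr)$, so some open $U$ has $y\in U\subseteq O$ and $U\subseteq\bigcup_{i<m_{k+1}}\overline{W_{i}}$; the terms with $i\geq p$ contribute nothing to $U$, so $U\subseteq\bigcup_{i<p}\overline{W_{i}}$, contradicting $y\in A_{k}\subseteq X\setminus\overline{G_{m_{k}}}=X\setminus\bigcup_{i<m_{k}}\overline{W_{i}}\subseteq X\setminus\bigcup_{i<p}\overline{W_{i}}$. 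Hence $O$ meets only the finitely many $A_{k}$ with $m_{k}<p$, and $\{A_{k}:k\in\omega\}$ is the required denumerable locally finite cellular family of regular open sets.

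For (ii): if $\mathbf{X}$ admits an infinite locally finite family of open sets, Theorem \ref{t3.1}(iii) gives an infinite locally finite cellular family, and (i) finishes. For (iii): if $\mathbf{X}$ is not lightly compact, then by Theorem \ref{t02.20} (condition $(A_{1})$) it has an infinite disjoint locally finite family of open sets, i.e.\ an infinite locally finite cellular family, and again (i) applies (so this strengthens Corollary \ref{c02.21}). I expect the main obstacle to be Stage 2: one must extract from the \emph{mere} local finiteness of $(W_{n})$ both that the regular-open chain $(G_{n})$ grows strictly infinitely often (not automatic — the $F_{n}$ grow strictly, but interiors of closures can stall) and that the telescoped differences $A_{k}$ inherit local finiteness; both are squeezed out of the identity $\overline{\bigcup_{i}W_{i}}=\bigcup_{i}\overline{W_{i}}$.
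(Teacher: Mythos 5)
Your proposal is correct, and its skeleton matches the paper's: part (i) carries the weight, (ii) follows via Theorem \ref{t3.1}(iii), and (iii) via Theorem \ref{t02.20}. Your Stage 1 is exactly the paper's use of $\mathbf{IQDI}$ (disjointify the finite subfamilies $T_n$ and take unions). Where you diverge is the regularization step. The paper does it in one line: with $A_n$ the disjoint finite subfamilies, it sets $O_n=int(\overline{\bigcup A_n})$ — i.e.\ $int(\overline{W_n})$ in your notation — and these are already the desired sets. This works because for disjoint open sets $U,V$ one has $int(\overline{U})\cap int(\overline{V})=\emptyset$ (an open subset of $\overline{U}\cap\overline{V}$ would have to meet $U$, and then $U$ would have to meet $V$), and local finiteness is preserved because an open set meets $\overline{W}$ iff it meets $W$, so a neighbourhood meeting finitely many members of $\mathcal{C}$ meets finitely many $O_n$. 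Your Stage 2 instead builds the increasing chain $G_n=int(\overline{F_n})$ from partial unions and telescopes; this is sound (your two "key points" — that the chain grows strictly infinitely often, and that the differences $A_k$ stay locally finite — both check out), but it is essentially the machinery of Proposition \ref{p02.23} imported where it is not needed, at the cost of two nontrivial verifications that the direct pointwise regularization avoids. Also, both $\overline{F_n}=\bigcup_{i<n}\overline{W_i}$ (a finite union) and the strictness argument go through without invoking local finiteness, so that hypothesis is doing less work in your Stage 2 than you suggest.
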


\begin{proof} We assume $\mathbf{ZF+IQDI}$. To prove (i), let us suppose that  $\mathcal{P}$ is an infinite locally finite cellular
family of a topological space $\mathbf{X}$. Let, by $\mathbf{IQDI}$, $%
\{A_{n}: n\in \omega\}$ be a disjoint family of non-empty finite subsets
of $\mathcal{P}$. For every $n\in \omega$, put $O_{n}=int(\overline{%
\bigcup A_{n}})$. It is straightforward to verify that $\{O_{n}: n\in\omega\}$ is a locally finite, cellular family of regular open sets of $%
\mathbf{X}$. This completes the proof to (i).

To prove that $\mathbf{IQDI}$ implies (ii), suppose that $\mathbf{X}$ is a topological space which admits an infinite locally finite family of open sets. By Theorem \ref{t3.1} (iii), $\mathbf{X}$ admits an infinite locally finite cellular family. Therefore, it follows from (i) that $\mathbf{IQDI}$ implies that $\mathbf{X}$ admits a denumerable locally finite cellular family.

Since $\mathbf{IQDI}$ implies (ii), it follows from Theorem \ref{t02.20} that $\mathbf{IQDI}$ implies (iii).
\end{proof}

As an immediate corollary to Theorem \ref{t03.4}, we get the following
strengthening of Corollary \ref{c02.21}:

\begin{corollary}
\label{c03.5} It holds in $\mathbf{ZF}$ that $\mathbf{IQDI}$ implies that
every infinite topological space which does not admit a denumerable cellular
family is lightly compact.
\end{corollary}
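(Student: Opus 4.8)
The plan is to obtain this statement as a one-line contrapositive of Theorem \ref{t03.4}(iii). Working in $\mathbf{ZF}+\mathbf{IQDI}$, I would fix an infinite topological space $\mathbf{X}$ that admits no denumerable cellular family, and argue by contradiction, supposing $\mathbf{X}$ is not lightly compact.

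Under that supposition, Theorem \ref{t03.4}(iii) --- which is precisely where the hypothesis $\mathbf{IQDI}$ is consumed --- provides a denumerable locally finite cellular family $\mathcal{C}$ of $\mathbf{X}$. But a denumerable locally finite cellular family is, in particular, a denumerable cellular family of $\mathbf{X}$, contradicting the choice of $\mathbf{X}$. Hence $\mathbf{X}$ must be lightly compact, which is what we wanted.

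Since the entire analytic content --- the passage from ``not lightly compact'' to the existence of a denumerable locally finite cellular family --- is already packaged in Theorem \ref{t03.4}(iii) (which itself rests on Theorem \ref{t3.1}(iii) together with Theorem \ref{t03.4}(i)), there is no real obstacle left to overcome; the only thing one has to notice is the trivial fact that a locally finite cellular family is a cellular family, so no separate argument is required. Note also that the restriction to infinite $\mathbf{X}$ is inessential, since every finite space is lightly compact outright; it is retained only to parallel the weaker Corollary \ref{c02.21}, which this corollary strengthens by replacing $\mathbf{IDI}$ with $\mathbf{IQDI}$.
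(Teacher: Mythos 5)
Your proposal is correct and is exactly the paper's argument: the paper offers no separate proof, presenting the statement as an immediate corollary of Theorem \ref{t03.4}(iii), which is precisely the contrapositive reading you give. Nothing further is needed.
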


In connection with Theorem \ref{t02.15} and Proposition \ref{p02.17}, it is
natural to ask the following question:

\begin{question}
\label{q03.6} Is it provable in $\mathbf{ZF}$ that, for all Hausdorff spaces 
$\mathbf{X}$ and $\mathbf{Y}$, if $\mathbf{X}\times\mathbf{Y}$ has a
denumerable cellular family, then at least one of the spaces $\mathbf{X}$
and $\mathbf{Y}$ has a denumerable cellular family?
\end{question}

A partial answer to Question \ref{q03.6} is given by the following
proposition:

\begin{proposition}
\label{p03.7} $(\mathbf{ZF})$ Let $\mathbf{X}=(X,\mathcal{T}_X)$ and $%
\mathbf{Y}=(Y,\mathcal{T}_Y)$ be topological spaces such that both $X$ and $%
Y $ are infinite. Then the following hold:

\begin{enumerate}
\item[(i)] If $\mathbf{X}$ is compact, then $\mathbf{X}\times \mathbf{Y}$
admits a denumerable locally finite family of open sets iff $\mathbf{Y}$
admits a denumerable locally finite cellular family.

\item[(ii)] If $\mathbf{X}\times\mathbf{Y}$ admits a denumerable cellular
family, then at least one of the spaces $\mathbf{X}$ and $\mathbf{Y}$ admits
a denumerable point-finite family of open sets and, in consequence, at least
one of the sets $\mathcal{T}_X$ and $\mathcal{T}_Y$ is Dedekind-infinite.

\item[(iii)] The set $X\times Y$ admits a denumerable partition iff at least
one of the sets $X$ and $Y$ is weakly Dedekind infinite.

\item[(iv)] If $\mathbf{Y}$ is discrete, then $\mathbf{X}\times \mathbf{Y}$
admits a denumerable cellular family iff at least one of the spaces $\mathbf{%
X}$ and $\mathbf{Y}$ admits a denumerable cellular family.
\end{enumerate}
\end{proposition}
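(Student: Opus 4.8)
The plan is to study $\mathbf{X}\times\mathbf{Y}$ through its two open coordinate projections $\pi_X$ and $\pi_Y$, using the following dichotomy for any denumerable family $\{W_n:n\in\omega\}$ of non-empty open subsets of $\mathbf{X}\times\mathbf{Y}$: \emph{either} some point $z$ of a factor belongs to $\pi_X(W_n)$ (resp.\ $\pi_Y(W_n)$) for infinitely many $n$, in which case the corresponding slices $\{x:(x,z)\in W_n\}$ (resp.\ $\{y:(z,y)\in W_n\}$) are non-empty open sets inheriting whatever disjointness the $W_n$ have, \emph{or} the projected family $\{\pi_X(W_n):n\in\omega\}$ (resp.\ $\{\pi_Y(W_n):n\in\omega\}$) is point-finite, hence has no value repeated infinitely often and so is denumerable. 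I would prove (ii) and (iv) directly from this dichotomy, (i) by using compactness to run the dichotomy and then feeding the outcome into Theorem~\ref{t3.1}(ii), and (iii) by a slicing argument together with the standard $\mathbf{ZF}$ fact that a set is weakly Dedekind-infinite if and only if it admits a denumerable partition into non-empty sets (equivalently, maps onto $\omega$); see the remarks on Form~82 in \cite{hr}.

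For (i), the direction ($\leftarrow$) needs no hypothesis on $\mathbf{X}$: if $\{V_n:n\in\omega\}$ is a denumerable locally finite cellular family of $\mathbf{Y}$, then $\{X\times V_n:n\in\omega\}$ is a denumerable locally finite (in fact cellular) family of open subsets of $\mathbf{X}\times\mathbf{Y}$. For ($\rightarrow$), given a denumerable locally finite family $\{W_n:n\in\omega\}$ of open sets of $\mathbf{X}\times\mathbf{Y}$, I would show that $\mathcal{V}=\{\pi_Y(W_n):n\in\omega\}$ is a denumerable locally finite family of open sets of $\mathbf{Y}$ and then apply Theorem~\ref{t3.1}(ii). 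For local finiteness: given $y\in Y$, for each $x\in X$ pick a basic neighbourhood $U_x\times V_x$ of $(x,y)$ meeting only finitely many $W_n$; cover the compact space $X$ by finitely many $U_{x_1},\dots,U_{x_k}$; then $V=\bigcap_{i\le k}V_{x_i}$ is a neighbourhood of $y$, and whenever $\pi_Y(W_n)$ meets $V$ some $U_{x_i}\times V_{x_i}$ meets $W_n$, so only finitely many indices occur. For point-finiteness (hence infinitude, hence denumerability of $\mathcal{V}$): each fibre $X\times\{y\}$ is compact and is covered by open sets meeting finitely many $W_n$, so $X\times\{y\}$ meets only finitely many $W_n$, i.e.\ $y$ lies in only finitely many $\pi_Y(W_n)$; were $\mathcal{V}$ finite, a value repeated infinitely often would give a point of $Y$ lying in infinitely many $\pi_Y(W_n)$, a contradiction.

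For (ii), apply the dichotomy to a denumerable cellular family $\{W_n:n\in\omega\}$ of $\mathbf{X}\times\mathbf{Y}$ via $\pi_X$. If some $x_0\in X$ lies in $\pi_X(W_n)$ for all $n$ in an infinite set $A$, then $\{\,\{y\in Y:(x_0,y)\in W_n\}:n\in A\}$ is a denumerable cellular family of $\mathbf{Y}$; otherwise $\{\pi_X(W_n):n\in\omega\}$ is a denumerable point-finite family of open sets of $\mathbf{X}$. Since a denumerable cellular family is in particular point-finite, one of $\mathbf{X},\mathbf{Y}$ admits a denumerable point-finite family of open sets, which is a denumerable subset of the corresponding topology; this gives (ii). For (iv) with $\mathbf{Y}$ discrete, the direction ($\leftarrow$) is Theorem~\ref{t02.15}. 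For ($\rightarrow$), run the same dichotomy via $\pi_Y$ and suppose $\mathbf{Y}$ admits no denumerable cellular family. In the first horn we obtain a denumerable cellular family of $\mathbf{X}$ from the slices of the $W_n$. In the second horn, $\{\pi_Y(W_n):n\in\omega\}$ is a denumerable subset of $\mathcal{P}(Y)$, so $Y$ is weakly Dedekind-infinite; then, $\mathbf{Y}$ being discrete, a denumerable partition of $Y$ into non-empty sets is a denumerable cellular family of $\mathbf{Y}$, contradicting the hypothesis. Hence the second horn cannot occur and $\mathbf{X}$ admits a denumerable cellular family.

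For (iii), ($\Leftarrow$): if, say, $X$ is weakly Dedekind-infinite, fix a denumerable partition $\{X_n:n\in\omega\}$ of $X$; then $\{X_n\times Y:n\in\omega\}$ is a denumerable partition of $X\times Y$. ($\Rightarrow$): let $\{P_n:n\in\omega\}$ be a denumerable partition of $X\times Y$ and suppose $Y$ is not weakly Dedekind-infinite. For each $x\in X$, the non-empty sets among $P_n\cap(\{x\}\times Y)$ partition $\{x\}\times Y$, which is equipotent with $Y$; since $Y$ admits no denumerable partition, $S_x=\{n:P_n\cap(\{x\}\times Y)\neq\emptyset\}$ is finite and non-empty, and $\bigcup_{x\in X}S_x=\omega$ because every $P_n$ is non-empty. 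Setting $X_n=\{x\in X:\max S_x=n\}$, the set $\{n:X_n\neq\emptyset\}$ is unbounded in $\omega$, so $\{X_n:X_n\neq\emptyset\}$ is a denumerable partition of $X$, whence $X$ is weakly Dedekind-infinite. I expect the main obstacle to be the forward direction of (i): compactness of $\mathbf{X}$ has to be used once to make $\mathcal{V}$ locally finite and again to make it point-finite (hence infinite), and only then does Theorem~\ref{t3.1}(ii) finish the job; a secondary point is to have on hand the $\mathbf{ZF}$ characterisation of weak Dedekind-infiniteness used in (iii) and (iv).
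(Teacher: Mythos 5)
Your proposal is correct and follows essentially the same route as the paper: project the given family onto a factor, run the dichotomy ``some point lies in infinitely many projections (pass to slices) versus the projected family is point-finite (hence denumerable as a set)'', use compactness twice in (i) before invoking Theorem~\ref{t3.1}(ii), and in (iii)--(iv) convert point-finiteness into a partition via $\max S_x$ or via the Form~82 characterisation of weak Dedekind-infiniteness (which the paper also uses implicitly in the easy direction of (iii)). The one point to repair in a $\mathbf{ZF}$ write-up is the phrase ``for each $x\in X$ pick a basic neighbourhood $U_x\times V_x$'' in (i): this is an $X$-indexed family of choices; instead, as in the paper's proof, cover $X$ by the (choice-free) collection of all $U\in\mathcal{T}_X$ for which \emph{there exists} a $V\ni y$ with $U\times V$ meeting only finitely many $W_n$, extract a finite subcover by compactness, and only then fix finitely many witnesses $V_i$. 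With that adjustment the argument is complete.
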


\begin{proof}
(i) ($\rightarrow $)  We assume that $\mathbf{X}$ is compact. Let $\mathcal{A}=\{A_{n}: n\in \omega \}$ be a
denumerable locally finite family of open sets of $\mathbf{X}\times \mathbf{Y}$. By
Theorem \ref{t3.1}, we may assume that $\mathcal{A}$ is cellular. We show
that $\mathbf{Y}$ has a denumerable locally finite family of open sets and
then appeal to Theorem \ref{t3.1}. For $n\in \omega $, let $C_{n}$ be the
canonical projection of $A_{n}$ into $Y$. Since canonical projections are open
mappings, each $C_{n}$ is open in $\mathbf{Y}$. To show that $\mathcal{C}$ is point-finite, consider any $y\in Y$ and the set $N(y)=\{ n\in\omega: y\in C_n\}$. Suppose that $y_0\in Y$ is such that  $N(y_0)$ is infinite.  Then $\{(X\times \{y_{0}\})\cap A_{n}: n\in N(y_0)\}$ is a denumerable
locally finite cellular family of the subspace $\mathbf{X}\times \{y_{0}\}$
of $\mathbf{X}\times \mathbf{Y}$. Hence, $\mathbf{X}$ being homeomorphic to $%
\mathbf{X}\times \{y_0\},$ admits a denumerable locally finite cellular
family, contradicting the fact that $\mathbf{X}$ is compact. This is why $N(y)$ is finite for each $y\in Y$. Hence, the family $%
\mathcal{C}=\{C_{n}: n\in \omega\}$ is point-finite, so it infinite.  We claim that $%
\mathcal{C}$ is locally finite. To this end, we assume the contrary and fix $%
y^{\star}\in Y$ such that every neighborhood of $y^{\star}$ meets infinitely many members of 
$\mathcal{C}$. Let $\mathcal{U}=\{U\in \mathcal{T}_X:$ there exists $V\in \mathcal{T}_Y$ such that $%
y^{\star}\in V$ and $U\times V$ meets finitely many members of $\mathcal{A}\}$.
Since $\mathbf{X}$ is compact and $\mathcal{U}$ covers $X$, it
follows that there exist $n\in\omega$ and a subcollection $\{U_i: i\in n+1\}$ of $\mathcal{U}$  with $X=\bigcup_{i\in n+1} U_i$.
For each $i\in n+1$, we can fix an open neighborhoods $V_{i}$ of $y^{\star}$ such that $U_{i}\times V_{i}$ meets finitely many members of $\mathcal{A}$. Clearly, 
$\bigcup_{i\in n+1} (U_{i}\times V_{i})$ meets finitely many members of $%
\mathcal{A}$. The set $V=\bigcap_{i\in n+1}V_i$ is a neighborhood of $y^{\star}$. If the set $K=\{k\in \omega: V\cap C_k\neq\emptyset\}$ is infinite, then there exists $i\in n+1$ such that $U_i\times V$ meets infinitely many members of $\mathcal{A}$, contradicting our choice of the sets $U_i$ and $V_i$ for $i\in n+1$. Hence, $V$ meets only finitely many members of $\mathcal{C}$. The contradiction obtained proves that $\mathcal{C}$ is locally finite. By Theorem \ref{t3.1}, $\mathbf{Y}$ admits a
denumerable cellular locally finite family.

($\leftarrow $) \smallskip It is straightforward that if $\mathbf{Y}$ admits a denumerable locally finite cellular family, then so does $\mathbf{X}\times\mathbf{Y}$ regardless
of $\mathbf{X}$ being compact.

(ii) We assume that $\mathcal{A}=\{A_n: n\in\omega\}$ is a denumerable cellular family of  $\mathbf{X}\times\mathbf{Y}$. Suppose that $\mathbf{X}$ does not admit a denumerable point-finite family of open sets. In much the same way, as in the proof of (i), we define the family $\mathcal{C}=\{C_n: n\in\omega\}$ and, for each $y\in Y$, the set $N(y)$. If there exists $y_0\in Y$ such that $N(y_0)$ is infinite, then $\mathbf{X}$ admits a denumerable cellular family. This contradicts our assumption about $\mathbf{X}$. Hence, for each $y\in Y$, the set $N(y)$ is finite. This proves that $\mathcal{C}$ is a denumerable point-finite family of open sets of $Y$. 

(iii) ($\rightarrow $) Let $\mathcal{A}=\{A_{n}: n\in \omega \}$ be a
denumerable partition of $X\times Y$. That is, it is assumed that each $A_n$ is non-empty and $A_m\cap A_n=\emptyset$ for each pair $m,n$ of distinct elements of $\omega$; moreover, $X=\bigcup_{n\in\omega} A_n$. Assume that $X$ is not weakly Dedekind
infinite. For every $n\in \omega $, let $C_{n}$ be the canonical projection
of $A_{n}$ into $Y$ and let $\mathcal{C}=\{C_{n}: n\in \omega \}$. As in part
(i), one can prove that $\mathcal{C}$ is infinite and point-finite.  For every $y\in Y,$ let $N(y)=\{ n\in\omega: y\in C_n\}$. Since $\mathcal{A}$ is a cover of $X\times Y$, the collection $\mathcal{C}$ is a cover of $Y$. Hence,  the set $N(y)$ is non-empty for each $y\in Y$. Let $n(y)=\max N(y)$ for each $y\in Y$. For $n\in\omega$, let $B_n=\{ y\in Y: n(y)=n\}$. Clearly, the set $M=\{n\in\omega: B_n\neq\emptyset\}$ is infinite and, for each pair $m, n$ of distinct elements of $M$, $B_m\cap B_n=\emptyset$. Hence $\mathcal{P}(Y)$ is Dedekind-infinite, so $Y$ is weakly Dedekind-infinite as required.

($\leftarrow $) It is straightforward to check that if $Y$ is weakly Dedekind-infinite, then $X\times Y$ admits a denumerable partition. 

(iv) If either $\mathbf{X}$ or $\mathbf{Y}$ admits a denumerable cellular family, so does $\mathbf{X}\times \mathbf{Y}$ by Theorem \ref{t02.15}. Now, we assume that $\mathcal{A}=\{A_n: n\in\omega\}$ is a denumerable cellular family of $\mathbf{X}
\times\mathbf{Y}$. We may assume that $\mathcal{A}$ is dense in $\mathbf{X}\times\mathbf{Y}$ because, otherwise, we may add the set $(X\times Y)\setminus\overline{\bigcup\mathcal{A}}$ to $\mathcal{A}$. Suppose that $\mathbf{X}$ does not admit a denumerable cellular family.  As in parts (i)-(iii), for $n\in\omega$, let $C_n$ be the canonical projection of $A_n$ into $Y$. Since $\bigcup\mathcal{A}$ is dense in $\mathbf{X}\times\mathbf{Y}$, we have $Y=\bigcup_{n\in\omega}C_n$. In much the same way, as in the proof of (iii), we define an infinite set $M\subseteq\omega$ and a denumerable cellular family $\{B_n: n\in M\}$  of $Y$. 
\end{proof}

\begin{proposition}
\label{p03.8} $\mathbf{(ZF)}$ For every infinite topological space $\mathbf{X%
}$, the following conditions are satisfied:

\begin{enumerate}
\item[(i)] $\mathbf{X}$ admits a denumerable point-finite family of regular
open sets iff $\mathbf{X}$ admits a denumerable cellular family of regular
open sets.

\item[(ii)] If $\mathbf{X}$ admits a denumerable point-finite family of open
sets, then $\mathbf{X}$ admits a denumerable cellular family or a
point-finite tower of open sets.
\end{enumerate}
\end{proposition}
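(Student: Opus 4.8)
The plan is to handle the two parts separately. Part (i) is, in essence, the Boolean-algebra statement of Proposition \ref{p02.12}(i) applied to $RO(\mathbf{X})$, while part (ii) calls for a disjointification argument combined with a short boundedness observation.

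For part (i), the ``if'' direction is trivial, since every cellular family is point-finite. For the ``only if'' direction, let $\mathcal{U}$ be a denumerable point-finite family of non-empty regular open subsets of $\mathbf{X}$. Then $\mathcal{U}$ is, in particular, a denumerable subset of the Boolean algebra $RO(\mathbf{X})$, so $RO(\mathbf{X})$ is an infinite Boolean algebra which is Dedekind-infinite. By Proposition \ref{p02.12}(i) (equivalently, by Proposition \ref{p2.10}(a) and Corollary \ref{c02.11}(i)), $RO(\mathbf{X})$ has a denumerable antichain; since for $U,V\in RO(\mathbf{X})$ the elements $U,V$ are incompatible in $RO(\mathbf{X})$ exactly when $U\cap V=\emptyset$, a denumerable antichain of $RO(\mathbf{X})$ is precisely a denumerable cellular family of regular open sets of $\mathbf{X}$. (The point-finiteness hypothesis is used only to guarantee that $\mathcal{U}$ is an infinite collection of distinct regular open sets.)

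For part (ii), write $\mathcal{U}=\{U_n:n\in\omega\}$ for the given denumerable point-finite family of non-empty open sets, and, for each $n\in\omega$, set $W_n=U_n\setminus\overline{\bigcup_{k>n}U_k}$. Each $W_n$ is open, and the sets $W_n$ are pairwise disjoint because $U_m\subseteq\bigcup_{k>n}U_k$ whenever $m>n$. If $W_n\neq\emptyset$ for infinitely many $n$, then $\{W_n:W_n\neq\emptyset\}$ is a denumerable cellular family of $\mathbf{X}$, and we are done. So suppose there is $n_1\in\omega$ with $W_n=\emptyset$, i.e.\ $U_n\subseteq\overline{\bigcup_{k>n}U_k}$, for every $n\geq n_1$. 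Put $T_n=\bigcup_{k\geq n}U_k$ for $n\geq n_1$; these are non-empty open sets with $T_{n+1}\subseteq T_n$. From $T_n=U_n\cup T_{n+1}$ and $U_n\subseteq\overline{T_{n+1}}$ we obtain $\overline{T_n}=\overline{T_{n+1}}$ for all $n\geq n_1$, so $\overline{T_n}$ does not depend on $n\geq n_1$; and $\bigcap_{n\geq n_1}T_n=\emptyset$, since a point of this intersection would belong to $U_k$ for arbitrarily large $k$, contradicting the point-finiteness of $\mathcal{U}$.

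It remains to produce a point-finite tower in this second case, and here is the main point. The set $J=\{n\geq n_1: T_{n+1}\subset T_n\}$ must be infinite: otherwise $T_n$ would be constant for all $n$ beyond some $n_2$, whence $\bigcap_{n\geq n_1}T_n=T_{n_2}\supseteq U_{n_2}\neq\emptyset$, contradicting the previous paragraph. Enumerating $J=\{j_0<j_1<\cdots\}$, the sequence $(T_{j_i})_{i\in\omega}$ is strictly decreasing, because $T_{j_{i+1}}\subseteq T_{j_i+1}\subset T_{j_i}$, and $\bigcap_{i\in\omega}T_{j_i}=\bigcap_{n\geq n_1}T_n=\emptyset$, so $\{T_{j_i}:i\in\omega\}$ is point-finite; hence $(T_{j_i})_{i\in\omega}$ is a point-finite tower of open sets of $\mathbf{X}$. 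The only genuine obstacle is this last case: one must recognize that when disjointification yields nothing (all $W_n$ eventually empty), the tails $T_n$ acquire a constant closure and empty intersection, and that an eventually strictly decreasing sequence of open sets with empty intersection is exactly a point-finite tower of open sets.
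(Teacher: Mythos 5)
Your proof is correct, and both halves take a genuinely different route from the paper's. For (i), you bypass the combinatorics entirely: a denumerable family of regular open sets already makes $RO(\mathbf{X})$ Dedekind-infinite, so Proposition \ref{p02.12}(i) together with Proposition \ref{p2.10}(a) and Corollary \ref{c02.11}(i) hands you a denumerable antichain of $RO(\mathbf{X})$, i.e.\ a denumerable cellular family of regular open sets; your observation that point-finiteness is thereby redundant in (i) is accurate. The paper instead normalizes $\mathcal{U}$ to a cover closed under finite intersections, passes to the sets $U_x=\bigcap\{U\in\mathcal{U}:x\in U\}$ (this is where point-finiteness is used, to keep $U_x\in\mathcal{U}$), and splits on whether the resulting family $\mathcal{V}$ has infinitely many minimal elements (a cellular family) or not (a tower, then Proposition \ref{p2.10}); the same construction, with ``regular open'' replaced by ``open'', is the paper's proof of (ii). Your (ii) is instead an elementary disjointification: $W_n=U_n\setminus\overline{\bigcup_{k>n}U_k}$ either yields a denumerable cellular family, or else the tails $T_n=\bigcup_{k\ge n}U_k$ are eventually ``dense in each other''; the key points --- that $\bigcap_n T_n=\emptyset$ by point-finiteness, that the set of indices where $T_{n+1}\subset T_n$ is therefore cofinal, and that a nested strictly decreasing sequence of non-empty open sets with empty intersection is a point-finite tower --- are all verified correctly (the constant-closure remark is true but not actually needed). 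What the two approaches buy: yours is shorter, self-contained in (ii), and sharpens (i) by dropping a hypothesis; the paper's single construction handles (i) and (ii) uniformly and, in the tower case, produces the tower from members of the original family $\mathcal{V}\subseteq\mathcal{U}$ rather than from the auxiliary tails $T_n$, which is slightly more information though not demanded by the statement. One cosmetic caveat: if the (injective) enumeration happens to list $\emptyset$ as the one possibly empty member of $\mathcal{U}$, replace ``$T_{n_2}\supseteq U_{n_2}\neq\emptyset$'' by the observation that $T_{n_2}$ contains some non-empty $U_k$ with $k\ge n_2$.
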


\begin{proof}
(i) ($\rightarrow $) Suppose that $\mathcal{U}$ is a  denumerable point-finite family of regular open sets of $\mathbf{X}$. As in the proof to Theorem \ref{t3.1}, without loss of generality, we may assume that $\mathcal{U}$ covers $X$ and $\mathcal{U}$ is closed under finite intersections. For every $x\in X,$ let 
\begin{equation*}
\mathcal{U}(x)=\{U\in \mathcal{U}: x\in U\}\text{ and }U_{x}=\bigcap \mathcal{%
U}(x)\text{.}
\end{equation*}%
Since $\mathcal{U}$ is point-finite and $\mathcal{U}$ is closed under finite intersections, for each $x\in X$, $U_x\in\mathcal{U}$. As in the proof to Theorem \ref{t3.1} (i), let $\sim $ be the
equivalence relation on $X$ given by: 
\begin{equation*}
x\sim y\text{ iff }U_{x}=U_{y}\text{.}
\end{equation*}%
Let $\mathcal{P}=X/\sim $ be the quotient set of $\sim $
and $\mathcal{V}=\{U_{A}: A\in \mathcal{P}\}$ where, for every $A\in \mathcal{%
P}$, $U_{A}$ is the unique element of $\mathcal{U}$ such that $U_{A}=U_{x}$
for all $x\in A$. Since the mapping $A\rightarrow U_{A}$ from $\mathcal{P%
}$ to $\mathcal{V}$ is a bijection, it follows that $|\mathcal{P}|\leq \aleph _{0}$%
. For every $A\in \mathcal{P}$, let $\mathcal{U}(A)=\{U\in\mathcal{U}: U_A\subseteq U\}$. We notice that if $A\in\mathcal{P}$, then $\mathcal{U}(A)\neq\emptyset$ because $U_A\in\mathcal{U}(A)$. Thus, since $\mathcal{U}\setminus\{\emptyset\}=\bigcup\{\mathcal{U}(A): A\in\mathcal{P}\}$, it follows that if $%
\mathcal{P}$ is finite, then there exists $A_0\in \mathcal{P}$ such that the family $\mathcal{U}(A_0)$ is infinite. This is impossible because $\mathcal{U}$ is point-finite. This proves that $\mathcal{P}$ is infinite. 

We consider the following cases:

(a) $(\mathcal{V},\subseteq )$ has infinitely many minimal elements. In this
case, $\mathcal{C}=\{V\in \mathcal{V}: V$ is minimal$\}$ is the required
denumerable cellular point-finite family of regular open sets of $\mathbf{X%
}$.

(b) $(\mathcal{V},\subseteq )$ has finitely many minimal elements. Since $\mathcal{V}$ is infinite, without
loss of generality, we may assume that $(\mathcal{V},\subseteq )$ has no
minimal elements. In this case, using the fact that $\mathcal{V}$ is denumerable, we can fix a bijection $f:\omega\to\mathcal{V}$ and construct, via a straightforward induction, a (point-finite)
tower $(V_n)_{n\in\omega}$ of $(\mathcal{V},\subseteq )$. It
follows from Proposition \ref{p2.10} that $\mathbf{X}$ has a denumerable
cellular family of regular open sets.

($\leftarrow $) This is straightforward.

(ii) This can be proved exactly as in the proof of part (i) by simply
replacing \textquotedblleft regular open\textquotedblright\ with
\textquotedblleft open\textquotedblright .
\end{proof}

\section{From regular matrices to denumerable cellular families}
\label{s4}

The following new concepts are of significant importance in the sequel:

\begin{definition}
\label{d04.1} Let $\mathbf{X}$ be a topological space. Suppose that $%
\mathcal{C}=\{\mathcal{C}_n: n\in\mathbb{N}\}$ is a collection of finite
cellular families of $\mathbf{X}$ such that $\mathcal{C}_m\neq\mathcal{C}_n$
for each pair $m,n$ of distinct natural numbers. Then $\mathcal{C}$ is
called:

\begin{enumerate}
\item[(i)] a \textit{regular matrix} of $\mathbf{X}$ if, for each $n\in%
\mathbb{N}$, $\mathcal{C}_n\subseteq RO(\mathbf{X})$ and $\bigcup\mathcal{C}%
_n$ is dense in $\mathbf{X}$;

\item[(ii)] a \textit{clopen matrix} of $\mathbf{X}$ if, for each $n\in%
\mathbb{N}$, $\mathcal{C}_n\subseteq Clop(\mathbf{X})$ and $\bigcup\mathcal{C%
}_n=X$.
\end{enumerate}
\end{definition}

\begin{remark}
\label{r04.2} Let $\mathbf{Y}$ be a regular open (resp., clopen) subspace of
a topological space $\mathbf{X}$. Suppose that $\mathcal{C}=\{\mathcal{C}%
_{n}: n\in \mathbb{N}\}$ is a regular (resp., clopen) matrix of $\mathbf{Y}$.
If $Y$ is dense in $\mathbf{X}$, then $\mathcal{C}$ is a regular (resp.,
clopen) matrix of $\mathbf{X}$. If $Y$ is not dense in $\mathbf{X}$, then,
by defining $\mathcal{C}^{\prime }=\{\mathcal{C}_{n}\cup \{X\backslash int(%
\overline{Y})\}: n\in \mathbb{N}\}$ (resp., $\mathcal{C}^{\prime }=\{\mathcal{%
C}_{n}\cup \{X\backslash Y\}: n\in \mathbb{N}\}$), we obtain a regular
(resp., clopen) matrix $\mathcal{C}^{\prime }$ of $\mathbf{X}$. On the other
hand, if $\mathcal{E}=\{\mathcal{E}_{n}: n\in \mathbb{N}\}$ is a regular
(resp., clopen) matrix of $\mathbf{X}$ and, for each $n\in \mathbb{N}$ and
each $E\in \mathcal{E}_{n}$, the set $E\cap Y$ is non-empty, then $\{\{E\cap
Y: E\in \mathcal{E}_{n}\}: n\in \mathbb{N}\}$ is a regular (resp., clopen)
matrix of $\mathbf{Y}$.
\end{remark}

\begin{lemma}
\label{l04.3} $\mathbf{(ZF)}$ For every topological space $\mathbf{X}$, the
following conditions are fulfilled:

\begin{enumerate}
\item[(i)] if $RO(\mathbf{X})$ is Dedekind-infinite, then $\mathbf{X}$
admits a regular matrix;

\item[(ii)] if $Clop(\mathbf{X})$ is Dedekind-infinite, then $\mathbf{X}$
admits a clopen matrix.
\end{enumerate}
\end{lemma}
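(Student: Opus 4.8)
The plan is to prove both parts uniformly by one Boolean-algebraic construction, read off $RO(\mathbf{X})$ for part (i) and $Clop(\mathbf{X})$ for part (ii). Accordingly, let $\mathcal{B}$ denote whichever of these two Boolean algebras of $\mathbf{X}$ is assumed to be Dedekind-infinite, and fix an injection $n\mapsto b_{n}$ from $\omega$ into $\mathcal{B}$; note that $\mathcal{B}$ is non-trivial, so $\mathbf{1}\neq\mathbf{0}$. For each $n\in\omega$, I would let $\mathcal{B}_{n}$ be the finite subalgebra of $\mathcal{B}$ generated by $\{b_{0},\dots,b_{n}\}$ and let $\mathcal{A}_{n}$ be the set of atoms of $\mathcal{B}_{n}$, i.e. the set of all non-zero ``minterms'' $b_{0}^{\varepsilon_{0}}\wedge\cdots\wedge b_{n}^{\varepsilon_{n}}$ with $\varepsilon_{0},\dots,\varepsilon_{n}\in\{0,1\}$, where $b^{1}=b$ and $b^{0}=b'$. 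Elementary Boolean algebra gives, for every $n$: distinct minterms have meet $\mathbf{0}$, the join of all minterms is $\mathbf{1}$, hence $\mathcal{A}_{n}$ is a non-empty finite antichain with $\bigvee\mathcal{A}_{n}=\mathbf{1}$; moreover $\mathcal{A}_{m}=\mathcal{A}_{n}$ if and only if $\mathcal{B}_{m}=\mathcal{B}_{n}$, since a finite Boolean algebra is determined by its set of atoms.

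Next I would transfer this to $\mathbf{X}$. When $\mathcal{B}=RO(\mathbf{X})$, each $b_{i}^{0}=X\setminus\overline{b_{i}}$ lies in $RO(\mathbf{X})$ and finite intersections of regular open sets are again regular open because $RO(\mathbf{X})$ is closed under $\wedge=\cap$ (Definition \ref{d2.5}); so every element of $\mathcal{A}_{n}$ is a non-empty regular open set, and $\mathcal{A}_{n}$ is a finite cellular family of regular open sets of $\mathbf{X}$. Since the join in $RO(\mathbf{X})$ of a finite subfamily $\mathcal{F}$ equals $int(\overline{\bigcup\mathcal{F}})$, the equality $\bigvee\mathcal{A}_{n}=X$ becomes $int(\overline{\bigcup\mathcal{A}_{n}})=X$, so $\bigcup\mathcal{A}_{n}$ is dense in $\mathbf{X}$. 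When $\mathcal{B}=Clop(\mathbf{X})$, each $b_{i}^{0}=X\setminus b_{i}$ is clopen, all minterms are clopen, $\mathcal{A}_{n}$ is a finite cellular family of clopen sets, and since the join in $Clop(\mathbf{X})$ is union (Remark \ref{r2.6}(i)) the equality $\bigvee\mathcal{A}_{n}=X$ becomes $\bigcup\mathcal{A}_{n}=X$. Thus in either case each $\mathcal{A}_{n}$ has exactly the form required by Definition \ref{d04.1}, and what remains is to organize the $\mathcal{A}_{n}$'s into a pairwise distinct family indexed by $\mathbb{N}$.

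For the last step I would argue that the increasing chain $\mathcal{B}_{0}\subseteq\mathcal{B}_{1}\subseteq\cdots$ is not eventually constant: if $\mathcal{B}_{n}=\mathcal{B}_{N}$ for all $n\ge N$, then $\{b_{n}:n\in\omega\}$ would be an infinite subset of the finite algebra $\mathcal{B}_{N}$. Hence $\{\mathcal{A}_{n}:n\in\omega\}$ is infinite, so one may define recursively $m_{0}=0$ and $m_{k+1}=\min\{m>m_{k}:\mathcal{A}_{m}\neq\mathcal{A}_{m_{k}}\}$; since $\mathcal{B}_{m_{k}}\subseteq\mathcal{B}_{m_{k+1}}$ while $\mathcal{A}_{m_{k}}\neq\mathcal{A}_{m_{k+1}}$, the chain $(\mathcal{B}_{m_{k}})_{k\in\omega}$ is strictly increasing, whence the sets $\mathcal{A}_{m_{k}}$ are pairwise distinct. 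Re-indexing $\{\mathcal{A}_{m_{k}}:k\in\omega\}$ by $\mathbb{N}$ then yields a regular matrix of $\mathbf{X}$ in case (i) and a clopen matrix of $\mathbf{X}$ in case (ii). The whole argument is carried out in $\mathbf{ZF}$: each $\mathcal{A}_{n}$ is a definable finite set determined by the given enumeration $(b_{n})_{n\in\omega}$, and the recursion defining $(m_{k})_{k\in\omega}$ only selects least natural numbers, so no form of choice is invoked. The only mildly fiddly point is this final bookkeeping — upgrading ``the value of $\mathcal{A}_{n}$ changes infinitely often'' to an honestly injective $\mathbb{N}$-indexing — but it is routine, and otherwise the lemma is an immediate consequence of the Boolean structure of $RO(\mathbf{X})$ and $Clop(\mathbf{X})$.
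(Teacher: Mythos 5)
Your proof is correct, and it reaches the conclusion by a different construction than the paper's. The paper's own proof of Lemma \ref{l04.3} is more economical: it fixes a denumerable family $\{U_n : n\in\mathbb{N}\}$ of pairwise distinct regular open (resp.\ clopen) sets, discards the dense ones (handling separately the degenerate case where infinitely many are dense), forms the two-element cellular families $\mathcal{E}_n=\{U_n, X\setminus\overline{U_n}\}$, and then thins out to make the $\mathcal{E}_n$ pairwise distinct. You instead take the atoms $\mathcal{A}_n$ of the finite subalgebra generated by $\{b_0,\dots,b_n\}$ and thin out using the strictly increasing chain of subalgebras; this is essentially the argument the paper reserves for Theorem \ref{t04.4}, where only a sequence of finite subsets (rather than a sequence of elements) of $RO(\mathbf{X})$ is available. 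Your route buys a cleaner uniform treatment -- the join of all atoms is automatically $\mathbf{1}$, so you never need the side case about dense $U_n$, and your families $\mathcal{A}_n$ form a refining chain -- at the cost of slightly heavier machinery. All the individual steps check out: non-zero minterms are pairwise disjoint non-empty regular open (resp.\ clopen) sets with $\bigvee\mathcal{A}_n=\mathbf{1}$, the finite join in $RO(\mathbf{X})$ is indeed $int(\overline{\bigcup\mathcal{F}})$ so density of $\bigcup\mathcal{A}_n$ follows, the identification of $\mathcal{A}_m=\mathcal{A}_n$ with $\mathcal{B}_m=\mathcal{B}_n$ is valid for finite subalgebras, and the extraction of a strictly increasing subchain uses only minima of subsets of $\omega$, so no choice is needed.
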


\begin{proof}
Suppose that $RO(\mathbf{X})$ (resp., $Clop(\mathbf{X})$) is Dedekind-infinite. Then we can fix a collection $\mathcal{U}=\{ U_n: n\in\mathbb{N}\}$ such that $\mathcal{U}\subseteq RO(\mathbf{X})$ (resp., $\mathcal{U}\subseteq Clop(\mathbf{X})$ and $U_m\neq U_n$ for each pair of distinct $m,n\in\mathbb{N}$. Of course, if $U_n$ is dense in $\mathbf{X}$ for infinitely many natural numbers $n$, then $\mathbf{X}$ has a regular (resp., clopen) matrix. Therefore,  without loss of generality, we may assume that, for each $n\in\mathbb{N}$, the set $U_n$ is not dense in $\mathbf{X}$.  Put $\mathcal{E}_n=\{U_n, X\setminus\overline{U_n}\}$ for each $n\in\mathbb{N}$. It may happen that there is a pair $m, n$ of distinct natural numbers such that $U_n=X\setminus\overline{U_m}$ and, in consequence, $\mathcal{E}_n=\mathcal{E}_m$. However, we can inductively define a strictly increasing sequence $(k_n)_{n\in\mathbb{N}}$ of natural numbers such that $\mathcal{E}_{k_m}\neq\mathcal{E}_{k_{n}}$ for each pair $m,n$ of distinct natural numbers. Then putting $\mathcal{C}_n=\mathcal{E}_{k_n}$ for each $n\in\mathbb{N}$,  we obtain a regular (resp., clopen) matrix $\mathcal{C}=\{\mathcal{C}_n: n\in\mathbb{N}\}$ of $\mathbf{X}$.
\end{proof}

\begin{theorem}
\label{t04.4} $\mathbf{(ZF)}$ Let $\mathbf{X}$ be a topological space. Then $%
\mathbf{IQDI}$ implies that the following conditions are fulfilled:

\begin{enumerate}
\item[(i)] if $RO(\mathbf{X})$ is infinite, then $\mathbf{X}$ admits a
regular matrix;

\item[(ii)] if $Clop(\mathbf{X})$ is infinite, then $\mathbf{X}$ admits a
clopen matrix.
\end{enumerate}
\end{theorem}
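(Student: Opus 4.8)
The plan is to derive both parts from $\mathbf{IQDI}$ by a single construction, in the spirit of Lemma \ref{l04.3} but starting from finite subsets of the relevant Boolean algebra rather than from single elements (note that $\mathbf{IQDI}$ does not, by itself, make $RO(\mathbf{X})$ or $Clop(\mathbf{X})$ Dedekind-infinite, so Lemma \ref{l04.3} cannot be invoked directly). Fix $\mathbf{X}$ and assume $\mathbf{IQDI}$; suppose $RO(\mathbf{X})$ is infinite. Applying $\mathbf{IQDI}$ to the infinite set $RO(\mathbf{X})$, the set $[RO(\mathbf{X})]^{<\omega}$ is Dedekind-infinite, so fix a sequence $(\mathcal{A}_n)_{n\in\omega}$ of pairwise distinct finite subsets of $RO(\mathbf{X})$. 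For each $n\in\omega$, let $B_n$ be the finite Boolean subalgebra of $RO(\mathbf{X})$ generated by $\mathcal{A}_n$, and let $\mathcal{D}_n$ be the set of atoms of $B_n$.

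First I would verify that each $\mathcal{D}_n$ is a finite cellular family of regular open sets of $\mathbf{X}$ with $\bigcup\mathcal{D}_n$ dense in $\mathbf{X}$. Finiteness and the fact that the members of $\mathcal{D}_n$ are non-empty regular open sets are clear, since $B_n$ is a finite Boolean algebra contained in $RO(\mathbf{X})$ and atoms are non-zero; distinct atoms are disjoint because their meet in $RO(\mathbf{X})$ is $\emptyset$ and meets in $RO(\mathbf{X})$ coincide with intersections; and $\bigcup\mathcal{D}_n$ is dense because the supremum in $RO(\mathbf{X})$ of the atoms of $B_n$ is $\mathbf{1}=X$, i.e. $int(\overline{\bigcup\mathcal{D}_n})=X$. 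This last point requires a little care and reduces to the elementary fact that, for pairwise disjoint regular open sets $V_1,\dots,V_k$ with $int(\overline{V_1\cup\dots\cup V_k})=X$, the family $\{V_1,\dots,V_k\}$ is precisely the atom set of the Boolean subalgebra of $RO(\mathbf{X})$ it generates. For part (ii) one argues identically with $Clop(\mathbf{X})$ in place of $RO(\mathbf{X})$: the join in $Clop(\mathbf{X})$ is the set-theoretic union by Remark \ref{r2.6}(i), so the atoms of a finite subalgebra are clopen, pairwise disjoint, and have union exactly $X$.

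The main point, then, is to show that the family $\{\mathcal{D}_n:n\in\omega\}$ is infinite; granting this, one enumerates it according to the order of first appearance of its members in the sequence $(\mathcal{D}_n)_{n\in\omega}$, which produces — with no use of choice — a denumerable subfamily that, re-indexed by $\mathbb{N}$, is the desired regular matrix (resp. clopen matrix). To prove infinitude, suppose $\{\mathcal{D}_n:n\in\omega\}$ had only finitely many distinct values $\mathcal{D}^{(1)},\dots,\mathcal{D}^{(r)}$. A finite Boolean algebra is generated by its atoms, so whenever $\mathcal{D}_n=\mathcal{D}^{(i)}$ we have $B_n=\langle\mathcal{D}^{(i)}\rangle$, hence $\mathcal{A}_n\subseteq B_n=\langle\mathcal{D}^{(i)}\rangle$; therefore $\bigcup_{n\in\omega}\mathcal{A}_n\subseteq\bigcup_{i=1}^{r}\langle\mathcal{D}^{(i)}\rangle$, which is a finite set. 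But then $\{\mathcal{A}_n:n\in\omega\}$ would be an infinite collection of pairwise distinct subsets of a finite set, which is absurd. This contradiction completes the argument for (i), and (ii) follows verbatim with $Clop(\mathbf{X})$ replacing $RO(\mathbf{X})$. I expect the density/covering verification for the $\mathcal{D}_n$ to be the only genuinely technical step, the infinitude argument being the conceptual core.
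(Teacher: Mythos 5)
Your proposal is correct and follows essentially the same route as the paper: apply $\mathbf{IQDI}$ to the infinite Boolean algebra, generate finite Boolean subalgebras from the resulting pairwise distinct finite sets, take their atom sets as the rows of the matrix, and establish infinitude of the family of atom sets by a counting argument. Your version is in fact slightly streamlined, since you avoid the paper's case split on whether $RO(\mathbf{X})$ is Dedekind-infinite and its extra patching step for density (adjoining $X\setminus\overline{O_1}$), observing instead that the join of the atoms of a finite subalgebra is already the top element, so $int(\overline{\bigcup\mathcal{D}_n})=X$ automatically.
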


\begin{proof} (i) Assume $\mathbf{IQDI}$. If $RO(\mathbf{X})$ is Dedekind-infinite, then $\mathbf{X}$ has a regular matrix by Lemma \ref{l04.3}. Let us suppose that $RO(\mathbf{X})$ is both infinite and Dedekind-finite. Fix, by $\mathbf{IQDI}$, a strictly ascending family $\mathcal{B}=\{%
\mathcal{B}_{n}: n\in \mathbb{N}\}$ of finite sets of regular open subsets of 
$\mathbf{X}$. For every $n\in \mathbb{N}$, let $\mathcal{G}_{n}$ be the
Boolean subalgebra of $RO(\mathbf{X})$ generated by $\mathcal{B}_{n}$. Then $\mathcal{G}=\bigcup_{n\in\mathbb{N}}\mathcal{G}_n$ is an infinite Boolean subalgebra of $RO(\mathbf{X})$.  We define
collections $\mathcal{C}_{n}$ as follows: 
\begin{equation*}
\mathcal{C}_{n}=\{C\in \mathcal{G}_{n}\backslash \{\emptyset \}: \forall G\in 
\mathcal{G}_{n}\backslash \{\emptyset \}(G\subseteq C\rightarrow G=C)\}\text{%
.}
\end{equation*}

\noindent Clearly, for each $n\in \mathbb{N}$, $\mathcal{C}_{n}\subseteq \mathcal{G}_n$ and $\mathcal{C}_n$ is a cellular family of regular open sets of $\mathbf{X}$. For each $n\in\mathbb{N}$, the collection $\mathcal{C}_n$ is the set of all atoms of the finite Boolean algebra $\mathcal{G}_n$.  If the collection  $\mathcal{C}=\{\mathcal{C}_n: n\in\mathbb{N}\}$ were finite, then $\mathcal{G}$ would be finite because every non-empty set $G\in\mathcal{G}$ is expressible as a finite union of some members of $\bigcup_{n\in\mathbb{N}}\mathcal{C}_n$. Hence $\mathcal{C}$ is infinite. Without loss of generality, we may assume that $\mathcal{C}_m\neq\mathcal{C}_n$ for each pair $m,n$ of distinct natural numbers. 

 Since, for each $n\in \mathbb{N}$, $O_{n}=int(\overline{%
\bigcup \mathcal{C}_{n}})\in RO(\mathbf{X})$ and $RO(\mathbf{X})$ is Dedekind-finite, it
follows that $\{O_{n}: n\in \mathbb{N}\}$ is finite. For our convenience we
assume that $\{O_{n}: n\in \mathbb{N}\}=\{O_{1}\}$. If $\overline{O_1}=X$, then $\mathcal{C}$ is a regular matrix of $\mathbf{X}$. So, assume that $\overline{O_1}\neq X$ and put $\mathcal{E}_n=\mathcal{C}_n\cup\{ X\setminus\overline{O_1}\}$ for each $n\in\mathbb{N}$. In this case, $\mathcal{E}=\{\mathcal{E}_n: n\in\mathbb{N}\}$ is a regular matrix of $\mathbf{X}$. 

(ii) This can be proved as in part (i) by replacing each occurrence of
regular open with clopen, and $RO(\mathbf{X})$ with $Clop(\mathbf{X})$.
 
\end{proof}

\begin{theorem}
\label{t04.5} $(\mathbf{ZF})$ Let $\mathbf{X}$ be a topological space.

\begin{enumerate}
\item[(i)] If $\mathbf{X}$ admits a regular matrix $\mathcal{C}=\{\mathcal{C}%
_{n}: n\in \mathbb{N}\}$ such that the set $D=\bigcap\{\bigcup\mathcal{C}_n:
n\in\mathbb{N}\}$ is dense in $\mathbf{X}$, then $\mathbf{X}$ admits an
infinite cellular family of regular open sets. \newline
In particular, $\mathbf{IQDI}$ implies that if $\mathbf{X}$ admits a regular
matrix $\mathcal{C}=\{\mathcal{C}_{n}: n\in \mathbb{N}\}$ such that the set $%
D=\bigcap\{\bigcup\mathcal{C}_n: n\in\mathbb{N}\}$ is dense in $\mathbf{X}$,
then $\mathbf{X}$ admits a denumerable cellular family of regular open sets.

\item[(ii)] If $\mathbf{X}$ admits a clopen matrix, then $\mathbf{X}$ admits
an infinite cellular family of clopen sets.\newline
In particular, $\mathbf{IQDI}$ implies that if $\mathbf{X}$ admits a clopen
matrix, then it admits a denumerable cellular family of clopen sets.
\end{enumerate}
\end{theorem}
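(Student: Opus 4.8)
The plan is to prove (i) by analysing the common refinements of the families $\mathcal{C}_{n}$, and to deduce (ii) as a special case. Put $\mathcal{D}_{n}$ equal to the collection of all non-empty sets $\bigcap_{i\le n}C_{i}$ with $C_{i}\in\mathcal{C}_{i}$; since a finite intersection of regular open sets is regular open, each $\mathcal{D}_{n}$ is a finite cellular family of regular open sets, $\bigcup\mathcal{D}_{n}=\bigcap_{i\le n}\bigcup\mathcal{C}_{i}\supseteq D$ is dense, and $\mathcal{D}_{n+1}$ refines $\mathcal{D}_{n}$. As each non-empty $E\in\mathcal{D}_{n}$ is open and meets the dense set $\bigcup\mathcal{C}_{n+1}$, it contains at least one member of $\mathcal{D}_{n+1}$, and distinct members of $\mathcal{D}_{n}$ contain distinct members of $\mathcal{D}_{n+1}$; hence $m_{n}:=|\mathcal{D}_{n}|$ is non-decreasing. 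Ordered by reverse inclusion, $T:=\bigcup_{n}\mathcal{D}_{n}$ is a forest whose set of roots is the finite family $\mathcal{D}_{1}$, all of whose levels are finite, and in which every node has at least one child.

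The first key step is that $m_{n}\to\infty$. If not, then $(m_{n})$ being non-decreasing, $m_{n}=m$ for all $n\ge n_{0}$. For such $n$ the map sending $E\in\mathcal{D}_{n}$ to the unique member of $\mathcal{D}_{n+1}$ contained in $E$ is a bijection, and that member equals $E\cap\bigcup\mathcal{C}_{n+1}$, which is dense in $E$; iterating, every $E\in\mathcal{D}_{n_{0}}$ has, at each later level, a single descendant, dense in $E$. Using that a regular open set coincides with $int(\overline{\,\cdot\,})$ of its intersection with any dense open set, one checks that for $n\ge n_{0}$ every member of $\mathcal{C}_{n+1}$ has the form $int(\overline{\bigcup\mathcal{S}})$ for some $\mathcal{S}\subseteq\mathcal{D}_{n_{0}}$. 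Since $\mathcal{D}_{n_{0}}$ is a fixed finite family, there are only finitely many such sets, hence only finitely many possibilities for $\mathcal{C}_{n+1}$, contradicting the pairwise distinctness of the $\mathcal{C}_{n}$. Therefore $m_{n}\to\infty$.

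The heart of the proof is the second step: from a refining sequence $(\mathcal{D}_{n})$ of finite cellular families of regular open sets with $m_{n}\to\infty$, produce an infinite cellular family of regular open sets, equivalently an infinite antichain of the forest $T$. I would argue by a dichotomy on $T$. Call a node \emph{thin} if the subtree below it has level-sizes bounded by some fixed integer. The maximal thin nodes (a thin node each of whose proper ancestors is non-thin, together with the thin roots) are pairwise incomparable, so the members of that set form a cellular family of regular open sets; if there are infinitely many maximal thin nodes we are done. Otherwise finitely many thin subtrees contribute only boundedly many nodes to each level, while $|T_{n}|=m_{n}\to\infty$, so the subforest of non-thin nodes again has levels of unbounded size, and one repeats the dichotomy on it; combining this with the remark that a pruned finitely-branching forest with finitely many roots and bounded level-sizes is, from some level on, a fixed finite collection of non-branching branches, one argues that the dichotomy must eventually exhibit infinitely many maximal thin nodes. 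The genuine difficulty of the whole theorem is concentrated here: the construction must never single out one among finitely many siblings — the members of $\mathcal{C}_{n}\subseteq RO(\mathbf{X})$ carry no canonical well-ordering — so one has to work throughout with canonically defined \emph{sets} of pieces rather than with chosen branches, and it is exactly the conclusion $m_{n}\to\infty$, obtained from distinctness of the $\mathcal{C}_{n}$ together with density of $D$, that makes the thin parts provably infinite.

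Finally, an infinite cellular family of regular open sets of $\mathbf{X}$ is an infinite antichain of $RO(\mathbf{X})$, so under $\mathbf{IQDI}$ Proposition \ref{p02.12}(ii), together with Proposition \ref{p2.10}(a) and Corollary \ref{c02.11}(i), upgrades it to a denumerable cellular family of regular open sets, which yields the ``in particular'' clause of (i). For (ii) one observes that a clopen matrix is precisely a regular matrix all of whose unions $\bigcup\mathcal{C}_{n}$ equal $X$, so that $D=X$ is automatically dense; the argument of (i) applies with ``clopen'' replacing ``regular open'', and is in fact easier, since each $\mathcal{D}_{n}$ is now a genuine finite clopen partition of $X$ and the bounded case collapses at once ($\mathcal{D}_{n+1}$ refining $\mathcal{D}_{n}$ with $m_{n+1}=m_{n}$ and both covering $X$ forces $\mathcal{D}_{n+1}=\mathcal{D}_{n}$, so $\mathcal{D}_{n}$ is eventually a fixed partition and each later $\mathcal{C}_{n}$ is a union of its blocks, contradicting distinctness); the $\mathbf{IQDI}$ upgrade then uses Proposition \ref{p2.10}(v) and Corollary \ref{c02.11}(ii). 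The step I expect to be the main obstacle is the choice-free extraction of the infinite cellular family in the second step.
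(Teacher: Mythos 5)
Your reduction to the forest $T=\bigcup_{n}\mathcal{D}_{n}$ of iterated refinements is sound, and your first step is correct: if $m_{n}$ were eventually constant, each later $\mathcal{C}_{n+1}$ would consist of sets $int(\overline{\bigcup\mathcal{S}})$ with $\mathcal{S}\subseteq\mathcal{D}_{n_{0}}$, leaving only finitely many possibilities and contradicting the pairwise distinctness of the $\mathcal{C}_{n}$. The gap is exactly where you locate ``the genuine difficulty''. Your second step, as set up, is the purely combinatorial claim that a pruned, finitely-branching forest with finitely many roots, finite levels and $|T_{n}|\to\infty$ has an infinite antichain --- and that claim is \emph{not} a theorem of $\mathbf{ZF}$. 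Take a denumerable disjoint family $\{A_{n}: n\in\mathbb{N}\}$ of two-element sets with no partial choice function (available in the second Fraenkel model $\mathcal{N}2$ and in Cohen's Second Model) and let $T$ be the tree of all elements of $\prod_{i\leq n}A_{i}$, $n\in\mathbb{N}$, ordered by extension: it is binary, pruned, has one root and levels of size $2^{n}$, yet a standard support argument shows every antichain of $T$ is finite (two members of an antichain of length greater than the size of the support, with equal restrictions to the support but different lengths, would force comparable elements into the antichain via permutations fixing the support; hence the antichain is covered by finitely many finite orbits). On this tree your dichotomy makes no progress: every node is non-thin, the set of maximal thin nodes is empty, and ``repeating the dichotomy on the non-thin subforest'' is the identity. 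So no argument of the shape you propose can work; this is also precisely the obstruction that forces the paper to assume $\mathbf{QPKW}(\infty,<\aleph_{0})$ in Theorem \ref{t06.8}, where no dense set of selectors is available.

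What rescues Theorem \ref{t04.5} is the hypothesis you stop using after the first step: the density of $D=\bigcap_{n}\bigcup\mathcal{C}_{n}$. Each $x\in D$ lies in exactly one member of each $\mathcal{C}_{n}$, so the point $x$ is itself a canonical, choice-free selector of a branch of $T$ (note that your counterexample tree has no infinite branches at all, so it cannot arise here). Concretely, the paper puts $\mathcal{U}=\bigcup_{n}\mathcal{C}_{n}$ and $\mathcal{U}(x)=\{U\in\mathcal{U}: x\in U\}$ for $x\in D$. If some $\mathcal{U}(x)$ is infinite, it is a denumerable subset of $RO(\mathbf{X})$ (at most one member per level), so $RO(\mathbf{X})$ is Dedekind-infinite and Proposition \ref{p02.12}(i) finishes. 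Otherwise $U_{x}=\bigcap\mathcal{U}(x)\in RO(\mathbf{X})$, the family $\{U_{x}: x\in D\}$ is cellular (if $U_{x}\neq U_{y}$, some $\mathcal{C}_{n}$ puts $x$ and $y$ into disjoint pieces), and it is infinite, since every $U\in\mathcal{U}$ meets $D$ and $U_{x}=U_{y}$ forces $\mathcal{U}(x)=\mathcal{U}(y)$, so finitely many $U_{x}$'s would make $\mathcal{U}$ finite. Your closing reductions (the $\mathbf{IQDI}$ upgrade via Propositions \ref{p2.10} and \ref{p02.12} and Corollary \ref{c02.11}, and the observation that the clopen case is the case $D=X$) are fine once the extraction step is replaced by this one.
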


\begin{proof}
(i) Let $\mathcal{C}=\{\mathcal{C}_{n}: n\in \mathbb{N}\}$ be a regular matrix of $\mathbf{X}$ such that the set $D=\bigcap
\{\bigcup \mathcal{C}_{n}: n\in \mathbb{N}\}$ is dense in $\mathbf{X}$. We are going to conclude that $\mathbf{X}$ has a denumerable cellular family of regular open sets.  It is straightforward to see
that $\mathcal{U}=\bigcup_{n\in\mathbb{N}} \mathcal{C}_n$ is infinite. Furthermore, it is easy
to see that, for every $x\in D$ and every $n\in \mathbb{N}$, $x$ belongs to a
unique element of $\mathcal{C}_{n}$. Therefore, for each $x\in D$, the set $\mathcal{U}(x)=\{U\in 
\mathcal{U}: x\in U\}$ is a countable subset of $RO(\mathbf{X})$. If, for some $x\in D$%
, $\mathcal{U}(x)$ is infinite, then the conclusion follows from Corollary %
\ref{c02.11}. Assume that, for every $x\in D$, $\mathcal{U}(x)$ is finite. Then, for
every $x\in D,U_{x}=\bigcap \mathcal{U}(x)\in RO(\mathbf{X})$. Let $\mathcal{W}%
=\{U_{x}: x\in D\}$ and check that $\mathcal{W}$ is cellular. If $x,y\in D$ are such that $%
U_{x}\neq U_{y}$ then, for some $n\in \mathbb{N}$, there exist $C,G\in 
\mathcal{C}_{n}$ such that $C\neq G$, $x\in C$ and $y\in G$. Since $C\cap
G=\emptyset $ it follows that $U_{x}\cap U_{y}=\emptyset $. Hence, $\mathcal{%
W}$ is cellular. If $\mathcal{W}$ were finite, then $\mathcal{U}$ would finite.
Contradiction! Therefore, $\mathcal{W}$ is infinite. Of course, $\mathcal{W}\subseteq RO(\mathbf{X})$.\smallskip 

The second assertion follows from the first one and Proposition \ref{p02.12}
(ii).\smallskip 

(ii) We notice that if $\mathcal{C}=\{\mathcal{C}_{n}: n\in \mathbb{N}\}$ is a clopen matrix of $\mathbf{X}$, then $X=\bigcap
\{\bigcup \mathcal{C}_{n}: n\in \mathbb{N}\}$, so, to prove (ii), we can argue in much the same way, as in the proof of part (i).
\end{proof}

\begin{theorem}
\label{t04.6} $\mathbf{(ZF})$ $\mathbf{IQDI}$ implies each of the following
statements:

\begin{enumerate}
\item[(i)] Every infinite Hausdorff space with a well-orderable dense subset
admits a denumerable cellular family.

\item[(ii)] Every infinite Hausdorff space which is also a Baire space
admits a denumerable cellular family.

\item[(iii)] Every infinite Hausdorff space such that $Clop(\mathbf{X})$ is
infinite admits a denumerable cellular family.

\item[(iv)] $\mathbf{I0}dim\mathbf{HS}(cell,\aleph _{0})$.
\end{enumerate}
\end{theorem}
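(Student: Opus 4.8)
The plan is to prove Theorem~\ref{t04.6} by deriving each of (i)--(iv) from $\mathbf{IQDI}$, leaning heavily on the machinery of regular and clopen matrices developed in Theorems~\ref{t04.4} and~\ref{t04.5}. For each statement, the strategy is the same: produce a regular matrix (or clopen matrix) of the space with the additional feature that the intersection of the ``rows'' is dense, and then invoke Theorem~\ref{t04.5}(i) (resp.\ (ii)) together with $\mathbf{IQDI}$ to pass from an infinite cellular family of regular open (resp.\ clopen) sets to a denumerable one. The work therefore splits into the constructions of the matrices, which is where the Hausdorff hypothesis and the particular side conditions (well-orderable dense set, Baire, $Clop(\mathbf{X})$ infinite, zero-dimensional) come into play.

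For (i), I would take a well-orderable dense subset $D=\{d_\alpha:\alpha<\kappa\}$ of the infinite Hausdorff space $\mathbf{X}$. Since $\mathbf{X}$ is infinite and Hausdorff with dense $D$, $D$ is infinite, so we may assume $\omega\le\kappa$. Using Hausdorffness, for each $n\in\mathbb{N}$ pick disjoint open neighbourhoods of $d_0,\dots,d_n$ and pass to their regular open interiors $int(\overline{\cdot})$ to get a finite cellular family $\mathcal{C}_n\subseteq RO(\mathbf{X})$ with $d_0,\dots,d_n$ each lying in a distinct member and $\bigcup\mathcal{C}_n\supseteq\{d_0,\dots,d_n\}$; adjoining the regular open set $X\setminus\overline{\bigcup\mathcal{C}_n}$ if necessary makes $\bigcup\mathcal{C}_n$ dense. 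The well-ordering of $D$ is what lets us make these finitely many choices canonically at each stage without appealing to any choice principle. Thinning to ensure the $\mathcal{C}_n$ are pairwise distinct yields a regular matrix; and since $D\subseteq\bigcap_n\bigcup\mathcal{C}_n$, the set $\bigcap_n\bigcup\mathcal{C}_n$ is dense. Theorem~\ref{t04.5}(i) then finishes (i). For (iv), every infinite zero-dimensional Hausdorff space has $Clop(\mathbf{X})$ infinite (a clopen base separating points of an infinite set must be infinite), so (iv) follows immediately from (iii).

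For (iii), I would argue: if $\mathbf{X}$ is infinite Hausdorff and $Clop(\mathbf{X})$ is infinite, then by Theorem~\ref{t04.4}(ii) (using $\mathbf{IQDI}$) $\mathbf{X}$ admits a clopen matrix; by Theorem~\ref{t04.5}(ii) and $\mathbf{IQDI}$, $\mathbf{X}$ then admits a denumerable cellular family of clopen sets. This reduces (iii) to invoking results already proved. For (ii), suppose $\mathbf{X}$ is an infinite Hausdorff Baire space. If $\mathbf{X}$ has an isolated point, one can still need infinitely many of them or reduce to a dense-in-itself case; but more robustly, I would run the matrix construction as follows. If $RO(\mathbf{X})$ is Dedekind-infinite we are done by Proposition~\ref{p02.12}(i). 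Otherwise $RO(\mathbf{X})$ is infinite and Dedekind-finite, so as in the proof of Theorem~\ref{t04.4}(i), $\mathbf{IQDI}$ produces a regular matrix $\mathcal{C}=\{\mathcal{C}_n:n\in\mathbb{N}\}$ of $\mathbf{X}$ whose rows $\mathcal{C}_n$ are the atoms of an increasing chain of finite subalgebras $\mathcal{G}_n$ of $RO(\mathbf{X})$. Each $\bigcup\mathcal{C}_n$ is dense open, so by the Baire property the set $D=\bigcap_n\bigcup\mathcal{C}_n$ is dense (a countable intersection of dense open sets). Now Theorem~\ref{t04.5}(i) together with $\mathbf{IQDI}$ delivers a denumerable cellular family of regular open sets of $\mathbf{X}$.

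The main obstacle I expect is the density condition $D=\bigcap_n\bigcup\mathcal{C}_n$ dense, which is exactly the hypothesis Theorem~\ref{t04.5}(i) needs and which does \emph{not} come for free from an arbitrary regular matrix. In case (ii) it is precisely the Baire hypothesis that supplies it; in case (i) it is the well-orderable dense set that is threaded through every row; in cases (iii)--(iv) the matrix is clopen, so every row covers all of $X$ and $D=X$ automatically. A secondary technical point, handled exactly as in the proofs of Lemma~\ref{l04.3} and Theorem~\ref{t04.4}, is ensuring the rows $\mathcal{C}_n$ are pairwise distinct (thin out to a strictly increasing subsequence) and, where needed, enlarging each row by the complement of its union's closure so that the union is dense; these are routine and invoke only the countability of $[\omega]^{<\omega}$ and of the relevant index sets, so no choice is used beyond $\mathbf{IQDI}$ itself. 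Finally, one should check the trivial reductions: an infinite Hausdorff space whose $RO(\mathbf{X})$ is finite is impossible when $\mathbf{X}$ is infinite and Hausdorff, or is handled by noting such a space would be finite, so the ``infinite and Dedekind-finite $RO$'' branch is the only substantive one.
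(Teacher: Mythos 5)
Your treatment of (ii), (iii) and (iv) is correct and coincides with the paper's: (iii) is exactly Theorem \ref{t04.4}(ii) followed by Theorem \ref{t04.5}(ii); (iv) reduces to (iii); and for (ii) you correctly identify that, after disposing of the case where $RO(\mathbf{X})$ is Dedekind-infinite via Proposition \ref{p02.12}(i), the Baire property is precisely what makes $D=\bigcap_{n}\bigcup\mathcal{C}_n$ dense for the regular matrix supplied by Theorem \ref{t04.4}(i), so that Theorem \ref{t04.5}(i) applies.

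Part (i), however, has a genuine gap. You build the regular matrix by choosing, for each $n\in\mathbb{N}$, a system of pairwise disjoint open neighbourhoods of $d_0,\dots,d_n$ and regularizing, and you assert that ``the well-ordering of $D$ is what lets us make these finitely many choices canonically at each stage without appealing to any choice principle.'' This is not so: the well-ordering of $D$ well-orders the \emph{points} being separated, but it provides no canonical selection among the open sets that separate them. Hausdorffness only guarantees that, for each $n$, the set of admissible systems of separating neighbourhoods is non-empty; picking one such system for every $n$ is a countable sequence of arbitrary choices from non-empty sets of families of open sets, which cannot be carried out in $\mathbf{ZF}$, and neither $\mathbf{IQDI}$ nor the well-orderability of $D$ supplies it. (Regular open sets are determined by their traces on the dense set $D$, but those traces live in $\mathcal{P}(D)$, which need not be well-orderable, so this observation does not rescue the canonicity claim.) The paper's argument for (i) avoids any such construction: assuming $RO(\mathbf{X})$ is Dedekind-finite, it takes the regular matrix $\{\mathcal{C}_n: n\in\mathbb{N}\}$ already provided by Theorem \ref{t04.4}, observes that each finite cellular family $\mathcal{C}_n$ is \emph{canonically} well-ordered by comparing $\min\{s\in S: s\in O\}$ for a fixed well-ordering $\leq$ of the dense set $S$ (the members of $\mathcal{C}_n$ are pairwise disjoint non-empty open sets, so these minima exist and are pairwise distinct), concludes that $\bigcup_{n\in\mathbb{N}}\mathcal{C}_n$ is a countable union of canonically enumerated finite sets and hence a denumerable subset of $RO(\mathbf{X})$, contradicting Dedekind-finiteness; Proposition \ref{p02.12}(i) then finishes. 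You should replace your construction in (i) by an argument of this kind.
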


\begin{proof}
Assume $\mathbf{IQDI}$ and let $\mathbf{X}=(X, \mathcal{T})$ be an infinite Hausdorff space. Then $RO(\mathbf{X})$ is infinite. If $RO(\mathbf{X})$ is Dedekind-infinite, then $\mathbf{X}$ has a denumerable cellular family by Proposition \ref{p02.12}(i). Therefore, to prove (i)-(ii), we may assume that $RO(\mathbf{X})$ is Dedekind-finite.  By Theorem \ref{t04.4}, we can fix a regular matrix $\mathcal{C}=\{\mathcal{C}_{n}: n\in \mathbb{N}\}$ of $\mathbf{X}$.  Clearly, $\mathcal{U}=\bigcup_{n\in\mathbb{N}} \mathcal{C}_n$ is
infinite. 

 To prove (i), suppose that $\mathbf{X}$ has a well-orderable  dense set $S$. Let $\leq$ be a well-ordering on $S$.  We observe that, for every cellular family $\mathcal{P}$ of $%
\mathbf{X}$, the following binary relation $\precsim $ on $\mathcal{P}$ given by: 
\begin{equation}
O\precsim Q\text{ iff }\min \{s\in S: s\in O\}\leq \min \{s\in S: s\in Q\}
\label{5}
\end{equation}%
is a well-ordering on $\mathcal{P}$. Therefore, for each $n\in\mathbb{N}$, we can fix a well-ordering $\precsim_n$ on $\mathcal{C}_n$. This implies that  $\mathcal{U}$ is countable as a countable
union of finite well-ordered sets. Since $\mathcal{U}$\ is
infinite, it follows that it is denumerable, contradicting our hypothesis on 
$RO(\mathbf{X})$. Hence (i) holds.\smallskip 

To prove (ii), assume that $\mathbf{X}$ is a Baire space. Then the set $D=\bigcap \{\bigcup \mathcal{C}%
_{n}: n\in \mathbb{N}\}$ is dense in $\mathbf{X}$. By Theorem \ref{t04.5}, $%
\mathbf{X}$ has a denumerable cellular family of regular open sets,
contradicting the assumption that $RO(\mathbf{X})$ is Dedekind finite. Hence (ii) holds.\smallskip\ 

To prove (iii), we assume that $\mathbf{X}$ is a Hausdorff space such that $Clop(\mathbf{X})$ is infinite.  By Theorem \ref{t04.4}, $\mathbf{%
X}$ admits a clopen matrix and, by Theorem \ref{t04.5}, $\mathbf{%
X}$ has a denumerable cellular family of clopen sets. Hence (iii) holds. It follows from (iii) that (iv) also holds.
\end{proof}

Now, we are in a position to give a satisfactory answer to Question \ref{q1}.

\begin{theorem}
\label{t04.7} $\mathbf{(ZF)}$ The following conditions are all equivalent:

\begin{enumerate}
\item[(i)] $\mathbf{IQDI}$;

\item[(ii)] $\mathbf{I0}dim\mathbf{HS}(cell,\aleph _{0})$;

\item[(iii)] for every infinite set $X$, every infinite subspace $\mathbf{Y}$
of the Cantor cube $\mathbf{2}^{X}$ admits a denumerable cellular family;

\item[(iv)] for every infinite set $X$, the Cantor cube $\mathbf{2}^{X}$
admits a denumerable cellular family;
\end{enumerate}
\end{theorem}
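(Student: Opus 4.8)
The plan is to establish the cycle of implications (i)~$\Rightarrow$~(ii)~$\Rightarrow$~(iii)~$\Rightarrow$~(iv)~$\Rightarrow$~(i); every link is short once Theorems~\ref{t02.18} and~\ref{t04.6} are in hand. The implication (i)~$\Rightarrow$~(ii) requires nothing new: the statement $\mathbf{I0}dim\mathbf{HS}(cell,\aleph_0)$ is precisely part~(iv) of Theorem~\ref{t04.6}, which was shown there to follow from $\mathbf{IQDI}$.

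For (ii)~$\Rightarrow$~(iii), I would fix an infinite set $X$ and an infinite subspace $\mathbf{Y}$ of $\mathbf{2}^X$, and observe that $\mathbf{Y}$ is again an infinite zero-dimensional Hausdorff space, so that (ii) applies to it directly. The point is that $\mathbf{2}^X$ is Hausdorff and that its standard base $\mathcal{B}(X)=\{[p]: p\in Fn(X,2)\}$ consists of clopen sets; hence $\{[p]\cap Y: p\in Fn(X,2)\}$ is a base of $\mathbf{Y}$ whose members are clopen in $\mathbf{Y}$, and Hausdorffness is hereditary. All of this is elementary and uses no form of choice.

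For (iii)~$\Rightarrow$~(iv), it suffices to note that for an infinite set $X$ the set $2^X$ is infinite: the functions $f_x\in 2^X$ (for $x\in X$) given by $f_x(x)=1$ and $f_x(y)=0$ for $y\neq x$ are pairwise distinct, so $|X|\le|2^X|$. Thus $\mathbf{2}^X$ is an infinite subspace of itself, and (iii) supplies a denumerable cellular family of $\mathbf{2}^X$. Finally, (iv)~$\Rightarrow$~(i) is contained in Theorem~\ref{t02.18}(i): if $\mathbf{2}^X$ admits a denumerable cellular family for every infinite set $X$, then every infinite set is quasi Dedekind-infinite. In fact Theorem~\ref{t02.18}(i) already yields (i)~$\Leftrightarrow$~(iv) outright, so one could shortcut the cycle at this point.

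The only implication carrying genuine content is (i)~$\Rightarrow$~(ii), and that content lies entirely in the proof of Theorem~\ref{t04.6}(iv) --- ultimately in the regular-matrix and clopen-matrix constructions of Theorems~\ref{t04.4} and~\ref{t04.5}. Within the present theorem there is no real obstacle: the remaining three implications are formal, the subtlest (and still routine) point being the heredity of zero-dimensionality and Hausdorffness to subspaces of Cantor cubes.
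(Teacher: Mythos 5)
Your proposal is correct and follows essentially the same route as the paper: the cycle (i)$\Rightarrow$(ii)$\Rightarrow$(iii)$\Rightarrow$(iv)$\Rightarrow$(i), with (i)$\Rightarrow$(ii) delegated to Theorem~\ref{t04.6}, (iv)$\Rightarrow$(i) to Theorem~\ref{t02.18}(i), and the middle two links formal. The only cosmetic difference is that the paper also invokes the embedding of zero-dimensional Hausdorff spaces into Cantor cubes to relate (ii) and (iv) directly, which your cycle makes unnecessary.
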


\begin{proof}
(i) $\leftrightarrow $ (iv) has been established in \cite{kt}. Since every $%
0$-dimensional Hausdorff space is homeomeorphic with a subspace of a Cantor
cube, it follows that (ii) and (iv) are equivalent. It is obvious that (ii) implies (iii) and (iii)
implies (iv). Finally, (i) $\rightarrow $ (ii) has been established in
Theorem \ref{t04.6}. \medskip 
\end{proof}

\begin{remark}
\label{r04.8} In \cite{et}, a model $\mathcal{M}$ of $\mathbf{ZF}$ was shown
in which there exists a topology $\mathcal{T}_{\omega}$ in $\omega$ such
that $(\omega, \mathcal{T}_{\omega})$ is a dense-in-itself zero-dimensional
Hausdorff space which does not admit denumerable cellular families in $%
\mathcal{M}$. Hence, each one of conditions (i)-(iv) of Theorem \ref{t04.7}
fails in $\mathcal{M}$.
\end{remark}

Let us recall the following known concept (see, for instance, \cite{hr} and 
\cite{tr}):

\begin{definition}
\label{d04.9} An infinite set $A$ is called \textit{amorphous} if, for every
infinite proper subset $B$ of $A$, the set $A\setminus B$ is finite.
\end{definition}

\begin{proposition}
\label{p04.10} The following hold in $\mathbf{ZF}$:

\begin{enumerate}
\item[(i)] If $\mathbf{X}$ is a topological space which admits a regular
matrix (resp., clopen matrix), then $RO(\mathbf{X})$ (resp, $Clop(\mathbf{X}%
) $) is quasi Dedekind-infinite.

\item[(ii)] If every infinite Hausdorff space admits a regular matrix, then
there are no amorphous sets.

\item[(iii)] If every infinite discrete space admits a clopen matrix, then
there are no amorphous sets.
\end{enumerate}
\end{proposition}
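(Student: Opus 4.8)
The plan is to handle (i) directly from the definitions, to isolate as an auxiliary lemma the fact that no amorphous set is quasi Dedekind-infinite, and then to derive (ii) and (iii) together by feeding an amorphous set equipped with the discrete topology into the respective hypothesis.

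For (i), I would simply observe that if $\mathcal{C}=\{\mathcal{C}_{n}:n\in\mathbb{N}\}$ is a regular (resp. clopen) matrix of $\mathbf{X}$, then by Definition~\ref{d04.1} each $\mathcal{C}_{n}$ is a finite subset of $RO(\mathbf{X})$ (resp. of $Clop(\mathbf{X})$) --- that is, a member of $[RO(\mathbf{X})]^{<\omega}$ (resp. of $[Clop(\mathbf{X})]^{<\omega}$) --- and that the $\mathcal{C}_{n}$ are pairwise distinct, so $n\mapsto\mathcal{C}_{n}$ carries $\mathbb{N}$ bijectively onto a denumerable subset of $[RO(\mathbf{X})]^{<\omega}$ (resp. of $[Clop(\mathbf{X})]^{<\omega}$); hence $RO(\mathbf{X})$ (resp. $Clop(\mathbf{X})$) is quasi Dedekind-infinite. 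Nothing else is needed for (i).

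The heart of the argument, and the step I expect to cost the most work (though it is not deep), is the claim that \emph{no amorphous set is quasi Dedekind-infinite}. I would prove it by contradiction: given an amorphous set $A$ together with an injective enumeration $\{F_{n}:n\in\omega\}$ of a denumerable subset of $[A]^{<\omega}$ --- discarding $\emptyset$ if it occurs, so that every $F_{n}$ is non-empty --- set $B=\bigcup_{n\in\omega}F_{n}$. Since $\mathcal{P}(B)$ contains the infinite set $\{F_{n}:n\in\omega\}$, the set $B$ is infinite, hence cofinite in $A$ and therefore amorphous as well. The finite sets $B_{n}=\bigcup_{i\le n}F_{i}$ increase to $B$, and because $B$ is infinite this chain cannot stabilize, so one can recursively select $n_{0}<n_{1}<\cdots$ with $B_{n_{0}}\subsetneq B_{n_{1}}\subsetneq\cdots$ and $\bigcup_{k}B_{n_{k}}=B$. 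Then $P_{0}=B_{n_{0}}$ and $P_{k+1}=B_{n_{k+1}}\setminus B_{n_{k}}$ give a partition of $B$ into infinitely many pairwise disjoint non-empty finite blocks; writing $C$ for the union of the even-indexed blocks and $D$ for the union of the odd-indexed blocks, the sets $C$ and $D$ are disjoint, have union $B$, and are each infinite (each meets infinitely many of these disjoint non-empty blocks). This splits the amorphous set $B$ into two infinite sets, a contradiction; so $[A]^{<\omega}$ is Dedekind-finite. The two points that need care are that $B$ must be shown infinite before the recursion is meaningful, and that each of $C,D$ is infinite.

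Finally, for (ii) and (iii) I would argue by contradiction: assume $A$ is an amorphous set and give it the discrete topology, obtaining an infinite discrete --- in particular Hausdorff --- space $\mathbf{A}$ for which $RO(\mathbf{A})=Clop(\mathbf{A})=\mathcal{P}(A)$ and in which a subset is dense precisely when it equals $A$. The respective hypothesis then supplies a regular matrix (for (ii)) or a clopen matrix (for (iii)) $\{\mathcal{C}_{n}:n\in\mathbb{N}\}$ of $\mathbf{A}$; in both cases each $\mathcal{C}_{n}$ is a partition of $A$ into finitely many non-empty blocks and the $\mathcal{C}_{n}$ are pairwise distinct. Because $A$ is amorphous, every $\mathcal{C}_{n}$ has exactly one infinite block $I_{n}$, which is moreover cofinite, so $S_{n}:=A\setminus I_{n}$ lies in $[A]^{<\omega}$. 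For a fixed finite $S\subseteq A$ there are only finitely many partitions of $A$ into non-empty blocks whose unique infinite block is $A\setminus S$ (they correspond to the finitely many partitions of $S$ into non-empty pieces), so the map $n\mapsto S_{n}$ is finite-to-one; hence $\{S_{n}:n\in\mathbb{N}\}$ is a denumerable subset of $[A]^{<\omega}$ and $A$ is quasi Dedekind-infinite, contradicting the lemma. This shows there are no amorphous sets. (Instead of analysing the matrix directly one could invoke part (i) to see that $RO(\mathbf{A})=\mathcal{P}(A)$ is quasi Dedekind-infinite, but one would still have to pass from $[\mathcal{P}(A)]^{<\omega}$ down to $[A]^{<\omega}$, so little would be gained.)
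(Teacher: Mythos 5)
Your proof is correct. Part (i) is exactly the paper's (the paper dismisses it as trivial, for the reason you give). For (ii) and (iii) the paper likewise reduces everything to an amorphous set $X$ carrying the discrete topology, but it then argues directly on the matrix: from each partition $\mathcal{C}_n$ it extracts the unique \emph{infinite} block $G_n$, shows by the same finite-to-one counting you use (only finitely many distinct partitions share a given cofinite block) that $\{G_n: n\in\mathbb{N}\}$ is infinite, and then, using the fact that two infinite subsets of an amorphous set must meet in an infinite set, builds a strictly decreasing chain $X=E_0\supset E_1\supset\cdots$ of infinite sets whose successive differences partition $X$ into infinitely many finite blocks, from which two disjoint infinite subsets are produced. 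You instead extract the \emph{finite} complements $S_n=A\setminus I_n$, conclude that $A$ is quasi Dedekind-infinite, and appeal to a separately proved lemma that no amorphous set is quasi Dedekind-infinite --- a lemma whose proof ends with precisely the same even/odd splitting of a partition into finite blocks. So the combinatorial core is identical and both arguments are choice-free (your recursions always pick least indices). What your factorization buys is a clean, reusable statement ($[A]^{<\omega}$ is Dedekind-finite for amorphous $A$), which is in effect what makes $\mathbf{IQDI}$ fail in the presence of amorphous sets (cf.\ Corollary \ref{c04.13} and the use of the Basic Fraenkel Model in Proposition \ref{p06.11}); the small extra cost is having to check that the countable image $\{S_n: n\in\mathbb{N}\}$ is genuinely infinite and hence denumerable, which you do correctly.
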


\begin{proof} Condition (i) is trivial and (ii) follows from (iii).  To prove (iii), let us suppose that there exists an amorphous set $X$. Let us consider the discrete space $\mathbf{X}=(X, \mathcal{P}(X))$. Suppose that $\mathbf{X}$ has a clopen matrix $\mathcal{C}=\{\mathcal{C}_n: n\in\mathbb{N}\}$. Since $X$ is amorphous and each $\mathcal{C}_n$ is a finite cellular family such that $X=\bigcup\mathcal{C}_n$, it follows that,
for every $n\in \mathbb{N}$, exactly one of the sets in $\mathcal{C}_{n}$ is infinite. For each $n\in\mathbb{N}$, let $G_n$ be the unique infinite set in $\mathcal{C}_n$. Let $\mathcal{G}=\{G_n: n\in\mathbb{N}\}$. Suppose that $\mathcal{G}$ is finite. Then there exists $n_0\in\mathbb{N}$ such that the set $N=\{n\in\mathbb{N}: G_{n_0}\in\mathcal{C}_n\}$ is infinite. This is impossible because the set $X\setminus G_{n_0}$ is finite, while $\mathcal{C}_m\neq\mathcal{C}_n$ for each pair of distinct elements of $N$. This shows that the collection $\mathcal{G}$ is infinite. We notice that if $A, B$ are distinct infinite subsets of $X$, then $A\cap B$ is infinite because $X$ is amorphous. Hence, since $\mathcal{G}$ is infinite, we can easily define by induction a sequence $(E_n)_ {n\in\omega}$ of infinite subsets of $X$ such that $X=E_0$ and $E_{n+1}\subset E_n$ for each $n\in\omega$. Since $X=\bigcup_{n\in\omega}(E_n\setminus E_{n+1})\cup\bigcap_{n\in\omega}E_n$, we can easily exhibit two disjoint infinite subsets of $X$. This contradicts the assumption that $X$ is amorphous.
\end{proof}

\begin{remark}
\label{r04.11} Given an infinite set $X$, the denumerable subset $\{[X]^n:
n\in\mathbb{N}\}$ of $\mathcal{P}(\mathcal{P}(X))$ witnesses that $\mathcal{P%
}(X)$ is weakly Dedekind-infinite in $\mathbf{ZF}$. If the discrete space $%
\mathbf{X}=(X, \mathcal{P}(X))$ admits a clopen matrix $\{\mathcal{C}_n: n\in%
\mathbb{N}\}$, then the denumerable subset $\{\mathcal{C}_n: n\in\mathbb{N}%
\} $ of $[\mathcal{P}(X)]^{<\omega}$ witnesses that $\mathcal{P}(X)$ is
quasi Dedekind-infinite.
\end{remark}

We recall that $\mathbf{IQDI}(\mathcal{P})$ states that, for every infinite
set $X$, $\mathcal{P}(X)$ is quasi Dedekind-infinite (see Section 1).

\begin{proposition}
\label{p4.12} It holds in $\mathbf{ZF}$ that $\mathbf{IQDI}(\mathcal{P})$ is
equivalent to: Every infinite discrete space admits a clopen matrix.
\end{proposition}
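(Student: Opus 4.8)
The statement is a biconditional, so I would prove the two implications separately.

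\textbf{($\Leftarrow$) Assume every infinite discrete space admits a clopen matrix.} Given an infinite set $X$, consider the discrete space $\mathbf{X}=(X,\mathcal{P}(X))$. By hypothesis, $\mathbf{X}$ has a clopen matrix $\{\mathcal{C}_n: n\in\mathbb{N}\}$. Since the underlying space is discrete, every subset is clopen, so each $\mathcal{C}_n$ is simply a finite partition of $X$ into non-empty sets, and the requirement $\mathcal{C}_m\neq\mathcal{C}_n$ for distinct $m,n$ gives a denumerable injection $n\mapsto\mathcal{C}_n$ into $[\mathcal{P}(X)]^{<\omega}$. Thus $[\mathcal{P}(X)]^{<\omega}$ is Dedekind-infinite, i.e., $\mathcal{P}(X)$ is quasi Dedekind-infinite. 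Since $X$ was an arbitrary infinite set, $\mathbf{IQDI}(\mathcal{P})$ holds. (This direction is essentially the observation recorded in Remark \ref{r04.11}.)

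\textbf{($\Rightarrow$) Assume $\mathbf{IQDI}(\mathcal{P})$.} Let $\mathbf{X}=(X,\mathcal{P}(X))$ be an infinite discrete space; I must produce a clopen matrix. By $\mathbf{IQDI}(\mathcal{P})$, the set $[\mathcal{P}(X)]^{<\omega}$ is Dedekind-infinite, so fix a denumerable family $\{\mathcal{F}_n: n\in\mathbb{N}\}$ of pairwise distinct finite subsets of $\mathcal{P}(X)$. The issue is that the $\mathcal{F}_n$ need not be partitions of $X$ into non-empty sets, so I must manufacture genuine finite clopen cellular covers from them, keeping infinitely many of them distinct. The natural move: for each $n$, let $\mathcal{G}_n$ be the (finite) Boolean subalgebra of $\mathcal{P}(X)$ generated by $\mathcal{F}_n$, and let $\mathcal{C}_n$ be the set of atoms of $\mathcal{G}_n$; then each $\mathcal{C}_n$ is a finite partition of $X$ into non-empty clopen sets, hence a finite cellular family with $\bigcup\mathcal{C}_n=X$, exactly as in the proof of Theorem \ref{t04.4}. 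It remains only to pass to an infinite subcollection on which $n\mapsto\mathcal{C}_n$ is injective. If $\{\mathcal{C}_n: n\in\mathbb{N}\}$ is infinite, relabel via a bijection of $\mathbb{N}$ with $\{\mathcal{C}_n: n\in\mathbb{N}\}$ (using that it is a denumerable, hence well-orderable, set of finite partitions) to get the required clopen matrix. If instead $\{\mathcal{C}_n: n\in\mathbb{N}\}$ were finite, then $\bigcup_{n}\mathcal{G}_n$ would be a finite subalgebra of $\mathcal{P}(X)$ containing every $\mathcal{F}_n$, contradicting that the $\mathcal{F}_n$ are pairwise distinct; so this case cannot occur.

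\textbf{Main obstacle.} The delicate point is the forward direction: the raw Dedekind-infinite witness for $[\mathcal{P}(X)]^{<\omega}$ gives arbitrary finite subsets of $\mathcal{P}(X)$, not clopen cellular covers, and I cannot choose atoms or representatives from uncountably many structures at once without choice. The reason the argument goes through in $\mathbf{ZF}$ is that the passage $\mathcal{F}_n\rightsquigarrow\mathcal{C}_n$ (generated subalgebra, then its atoms) is a definable operation applied term by term to an already-given denumerable sequence, so no additional choice is needed; and the relabelling at the end only requires well-ordering a countable set, which is available in $\mathbf{ZF}$. The finiteness-of-$\{\mathcal{C}_n\}$ case is disposed of exactly as the analogous step in Theorem \ref{t04.4}. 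I expect the write-up to be short, mostly citing Remark \ref{r04.11} for one direction and reusing the atom construction of Theorem \ref{t04.4} for the other.
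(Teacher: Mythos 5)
Your proof is correct and follows essentially the same route as the paper: the backward direction is exactly Remark \ref{r04.11}, and the forward direction generates finite Boolean subalgebras of $\mathcal{P}(X)$ from the given denumerable family of finite subsets and takes their atoms as the finite clopen partitions, just as the paper does by mimicking Theorem \ref{t04.4} (the paper first passes to the increasing unions $\mathcal{B}_n=\bigcup_{i\in n+1}\mathcal{A}_i$ so the subalgebras form an ascending chain, while you generate from each $\mathcal{F}_n$ separately and relabel at the end --- a cosmetic difference). One harmless imprecision: in the finiteness case, $\bigcup_n\mathcal{G}_n$ need not be a subalgebra, but it is a finite set containing every $\mathcal{F}_n$, which is all your contradiction requires.
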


\begin{proof}
Let $X$ be an infinite set. Suppose that $\mathcal{P}(X)$ is quasi Dedekind-infinite. Let $\{\mathcal{A}_n: n\in\omega\}$ be a denumerable set of pairwise distinct elements of $[\mathcal{P}(X)]^{<\omega}$ and, for $n\in\omega$, let $\mathcal{B}_n=\bigcup_{i\in n+1}\mathcal{A}_i$. We may assume that $\mathcal{B}_n\neq\mathcal{B}_{n+1}$ for each $n\in\omega$. We can mimic the proof to Theorem \ref{t04.4} to deduce that the discrete space $\mathbf{X}=(X, \mathcal{P}(X))$ admits a clopen matrix. This, together with Remark \ref{r04.11}, completes the proof.
\end{proof}

\begin{corollary}
\label{c04.13} $\mathbf{IQDI}(\mathcal{P})$ is not provable in $\mathbf{ZF}$%
. More precisely, $\mathbf{IQDI}(\mathcal{P})$ fails in every model of $%
\mathbf{ZFA}$ in which there are amorphous sets.
\end{corollary}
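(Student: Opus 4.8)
The plan is to obtain Corollary~\ref{c04.13} as an immediate consequence of two facts already in hand. By Proposition~\ref{p4.12}, $\mathbf{IQDI}(\mathcal{P})$ is equivalent in $\mathbf{ZF}$ to the assertion that every infinite discrete space admits a clopen matrix; and by Proposition~\ref{p04.10}(iii), the latter assertion implies that there are no amorphous sets, i.e.\ it implies $E(I,Ia)$. Chaining these, one gets (in $\mathbf{ZF}$, hence also in $\mathbf{ZFA}$) the implication $\mathbf{IQDI}(\mathcal{P})\rightarrow E(I,Ia)$. So the only thing left to do is to produce models of $\mathbf{ZFA}$ in which $E(I,Ia)$ fails.

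Next I would invoke a standard model of $\mathbf{ZFA}$ containing an amorphous set, for instance the basic Fraenkel model $\mathcal{N}1$ of \cite{hr}, whose set of atoms is amorphous. There $E(I,Ia)$ is false, hence, by the implication just established, $\mathbf{IQDI}(\mathcal{P})$ is false in $\mathcal{N}1$. The same argument applies verbatim in any model of $\mathbf{ZFA}$ that possesses an amorphous set, which is precisely the ``more precisely'' clause of the corollary: in every such model $\mathbf{IQDI}(\mathcal{P})$ fails.

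To deduce the first (weaker) assertion, that $\mathbf{IQDI}(\mathcal{P})$ is not provable in $\mathbf{ZF}$, I would appeal to the remark recorded in the introduction that every theorem of $\mathbf{ZF}$ is a theorem of $\mathbf{ZFA}$. Were $\mathbf{IQDI}(\mathcal{P})$ provable in $\mathbf{ZF}$, it would then hold in $\mathcal{N}1$, contradicting the previous paragraph; therefore $\mathbf{IQDI}(\mathcal{P})$ is not a theorem of $\mathbf{ZF}$.

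There is essentially no obstacle in this argument, since all of the topological and combinatorial work has already been absorbed into Propositions~\ref{p4.12} and~\ref{p04.10}. The one point deserving a line of care is the transition from ``unprovable in $\mathbf{ZFA}$'' to ``unprovable in $\mathbf{ZF}$'', which is handled by the metatheorem quoted above rather than by a Jech--Sochor embedding; a Jech--Sochor transfer would be needed only if one wished to exhibit an actual $\mathbf{ZF}$-model refuting $\mathbf{IQDI}(\mathcal{P})$, which the corollary does not assert.
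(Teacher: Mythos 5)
Your derivation of the implication $\mathbf{IQDI}(\mathcal{P})\rightarrow E(I,Ia)$ from Proposition \ref{p4.12} together with Proposition \ref{p04.10}(iii) is exactly the paper's argument, and it correctly establishes the ``more precisely'' clause: in every model of $\mathbf{ZFA}$ containing an amorphous set, $\mathbf{IQDI}(\mathcal{P})$ fails.

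The gap is in your deduction of unprovability in $\mathbf{ZF}$. You argue that if $\mathbf{ZF}\vdash\mathbf{IQDI}(\mathcal{P})$ then $\mathbf{IQDI}(\mathcal{P})$ would hold in $\mathcal{N}1$, appealing to the introduction's remark. But that remark asserts only that the theorems proved \emph{in this paper} have proofs valid in $\mathbf{ZFA}$; it is not, and cannot be, the general metatheorem ``every theorem of $\mathbf{ZF}$ is a theorem of $\mathbf{ZFA}$'' (for instance, $\mathbf{MC}\rightarrow\mathbf{AC}$ is a theorem of $\mathbf{ZF}$ that fails in suitable permutation models). Since $\mathcal{N}1$ is a model with atoms and not a model of $\mathbf{ZF}$, refuting $\mathbf{IQDI}(\mathcal{P})$ there yields only unprovability in $\mathbf{ZFA}$. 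To obtain unprovability in $\mathbf{ZF}$ you need either a transfer argument or, as the paper does, a genuine $\mathbf{ZF}$-model containing an amorphous set: the proof cites model $\mathcal{M}37$ of \cite{hr}, which satisfies $\mathbf{ZF}+\neg E(I,Ia)$, for precisely this purpose. With that substitution (a $\mathbf{ZF}$-model being in particular a $\mathbf{ZFA}$-model with no atoms, so your ``more precisely'' clause applies to it) the argument closes.
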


\begin{proof}  Let $\mathcal{M}$ be any model of $\mathbf{ZFA}+\neg E(I, Ia)$. For
instance, model $\mathcal{M}$37 of \cite{hr} is a model $\mathbf{ZF}+\neg E(I, Ia)$. It follows directly from Propositions \ref{p04.10} and \ref{p4.12} that $\mathbf{IQDI}(\mathcal{P})$ fails in $\mathcal{M}$.
\end{proof}

\begin{proposition}
\label{p04.12} $\mathbf{(ZF)}$ Let $\mathbf{X}$ be a Hausdorff space which
has a dense well-orderable subset. Then the following conditions are
satisfied:

\begin{enumerate}
\item[(i)] $\mathbf{X}$ admits a regular matrix if and only if $RO(\mathbf{X}%
)$ is Dedekind-infinite;

\item[(ii)] if $\mathbf{X}$ is zero-dimensional, then $\mathbf{X}$ admits a
clopen matrix if and only if $Clop(\mathbf{X})$ is Dedekind-infinite.
\end{enumerate}
\end{proposition}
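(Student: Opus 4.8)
The plan is to separate the ``if'' and ``only if'' directions of (i) and (ii). The ``if'' directions carry no real content: if $RO(\mathbf{X})$ (resp., $Clop(\mathbf{X})$) is Dedekind-infinite, then $\mathbf{X}$ admits a regular (resp., clopen) matrix directly by Lemma~\ref{l04.3}, and here neither the Hausdorff nor the zero-dimensionality hypothesis is needed. All the work is in the ``only if'' directions, where the dense well-orderable subset is the essential ingredient.

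For the ``only if'' direction of (i), I would fix a dense subset $S$ of $\mathbf{X}$, a well-ordering $\leq$ of $S$, and a regular matrix $\mathcal{C}=\{\mathcal{C}_{n}:n\in\mathbb{N}\}$ of $\mathbf{X}$. Exactly as in the proof of Theorem~\ref{t04.6}(i), the relation $\precsim$ on any cellular family $\mathcal{P}$ of $\mathbf{X}$ given by $O\precsim Q$ iff $\min\{s\in S:s\in O\}\leq\min\{s\in S:s\in Q\}$ is a well-ordering of $\mathcal{P}$: density of $S$ makes each set $O\cap S$ non-empty, and disjointness of the members of a cellular family makes the resulting minima pairwise distinct. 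Applying this to each finite family $\mathcal{C}_{n}$ yields a canonical, choice-free enumeration $\varphi_{n}$ of $\mathcal{C}_{n}$, and hence a single injection of $\mathcal{U}=\bigcup_{n\in\mathbb{N}}\mathcal{C}_{n}$ into $\mathbb{N}\times\omega$ sending each $U\in\mathcal{U}$ to the pair consisting of the least $n$ with $U\in\mathcal{C}_{n}$ and the $\varphi_{n}$-index of $U$; thus $\mathcal{U}$ is countable. Since the $\mathcal{C}_{n}$ are infinitely many pairwise distinct subsets of $\mathcal{U}$, the set $\mathcal{U}$ cannot be finite, hence it is denumerable, and since $\mathcal{U}\subseteq RO(\mathbf{X})$ this shows that $RO(\mathbf{X})$ is Dedekind-infinite. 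The ``only if'' direction of (ii) is proved verbatim, replacing ``regular matrix'' by ``clopen matrix'' and $RO(\mathbf{X})$ by $Clop(\mathbf{X})$; zero-dimensionality is in fact not used for this implication, but is the natural ambient hypothesis when clopen matrices are in play.

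The one step deserving care is the verification that $\mathcal{U}$ is countable in $\mathbf{ZF}$: a countable union of finite sets need not be countable without choice, so it is essential that the enumerations $\varphi_{n}$ be produced \emph{uniformly} from the single fixed structure $(S,\leq)$ rather than chosen family by family. This is the only place the well-orderability of the dense set is exploited; once it is in hand, the remainder is routine bookkeeping.
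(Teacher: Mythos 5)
Your proof is correct and follows essentially the same route as the paper: the ``if'' directions are exactly Lemma~\ref{l04.3}, and the ``only if'' directions reuse the well-ordering $\precsim$ from the proof of Theorem~\ref{t04.6}(i) to enumerate each finite cellular family $\mathcal{C}_n$ uniformly from the fixed well-ordering of the dense set, so that $\mathcal{U}=\bigcup_{n\in\mathbb{N}}\mathcal{C}_n$ is a countable, hence denumerable, subset of $RO(\mathbf{X})$ (resp.\ $Clop(\mathbf{X})$). Your explicit remark that the enumerations must be produced uniformly is precisely the point the paper relies on implicitly.
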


\begin{proof} Let $S$ be a well-orderable dense set in $\mathbf{X}$. In the light of Lemma \ref{l04.3}, to prove (i), it suffices to check that if $\mathbf{X}$ admits a regular matrix, then $RO(\mathbf{X})$ is Dedekind-infinite. So, suppose that $\mathcal{C}=\{\mathcal{C}_n: n\in\mathbb{N}\}$ is a regular matrix of $X$. Let $\mathcal{U}=\bigcup_{n\in\mathbb{N}}\mathcal{C}_n$. In much the same way, as in the proof to Theorem \ref{t04.6}(i), one can show that $\mathcal{U}$ is denumerable, so $RO(\mathbf{X})$ is Dedekind-infinite. The proof to (ii) is similar.
\end{proof}

We are unable to solve the following problem:

\begin{problem}
\label{q04.13} Is it provable in $\mathbf{ZF}$ that, for every infinite
Hausdorff space $\mathbf{X}$ which admits a regular (resp., clopen) matrix,
the set $RO(\mathbf{X})$ (resp., $Clop(\mathbf{X})$) is Dedekind-infinite?
\end{problem}

\begin{remark}
\label{r04.14} Let us notice that if the answer to Problem \ref{q04.13} is
in the affirmative, then so is the answer to Problem \ref{q2}. Indeed,
assume that $\mathbf{IQDI}$ holds and assume that, for every infinite
Hausdorff space $\mathbf{X}$ which admits a regular matrix, $RO(\mathbf{X})$
is Dedekind-infinite. Consider any infinite Hausdorff space $\mathbf{X}$. In
view of Proposition \ref{p02.12}(i), to prove that $\mathbf{X}$ admits a
denumerable cellular family, it suffices to show that $RO(\mathbf{X})$ is
Dedekind-infinite. Since $RO(\mathbf{X})$ is infinite, it follows from
Theorem \ref{t04.4} that $X$ admits a regular matrix. Hence, by our
assumption, $RO(\mathbf{X})$ is Dedekind-infinite.
\end{remark}

\begin{theorem}
\label{t04.15}

\begin{enumerate}
\item[(a)] $\mathbf{(ZF)}$ The following are equivalent:

\begin{enumerate}
\item[(i)] $\mathbf{CAC}(\mathbb{R})$;

\item[(ii)] for every infinite second-countable Hausdorff space $\mathbf{X}$%
, it holds that every base of $\mathbf{X}$ admits a denumerable cellular
family;

\item[(iii)] for every infinite second-countable metrizable space $\mathbf{X}
$, it holds that every base of $\mathbf{X}$ admits a denumerable cellular
family;

\item[(iv)] every base of the Cantor cube $\mathbf{2}^{\omega }$ admits a
denumerable cellular family;

\item[(v)] every base of the real line $\mathbb{R}$ with the natural
topology contains a denumerable cellular family of $\mathbb{R}$.
\end{enumerate}

In particular, $\mathbf{IDI}$ is relatively consistent with $\mathbf{ZF}$
and the negation of the sentence ``for every infinite Hausdorff space $%
\mathbf{X}$, every base of $\mathbf{X}$ contains a denumerable cellular
family of $\mathbf{X}$''.

\item[(b)] It holds in $\mathbf{ZF}$ that the statement \textquotedblleft
for every set $X$, every base of the Cantor cube $\mathbf{2}^{X}$ admits a
denumerable cellular family\textquotedblright\ implies $\mathbf{IDI}$.
\end{enumerate}
\end{theorem}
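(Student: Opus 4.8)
The plan for part (a) is to run the cycle $(i)\Rightarrow(ii)\Rightarrow(iii)\Rightarrow(iv)$, $(iii)\Rightarrow(v)$, and then close it with the two ``converse'' arrows $(iv)\Rightarrow(i)$ and $(v)\Rightarrow(i)$. The implications $(ii)\Rightarrow(iii)\Rightarrow(iv)$ and $(iii)\Rightarrow(v)$ are trivial, since $\mathbf{2}^{\omega}$ and $\mathbb{R}$ are infinite second-countable metrizable spaces. For $(i)\Rightarrow(ii)$, fix an infinite second-countable Hausdorff space $\mathbf{X}$, a countable base $\mathcal{V}=\{V_{m}:m\in\omega\}$ of $\mathbf{X}$, and an arbitrary base $\mathcal{B}$ of $\mathbf{X}$. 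By Proposition~\ref{p02.13}(a) (together with the trivial remark that an infinite discrete second-countable space is denumerable), $\mathbf{X}$ admits a denumerable cellular family $\{U_{n}:n\in\omega\}$. For each $n$, the collection $\mathcal{B}_{n}=\{B\in\mathcal{B}:\emptyset\neq B\subseteq U_{n}\}$ is non-empty, and the assignment $B\mapsto\{m\in\omega:V_{m}\subseteq B\}$ embeds $\mathcal{B}_{n}$ into $\mathcal{P}(\omega)$; composing with the canonical injection $\mathcal{P}(\omega)\hookrightarrow\mathbb{R}$, one obtains a denumerable family of non-empty subsets of $\mathbb{R}$, so $\mathbf{CAC}(\mathbb{R})$ yields $B_{n}\in\mathcal{B}_{n}$ for every $n$; then $\{B_{n}:n\in\omega\}$ is a denumerable cellular family contained in $\mathcal{B}$ (the $B_{n}$ are pairwise disjoint and non-empty, hence distinct).

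For the converses $(iv)\Rightarrow(i)$ and $(v)\Rightarrow(i)$ I would argue contrapositively. Assuming $\neg\mathbf{CAC}(\mathbb{R})$, fix (using the equivalence of $\mathbf{CAC}(\mathbb{R})$ with its ``partial'' form) a denumerable family $\{A_{n}:n\in\omega\}$ of non-empty subsets of $\mathbb{R}$ admitting no choice function on any infinite subfamily. After transporting the $A_{n}$ by fixed homeomorphisms into pairwise disjoint clopen pieces of $\mathbf{2}^{\omega}$ (resp. into the pairwise disjoint intervals $(n,n+1)$ of $\mathbb{R}$), the goal is to build a base $\mathcal{B}^{\ast}$ of $\mathbf{2}^{\omega}$ (resp. of $\mathbb{R}$) such that \emph{every} denumerable cellular subfamily of $\mathcal{B}^{\ast}$ is forced to meet infinitely many pieces and, on each piece it meets, to single out an element of the corresponding $A_{n}$; such a subfamily would then produce a choice function on an infinite subfamily of $\{A_{n}\}$, a contradiction. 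This is the main obstacle of part (a): $\mathcal{B}^{\ast}$ must at the same time be a genuine base (so it cannot be made ``thin'' in any direction) and contain no ``harmless'' denumerable cellular subfamily concentrated in finitely many pieces; ruling out the latter seems to require defining $\mathcal{B}^{\ast}$ self-referentially, the base on each piece again coding a tail of the family $\{A_{n}\}$.

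For part (b) it suffices to deduce $\mathbf{IDI}$, i.e. that every infinite set $Y$ has a denumerable subset. Applying the hypothesis to the standard base $\mathcal{B}(Y)$ of $\mathbf{2}^{Y}$ and invoking Theorem~\ref{t02.18}(i) already gives $\mathbf{IQDI}$, but this is too weak, so one must exploit the freedom in the choice of base. The relevant observation is that any denumerable subset $\{s_{k}:k\in\omega\}$ of the set $S(Y)$ of finite injective sequences of elements of $Y$ yields a denumerable subset of $Y$: concatenating the finite sequences $s_{0},s_{1},s_{2},\dots$ defines a surjection of $\omega$ onto $\bigcup_{k}\operatorname{ran}(s_{k})\subseteq Y$, this set is infinite (a finite union of finite ranges would leave only finitely many injective sequences available), and an infinite surjective image of $\omega$ is denumerable. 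Hence it is enough to show that the hypothesis forces $S(Y)$ to be Dedekind-infinite. I would do this by assuming $S(Y)$ Dedekind-finite and constructing a base $\mathcal{B}^{\ast}$ of $\mathbf{2}^{S(Y)}$ --- for instance starting from the basic sets $[q]$, $q\in Fn(S(Y),2)$, with downward-closed domain and then thinning them --- so that any denumerable cellular subfamily of $\mathcal{B}^{\ast}$ would produce a denumerable subset of $S(Y)$, which is impossible. The technical heart, just as in part (a), is to keep $\mathcal{B}^{\ast}$ an honest base while excluding the degenerate cellular subfamilies that ``stay low in the tree $S(Y)$'' and extract nothing.

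Finally, the ``in particular'' clause of (a) is immediate from $(i)\Leftrightarrow(ii)$: since the statement ``for every infinite Hausdorff space $\mathbf{X}$, every base of $\mathbf{X}$ contains a denumerable cellular family of $\mathbf{X}$'' implies statement $(ii)$, it fails in any model of $\mathbf{ZF}+\neg\mathbf{CAC}(\mathbb{R})$; so it suffices to invoke a model of $\mathbf{ZF}+\mathbf{IDI}$ in which $\mathbf{CAC}(\mathbb{R})$ fails (as is exhibited elsewhere in Section~\ref{s4}).
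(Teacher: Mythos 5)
Your treatment of the easy directions matches the paper's: $(i)\Rightarrow(ii)$ via a fixed denumerable cellular family $\{U_n\}$, the injection $B\mapsto\{m:V_m\subseteq B\}$ of the base into $\mathcal{P}(\omega)$, and an application of $\mathbf{CAC}(\mathbb{R})$; the trivial implications and the ``in particular'' clause are also fine. But there is a genuine gap exactly where you flag ``the main obstacle'': you never construct the bases needed for $(iv)\Rightarrow(i)$, $(v)\Rightarrow(i)$ and part (b), so the hard half of the theorem is not proved. The missing idea is not a self-referential base but a \emph{punctured} one. For $(iv)\Rightarrow(i)$: identify $\mathbb{R}$ with $2^{\omega}$; for $n\in\mathbb{N}$, $p\in 2^{n}$ and $x\in A_{n}$ let $z_{p,x}$ agree with $p$ on $n$ and with $x$ off $n$, and take as base all sets $[p]\setminus\{z\}$ with $z\in\{z_{p,x}:x\in A_{n}\}$. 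Since the cube is dense-in-itself these form a base, and \emph{every} member codes an element of some $A_{n}$, so there are no ``harmless'' members to exclude. Your worry about a cellular subfamily concentrated on finitely many indices also evaporates: disjointness of $[p_m]\setminus\{z_m\}$ and $[p_n]\setminus\{z_n\}$ in a dense-in-itself space forces $[p_m]\cap[p_n]=\emptyset$, and only finitely many pairwise disjoint $[p]$ exist with $\mathrm{dom}(p)$ of fixed length, so infinitely many lengths occur. One recovers a partial choice function by searching the finite, canonically ordered set $\{x:z_n=z_{p_n,x}\}$, and partial choice suffices for $\mathbf{CAC}(\mathbb{R})$. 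The base for $(v)\Rightarrow(i)$ is analogous: rational intervals minus finite selectors meeting each of $A_0,\dots,A_n$ once, after reducing to disjoint dense sets via $\mathbf{CAC}_D(\mathbb{R})\leftrightarrow\mathbf{CAC}(\mathbb{R})$.

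Part (b) has the same gap, and your route through the set $S(Y)$ of finite injective sequences would be hard to repair as stated: the length-$(n+1)$ level of $S(Y)$ is infinite, so the condition ``$h^{-1}(1)$ meets that level in exactly one point'' is not clopen and the corresponding punctured sets would not form a base. The paper instead first extracts $\mathbf{IQDI}$ from the hypothesis (as you do), fixes a disjoint family $\{A_n\}$ of non-empty \emph{finite} subsets of $X$ with union $Y$, and punctures: the sets $[p]\setminus\{h\}$ with $p\in 2^{\bigcup_{i\in n+1}A_i}$, $h\in[p]$ and $|h^{-1}(1)\cap A_{n+1}|=1$ form a base of $\mathbf{2}^{Y}$, and any denumerable cellular subfamily returns the points $y_n\in A_{k_n+1}$ with $h_n(y_n)=1$, giving a denumerable subset of $X$. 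Your concatenation lemma for $S(Y)$ is correct but becomes unnecessary once the levels are finite.
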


\begin{proof} 
(i) $\rightarrow $ (ii) Fix a second-countable Hausdorff space $\mathbf{X}$ and a base $\mathcal{B}$ of $\mathbf{X}$. Let $\mathcal{H}=\{ H_i: i\in\omega\}$ be a countable
base of $\mathbf{X}$. Via a straightforward
induction we construct a denumerable cellular family $\mathcal{C}%
=\{C_{n}: n\in \omega\}\subseteq \mathcal{H}$ as follows.

Let $k_0=\min\{i\in\omega: X\setminus\overline{H_i}\neq\emptyset\}$ and $C_0=H_{k_0}$. Suppose that $n\in\omega$ is such that, for each $i\in n+1$,  we have already defined $k_i\in\omega$ such that, for $C_i=H_{k_i}$, the set $X\setminus\overline{\bigcup_{i\in n+1}C_i}\neq\emptyset$. We terminate the induction by putting 
\begin{equation*}
k_{n+1}=\min\{ j\in\omega\setminus\{k_i: i\in n+1\}: X\setminus\overline{\bigcup_{i\in n+1}C_i\cup H_j}\neq\emptyset\}
\end{equation*}
and $C_{n+1}=H_{k_{n+1}}$.

For every $n\in \omega$, let $\mathcal{A}_{n}=\{B\in \mathcal{B}: B\subseteq
C_{n}\} $. Since $\mathcal{P}(\mathcal{H})$ is equipotent with $\mathbb{R}$, $\mathcal{B}$ is equipotent with a subset of $\mathbb{R}$.  Hence, by $\mathbf{CAC}(\mathbb{R})$, there exists a function $\psi\in\prod_{n\in\omega}\mathcal{A}_n$.   Clearly, $\{ \psi(n): n\in\omega\}$ is a denumerable cellular family contained in $\mathcal{B}$.

(ii) $\rightarrow $ (iii) and (iii) $\rightarrow $ (iv) are
straightforward.\smallskip

(iv) $\rightarrow $ (i) Fix a disjoint family $\mathcal{A}%
=\{A_{n}: n\in \mathbb{N}\}$ of non-empty subsets of $\mathbb{R}$. We assume that (iv) holds and show
that $\mathcal{A}$ has a partial choice function. Since the sets $\mathbb{R}$ and $2^{\omega
}$ are equipotent, we may assume that $A_{n}\subseteq 2^{\omega }$ for every $n\in \mathbb{N}$.
For every $n\in \mathbb{N}$, $p\in 2^{n}$ and $x\in 2^{\omega}$, let $z_{p,x}\in 2^{\omega}$ be the function given by the rule: 
\begin{equation*}
z_{p,x}(i)=\left\{ 
\begin{array}{c}
p(i)\text{ if }i\in n, \\ 
x(i)\text{ if }i\in\omega\setminus n%
\end{array}%
\right. . 
\end{equation*}%
For $n\in\mathbb{N}$ and $p\in 2^n$, we define 
\begin{equation*}
A_{n,p}=\{z_{p,x}: x\in A_{n}\}.
\end{equation*}%
It is straightforward to verify that 
\begin{equation*}
\mathcal{B}^{\prime }(\omega )=\{[p]\setminus \{z\}: p\in 2^{n}, z\in
A_{n,p},n\in \mathbb{N}\}
\end{equation*}%
is a base for $\mathbf{2}^{\omega }$. Let, by our hypothesis,  $\mathcal{C}=\{C_n: n\in\mathbb{N}\}$ be a denumerable cellular family contained in $%
\mathcal{B}^{\prime }(\omega )$. For each  $n\in\mathbb{N}$, there exist a unique $k_n\in\mathbb{N}$, a unique $p_n\in 2^{k_n}$ and a unique $z_n\in A_{k_n, p_n}$ such that $C_n=[p_n]\setminus \{ z_n\}$. Since $\mathbf{2}^{\omega}$ is dense-in-itself, it follows that if $m,n\in\mathbb{N}$ and $m\neq n$, then $[p_n]\cap [p_m]=\emptyset$ because $C_m\cap C_n=\emptyset$. Therefore, $z_m\neq z_n$ for distinct $m,n\in\mathbb{N}$ and, moreover, the set $\{k_n: n\in\mathbb{N}\}$ is infinite. There is a strictly increasing subsequence of the sequence $(k_n)_{n\in\mathbb{N}}$, so, without loss of generality, we may assume that $k_{n}<k_{n+1}$ for each $n\in\mathbb{N}$.  Hence, $z_{n}\in
A_{k_{n},p_{n}}$. For $n\in\mathbb{N}$, let $Z_n=\{x\in 2^{\omega}: z_n=z_{p_n, x}\}$. For each $n\in\mathbb{N}$,  the set $Z_n$ is finite and we can fix a well-ordering $\leq_n$ of  $Z_n$; furthermore $A_{k_n}\cap Z_n\neq\emptyset$. Now, we can definie a partial choice function of $\mathcal{A}$ as follows:  for $n\in\mathbb{N}$, let $f(k_n)$ be the first element of $(A_{k_n}\cap Z_n, {\leq}_n)$. In this way, we have proved that conditions (i)-(iv) are all equivalent. Of course (v) follows from (ii). We show below that (v) implies (i).

(v) $\rightarrow $ (i) Assume that every base of $\mathbb{R}$ contains a denumerable cellular family. In the light of Theorem 3.14 of \cite{kw}, $\mathbf{CAC}(\mathbb{R})$ and $\mathbf{CAC}_D(\mathbb{R})$ are equivalent. Hence, to prove that $\mathbf{CAC}(\mathbb{R})$ holds, it suffices to show that every denumerable disjoint family of dense subsets of $\mathbb{R}$ has a choice function.

 Fix a disjoint family $\mathcal{A}=\{A_i: i\in\omega\}$ of dense subsets of $\mathbb{R}$. Obviously, we assume that $A_i\cap A_j=\emptyset$ for each pair of distinct element $i,j$ of $\omega$.  Let $\mathcal{Q}=\{Q_n: n\in\mathbb{N}\}$ be the family of all open intervals of $\mathbb{R}$ with rational endpoints enumerated in such a way that $Q_m\neq Q_n$ for each pair $m, n$ of distinct  natural numbers. For every $n\in\omega$, let
$$\mathcal{W}_n=\{ Q_{n+1}\setminus F: F\in [\mathbb{R}]^{n+1}, \vert F\cap A_{i}\vert =1\text{ for all } i\in n+1\}.$$
It is easy to verify that $\mathcal{W}=\bigcup_{n\in\omega}\mathcal{W}_n$ is a base of $\mathbb{R}$ with the usual topology. Let, by our hypothesis, $\mathcal{C}=\{C_n: n\in\mathbb{N}\}$ be a denumerable cellular family contained in $\mathcal{W}$. It is straightforward to check that, for every $n\in\mathbb{N}$, there exist a unique $k_n\in\omega$ and a unique $F_n\in [\mathbb{R}]^{k_n+1}$, such that  $C_n=Q_{k_n+1}\setminus F_n$ and $\vert F_n\cap A_i\vert=1$ for each $i\in k_n+1$. For our convenience, we may assume that the sequence $(k_n)_{n\in\omega}$ is strictly increasing. Now, we can define a choice function $f$ of $\mathcal{A}$ as follows: if $i\in k_0+1$, let $f(i)$ be the unique element of $F_0\cap A_i$ and, if $n\in\omega$, then, for $j\in (k_{n+1}+1)\setminus (k_n+1)$, let $f(j)$ be the unique element of $F_{n+1}\cap A_j$. This completes the proof that (v) implies (i).

The second assertion of ($a$) follows from the fact that $\mathbf{IDI}$ holds but $%
\mathbf{CAC}(\mathbb{R})$ fails in Sageev's Model I, that is, in model $\mathcal{M}6$ in 
\cite{hr}. This completes the proof of ($a$).\smallskip

($b$) Assume that, for every infinite set $X$, every base of $\mathbf{2}^X$ contains a denumerable cellular family of $\mathbf{2}^X$.  Fix an infinite set $X$. By Theorem \ref{t04.7} there exists a 
disjoint family $\mathcal{A}=\{A_{n}: n\in \mathbb{N}\}$ of finite non-empty
subsets of $X$. Let $Y=\bigcup \mathcal{A}$. We show that 
$Y$ is Dedekind-infinite. For every $n\in \omega$, let 
\begin{equation*}
\mathcal{B}_{n}=\{[p]\backslash \{h\}: p\in 2^{\cup \{A_{i}: i\in n+1\}},h\in
\lbrack p]\text{ and }|h^{-1}(1)\cap A_{n+1}|=1\}\text{. }
\end{equation*}%
Let $\mathcal{B}=\{\mathcal{B}_{n}: n\in \omega\}$. It is straightforward to verify that $\mathcal{B}$ is a base for $\mathbf{2}^{Y}$. By
our hypothesis, $\mathcal{B}$ admits a denumerable cellular family of $\mathbf{2}^Y$. Let $\mathcal{C}=\{C_n: n\in\omega\}$ be a subfamily of non-empty subsets of $\mathcal{B}$ such that $C_m\cap C_n=\emptyset$ for each pair of distinct $m,n\in\omega$. Arguing in much the same way, as in the proof that (iv) implies (i), we can show that there exists a strictly increasing sequence $(k_n)_{n\in\mathbb{N}}$ of natural numbers such that, for each $n\in\mathbb{N}$, there exist a unique $p_{n}\in 2^{\cup \{A_{i}: i\in k_{n}+1\}}$ and a unique $h_n\in [p_n]$ such that $|h_n^{-1}(1)\cap A_{k_n+1}|=1$ and $C_n=[p_n]\setminus\{ h_n\}$. For each $n\in\mathbb{N}$,  let $y_{n}$ be the unique element of $A_{k_{n}+1}$ with $%
h_n(y_n)=1$. Clearly, $\{y_{n}: n\in \mathbb{N}\}$ is a denumerable subset of 
$Y$, so $Y$ is Dedekind-infinite as required.
\end{proof}

\begin{remark}
\label{r04.16} Let $C$ be the Cantor ternary set in the unit interval $[0,1]$
of the real line $\mathbb{R}$. It is known that $\mathbf{2}^{\omega }$ and $%
\mathbf{C}$ are homeomorphic where $\mathbf{C}$ is $C$ equipped with the
usual topology inherited from $\mathbb{R}$. Let $g:[0,1]\rightarrow \lbrack
0,1]$ be the Cantor continuous increasing function such that $g(C)=[0,1]$.
By using $g$, together with the fact that $\mathbf{CAC}(\mathbb{R})$ and $%
\mathbf{CAC}_{D}(\mathbb{R})$ are equivalent (see Theorem 3.14 of \cite{kw}%
), one can easily prove that $\mathbf{CAC}(\mathbb{R})$ is equivalent to the
following sentence: Every denumerable disjoint family of dense subsets of $%
\mathbf{2}^{\omega }$ has a choice function. Then, arguing similarly to the
proof that (v) implies (i) in Theorem \ref{t04.15}, one can show that (iv)
implies (i) in Theorem \ref{t04.15}.
\end{remark}

\section{Pseudocompactness and paracompactness of Cantor cubes}
\label{s5}

Considering Proposition \ref{p02.22} and Theorem \ref{t04.7}, one may ask
whether it is consistent with $\mathbf{ZF}$ that there exist
non-pseudocompact Cantor cubes. A positive answer to this question can be
deduced from the following theorem:

\begin{theorem}
\label{t05.1} $\mathbf{(ZF)}$ Each one of the following sentences implies $%
\mathbf{CAC}_{fin}$:

\begin{enumerate}
\item[(i)] For every infinite set $X$, $\mathbf{2}^X$ is lightly compact.

\item[(ii)] For every infinite set $X$, $\mathbf{2}^X$ is pseudocompact.

\item[(iii)] For every infinite set $X$, $\mathbf{2}^X$ satisfies conditions 
$(B_1)-(B_5)$ of Proposition \ref{p02.19}.
\end{enumerate}

In consequence, none of the above statements (i)-(iii) is a theorem of $%
\mathbf{ZF}$.
\end{theorem}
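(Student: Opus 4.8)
The plan is to reduce the three implications to a single one and then prove that one by contraposition. First I would record that, for an arbitrary topological space, light compactness implies condition $(B_5)$ of Proposition \ref{p02.19} (a countable, locally finite, disjoint collection of open sets is in particular a locally finite family of open sets), and that each of $(B_1)$--$(B_5)$ implies pseudocompactness, since a non-pseudocompact space carries an infinite, locally finite, disjoint family of open sets by Proposition \ref{p02.22}, contradicting $(B_5)$, while $(B_1)$--$(B_5)$ are mutually equivalent by Proposition \ref{p02.19}. Hence (i) $\Rightarrow$ (iii) $\Rightarrow$ (ii), so it suffices to prove that (ii) implies $\mathbf{CAC}_{fin}$; the final sentence then follows because $\mathbf{CAC}_{fin}$ (Form 10 of \cite{hr}) is not a theorem of $\mathbf{ZF}$.

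So suppose $\mathbf{CAC}_{fin}$ fails. Then there is a denumerable family $\{A_n: n\in\omega\}$ of non-empty finite sets with no choice function; replacing each $A_n$ by $A_n\times\{n\}$ — a choice function for the new family would project to one for the old — I may assume the $A_n$ are pairwise disjoint. Set $X=\bigcup_{n\in\omega}A_n$. Since a finite set cannot contain infinitely many pairwise disjoint non-empty subsets, $X$ is infinite, hence $\mathbf{2}^X$ is pseudocompact by (ii). I will contradict this by producing a continuous unbounded function $\mathbf{2}^X\to\mathbb{R}$.

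For $f\in 2^X$ put $R(f)=\{k\in\omega: |f^{-1}(1)\cap A_i|=1 \text{ for all } i\le k\}$, an initial segment of $\omega$. If $R(f)=\omega$, then $n\mapsto$ (the unique point of $f^{-1}(1)\cap A_n$) is a choice function for $\{A_n: n\in\omega\}$, which is impossible; hence $R(f)$ is finite and I may set $\phi(f)=|R(f)|\in\omega$. Two observations finish the argument. First, $\phi$ is locally constant, hence continuous: if $\phi(f)=k$, then $A_0\cup\dots\cup A_k$ is finite (a finite union of finite sets is finite in $\mathbf{ZF}$), so $[f|_{A_0\cup\dots\cup A_k}]$ is a basic clopen neighbourhood of $f$ on which $R$ — and therefore $\phi$ — is constant. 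Second, $\phi$ is unbounded: for each $k\in\omega$, use finite choice to pick $a_i\in A_i$ for $i<k$, and let $f\in 2^X$ take the value $1$ at each $a_i$ and $0$ at every other point of $X$; then $R(f)=\{0,\dots,k-1\}$, so $\phi(f)=k$. Thus $\phi:\mathbf{2}^X\to\mathbb{R}$ is continuous and unbounded, so $\mathbf{2}^X$ is not pseudocompact, a contradiction.

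I expect the only real subtlety to be choosing the right invariant: the quantity to track is the length of the longest initial run of blocks $A_0,\dots,A_k$ on which $f$ assumes the value $1$ exactly once, and the point is that finiteness of this length for every $f$ is literally equivalent to the non-existence of a choice function for $\{A_n:n\in\omega\}$; everything else (local constancy, unboundedness, finiteness of $A_0\cup\dots\cup A_k$) is routine. As an alternative to the opening reduction, one may instead observe that $\{\phi^{-1}(\{k\}): k\in\omega\}$ is a denumerable, locally finite, cellular family of clopen sets covering $\mathbf{2}^X$, which by Theorem \ref{t02.20} and Proposition \ref{p02.19} simultaneously refutes light compactness, pseudocompactness, and conditions $(B_1)$--$(B_5)$, thereby establishing all three implications at once.
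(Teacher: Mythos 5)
Your proof is correct, and it is organized differently from the paper's, even though the underlying combinatorial object is the same. The paper proves (iii)~$\Rightarrow\mathbf{CAC}_{fin}$ directly by introducing the nested clopen sets $H_{n}=\{f\in 2^{X}: |f^{-1}(1)\cap A_{i}|=1 \text{ for all } i\in n+1\}$, showing $\{H_{n}: n\in\omega\}$ is locally finite, and contradicting the $(B)$-conditions; it then gets (i) from Theorem~\ref{t02.20} and handles (ii) by a separate argument that routes through Corollary~\ref{c03.3}(i), and hence through the nontrivial Theorem~\ref{t3.1}, to extract a clopen cellular cover from which an unbounded continuous function is built. You instead reduce everything to the single implication (ii)~$\Rightarrow\mathbf{CAC}_{fin}$ via the easy chain (i)~$\Rightarrow$~(iii)~$\Rightarrow$~(ii) (using Propositions~\ref{p02.19} and~\ref{p02.22}), and then produce the unbounded continuous map $\phi$ directly as a locally constant function; note that your fibers $\phi^{-1}(\{k\})$ are exactly the differences $H_{k-1}\setminus H_{k}$ of the paper's sets, so the invariant is the same, but packaging it as a locally constant function lets you bypass Theorem~\ref{t3.1} entirely and makes the proof self-contained. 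Two further small differences, both in your favor as to economy: you work with the direct negation of $\mathbf{CAC}_{fin}$ (no full choice function), which is all that is needed to see that $R(f)$ is finite, whereas the paper assumes the absence of a partial choice function; and your closing remark that $\{\phi^{-1}(\{k\}):k\in\omega\}$ is an infinite, disjoint, locally finite clopen cover refutes (i), (ii) and (iii) simultaneously, recovering the paper's conclusion in one stroke. All the individual steps check out: the disjointification $A_{n}\mapsto A_{n}\times\{n\}$ preserves the absence of a choice function, $R(f)$ is an initial segment, $[f|_{A_{0}\cup\dots\cup A_{k}}]$ is indeed a basic clopen set on which $\phi$ is constant, and finite choice suffices for unboundedness.
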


\begin{proof}
To prove that (iii) implies $\mathbf{CAC}_{fin}$, let us  assume (iii) and fix a disjoint family $%
\mathcal{A}=\{A_{n}: n\in \omega \}$ of non-empty finite sets. It suffices to show that $\mathcal{A}$ has a partial
choice function. Assume the contrary and put $%
X=\bigcup \mathcal{A}$. For every $n\in \omega$, let 
\begin{equation*}
H_{n}=\{f\in 2^{X}: \text{for each }i\in n+1,|f^{-1}(1)\cap A_{i}|=1\}\text{.}
\end{equation*}%
To check that, for every $n\in \omega$, $H_{n}$ is open in $\mathbf{2}^{X}$, we notice that if $n\in\omega$ and $h\in
H_{n},$ then $|h^{-1}(1)\cap A_{i}|=1$ for each $i\in n+1$. Hence, for $p=h|\bigcup_{i\in n+1} A_{i}$ and every $g\in \lbrack p],|g^{-1}(1)\cap A_{i}|=1$ if $i\in n+1$. Therefore, $[p]\subseteq H_{n}$ and, in consequence,  $H_{n}$ is open. Using similar arguments, one can
show that, for every $n\in \omega$, $H_{n}^{c}=2^X\setminus H_n$ is open. Hence each $%
H_{n} $ is a clopen subset of $\mathbf{2}^{X}$. 

We claim that the collection $\mathcal{H}%
=\{H_{n}: n\in \omega \}$ is locally finite. To this end, fix $f\in 2^{X}$.
Since $\mathcal{A}$ has no partial choice function, it follows that there exists $%
n_{f}\in \omega$ such that for every $n\in \omega\setminus n_f$,  $|f^{-1}(1)\cap A_{n}|\neq 1$. Hence, $[f|A_{n_{f}}]$ is a
neigborhood of $f$ avoiding each $H_{n}$ for $n\in\omega\setminus n_{f}$. Thus, $\mathcal{H}$ is
locally finite as claimed. On the other hand, by (iii), $\mathbf{2}^{X}$ has no
infinite locally finite families of open sets. This contradiction shows that 
$\mathcal{A}$ has a partial choice function as required.\smallskip

That (i) implies $\mathbf{CAC}_{fin}$ follows from the observation that (iii) implies $\mathbf{CAC}_{fin}$ and
(i) implies (iii) by Theorem \ref{t02.20}.\smallskip

To prove that (ii) implies $\mathbf{CAC}_{fin}$, we fix $\mathcal{A}$ and $\mathcal{H}$
as in the proof of (iii) $\rightarrow $ $\mathbf{CAC}_{fin}$. Suppose that $%
\mathcal{A}$ does not have a partial choice function. As we have observed in the proof of (iii) $%
\rightarrow $ $\mathbf{CAC}_{fin}$, for every $n\in\omega$, the set $H_{n}$ is clopen in $\mathbf{2}^X$. Since $\mathcal{H}$ is locally finite, it follows from Corollary  \ref%
{c03.3}(i) that $\mathbf{2}^{X}$ admits a denumerable cellular family $%
\mathcal{C}=\{C_{n}: n\in \omega\}$ of clopen sets which is a cover of $\mathbf{2}^X$. 
 Define a function $f: \mathbf{2}^{X}\rightarrow \mathbb{R}$ by
requiring: $f(x)=n$ for each $n\in\omega$ and each $x\in C_n$. Since $\mathcal{C}$ is a disjoint collection
of clopen subsets of $2^{X}$ and the restriction of $f$ to each member of
the collection is continuous, it follows that $f$ is continuous. Since $f$
is unbounded, $\mathbf{2}^{X}$ is not pseudocompact. Therefore  (ii) implies $\mathbf{CAC}_{fin}$.
\end{proof}

\begin{corollary}
\label{c05.2} In every model of $\mathbf{ZF}+\neg\mathbf{CAC}_{fin}$, there
exist Cantor cubes that are not pseudocompact. In particular, in Pincus'
Model I ($\mathcal{M}4$ in \cite{hr}) and in Cohen's Second Model ($\mathcal{%
M}7$ of \cite{hr}), there exist Cantor cubes that are not pseudocompact.
\end{corollary}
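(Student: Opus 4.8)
The plan is to obtain Corollary \ref{c05.2} as an immediate contrapositive of Theorem \ref{t05.1}(ii). First I would fix an arbitrary model $\mathcal{M}$ of $\mathbf{ZF}$ in which $\mathbf{CAC}_{fin}$ fails. Theorem \ref{t05.1} asserts that statement (ii) of that theorem, namely ``for every infinite set $X$, $\mathbf{2}^{X}$ is pseudocompact'', implies $\mathbf{CAC}_{fin}$; since $\mathbf{CAC}_{fin}$ is false in $\mathcal{M}$, statement (ii) cannot hold in $\mathcal{M}$ either. Hence there is in $\mathcal{M}$ an infinite set $X$ whose Cantor cube $\mathbf{2}^{X}$ fails to be pseudocompact, which is exactly the first assertion of the corollary.

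For the named models, I would simply invoke the tables of \cite{hr}: the principle $\mathbf{CAC}_{fin}$ is Form 10 there, and it fails both in Pincus' Model I ($\mathcal{M}4$) and in Cohen's Second Model ($\mathcal{M}7$); in each case there is a denumerable disjoint family of non-empty finite sets admitting no partial choice function. Applying the general statement just proved to $\mathcal{M}4$ and to $\mathcal{M}7$ then yields the ``in particular'' clause. I do not expect any genuine obstacle here: the corollary is a direct logical consequence of Theorem \ref{t05.1}, and the only point requiring care is confirming the status of Form 10 in the two specified models, which is a routine consultation of \cite{hr}.
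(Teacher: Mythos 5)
Your proposal is correct and coincides with the paper's intent: the corollary is stated without proof precisely because it is the immediate contrapositive of Theorem \ref{t05.1}(ii), combined with the fact (recorded in \cite{hr}) that $\mathbf{CAC}_{fin}$ (Form 10) fails in $\mathcal{M}4$ and $\mathcal{M}7$. Nothing further is needed.
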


To avoid misunderstanding, let us recall the following definition:

\begin{definition}
\label{d05.3} A topological space $\mathbf{X}$ is called (\textit{countably}%
) \textit{paracompact} if every (countable) open cover of $\mathbf{X}$ has a
locally finite open refinement.
\end{definition}

We obtain the following new results by applying Theorems \ref{t3.1} and \ref%
{t04.7}:

\begin{theorem}
\label{t05.4} $\mathbf{(ZF)}$ If every Cantor cube is countably paracompact,
then $\mathbf{IQDI}$ holds.
\end{theorem}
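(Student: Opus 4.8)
The plan is to argue by contraposition: assuming every Cantor cube is countably paracompact, I want to deduce $\mathbf{IQDI}$. So suppose instead that $\mathbf{IQDI}$ fails. By Theorem~\ref{t04.7} (equivalently, by Theorem~\ref{t02.18}(i)) there is an infinite set $X$ such that $\mathbf{2}^{X}$ admits no denumerable cellular family; equivalently, $[X]^{<\omega}$ is Dedekind-finite (and infinite). The goal is to contradict this by producing a denumerable cellular family of $\mathbf{2}^{X}$, using only that $\mathbf{2}^{X}$ --- being a Cantor cube --- is countably paracompact.

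The first step is an auxiliary fact, provable in $\mathbf{ZF}$: \emph{a countably paracompact topological space $\mathbf{Y}$ that admits no denumerable cellular family is countably compact.} To see this, take a countable open cover $\{U_{n}: n\in\omega\}$ of $\mathbf{Y}$, put $V_{n}=\bigcup_{i\le n}U_{i}$, and suppose no $V_{n}$ equals $Y$; then $K_{n}=Y\setminus V_{n}$ is a decreasing sequence of non-empty closed sets with $\bigcap_{n}K_{n}=\emptyset$. Choose, by countable paracompactness, a locally finite open refinement $\mathcal{W}$ of the cover $\{V_{n}\}$ and set $G_{k}=\bigcup\{W\in\mathcal{W}:W\not\subseteq V_{k}\}$. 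Then the $G_{k}$ are open, decreasing, satisfy $G_{k}\supseteq K_{k}$, and $\bigcap_{k}\overline{G_{k}}=\emptyset$ (a neighbourhood of a given point meets only finitely many members of $\mathcal{W}$, each of which lies in some $V_{n_i}$; for $k$ exceeding all the $n_i$ such a neighbourhood is disjoint from $G_{k}$). Now the sets $G_{k}\setminus\overline{G_{k+1}}$ ($k\in\omega$) are pairwise disjoint open sets. If infinitely many of them are non-empty, they form a denumerable cellular family of $\mathbf{Y}$, contrary to hypothesis. If only finitely many are non-empty, then $\overline{G_{k}}$ is eventually constant, hence --- as $\bigcap_{k}\overline{G_{k}}=\emptyset$ --- eventually empty, so some $K_{k}=\emptyset$, i.e.\ some $V_{k}=Y$, a contradiction. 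Applying this to $\mathbf{Y}=\mathbf{2}^{X}$ gives: $\mathbf{2}^{X}$ is countably compact.

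The second step is to reach a contradiction from ``$\mathbf{2}^{X}$ is countably compact'' together with ``$[X]^{<\omega}$ is Dedekind-finite and infinite''. I expect this to be the main obstacle. What is needed is a countable open cover of $\mathbf{2}^{X}$ with no finite subcover --- equivalently, a decreasing $\omega$-indexed sequence of non-empty closed subsets of $\mathbf{2}^{X}$ with empty intersection. The natural source of a denumerable scaffolding is the layering $[X]^{<\omega}=\bigsqcup_{k\in\omega}[X]^{k}$, which is available precisely because $X$ is infinite; inside $\mathbf{2}^{X}$ it is mirrored by the increasing chain of closed nowhere dense sets $\overline{D_{k}}=\{f\in 2^{X}:|f^{-1}(1)|\le k\}$, whose union is the dense set of finitely supported functions. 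The subtlety --- and why this step is delicate --- is that Dedekind-finiteness of $[X]^{<\omega}$ blocks the obvious covers by basic clopen sets (any such cover without a finite subcover would yield a denumerable family of pairwise distinct finite subsets of $X$), so the escaping cover must be extracted from this layering in a less direct fashion.

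Once such a cover $\{U_{n}: n\in\omega\}$ is in hand, the proof concludes as follows. Countable paracompactness of $\mathbf{2}^{X}$ produces a locally finite open cover $\{W_{n}: n\in\omega\}$ with $W_{n}\subseteq U_{n}$; since $\{U_{n}\}$ has no finite subcover, infinitely many $W_{n}$ are non-empty, so $\{W_{n}: W_{n}\neq\emptyset\}$ is a denumerable locally finite family of non-empty open sets of $\mathbf{2}^{X}$. By Theorem~\ref{t3.1}(iii) (and the countability of the cellular family constructed in its proof), $\mathbf{2}^{X}$ then admits a denumerable cellular family, contradicting the choice of $X$. This contradiction establishes $\mathbf{IQDI}$, completing the argument.
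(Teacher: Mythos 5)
Your reduction to ``$\mathbf{2}^{X}$ admits a denumerable cellular family'' via Theorem \ref{t04.7} (or Theorem \ref{t02.18}(i)) is right, and your auxiliary lemma --- a countably paracompact space with no denumerable cellular family is countably compact --- is a correct $\mathbf{ZF}$ argument, a nice sharpening of Proposition \ref{p02.23} under countable paracompactness. The genuine gap is exactly the one you flag yourself: you never produce a countable open cover of $\mathbf{2}^{X}$ with no finite subcover, and everything after ``once such a cover is in hand'' is conditional on an object you have not constructed. For comparison, the paper fills this slot with the family $D_{n}=\{f\in 2^{X}:|f^{-1}(1)|\geq n+1\}$, but that family is not a cover of $\mathbf{2}^{X}$ (it misses the constant zero function), and since the $D_{n}$ are decreasing, every $W\subseteq D_{k+1}$ automatically satisfies $W\subseteq D_{i}$ for all $i\in k+1$, so the paper's sets $H_{k+1}$ are all empty and the resulting family $\mathcal{H}$ is not denumerable. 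So the paper does not actually close the gap either.

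Worse, the gap cannot be closed, because the object you need provably fails to exist in some models satisfying the hypothesis. In Cohen's original model $\mathcal{M}1$ the Boolean prime ideal theorem holds (Halpern--L\'{e}vy), and in $\mathbf{ZF}$ the Boolean prime ideal theorem implies that every Cantor cube $\mathbf{2}^{X}$ is compact; compact spaces are trivially countably paracompact, so every Cantor cube is countably paracompact in $\mathcal{M}1$. On the other hand, as the paper itself observes in the proof of Proposition \ref{p02.13}(c), $\mathbf{IQDI}$ fails in $\mathcal{M}1$: the set $A$ of added Cohen reals is Dedekind-finite and $\mathbf{CAC}_{fin}$ holds there, so $[A]^{<\omega}$ is Dedekind-finite. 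For this $A$ the cube $\mathbf{2}^{A}$ is compact, a fortiori countably compact, and no countable open cover of it without a finite subcover exists; your second step, and with it the whole contrapositive strategy, cannot be carried out. What your difficulty really reveals is that the statement of Theorem \ref{t05.4} conflicts with $\mathcal{M}1$, so the obstacle you identified is not one you could have removed by a cleverer construction.
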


\begin{proof}
Let us suppose that all Cantor cubes are countably paracompact. In view of Theorem \ref{t04.7}, to prove that $\mathbf{IQDI}$ holds, it suffices to show that, for every infinite
set $X$, the Cantor cube $\mathbf{2}^{X}$ has a denumerable cellular family. Let us fix an
infinite set $X$. Clearly, for every $n\in \omega$, the set 
\begin{equation*}
D_{n}=\{f\in 2^{X}: |f^{-1}(1)|\geq n+1\}
\end{equation*}%
is open in $\mathbf{2}^{X}$. Since $\mathbf{2}^{X}$ is countably paracompact, there exists a locally finite open refinement $\mathcal{W}$  of the open cover $\mathcal{D}=\{D_{i}: i\in
\omega \}$. We define a collection $\mathcal{H}=\{ H_i: i\in\omega\}$ as follows:
\begin{equation*}
H_{0}=\bigcup \{W\in \mathcal{W}: W\subseteq D_{0}\}
\end{equation*}%
\noindent and, for each $k\in\omega$,  
\begin{equation*}
H_{k+1}=\bigcup \{W\in \mathcal{W}: W\subseteq D_{k+1}\text{ and }W\nsubseteq
D_{i}\text{ where } i\in k+1\}\text{.}
\end{equation*}%
Clearly, $\mathcal{H}$ is a locally finite open cover
of $\mathbf{2}^{X}$. Hence, by Theorem \ref{t3.1}, $\mathbf{2}^{X}$ admits a denumerable
(locally finite) cellular family. This completes the proof of the
theorem.
\end{proof}

\begin{corollary}
\label{c05.5}

\begin{enumerate}
\item[(i)] In every model of $\mathbf{ZF+\neg IQDI}$, there exist Cantor
cubes that are not countably paracompact.

\item[(ii)] If $\mathcal{M}$ is a model of $\mathbf{ZF}$ in which all Cantor
cubes are countably paracompact, then $\mathbf{I0}dim\mathbf{HS}(cell,
\aleph_0)$ holds in $\mathcal{M}$.
\end{enumerate}
\end{corollary}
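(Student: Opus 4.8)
Both parts are immediate consequences of Theorem~\ref{t05.4} together with earlier results, so the plan is simply to package the contrapositive and a chain of implications.

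For part~(i), the plan is to argue by contraposition of Theorem~\ref{t05.4}. Fix a model $\mathcal{M}$ of $\mathbf{ZF}+\neg\mathbf{IQDI}$. If it were the case that in $\mathcal{M}$ every Cantor cube is countably paracompact, then Theorem~\ref{t05.4} (which is a theorem of $\mathbf{ZF}$, hence holds in $\mathcal{M}$) would force $\mathbf{IQDI}$ to hold in $\mathcal{M}$, contradicting the choice of $\mathcal{M}$. Therefore there is at least one infinite set $X$ in $\mathcal{M}$ for which $\mathbf{2}^{X}$ fails to be countably paracompact.

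For part~(ii), suppose $\mathcal{M}\models\mathbf{ZF}$ and in $\mathcal{M}$ every Cantor cube is countably paracompact. Applying Theorem~\ref{t05.4} inside $\mathcal{M}$ yields that $\mathbf{IQDI}$ holds in $\mathcal{M}$. By Theorem~\ref{t04.7} (equivalence of (i) and (ii)), or equally by Theorem~\ref{t04.6}(iv), $\mathbf{IQDI}$ implies $\mathbf{I0}dim\mathbf{HS}(cell,\aleph_{0})$ in $\mathbf{ZF}$; hence $\mathbf{I0}dim\mathbf{HS}(cell,\aleph_{0})$ holds in $\mathcal{M}$, as required.

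There is no genuine obstacle here: the corollary is a formal rephrasing of Theorem~\ref{t05.4} plus the already-established implication $\mathbf{IQDI}\Rightarrow\mathbf{I0}dim\mathbf{HS}(cell,\aleph_{0})$. The only point worth a sentence of care is noting that Theorem~\ref{t05.4} is stated in $\mathbf{ZF}$, so it is available inside any $\mathbf{ZF}$-model $\mathcal{M}$, which is exactly what legitimizes the relativization step in each part.
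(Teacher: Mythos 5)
Your argument is correct and matches the paper's (implicit) reasoning exactly: the paper states this as an immediate corollary of Theorem~\ref{t05.4}, with part~(i) being its contrapositive relativized to a model and part~(ii) following by composing Theorem~\ref{t05.4} with the implication $\mathbf{IQDI}\Rightarrow\mathbf{I0}dim\mathbf{HS}(cell,\aleph_{0})$ from Theorem~\ref{t04.6}(iv)/Theorem~\ref{t04.7}. Nothing is missing.
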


\begin{remark}
\label{r05.6} We recall that $\mathbf{MP}$ states that all metrizable spaces
are paracompact (see the list of forms in Section 1). In \cite{gtw}, it was
proved, by a forcing argument, that $\mathbf{MP}$ is not a theorem of $%
\mathbf{ZF}$. In fact, it was shown in \cite{gtw} that even the Principle of
Dependent Choices ($\mathbf{DC}$) (see Form 43 in \cite{hr} ) does not imply 
$\mathbf{MP}$. To the best of our knowledge, it is unknown whether $\mathbf{%
MP}$ implies any weak form of the axiom of choice mentioned in Section 1.
\end{remark}

In view of Theorem \ref{t05.4} and Proposition 2.3 of \cite{ew}, one may
suspect that it is relatively consistent with $\mathbf{ZF}$ the existence of
an infinite set $X$ such that the Cantor cube $\mathbf{2}^X$ is metrizable
and not paracompact. However, we can state the following theorem:

\begin{theorem}
\label{t05.7} $\mathbf{(ZF)}$ Every metrizable Cantor cube is paracompact.
\end{theorem}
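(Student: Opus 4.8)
The plan is to split the argument into a structural input and a self-contained combinatorial core. The structural input is the characterization of metrizable Cantor cubes from \cite{ew}: in $\mathbf{ZF}$, $\mathbf{2}^{X}$ is metrizable if and only if $X$ is a countable union of finite sets. (If one wants to avoid quoting this, it is easy to see directly: fix a compatible metric $d$ and, for $n\in\mathbb{N}$, let $H_{n}=\{x\in X: B_{d}(\mathbf{0},1/n)\subseteq\{f\in 2^{X}: f(x)=0\}\}$, where $\mathbf{0}$ is the constantly $0$ function; each $H_{n}$ is finite, since otherwise the points $\chi_{\{x\}}$, $x\in H_{n}$, would all lie at distance $\ge 1/n$ from $\mathbf{0}$ and yet accumulate at $\mathbf{0}$ in the product topology, which is impossible in a metric space, and clearly $X=\bigcup_{n}H_{n}$.) So I may assume $X=\bigcup_{n\in\omega}H_{n}$ where $H_{0}=\emptyset$, each $H_{n}$ is finite, and $H_{n}\subseteq H_{n+1}$; it then suffices to prove in $\mathbf{ZF}$ that such a Cantor cube is paracompact.

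For each $n\in\omega$ put $\mathcal{P}_{n}=\{[p]: p\in 2^{H_{n}}\}$, where $2^{H_{n}}$ is the finite set of all maps $H_{n}\to 2$ and $[p]=\{f\in 2^{X}: f|_{H_{n}}=p\}$. Then each $\mathcal{P}_{n}$ is a \emph{finite} partition of $2^{X}$ into non-empty clopen sets, $\mathcal{P}_{n+1}$ refines $\mathcal{P}_{n}$, and $\mathcal{B}=\bigcup_{n\in\omega}\mathcal{P}_{n}$ is a base of $\mathbf{2}^{X}$, because every finite subset of $X$ lies in some $H_{n}$. For $f\in 2^{X}$ write $P_{n}(f)$ for the unique block of $\mathcal{P}_{n}$ containing $f$; these blocks are nested, $P_{0}(f)\supseteq P_{1}(f)\supseteq\cdots$.

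Now let $\mathcal{U}$ be an arbitrary open cover of $\mathbf{2}^{X}$. Let $\mathcal{B}'=\{P\in\mathcal{B}: P\subseteq U\text{ for some }U\in\mathcal{U}\}$; since $\mathcal{B}$ is a base, $\mathcal{B}'$ is still a cover. For $f\in 2^{X}$, set $m(f)=\min\{n: P_{n}(f)\in\mathcal{B}'\}$ (well-defined: if $P\in\mathcal{B}'\cap\mathcal{P}_{k}$ and $f\in P$ then $P=P_{k}(f)$), put $V(f)=P_{m(f)}(f)$, and let $\mathcal{V}=\{V(f): f\in 2^{X}\}$. The claim is that $\mathcal{V}$ is a locally finite open refinement of $\mathcal{U}$ covering $\mathbf{2}^{X}$. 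It covers, since $f\in V(f)$; it refines $\mathcal{U}$, since each $V(f)\in\mathcal{B}'$. The only real point is that $\mathcal{V}$ is a disjoint family: if $h\in V(f)\cap V(g)$ with $m(f)\le m(g)$, then, since the $\mathcal{P}_{n}$ refine each other, $h\in P_{m(g)}(g)\subseteq P_{m(f)}(g)$, so $P_{m(f)}(g)=P_{m(f)}(h)=P_{m(f)}(f)\in\mathcal{B}'$, which forces $m(g)\le m(f)$; hence $m(f)=m(g)$ and then $V(f)=P_{m(f)}(h)=V(g)$. A disjoint family of open sets is locally finite (each of its members is a neighbourhood of each of its points meeting no other member), so $\mathbf{2}^{X}$ is paracompact.

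I expect the main obstacle to be purely conceptual: making sure the construction does not smuggle in a choice principle. It does not --- $\mathcal{B}$, $\mathcal{B}'$, the function $m(\cdot)$, the function $V(\cdot)$ and the family $\mathcal{V}$ are all given by explicit $\mathbf{ZF}$-formulas, and the crucial feature enabling this is that every $\mathcal{P}_{n}$ is \emph{finite}, so ``the least level at which $f$'s partition block already sits inside some member of $\mathcal{U}$'' is a legitimate definition requiring no selection. This finiteness is exactly what metrizability buys (via $X$ being a countable union of finite sets); by contrast the classical theorem that metrizable spaces are paracompact genuinely needs $\mathbf{AC}$, and $\mathbf{MP}$ fails in $\mathbf{ZF}$ (Remark \ref{r05.6}), so the special structure of Cantor cubes is essential and the only external ingredient is the metrizability characterization of \cite{ew}.
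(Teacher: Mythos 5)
Your proof is correct, and its core is genuinely different from the paper's. Both arguments start from the same structural fact (Theorem 2.2 of \cite{ew}: $\mathbf{2}^{X}$ metrizable implies $X$ is a countable increasing union of finite sets $H_n$), and your direct derivation of that fact from a compatible metric is also sound and choice-free. From there the paper takes a short cut: it observes that $\mathcal{B}=\bigcup_n\{[p]:p\in 2^{H_n}\}$ is a $\sigma$-locally finite base and invokes Theorem 2 of \cite{hkrs} to conclude paracompactness. You instead give a self-contained construction: the nested finite clopen partitions $\mathcal{P}_n$ let you define, for any open cover $\mathcal{U}$, the canonical ``first level at which $f$'s block fits inside a member of $\mathcal{U}$,'' and your disjointness argument for the resulting family $\mathcal{V}$ is correct (the minimality of $m(\cdot)$ forces $m(f)=m(g)$ whenever $V(f)\cap V(g)\neq\emptyset$, and a disjoint open cover is trivially locally finite). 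No choice is used anywhere, since every $\mathcal{P}_n$ is finite and all objects are defined by explicit formulas. What each approach buys: the paper's version is shorter but rests on an external $\mathbf{ZF}$ theorem about $\sigma$-locally finite bases; yours is longer but elementary, avoids \cite{hkrs} entirely, and in fact proves the stronger statement that every open cover of a metrizable Cantor cube admits a \emph{disjoint clopen} refinement (ultraparacompactness), which is more than Theorem \ref{t05.7} claims.
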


\begin{proof}
 Let $X$ be an infinite set such that the Cantor cube $\mathbf{2}^X$ is metrizable. By Theorem 2.2 of \cite{ew}, X can be expressed as the union of a strictly ascending family $\{A_n: n\in\omega\}$ of non-empty finite subsets of X. For every $n\in\omega$, let 
$\mathcal{B}_n=\{[p]: p\in 2^{A_n}\}$. Then $\mathcal{B}=\bigcup_{n\in\omega}\mathcal{B}_n$ is a $\sigma$-locally finite base of $\mathbf{2}^X$. By Theorem 2 of \cite{hkrs}, $\mathbf{2}^X$ is paracompact.

\end{proof}

\section{$\mathbf{IDI}_F$ and towers of infinite Boolean algebras}
\label{s6}

\begin{definition}
\label{d06.1} Let $\mathcal{A}$ be a collection of finite sets such that $|%
\mathcal{A}|\geq 2$. Then:

\begin{enumerate}
\item[(i)] a finite set $r$ such that $r=x\cap y$ for each pair $x,y$ of
distinct sets from $\mathcal{A}$ is called a \textit{root} of $\mathcal{A}$;

\item[(ii)] $\mathcal{A}$ is called a $\Delta $\textit{-system} if it has a
root.
\end{enumerate}
\end{definition}

The following lemma concerning $\Delta $\textit{-}systems is well known. We
show that its proof can be given in $\mathbf{ZF}$.

\begin{lemma}
\label{l06.2} $(\mathbf{ZF})$ For a fixed $k\in\mathbb{N}$, let $\mathcal{A}$
be a denumerable family of $k$-sized sets. Then, there exists an infinite
subcollection $\mathcal{B}$ of $\mathcal{A}$ such that $\mathcal{B}$ is a $%
\Delta$-system with a root $r$.
\end{lemma}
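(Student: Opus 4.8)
The plan is to argue by induction on $k\in\mathbb{N}$, exploiting the fact that $\mathcal{A}$, being denumerable, comes with a bijective enumeration $\mathcal{A}=\{A_{n}:n\in\omega\}$ in which $A_{m}\neq A_{n}$ whenever $m\neq n$. Every choice made along the way will be of one of two harmless kinds — a single existential instantiation, or a canonical ``least index'' selection along a recursion on $\omega$ — so the argument will go through in $\mathbf{ZF}$. For the base case $k=1$, the sets $A_{n}$ are pairwise distinct singletons, hence pairwise disjoint, and $\mathcal{B}=\mathcal{A}$ is a $\Delta$-system with root $r=\emptyset$.

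For the inductive step I would assume $k\geq 2$ and that the lemma holds for $k-1$, and split into two exhaustive cases. In the first case, some element $x$ belongs to infinitely many members of $\mathcal{A}$; fix one such $x$ and put $N=\{n\in\omega:x\in A_{n}\}$. Then $N$ is an infinite subset of $\omega$, hence denumerable, and the map $n\mapsto A_{n}\setminus\{x\}$ is injective on $N$ (the sets $A_{n}$, $n\in N$, are distinct and all contain $x$), so $\{A_{n}\setminus\{x\}:n\in N\}$ is a denumerable family of $(k-1)$-element sets. Applying the induction hypothesis and pulling the resulting infinite $\Delta$-subsystem back along that bijection to an infinite set $M\subseteq N$ with root $r'$, one checks that $A_{m}\cap A_{n}=r'\cup\{x\}$ for distinct $m,n\in M$; thus $\{A_{n}:n\in M\}$ is a $\Delta$-system with root $r'\cup\{x\}$.

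In the complementary case, every point of $\bigcup\mathcal{A}$ lies in only finitely many members of $\mathcal{A}$, and I would extract an infinite pairwise disjoint subfamily (with root $\emptyset$) by recursion on $\omega$: set $n_{0}=0$, and, having chosen $n_{0}<\dots<n_{j}$ with $A_{n_{0}},\dots,A_{n_{j}}$ pairwise disjoint, observe that $F=A_{n_{0}}\cup\dots\cup A_{n_{j}}$ is finite and each of its points meets only finitely many $A_{n}$, so $\{n\in\omega:A_{n}\cap F\neq\emptyset\}$ is a finite union of finite sets, hence finite; let $n_{j+1}$ be the least index exceeding $n_{j}$ lying outside this finite set. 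Each step of the recursion is fully determined, so no choice is used, and $\{A_{n_{j}}:j\in\omega\}$ is the required infinite $\Delta$-system.

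The only genuinely delicate point — and hence the main thing to be careful about — is keeping the proof free of the axiom of choice: one must use that a denumerable $\mathcal{A}$ furnishes an honest enumeration, that in $\mathbf{ZF}$ every infinite subset of $\omega$ is denumerable and every finite union of finite sets is finite, and that the dichotomy ``some point lies in infinitely many $A_{n}$, or no point does'' is a legitimate case split, so that the two cases above are exhaustive without any appeal to choice.
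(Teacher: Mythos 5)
Your proof is correct, and every selection you make is either a single existential instantiation or a least-index choice along a recursion on $\omega$, so it is indeed a $\mathbf{ZF}$ proof. It is, however, organized around a different dichotomy than the paper's argument. Both proofs induct on $k$ and both terminate in one of two ways --- either by extracting an infinite pairwise disjoint subfamily (root $\emptyset$) or by stripping off a common part and invoking the induction hypothesis --- but the case split driving this is not the same. You split on whether some single point $x$ lies in infinitely many $A_n$: if so, you delete $x$ and drop from $k$ to $k-1$; if not, the family is point-finite and the greedy least-index recursion produces a disjoint subfamily (your observation that $\{n:A_n\cap F\neq\emptyset\}$ is a finite union of finite sets is exactly what makes that recursion run). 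This is the classical textbook proof of the countable $\Delta$-system lemma. The paper instead first quotients $\mathcal{A}$ by the ``chain-connectedness'' equivalence relation (an infinite quotient immediately yields a disjoint subfamily), and within an infinite class it asks whether there is a \emph{finite hitting set} $F$; if so, a pigeonhole on the finitely many traces $B\cap F$ produces an infinite subfamily with a common trace $S$, and the induction hypothesis is applied to the sets $B\setminus S$, which may drop the size by more than $1$ at a step; if not, a greedy recursion again gives a disjoint subfamily. Your route buys simplicity --- the dichotomy ``some point is in infinitely many sets, or none is'' is cleaner than the paper's two-layer case analysis, and reducing by exactly one element at a time avoids having to worry about the size $n-|S|$ of the reduced sets --- while the paper's route is slightly more general in spirit (it isolates the hitting-set idea that also appears in uncountable versions of the lemma). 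One small point worth making explicit in your Case 1 is that $x\notin r'$ (since $r'\subseteq A_m\setminus\{x\}$), so that $r'\cup\{x\}$ really is the exact pairwise intersection; your computation $A_m\cap A_n=\bigl((A_m\setminus\{x\})\cap(A_n\setminus\{x\})\bigr)\cup\{x\}=r'\cup\{x\}$ already establishes this.
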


\begin{proof}
We prove the lemma by induction with respect to $k$.

For $k=1$ simply take $r=\emptyset $.

Assume that the lemma is true for every $k<n$ and let  $\mathcal{A}$  be a denumerable family of $n$-sized sets. We assume that $\mathcal{A}=\{ A_i: i\in\omega\}$ and $A_i\neq A_j$ for each pair $i,j$ of distinct elements of $\omega$. We define an
equivalence relation $\sim $ on $\mathcal{A}$ by requiring $A\sim B$ iff
there exist $v\in\omega$ and a collection $\{S_{i}: i\in v+1\}\subseteq \mathcal{A}$ such that $A\cap
S_{0}\neq \emptyset$, $S_{i}\cap S_{i+1}\neq\emptyset$ for each $i\in v+1$, and  $S_{v}\cap B\neq \emptyset $. For every $A\in \mathcal{A}
$, let $[A]$ denote the $\sim $ equivalence class of $A$. Clearly, for
every pair $A,B\in \mathcal{A}$, if $[A]\neq \lbrack B]$, then $A\cap B=\emptyset $%
. We consider the following two cases:\smallskip

i) The quotient $\mathcal{A}/\sim $ is infinite. For $A\in\mathcal{A}$, let $n([A])=\min\{ i\in\omega: A_i\in [A]\}$ and let $C_{[A]}=A_{n([A])}$. Evidently, $%
\mathcal{B}=\{C_{[A]}: A\in \mathcal{A}\}$ is an infinite disjoint
family of members of $\mathcal{A}$, and we can let $r=\emptyset $.\smallskip

ii) $\mathcal{A}/\sim $ is finite. Fix $A\in \mathcal{A}$ with $[A]$
infinite. We consider the following two subcases:\smallskip

ii) (a) There exists a finite subset $F$ of $\bigcup [A]$ such that, for
every $G\in \lbrack A],G\cap F\neq \emptyset $. Since $F$ is finite, it
follows that there exists a subset $S$ of $F$ and an infinite subfamily $%
\mathcal{B}$ of $[A]$ such that, for every $B\in \mathcal{B},B\cap F=S$.
Clearly, $\mathcal{A}^{\prime }=\{B\backslash S: B\in \mathcal{B}\}$ is a
denumerable family of $n-|S|$ sized sets. So, by our induction hypothesis,
there exists an infinite subfamily $\mathcal{B}^{\prime }$ of $\mathcal{A}%
^{\prime }$  such that $\mathcal{B}^{\prime}$ is a $\Delta$-system with a root $t$. Clearly, $r=t\cup S$ is a root of the infinite
subfamily $\{B\cup S: B\in \mathcal{B}^{\prime }\}$ of $\mathcal{A}$%
.\smallskip

ii) (b) For every finite subset $F$ of $\bigcup [A]$ there is a $G\in
\lbrack A]\ $with $G\cap F=\emptyset $. In this case we construct via an
easy induction a denumerable disjoint subfamily $\mathcal{B}%
=\{B_{n}: n\in \omega\}$ of $[A]$. Let $N([A])=\{i\in\omega: A_i\in [A]\}$. For $n=0$ we let $B_{0}=A_{\min N([A])}$. Now, assume that $n\in\omega$ is such that we have defined a disjoint subfamily $\{B_i: i\in n+1\}$ of $\lbrack A]$. Since $F=\bigcup_{i\in n+1} B_i$ is finite, by our hypothesis, some member of $[A]$ is disjoint from $F$. Let $B_{n+1}=A_{j(n)}$ where $j(n)=\min\{ i\in\omega: A_i\in [A]\text{ and } A_i\cap F=\emptyset\}$. Clearly, $r=\emptyset $ is a root of $%
\mathcal{B}$, terminating the proof of ii) (b) and the proof of the
lemma.\medskip
\end{proof}

\begin{theorem}
\label{t06.3} $\mathbf{(ZF)}$

\begin{enumerate}
\item[(i)] For every pair of numbers $m,k\in \mathbb{N}$ such that $k<m$, $%
\mathbf{IDI}_{k}$ implies $\mathbf{IDI}_{m}$. In particular, for every
natural number $m\geq 2$, $\mathbf{IDI}_{2}$ implies $\mathbf{IDI}_{m}$.

\item[(ii)] $\mathbf{CMC}$ implies $\mathbf{IQDI}$.

\item[(iii)] $\mathbf{IDI}_{F}$ implies $\mathbf{IQDI}$.

\item[(iv)] $\mathbf{IDI}$ implies $\mathbf{IDI}_{2}$ but there is a $%
\mathbf{ZF}$\ model $\mathcal{M}$ including a Dedekind-finite set $X$ such
that $[X]^{2}$ is Dedekind-infinite in $\mathcal{M}$.

\item[(v)] The following are equivalent:

\begin{enumerate}
\item[(a)] $\mathbf{IQDI}$;

\item[(b)] for every infinite set $X$, the poset $([X]^{<\omega },\subseteq
) $ has a denumerable antichain;

\item[(c)] for every infinite set $X$, the poset $([X]^{<\omega },\subseteq
) $ has a tower.
\end{enumerate}

\item[(vi)] \cite{kk} The following are equivalent:

\begin{enumerate}
\item[(d)] $\mathbf{IWDI}$;

\item[(e)] for every infinite set $X$, $X$ has a denumerable partition into
infinite sets, i.e., the poset $(\mathcal{P}(X),\subseteq )$ has a
denumerable antichain;

\item[(f)] for every infinite set $X$, the poset $(\mathcal{P}(X),\subseteq
) $ has tower.
\end{enumerate}
\end{enumerate}
\end{theorem}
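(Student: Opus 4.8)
The plan is to establish the six clauses more or less independently, using the $\Delta$-system lemma (Lemma~\ref{l06.2}) as the main tool for (i), (iii) and the substantive half of (v), and reducing (ii) directly to $\mathbf{CMC}$. For (i), I would fix $k\ge 2$ (the case $k=1$, under the natural reading $\mathbf{IDI}_1=\mathbf{IDI}$, is immediate) and an infinite set $X$, and aim to show $[X]^m$ is Dedekind-infinite whenever $m>k$. From $\mathbf{IDI}_k$ I get a denumerable family of pairwise distinct $k$-subsets of $X$; Lemma~\ref{l06.2} lets me pass to an infinite — hence denumerable — subfamily $\{A_n:n\in\omega\}$ forming a $\Delta$-system with root $r$. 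Set $d=k-|r|$; then $d\ge 1$ because the $A_n$ are distinct, and $d\le k<m$, so $m-d\ge 1$. The pieces $B_n=A_n\setminus r$ are pairwise disjoint nonempty sets of the common size $d$. The decisive observation is that, choosing $s\in\omega$ with $(s+1)d\ge m-d$, the set $F=B_0\cup\cdots\cup B_s$ is a fixed finite set, hence well-orderable, so I may select $G\subseteq F$ with $|G|=m-d$; then $\{G\cup B_n:n>s\}$ is a denumerable family of pairwise distinct members of $[X]^m$, witnessing that $[X]^m$ is Dedekind-infinite. The ``in particular'' is the case $k=2$.

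For (ii) I would apply $\mathbf{CMC}$ to the denumerable set $\{[X]^n:n\in\mathbb N\}$ of nonempty sets, obtaining for each $n$ a nonempty finite subfamily $g(n)\subseteq[X]^n$. Then $F_n=\bigcup g(n)$ is a finite subset of $X$ with $|F_n|\ge n$, so the cardinalities $|F_n|$ are unbounded, and selecting a subsequence along which they strictly increase yields denumerably many pairwise distinct finite subsets of $X$; thus every infinite $X$ is quasi Dedekind-infinite, i.e.\ $\mathbf{IQDI}$ holds. Part (iii) I would handle similarly: $\mathbf{IDI}_F$ supplies an $n$ with $[X]^n$ Dedekind-infinite, hence a denumerable family $\{A_l\}$ of distinct $n$-subsets whose union is infinite (only finitely many $n$-subsets lie inside any finite set), so the $\subseteq$-increasing chain $D_l=\bigcup_{i\le l}A_i$ has unbounded cardinality and, again passing to a subsequence of strictly increasing cardinality, gives $\mathbf{IQDI}$. (En route this also re-proves $\mathbf{IDI}_k\Rightarrow\mathbf{IDI}_F\Rightarrow\mathbf{IQDI}$.)

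For (iv), $\mathbf{IDI}\Rightarrow\mathbf{IDI}_2$ is trivial, since a denumerable subset $\{d_n\}$ of $X$ gives the denumerable family $\{\{d_{2n},d_{2n+1}\}:n\in\omega\}$ of distinct pairs. The independence half I would settle by quoting a model in which $\mathbf{CAC}_{fin}$ fails via a denumerable family $\{P_n\}$ of $2$-element sets with no choice function on any infinite subfamily (the second Fraenkel model in $\mathbf{ZFA}$, and a corresponding symmetric $\mathbf{ZF}$ model, see \cite{hr}): there $X=\bigcup_n P_n$ is Dedekind-finite, yet $[X]^2\supseteq\{P_n:n\in\omega\}$ is Dedekind-infinite. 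For (v), both nontrivial implications spring from a single construction: given $\mathbf{IQDI}$ and infinite $X$, start from a denumerable family of distinct finite subsets and form the cumulative unions $G_n$; these cannot be eventually constant (otherwise infinitely many distinct subsets would sit inside one finite set), so a strictly $\subseteq$-increasing $\omega$-chain $H_0\subsetneq H_1\subsetneq\cdots$ can be extracted, which is the tower of (c), while $\{H_{n+1}\setminus H_n:n\in\omega\}$ is a denumerable antichain of $([X]^{<\omega},\subseteq)$, giving (b); conversely each of (b) and (c) already exhibits a denumerable family of distinct finite subsets, so implies (a). Clause (vi) is cited from \cite{kk}; its proof is the familiar reversible chain ``denumerable family of distinct subsets $\leadsto$ denumerable disjoint family of nonempty subsets $\leadsto$ denumerable partition $\leadsto$ (via a fixed partition of $\omega$ into infinite pieces) a partition into infinite sets $\leadsto$ (via tails) a strictly $\supseteq$-decreasing $\omega$-chain in $\mathcal P(X)$''.

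The step I expect to carry the weight — more a key idea than an obstacle — is the cardinality adjustment in (i): noticing that a finite union of the $\Delta$-system's disjoint pieces is well-orderable, and can therefore be trimmed to exactly the size needed to reach $m$, which makes the argument go through uniformly in $m$ no matter what piece size $d$ the $\Delta$-system happens to produce. Once Lemma~\ref{l06.2} and the ``cumulative unions cannot stabilise'' observation are available, the remaining $\mathbf{ZF}$ clauses are essentially bookkeeping; the one genuinely external input is the model in (iv), where the real content is the verification that the union of the pairs is Dedekind-finite, which I would take from the literature.
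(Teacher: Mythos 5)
Your proposal is correct and follows essentially the same route as the paper: part (i) is the same padding argument (disjointify the $k$-sets and enlarge each by a fixed finite set to reach size $m$), part (ii) is verbatim the paper's argument of taking unions of the $\mathbf{CMC}$-selected finite subfamilies of the $[X]^n$ and extracting a subsequence of strictly increasing cardinality, part (iv) uses the same model of pairs without a partial choice function, and parts (iii), (v), (vi) are exactly the routine arguments the paper declares straightforward, leaves as an exercise, or cites from \cite{kk}. Two minor remarks: your explicit appeal to Lemma \ref{l06.2} is a legitimate (indeed more careful) way of effecting the paper's terse ``by disjointifying $\mathcal{A}$'' step in (i), and in (v) you tacitly read ``tower of $([X]^{<\omega},\subseteq)$'' as a strictly $\subseteq$-increasing chain, which is the only sensible reading since no strictly $\subseteq$-decreasing $\omega$-sequence of finite sets exists under Definition \ref{d2.7}(i) as literally stated.
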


\begin{proof}
(i) Fix a set $X$ such that $[X]^{k}$ is Dedekind-infinite for some $k\in 
\mathbb{N}$ and let $m\in\mathbb{N}$ be such that $m>k$. We show that $%
[X]^{m}$ is Dedekind-infinite. Fix, by our hypothesis, a family $\mathcal{A}%
=\{A_{n}: n\in \mathbb{N}\}$ of $k$-sized subsets of $X$ such that $A_i\neq
A_j$ for each pair of distinct natural numbers $i,j$. By disjointifying $%
\mathcal{A}$, if necessary, we may assume that $\mathcal{A}$ is 
disjoint and each of its members has size $\leq k$. Fix an $m$-element
subset $B=\{x_{1},x_{2},...,x_{m}\}$ of $X$. Since $\mathcal{A}$ is
disjointed, it follows that only finitely many members of $\mathcal{A}$ can
meet $B$. Assume that no member of $\mathcal{A}$ meets $B$. For every $n\in 
\mathbb{N}$, let $k_{n}=m-|A_{n}|$ and define $A_{n}^{\prime }=A_{n}\cup
\{x_{i}: i\leq k_{n}\}$. It is easy to see that $\mathcal{A}^{\prime
}=\{A_{n}^{\prime }: n\in \mathbb{N}\}$ is a denumerable family of $m$%
-element sets. Hence $\mathbf{IDI}_{m}$ is true.\smallskip

(ii) Fix an infinite set $X$ and let, by $\mathbf{CMC}$, $\{A_{n}: n\in 
\mathbb{N}\}$ be a family of non-empty finite sets such that $A_n\subseteq [X]^{n}$ for each $n\in\mathbb{N}$.
Since, for every $n\in \mathbb{N}$, $\bigcup A_{n}$ is a finite set of size $%
\geq n$, we can construct, via a straightforward induction, a subfamily $%
\{A_{k_{n}}: n\in \mathbb{N}\}$ of $\{A_{n}: n\in \mathbb{N}\}$ such that, for
all $n,m\in \mathbb{N}$, if $n<m$, then $k_{n}<k_{m}$ and $|\bigcup
A_{k_{n}}|<|\bigcup A_{k_{m}}|$. Clearly, $\{\bigcup A_{k_{n}}: n\in \mathbb{N%
}\}$ is a denumerable family of finite subsets of $X$. Hence, $X$ is quasi
Dedekind-infinite.\smallskip

(iii) This assertion is straightforward. \smallskip

(iv) It is obvious that $\mathbf{IDI}$ implies $\mathbf{IDI}_{2}$. For the
second assertion, let $\mathcal{M}$ be a $\mathbf{ZF}$ model including a
family $\mathcal{A}=\{A_{n}: n\in \mathbb{N}\}$ of two-element sets without a
partial choice, e.g., Cohen's Second Model $\mathcal{M}7$ in \cite{hr}.
Then, in $\mathcal{M}$, the set $X=\bigcup \mathcal{A}$ is Dedekind-finite,
but $\mathcal{A}$ is a countably infinite subset of $[X]^{2}$.

(v) We leave the proof of (v) as an easy exercise for the reader.\medskip
\end{proof}

\begin{proposition}
\label{p06.4} Let $\mathcal{N}$ be any model of $\mathbf{ZFA}$ satisfying $%
\mathbf{CMC}$ together with $(\forall n\in \omega ,n\geq 2)C(\omega ,n)$ and
the negation of $\mathbf{IDI}$. For instance, let $\mathcal{N}$ be Levy's
Model I denoted by $\mathcal{N}$6 in \cite{hr}. Then $\mathbf{IQDI}$ holds
in $\mathcal{N}$ but $\mathbf{IDI}_{F}$ fails, i.e., for every $k\in \mathbb{%
N},k\geq 2,\mathbf{IDI}_{k}$ fails in $\mathcal{N}$.
\end{proposition}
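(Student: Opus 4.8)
The plan is to treat the two assertions separately, relying only on the three hypotheses that $\mathcal{N}$ satisfies $\mathbf{CMC}$, the scheme $(\forall n\in\omega,\ n\ge 2)\,C(\omega,n)$, and $\neg\mathbf{IDI}$; that Levy's Model~I ($\mathcal{N}6$ of \cite{hr}) does satisfy these forms is documented in \cite{hr}. The first assertion is immediate: by Theorem~\ref{t06.3}(ii), $\mathbf{CMC}$ implies $\mathbf{IQDI}$, so $\mathbf{IQDI}$ holds in $\mathcal{N}$.

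For the failure of $\mathbf{IDI}_F$, the core is the following claim, which I would prove by induction on $n\in\mathbb{N}$: \emph{if $A$ is an infinite Dedekind-finite set and $C(\omega,m)$ holds for every $m$ with $2\le m\le n$, then $[A]^n$ is Dedekind-finite.} For the inductive step, assume for contradiction that $\{S_i:i\in\omega\}$ is a denumerable subset of $[A]^n$ consisting of pairwise distinct $n$-element sets. Apply $C(\omega,n)$ to the denumerable family $\{S_i:i\in\omega\}$ to get a function $h_0$ with $h_0(i)\in S_i$ for all $i$. The range of $h_0$ is finite: it is the surjective image of $\omega$, hence injects into $\omega$ via $r\mapsto\min h_0^{-1}(r)$, hence is well-orderable; were it infinite it would be a countably infinite, therefore Dedekind-infinite, subset of the Dedekind-finite set $A$, which is impossible. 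Now $\{S_i\setminus\{h_0(i)\}:i\in\omega\}$ is a denumerable family of $(n-1)$-element subsets of $A$, and iterating with $C(\omega,n-1),\dots,C(\omega,2)$ (the last deletion leaves a singleton and needs no choice) yields functions $h_0,\dots,h_{n-1}$ with $S_i=\{h_0(i),\dots,h_{n-1}(i)\}$ for every $i$ and each range $\{h_j(i):i\in\omega\}$ finite. Consequently $\bigcup_{i\in\omega}S_i=\bigcup_{j<n}\{h_j(i):i\in\omega\}$ is finite, so it has only finitely many $n$-element subsets --- contradicting that the $S_i$ are pairwise distinct. The base case $n=1$ is trivial, and $n=2$ is the instance using only $C(\omega,2)$.

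It then remains to assemble the pieces. Since $\neg\mathbf{IDI}$ holds in $\mathcal{N}$, there is an infinite Dedekind-finite set $A$ in $\mathcal{N}$, and since $C(\omega,m)$ holds for every $m\ge 2$, the claim shows that $[A]^n$ is Dedekind-finite for every $n\in\mathbb{N}$. Hence $A$ witnesses the failure of $\mathbf{IDI}_F$ in $\mathcal{N}$, and, a fortiori, this same $A$ witnesses the failure of $\mathbf{IDI}_k$ in $\mathcal{N}$ for every $k\in\mathbb{N}$ with $k\ge 2$.

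I do not anticipate a genuine obstacle: the argument is elementary, and the only points to treat with a little care are (a) checking that at each deletion step $\{S_i\setminus\{h_0(i),\dots\}:i\in\omega\}$ is a bona fide denumerable family lying in $\mathcal{N}$ (it is definable from the data already constructed, hence belongs to $\mathcal{N}$, and the relevant instance of $C(\omega,m)$ applies to it) and (b) the elementary verification that a function from $\omega$ into $A$ with infinite range would make $A$ Dedekind-infinite. It is worth noting that the hypothesis $(\forall m\ge 2)C(\omega,m)$ is essential here: by Theorem~\ref{t06.3}(iv) there are $\mathbf{ZF}$-models containing a Dedekind-finite set $X$ with $[X]^2$ Dedekind-infinite.
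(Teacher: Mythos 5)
Your proof is correct, and the first half (deducing $\mathbf{IQDI}$ from $\mathbf{CMC}$ via Theorem~\ref{t06.3}(ii)) coincides with the paper's. For the failure of $\mathbf{IDI}_F$, however, you take a genuinely different route. The paper fixes an infinite Dedekind-finite $X$, supposes $\{A_n:n\in\mathbb{N}\}\subseteq[X]^k$ is denumerable, and invokes its $\Delta$-system lemma (Lemma~\ref{l06.2}, proved there in $\mathbf{ZF}$ precisely for this purpose) to extract an infinite subfamily with root $r$; the sets $B\setminus r$ are then pairwise \emph{disjoint} of common size $k-|r|$, so a single application of $C(\omega,k-|r|)$ produces a choice set that is automatically denumerable, contradicting Dedekind-finiteness of $X$ directly. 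You avoid the $\Delta$-system lemma entirely: you iterate $C(\omega,m)$ for $m=n,n-1,\dots,2$ to write $S_i=\{h_0(i),\dots,h_{n-1}(i)\}$, use the (correct) observation that any countable subset of a Dedekind-finite set is finite to see that each range $\mathrm{ran}(h_j)$ is finite, and conclude that $\bigcup_i S_i$ is finite --- contradicting the existence of infinitely many distinct $n$-element subsets of it. Your argument is more self-contained and elementary (no combinatorial lemma needed, only the trick of injecting a surjective image of $\omega$ back into $\omega$), at the cost of $n-1$ applications of the choice scheme instead of one; the paper's argument is shorter given Lemma~\ref{l06.2}, and that lemma carries independent interest elsewhere. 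The only points in your write-up worth tightening are cosmetic: at each deletion stage the induced \emph{set} $\{S_i\setminus\{h_0(i),\dots,h_{j-1}(i)\}:i\in\omega\}$ could a priori be finite rather than denumerable (in which case finite choice suffices, or one notes it is in fact infinite since the deleted parts range over a finite set), and your "induction on $n$" is really a direct iteration that never uses the inductive hypothesis --- both are harmless.
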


\begin{proof}
By part (ii) of Theorem \ref{t06.3}, $\mathbf{IQDI}$ holds in $\mathcal{N}$%
. Assume, aiming for a contradiction, that $\mathbf{IDI}_{k}$ holds in $%
\mathcal{N}$ for some $k\in 
\mathbb{N}$. Fix an infinite Dedekind-finite set $X\in \mathcal{N}$. By $%
\mathbf{IDI}_{k}$, there exists a denumerable family $\mathcal{A}%
=\{A_{n}: n\in \mathbb{N}\}$ of $k$-sized subsets of $X$. Let, by Lemma \ref{l06.2}, $r$ be a root of an infinite subfamily $%
\mathcal{B}$ of $\mathcal{A}$. Clearly, $\{B\backslash r: B\in \mathcal{B}\}$
is a denumerable family of pairwise disjoint subsets of $X$, each of size $%
k\backslash |r|$. Hence, by $C(\infty ,k\backslash |r|),$ $\{B\backslash
r: B\in \mathcal{B}\}$ has a choice set $C$. Since $C$ is clearly
denumerable, it follows that $X$ is Dedekind-infinite. Contradiction!
Therefore, in $\mathcal{N}$, $\mathbf{IQDI}$ holds but, for every $k\in 
\mathbb{N}$,  $\mathbf{IDI}_{k}$ does not hold.
\end{proof}

\begin{remark}
\label{r06.5} In \cite{et}, it is shown that $\mathbf{IDI}$ implies $\mathbf{%
I0}dim\mathbf{HS}(cell,\aleph _{0})$ and that this implication is not
reversible in $\mathbf{ZFA}$. We notice that, in the second Fraenkel model
(model $\mathcal{N}$2 in \cite{hr}), $\mathbf{CMC}$ holds but $\mathbf{IDI}$
fails. It follows from Theorem \ref{t06.3} that $\mathbf{IQDI}$ holds in $%
\mathcal{N}$2. Hence $\mathbf{IQDI}$ does not imply $\mathbf{IDI}$ in $%
\mathbf{ZFA}$. E. Tachtsis \cite{ltah}  has informed us recently that the
result of Proposition \ref{p06.4} transfers to $\mathbf{ZF}$. Hence, $%
\mathbf{IQDI}$  implies none of $\mathbf{IDI},$ $\mathbf{IDI}_{2}$ and $%
\mathbf{IDI}_{F}$ in $\mathbf{ZF}$. Therefore, in view of Theorem \ref{t04.7}%
, the implication $\mathbf{IDI}$ $\rightarrow $ $\mathbf{I0}dim\mathbf{HS}%
(cell,\aleph _{0})$ is not reversible in $\mathbf{ZF}$.
\end{remark}

\begin{question}
\label{q06.6} Does $\mathbf{IDI}_{2}$ imply $\mathbf{IDI}?$
\end{question}

In the sequel, we use the new selection principles $\mathbf{PKW}(\infty,
<\aleph_0)$ and $\mathbf{QPKW}(\infty, <\aleph_0)$, both defined in Section
1. We aim to prove that, in every model $\mathcal{M}$ of $\mathbf{ZF+QPKW%
}(\infty, <\aleph_0)$, the sentences $\mathbf{IQDI}$ and $\mathbf{IHS}%
(cell, \aleph_0)$ are equivalent and imply that every infinite Boolean
algebra has a tower. To do this, let us begin with the trivial observation
that the following condition is satisfied in $\mathbf{ZF}$:

\begin{description}
\item[(6)] If $\mathcal{B}_0$ is a Boolean subalgebra of a Boolean algebra $%
\mathcal{B}$, then every tower of $\mathcal{B}_0$ is a tower of $\mathcal{B}$%
. In particular, if a Boolean algebra $\mathcal{B}$ has a Boolean subalgebra 
$\mathcal{B}_0$ such that $\mathcal{B}_0$ has a tower, then $\mathcal{B}$
has a tower.
\end{description}

The following proposition shows that, in a model of $\mathbf{ZF}$, an
infinite Boolean algebra $\mathcal{B}$ can have a tower but an infinite
Boolean subalgebra of $\mathcal{B}$ may fail to have a tower.

\begin{proposition}
\label{p06.7} Let $\mathcal{M}$ be any model of $\mathbf{ZF+\neg IQDI}$ (for
instance, let $\mathcal{M}$ be the model mentioned in Remark \ref{r04.8}).
Then it holds in $\mathcal{M}$ that there exists an infinite Hausdorff space 
$\mathbf{X}$ such that the Boolean algebra $RO(\mathbf{X})$ has a tower but
some infinite Boolean subalgebra of $RO(\mathbf{X})$ does not have a tower.
\end{proposition}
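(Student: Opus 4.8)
The plan is to take for $\mathbf{X}$ a product of a ``pathological'' Cantor cube with the well-behaved Cantor cube $\mathbf{2}^{\omega}$. We work inside $\mathcal{M}$. Since $\mathbf{IQDI}$ fails in $\mathcal{M}$, fix an infinite set $Y$ which is not quasi Dedekind-infinite, i.e. $[Y]^{<\omega}$ is Dedekind-finite. Replacing $Y$ by an isomorphic copy if necessary, we may assume $Y\cap\omega=\emptyset$; put $Z=Y\cup\omega$ and $\mathbf{X}=\mathbf{2}^{Z}$. Then $\mathbf{X}$ is an infinite Hausdorff space, and the assignment $f\mapsto(f|_{Y},f|_{\omega})$ is a homeomorphism $\mathbf{2}^{Z}\cong\mathbf{2}^{Y}\times\mathbf{2}^{\omega}$.

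First I would check that $RO(\mathbf{X})$ has a tower. Since $\omega\subseteq Z$, the denumerable set $[\omega]^{<\omega}$ is a subset of $[Z]^{<\omega}$, so $Z$ is quasi Dedekind-infinite; hence by Theorem \ref{t02.18}(i) the cube $\mathbf{2}^{Z}$ admits a denumerable cellular family, and so by Proposition \ref{p2.10}(b)(iii) the Boolean algebra $RO(\mathbf{X})=RO(\mathbf{2}^{Z})$ has a tower. (Alternatively, $\mathbf{2}^{\omega}$ admits a denumerable cellular family of clopen sets by Proposition \ref{p02.13}(d), so the conclusion also follows from Corollary \ref{c02.16}.)

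Next I would exhibit the required subalgebra. Using the identification $2^{Z}=2^{Y}\times 2^{\omega}$, observe that for every $A\in Clop(\mathbf{2}^{Y})$ the set $A\times 2^{\omega}$ is clopen in $\mathbf{2}^{Z}$, hence regular open, and that $A\mapsto A\times 2^{\omega}$ is an injective Boolean homomorphism. Therefore $\mathcal{B}_{0}=\{A\times 2^{\omega}: A\in Clop(\mathbf{2}^{Y})\}$ is a Boolean subalgebra of $RO(\mathbf{X})$ isomorphic to $Clop(\mathbf{2}^{Y})$. It is infinite, because the clopen sets $\{f\in 2^{Y}: f(y)=0\}$ are pairwise distinct as $y$ ranges over the infinite set $Y$, so $Clop(\mathbf{2}^{Y})$ is infinite. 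Finally, if $\mathcal{B}_{0}$ had a tower, then so would $Clop(\mathbf{2}^{Y})$ (a strictly decreasing $\omega$-sequence is carried to one by the isomorphism), hence by Proposition \ref{p2.10}(a) the algebra $Clop(\mathbf{2}^{Y})$ would have a denumerable antichain; such an antichain is a denumerable cellular family of clopen sets of $\mathbf{2}^{Y}$, so $\mathbf{2}^{Y}$ would admit a denumerable cellular family, and by Theorem \ref{t02.18}(i) this would force $Y$ to be quasi Dedekind-infinite, contrary to the choice of $Y$. Thus $\mathcal{B}_{0}$ is an infinite Boolean subalgebra of $RO(\mathbf{X})$ having no tower, which completes the argument.

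The only real decision here is the choice of $\mathbf{X}$; once $\mathbf{X}=\mathbf{2}^{Y}\times\mathbf{2}^{\omega}$ is fixed, the rest is routine. The one point to handle with some care is that one should not describe $\mathcal{B}_{0}$ as ``the subalgebra generated by the basic clopen cylinders of $\mathbf{2}^{Y}$'', since identifying that with $Clop(\mathbf{2}^{Y})$ would rely on a finite-union normal form for clopen sets and hence on compactness of $\mathbf{2}^{Y}$, which is not available in $\mathbf{ZF}$; defining $\mathcal{B}_{0}$ directly as the image of all of $Clop(\mathbf{2}^{Y})$ under $A\mapsto A\times 2^{\omega}$ makes the isomorphism, and therefore the transfer of the property ``has a tower'', immediate.
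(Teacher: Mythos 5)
Your proof is correct, but it takes a genuinely different route from the paper's. The paper forms the direct sum $\mathbf{X}=\mathbf{X}_1\oplus\mathbf{X}_2$, where $\mathbf{X}_1$ is any non-discrete first-countable Hausdorff space (so $RO(\mathbf{X}_1)$, and hence $RO(\mathbf{X})$, has a tower by Proposition \ref{p02.13}(a)) and $\mathbf{X}_2$ is an infinite Hausdorff space such that $RO(\mathbf{X}_2)$ has no tower, whose existence under $\neg\mathbf{IQDI}$ is extracted from Theorem \ref{t04.7} together with Proposition \ref{p2.10}; the witnessing subalgebra is the one generated by $RO(\mathbf{X}_2)$ inside $RO(\mathbf{X})$. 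You instead combine the two ingredients multiplicatively, taking $\mathbf{X}=\mathbf{2}^{Y\cup\omega}\cong\mathbf{2}^{Y}\times\mathbf{2}^{\omega}$ with $Y$ not quasi Dedekind-infinite, and use the embedded copy of $Clop(\mathbf{2}^{Y})$ as the bad subalgebra. Both arguments ultimately rest on the same external fact, namely Theorem \ref{t02.18}(i) (equivalently the direction of Theorem \ref{t04.7} established in \cite{kt}), that $\mathbf{2}^{Y}$ admits a denumerable cellular family iff $Y$ is quasi Dedekind-infinite; so neither proof is deeper than the other. What yours buys is a single explicit zero-dimensional witness $\mathbf{X}$ (a Cantor cube), and, since your subalgebra lies inside $Clop(\mathbf{X})$, the same example shows that $Clop(\mathbf{X})$ can have a tower while an infinite Boolean subalgebra of it does not. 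Your cautionary remark about not describing $\mathcal{B}_0$ via generators is well taken: defining it as the full image of $Clop(\mathbf{2}^{Y})$ under $A\mapsto A\times 2^{\omega}$ makes the isomorphism immediate, because on clopen sets the operations of $RO(\mathbf{X})$ reduce to the set-theoretic ones by Remark \ref{r2.6}(i).
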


\begin{proof} We work inside $\mathcal{M}$. Let $\mathbf{X}_1=(X_1,\mathcal{T}_1)$ be any non-discrete first-countable Hausdorff space in $\mathcal{M}$. By Propositions \ref{p2.10} and \ref{p02.13}, $RO(\mathbf{X}_1)$ has a tower. It follows from Theorem \ref{t04.7} and Proposition \ref{p2.10} that there exists in $\mathcal{M}$ an infinite Hausdorff space $\mathbf{X}_2=(X_2, \mathcal{T}_2)$ such that $RO(\mathbf{X}_2)$ does not have a tower. We may assume that $X_1\cap X_2=\emptyset$. Let $\mathbf{X}=\mathbf{X}_1\oplus\mathbf{X}_2$ be the direct sum of $\mathbf{X}_1$ and $\mathbf{X}_2$, and let $\mathcal{B}_0$ be the Boolean subalgebra of $RO(\mathbf{X})$ generated by $RO(\mathbf{X}_2)$. Then $RO(\mathbf{X})$ has a tower, while $\mathcal{B}_0$ does not have a tower.  
\end{proof}

\begin{theorem}
\label{t06.8} $\mathbf{(ZF)}$

\begin{enumerate}
\item[(i)] $\mathbf{IQDI}$ implies that every infinite Boolean algebra has a
tower iff every Boolean algebra expressible as a denumerable union of finite
sets has a tower.

\item[(ii)] The conjunction of $\mathbf{IQDI}$ and $\mathbf{QPKW}(\infty,
<\aleph_0)$ implies that every infinite Boolean algebra has a tower.

\item[(iii)] $\mathbf{QPKW}(\infty, <\aleph_0)$ implies, for every
topological space $\mathbf{X}$, if $\mathbf{X}$ has a regular matrix, then $%
\mathbf{X}$ admits a denumerable cellular family.
\end{enumerate}
\end{theorem}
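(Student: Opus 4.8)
The plan is to reduce all three parts to one lemma, which isolates the role of $\mathbf{QPKW}(\infty,<\aleph_0)$: \emph{if $\mathbf{QPKW}(\infty,<\aleph_0)$ holds, then every infinite Boolean algebra expressible as a denumerable union of finite sets has a tower}. Granting this lemma, the parts go as follows. For (i) the forward implication is immediate; for the converse, given an infinite Boolean algebra $\mathcal{B}$, I would use $\mathbf{IQDI}$ to fix a denumerable family of pairwise distinct finite subsets of $B$, take unions of initial segments and pass to a subsequence to get a strictly increasing sequence $D_{0}\subsetneq D_{1}\subsetneq\cdots$ of finite subsets of $B$ with infinite union, let $\mathcal{G}_{n}$ be the finite subalgebra generated by $D_{n}$, and put $\mathcal{G}=\bigcup_{n}\mathcal{G}_{n}$; then $\mathcal{G}$ is an infinite Boolean subalgebra of $\mathcal{B}$ which is a denumerable union of finite sets, so $\mathcal{G}$ has a tower, and hence so does $\mathcal{B}$ by the observation (6) above. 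Part (ii) is then the conjunction of (i) with the lemma. For (iii), if $\mathbf{X}$ has a regular matrix $\{\mathcal{C}_{n}:n\in\mathbb{N}\}$ then $\bigcup_{n}\mathcal{C}_{n}$ is infinite (a finite union would admit only finitely many distinct $\mathcal{C}_{n}$), the subalgebra $\mathcal{G}$ of $RO(\mathbf{X})$ it generates is an infinite denumerable union of finite subalgebras and so has a tower by the lemma, whence $RO(\mathbf{X})$ has a tower by (6), and $\mathbf{X}$ admits a denumerable cellular family by Proposition~\ref{p2.10}(b)(iii).

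For the lemma itself I would write $\mathcal{B}=\bigcup_{n}B_{n}$ with $B_{n}$ finite and, without loss of generality, $B_{n}\subseteq B_{n+1}$; let $\mathcal{G}_{n}$ be the finite subalgebra generated by $B_{n}$ and $\mathcal{C}_{n}$ its set of atoms. Since $\mathcal{B}$ is infinite, $|\mathcal{C}_{n}|\to\infty$, so after restricting to the canonically defined subsequence of indices at which the number of atoms strictly increases we may assume $|\mathcal{C}_{n}|<|\mathcal{C}_{n+1}|$ for all $n$; thus for every $n$ at least one atom of $\mathcal{G}_{n}$ \emph{splits}, i.e.\ has at least two atoms of $\mathcal{G}_{n+1}$ below it. Let $J=\{(n,c):n\in\omega,\ c\in\mathcal{C}_{n},\ c\text{ splits in }\mathcal{G}_{n+1}\}$; this is an infinite set which is a denumerable union of finite sets, and for $(n,c)\in J$ the set $A_{(n,c)}=\{d\in\mathcal{C}_{n+1}:d\le c\}$ is finite of size at least $2$. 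Applying $\mathbf{QPKW}(\infty,<\aleph_0)$ to $\{A_{j}:j\in J\}$ yields an infinite $I\subseteq J$ together with nonempty proper subsets $B_{j}\subsetneq A_{j}$ for $j\in I$; for $j=(n,c)\in I$ set $v_{j}=\bigvee B_{j}$ and $w_{j}=\bigvee(A_{j}\setminus B_{j})$, two disjoint nonzero members of $\mathcal{G}_{n+1}$ whose join is $c$.

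Next I would build a denumerable antichain of $\mathcal{B}$ by recursion (which gives a tower via Proposition~\ref{p2.10}(a)), keeping a nonzero $g_{k}\in\mathcal{G}_{m_{k}}$ with $m_{0}<m_{1}<\cdots$ in $\pi(I)\subseteq\omega$ and with the invariant that infinitely many $n\in\pi(I)$ with $n>m_{k}$ carry a node $c$ with $(n,c)\in I$ and $c\le g_{k}$. At stage $k$ let $n^{\ast}$ be the least such $n$, let $T\ne\emptyset$ be the finite set of those $c$ with $(n^{\ast},c)\in I$ and $c\le g_{k}$, and refine $g_{k}$ into the finite partition $\mathcal{F}_{k}=\{v_{(n^{\ast},c)}:c\in T\}\cup\{w_{(n^{\ast},c)}:c\in T\}\cup\{c\in\mathcal{C}_{n^{\ast}}:c\le g_{k},\ c\notin T\}$, which has at least two members. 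Every node witnessing the invariant below $g_{k}$ at a level above $n^{\ast}$ lies below exactly one member of $\mathcal{F}_{k}$, so by pigeonhole the subcollection $\mathcal{F}_{k}^{\infty}$ of members below which infinitely many levels of $\pi(I)$ carry such a node is nonempty. Put $\mathcal{S}_{k}=\mathcal{F}_{k}^{\infty}$ if this is a proper subcollection of $\mathcal{F}_{k}$, and $\mathcal{S}_{k}=\mathcal{F}_{k}\setminus\{v_{(n^{\ast},c)}:c\in T\}$ otherwise; in either case $\mathcal{S}_{k}$ is a nonempty proper subcollection of $\mathcal{F}_{k}$ all of whose members carry infinitely many such levels. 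Set $g_{k+1}=\bigvee\mathcal{S}_{k}$ (which inherits the invariant), $a_{k}=\bigvee(\mathcal{F}_{k}\setminus\mathcal{S}_{k})$, which is nonzero and disjoint from $g_{k+1}$ and hence from every later $g_{l}$ and $a_{l}$, and let $m_{k+1}$ be the least member of $\pi(I)$ above $n^{\ast}$. Every datum chosen at stage $k$ is definable from $g_{k}$ and the fixed parameters, so no further choice is needed, and the $a_{k}$ are nonzero and pairwise disjoint, hence form a denumerable antichain of $\mathcal{B}$.

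The main obstacle is precisely the recursion in the lemma: one must peel off a nonzero piece $a_{k}$ at each stage while ensuring that the remainder $g_{k+1}$ still meets infinitely many of the selected splitting nodes, so that the construction never stalls; the case distinction defining $\mathcal{S}_{k}$ — taking the ``infinitely-splitting'' part when it is proper and otherwise discarding the selected $v$-parts — is what makes both requirements compatible without any appeal to choice beyond the single use of $\mathbf{QPKW}(\infty,<\aleph_0)$. A subsidiary point to watch throughout is that ``denumerable union of finite sets'' does not imply ``countable'', so every selection in the argument must be either a single definable set or that one application of $\mathbf{QPKW}(\infty,<\aleph_0)$.
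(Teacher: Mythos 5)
Your proposal is correct, and its skeleton --- isolating the lemma ``$\mathbf{QPKW}(\infty,<\aleph_0)$ implies that every infinite Boolean algebra expressible as a denumerable union of finite sets has a tower,'' reducing (i) to observation (6) via the subalgebra $\mathcal{G}=\bigcup_n\mathcal{G}_n$ produced by $\mathbf{IQDI}$, and deriving (iii) from the subalgebra of $RO(\mathbf{X})$ generated by $\bigcup_n\mathcal{C}_n$ --- is exactly the paper's. Where you genuinely diverge is in the proof of the lemma itself. After the single application of $\mathbf{QPKW}(\infty,<\aleph_0)$ to the splitting atoms, the paper takes a much shorter path: for each relevant level $n$ it forms the single element $t_n=\sum\bigcup\{B(n,x):(n,x)\in J\}$, observes that $t_n\in\mathcal{G}_{n+1}\setminus\mathcal{G}_n$ (since $t_n\in\mathcal{G}_n$ would force $\bigcup\{A(n,x):x\in C(t_n)\}=\bigcup\{B(n,x):(n,x)\in J\}$, impossible as each $B(n,x)$ is a proper subset of $A(n,x)$), concludes that $\mathcal{G}$ is Dedekind-infinite, and then invokes Proposition \ref{p02.12}(i) to get the antichain and hence the tower. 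You instead construct the denumerable antichain directly by a choice-free recursion with the ``infinitely many splitting levels below $g_k$'' invariant and the two-case definition of $\mathcal{S}_k$; I checked the recursion and it works (each $\mathcal{F}_k$ is a genuine partition of $g_k$ into at least two nonzero pieces, the pigeonhole and the case split keep $\mathcal{S}_k$ nonempty and proper, and every choice made is canonical), though you should state the initialization $g_0=\mathbf{1}$, $m_0=\min\pi(I)$ explicitly. The trade-off: the paper's argument is several lines once Proposition \ref{p02.12}(i) (an infinite Dedekind-infinite Boolean algebra has a denumerable antichain) is available, whereas your recursion is self-contained but in effect re-proves a special case of that proposition in a harder setting; if you already trust Proposition \ref{p02.12}(i), the $t_n$ trick saves you the entire third paragraph of your argument.
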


\begin{proof}
To prove (i) and (ii), we assume $\mathbf{IQDI}$ and fix an infinite Boolean algebra $\mathcal{D}=(\mathcal{D}, +, \cdot, \mathbf{0}, \mathbf{1})$. By $\mathbf{IQDI}$, there exists a family $\mathcal{B}=\{\mathcal{B}_{n}: n\in \omega\}$ of pairwise distinct
finite subsets of $\mathcal{D}$.  For every $n\in \omega$, let $\mathcal{D}_{n}=\bigcup_{i\in n+1}\mathcal{B}_i$ and let $\mathcal{G}_{n}$ be the Boolean subalgebra of $\mathcal{D}$ generated by $\mathcal{D}_{n}$. Since $\mathcal{G}_n\subseteq \mathcal{G}_{n+1}$  and $\mathcal{G}_n$ is finite for each $n\in\omega$, while the set $\mathcal{G}=\bigcup_{n\in\omega}\mathcal{G}_n$ is infinite,  without loss of generality, we may assume that $\mathcal{G}_n$ is a proper subset of $\mathcal{G}_{n+1}$ for every $n\in\omega$. 

 To conclude the proof of (i), we notice that $\mathcal{G}$ is a Boolean subalgebra of $\mathcal{D}$ and $\mathcal{G}$ is expressible as a denumerable union of finite sets; furthermore, it follows from (6) that if $\mathcal{G}$ has a tower, then $\mathcal{D}$ has a tower.  

To prove (ii), we assume both $\mathbf{IQDI}$ and $\textbf{QPKW}(\infty, <\aleph_0)$. In view of Proposition \ref{p02.12}, to show that $\mathcal{G}$ has a tower, it suffices to prove that $\mathcal{G}$ is Dedekind-infinite. 

Let $n\in\omega$. Since the Boolean algebra $\mathcal{G}_n$ is finite, it is atomic. Let $C_n$ be the set of all atoms of $\mathcal{G}_n$. It is known from the theory of finite Boolean algebras that the following condition is satisfied:
\begin{enumerate}
\item[(a)] for  every non-zero element $x$ of $\mathcal{G}_n$, there exists  a unique non-empty set $C(x)\subseteq C_n$ such that $x$ is the sum $\sum C(x)$ of all elements of $C(x)$.
\end{enumerate}
Moreover, for every $n\in\omega$, the Boolean algebra $\mathcal{G}_n$ is isomorphic with the power set algebra $\mathcal{P}(C_n)$. Hence, for every $n\in\omega$, the set $E_n=C_{n}\setminus C_{n+1}$ is non-empty. For $n\in\omega$ and $x\in C_n$, let $A(n, x)$ be the unique subset of $C_{n+1}$ such that $x=\sum A(n, x)$. We notice that if $n\in\omega$ and $x\in E_n$, then $A(n, x)$ is a finite set which consists of at least two elements.  By $\mathbf{QPKW}(\infty,< \aleph_0)$, there exist an infinite subset $J$ of $\bigcup_{n\in\omega} (\{n\}\times E_n)$ and a family $\{B(n, x): (n, x)\in J\}$ of non-empty sets such that, for every $(n, x)\in J$, $B(n, x)$ is a proper subset of $A(n, x)$. Let 
$$N=\{n\in\omega: \text{ there exists } x\in E_n \text{ such that } (n, x)\in J\}.$$
Since each $E_n$ is finite and $J$ is infinite, it follows that $N$ is infinite. Now, for each $n\in N$, we define 
$$t_n=\sum\{ t: t\in\bigcup\{B(n, x): (n, x)\in J\}\}.$$
Clearly, $t_n\in\mathcal{G}_{n+1}$. Suppose that $n\in\omega$ is such that  $t_n\in\mathcal{G}_n$. There exists a unique set $C(t_n)\subseteq C_n$ such that $t_n=\sum\{x: x\in C(t_n)\}=\sum\{t: t\in\bigcup\{A(n, x): x\in C(t_n)\}\}$. Then $\bigcup\{A(n, x): x\in C(t_n)\}=\bigcup\{B(n, x): (n, x)\in J\}$. Since the last equality is impossible, we deduce that $t_n\in\mathcal{G}_{n+1}\setminus\mathcal{G}_n$ for every $n\in\omega$.  This proves that $\mathcal{G}$ is Dedekind-infinite. By Proposition \ref{p02.12}, $\mathcal{G}$ has a tower, so $\mathcal{D}$  has a tower by (6). Hence (ii) holds.

To prove (iii), we assume $\mathbf{QPKW}(\infty, <\aleph_0)$ and fix a topological space $\mathbf{X}$ such that $\mathbf{X}$ admits a regular matrix $\mathcal{F}=\{\mathcal{F}_n: n\in\mathbb{N}\}$. Now, for every $n\in\omega$, let $\mathcal{B}_n=\bigcup\limits_{i=1}^{n+1}\mathcal{F}_n$ and let $\mathcal{G}_n$ be the Boolean subalgebra of $RO(\mathbf{X})$ generated by $\mathcal{B}_n$. Mimicking the proof to (ii), we can show that the Boolean subalgebra $\mathcal{G}=\bigcup_{n\in\omega}\mathcal{G}_n$ of $RO(\mathbf{X})$ has a tower. Hence $RO(\mathbf{X})$ has a tower, so $\mathbf{X}$ has a denumerable cellular family by Proposition \ref{p02.12}. This completes the proof.
\end{proof}

The following corollary shows that Theorem \ref{t06.8} leads to a positive
answer to Problem \ref{q2} in every model of $\mathbf{ZF+QPKW}(\infty,
<\aleph_0)$:

\begin{corollary}
\label{c06.9} In every model of $\mathbf{ZF+QPKW}(\infty, <\aleph_0)$, the
following conditions are all equivalent:

\begin{enumerate}
\item[(i)] $\mathbf{IQDI}$;

\item[(ii)] $\mathbf{I0}dim\mathbf{HS}(cell,\aleph_0)$;

\item[(iii)] $\mathbf{IHS}(cell, \aleph_0)$;

\item[(iv)] every infinite Tychonoff space has a denumerable cellular family;

\item[(v)] for every infinite set $X$, every infinite subspace of the
Tychonoff cube $[0,1]^X$ admits a denumerable cellular family;

\item[(vi)] every infinite Hausdorff space has a regular matrix.
\end{enumerate}
\end{corollary}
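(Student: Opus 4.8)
The plan is to establish one cycle of implications together with the equivalence $(i)\leftrightarrow(ii)$, which comes for free, drawing almost entirely on results already proved so that the corollary becomes a matter of bookkeeping. We work throughout in a fixed model $\mathcal{M}$ of $\mathbf{ZF}+\mathbf{QPKW}(\infty,<\aleph_0)$. The equivalence $(i)\leftrightarrow(ii)$ is exactly Theorem \ref{t04.7} and requires nothing further. The remaining conditions I would arrange in the cycle $(i)\rightarrow(vi)\rightarrow(iii)\rightarrow(iv)\rightarrow(v)\rightarrow(i)$.

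For $(i)\rightarrow(vi)$, let $\mathbf{X}$ be an infinite Hausdorff space. First note that $RO(\mathbf{X})$ is infinite: were it finite, the semi-regularization of $\mathbf{X}$ would be a topology with a finite base, hence a finite topology, and being Hausdorff by Proposition \ref{p2.10}(b)(i) it would be discrete, which is impossible on an infinite set. Now Theorem \ref{t04.4}(i) produces a regular matrix of $\mathbf{X}$, which is $(vi)$. For $(vi)\rightarrow(iii)$: assuming every infinite Hausdorff space has a regular matrix, Theorem \ref{t06.8}(iii)---and this is the one place where $\mathbf{QPKW}(\infty,<\aleph_0)$ is used---converts each such matrix into a denumerable cellular family, i.e.\ $\mathbf{IHS}(cell,\aleph_0)$. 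The implications $(iii)\rightarrow(iv)$ and $(iv)\rightarrow(v)$ are immediate, since every Tychonoff space is Hausdorff and every infinite subspace of a Tychonoff cube $[0,1]^X$ is again an infinite Tychonoff space. Finally $(v)\rightarrow(i)$: for every infinite $X$ the Cantor cube $\mathbf{2}^X$ sits inside $[0,1]^X$ as an infinite subspace, so by $(v)$ it admits a denumerable cellular family, and then Theorem \ref{t04.7} yields $\mathbf{IQDI}$.

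I do not expect any genuine obstacle: the only non-routine inputs, Theorem \ref{t06.8}(ii)--(iii), are already established, and what remains is observing the obvious inclusions among the relevant classes of spaces and quoting Theorem \ref{t04.7}. The single step asking for a line of care is the observation that $RO(\mathbf{X})$ is infinite for an infinite Hausdorff $\mathbf{X}$, needed to invoke Theorem \ref{t04.4}(i); this is the short semi-regularization argument above, and the same fact is used implicitly in the proof of Theorem \ref{t04.6}. As an alternative that avoids routing through $(vi)$, the implication $(i)\rightarrow(iii)$ also follows from Theorem \ref{t06.8}(ii): under $\mathbf{IQDI}$ every infinite Boolean algebra has a tower in $\mathcal{M}$, in particular $RO(\mathbf{X})$ does for every infinite Hausdorff $\mathbf{X}$, whence $\mathbf{IHS}(cell,\aleph_0)$ by Proposition \ref{p2.10}(c); it is worth recording this so that the cycle still closes for a reader who prefers not to pass through regular matrices.
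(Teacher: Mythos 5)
Your proof is correct and follows essentially the same route as the paper: both rest on Theorem \ref{t04.7} for $(i)\leftrightarrow(ii)$ and the Cantor-cube characterization, Theorem \ref{t04.4} together with the observation that $RO(\mathbf{X})$ is infinite for an infinite Hausdorff space to get regular matrices, and Theorem \ref{t06.8} (parts (ii)/(iii)) as the place where $\mathbf{QPKW}(\infty,<\aleph_0)$ enters. The only difference is organizational --- you close a single cycle through $(vi)$ where the paper proves $(i)\rightarrow(iii)$ directly via towers of $RO(\mathbf{X})$ and treats $(vi)$ separately --- and your parenthetical alternative via Theorem \ref{t06.8}(ii) is precisely the paper's argument.
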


\begin{proof} Let $\mathcal{M}$ be a model of $\mathbf{ZF+QPKW}(\infty, <\aleph_0)$. It follows from Theorem \ref{t04.7} that (i) and (ii) are equivalent in $\mathcal{M}$. Since it holds in $\mathbf{ZF}$ that, for every infinite Tychonoff space $\mathbf{Y}$, there exists an infinite set $X$ such that $\mathbf{Y}$  is homeomorphic with an infinite subspace of the Tychonoff cube $[0, 1]^X$, it follows that conditions (iv) and (v) are equivalent in $\mathbf{ZF}$. Of course, (iii) implies (iv). If (iii) holds, then, for every infinite set $X$, the Cantor cube $\mathbf{2}^X$ has a denumerable cellular family because $\mathbf{2}^X$ is an infinite Tychonoff space. Hence (iv) implies (i) by Theorem \ref{t04.7}. 
 
 Now, assume that $\mathbf{X}$ is an infinite Hausdorff space in $\mathcal{M}$. If (i) holds in $\mathcal{M}$, it follows from Theorem \ref{t06.8} that the Boolean algebra $RO(\mathbf{X})$ has a tower in $\mathcal{M}$, so,  by Proposition \ref{p02.12}, $\mathbf{X}$ has a denumerable cellular family in $\mathcal{M}$. Hence (i) implies (iii) in $\mathcal{M}$. In consequence, conditions (i)-(v) are all equivalent in $\mathcal{M}$.  Moreover, by Theorem \ref{t04.4}, (i) implies (vi) in $\mathcal{M}$. To complete the proof, we notice that, in view of Theorem \ref{t06.8},  (vi) implies (iii) in $\mathcal{M}$. 
 \end{proof}

Since $\mathbf{PKW}(\infty, <\aleph_0)$ and $\mathbf{QPKW}(\infty ,<\aleph
_{0})$ are new here, let us scrutinize a little bit on their set theoretic
strength. To do this, we also need the following forms:

\begin{itemize}
\item $\mathbf{PKW}(\infty, \leq n)$ where $n\in\omega\setminus\{0,1\}$: For
every infinite set $J$ and every family $\{A_{j}: j\in J\}$ of finite sets
such that $1<|A_{j}|\leq n$ for every $j\in J$, there exist an infinite
subset $I$ of $J$ and a family $\{B_{j}: j\in I\}$ of non-empty sets such
that, for every $j\in I$, $B_{j}$ is a proper subset of $A_{j}$.

\item $\mathbf{PAC}(\leq n)$ where $n\in\omega\setminus\{0,1\}$: Every
infinite family $\mathcal{A}$ of non-empty at most $n$-element sets has an
infinite subfamily $\mathcal{A}^{\prime}$ such that $\mathcal{A}^{\prime}$
has a choice function.

\item $\mathbf{UPKWF}$: For every $n\in\omega\setminus\{0,1\}$, $\mathbf{PKW}%
(\infty,\leq n)$.

\item $\mathbf{UPACF}$: For every $n\in\omega\setminus\{0, 1\}$, $\mathbf{PAC%
}(\leq n)$.

\item $\mathbf{PCAC}(\leq n)$ where $n\in\omega\setminus\{0,1\}$: Every
denumerable family of non-empty at most $n$-element sets has a partial
choice function.
\end{itemize}

Let us notice that $\mathbf{PAC}(\leq 2)$ is equivalent to Form 166 of \cite%
{hr}.

\begin{proposition}
\label{p06.10}$(\mathbf{ZF})$ For every $n\in\omega\setminus\{0,1\}$, the
following implications and equivalences hold:

\begin{enumerate}
\item[(i)] $\mathbf{CAC}_{fin}\rightarrow \mathbf{QPKW}(\infty ,<\aleph
_{0})\rightarrow \mathbf{PCAC}(\leq n)$;

\item[(ii)] $\mathbf{PKW}(\infty ,\leq n)\leftrightarrow \mathbf{PAC}(\leq n)
$ and $\mathbf{UPKWF}\leftrightarrow\mathbf{UPACF}$;

\item[(iii)] $\mathbf{PKW}(\infty, <\aleph_0)\leftrightarrow(\mathbf{QPKW}%
(\infty, <\aleph_0)\wedge\mathbf{UPKWF})$;

\item[(iv)] $\mathbf{PKW}(\infty, <\aleph_0)\leftrightarrow(\mathbf{QPKW}%
(\infty, <\aleph_0)\wedge\mathbf{UPACF})$.
\end{enumerate}
\end{proposition}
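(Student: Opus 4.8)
The plan is to prove each item separately, beginning with the easy equivalences. For (ii), the implication $\mathbf{PAC}(\leq n)\rightarrow\mathbf{PKW}(\infty,\leq n)$ is trivial, since a choice function on an infinite subfamily $\mathcal{A}'$ immediately provides the singletons $B_j=\{f(A_j)\}$ (these are proper subsets because $|A_j|\geq 2$). For the converse $\mathbf{PKW}(\infty,\leq n)\rightarrow\mathbf{PAC}(\leq n)$, I would argue by induction on $n$: given $\mathbf{PKW}(\infty,\leq n)$ applied to $\{A_j:j\in J\}$, we obtain an infinite $I\subseteq J$ and proper non-empty subsets $B_j\subset A_j$ with $|B_j|\leq n-1$; the induction hypothesis $\mathbf{PAC}(\leq n-1)$ — which follows from $\mathbf{PKW}(\infty,\leq n-1)$, itself a consequence of $\mathbf{PKW}(\infty,\leq n)$ by padding sets of size $<n$ up to size $n$ with fresh elements as in the proof of Theorem \ref{t06.3}(i) — then yields an infinite subfamily of $\{B_j:j\in I\}$ with a choice function, which is simultaneously a partial choice function for the original family. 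The base case $n=2$ is immediate since $|B_j|=1$ forces $B_j$ to be a singleton. The equivalence $\mathbf{UPKWF}\leftrightarrow\mathbf{UPACF}$ is then just the universal quantification over $n$ of this equivalence.

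For (iii), the forward direction is clear: $\mathbf{PKW}(\infty,<\aleph_0)$ trivially implies $\mathbf{QPKW}(\infty,<\aleph_0)$ (the latter is $\mathbf{PKW}(\infty,<\aleph_0)$ with the extra hypothesis that $J$ is a countable union of finite sets), and it implies each $\mathbf{PKW}(\infty,\leq n)$ by restricting to families of sets of size $\leq n$, hence implies $\mathbf{UPKWF}$. For the reverse direction, suppose $\mathbf{QPKW}(\infty,<\aleph_0)$ and $\mathbf{UPKWF}$ both hold, and let $\{A_j:j\in J\}$ be a family of finite sets with $|A_j|\geq 2$. Partition $J$ as $J=\bigcup_{n\geq 2}J_n$ where $J_n=\{j\in J:|A_j|=n\}$. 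If some $J_n$ is infinite, apply $\mathbf{PKW}(\infty,\leq n)$ (a consequence of $\mathbf{UPKWF}$) to $\{A_j:j\in J_n\}$ to obtain the required infinite $I$ and proper subsets $B_j$. Otherwise every $J_n$ is finite, so $J$ is a countable union of finite sets, and $\mathbf{QPKW}(\infty,<\aleph_0)$ applies directly to $\{A_j:j\in J\}$. Item (iv) follows at once from (iii) and the equivalence $\mathbf{UPKWF}\leftrightarrow\mathbf{UPACF}$ proved in (ii).

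It remains to prove (i). For $\mathbf{CAC}_{fin}\rightarrow\mathbf{QPKW}(\infty,<\aleph_0)$, let $\{A_j:j\in J\}$ be a family of finite sets with $|A_j|\geq 2$ and $J=\bigcup_{k\in\omega}K_k$ a countable union of finite sets. Using the well-ordering of $\omega$ and the finiteness of each $K_k$, we first thin out to a denumerable subset $\{j_m:m\in\omega\}$ of $J$ (the set $\{j:j\in K_k,\ k\in\omega\}$ is a countable union of finite sets, hence countable in $\mathbf{ZF}$; it is infinite because the family is infinite). Then $\{A_{j_m}:m\in\omega\}$ is a denumerable family of non-empty finite sets, so by $\mathbf{CAC}_{fin}$ (in the form: every denumerable family of non-empty finite sets has an infinite subfamily with a choice function) there is an infinite $M\subseteq\omega$ and a choice function $f$ on $\{A_{j_m}:m\in M\}$; set $I=\{j_m:m\in M\}$ and $B_{j_m}=\{f(A_{j_m})\}$, which is a proper subset of $A_{j_m}$ since $|A_{j_m}|\geq 2$. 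For $\mathbf{QPKW}(\infty,<\aleph_0)\rightarrow\mathbf{PCAC}(\leq n)$, given a denumerable family $\{A_m:m\in\omega\}$ of non-empty sets with $|A_m|\leq n$, discard those $A_m$ that are singletons (if infinitely many are singletons, a partial choice function is immediate by picking the unique element; this selection uses no choice); on the remaining infinite subfamily all sets have size between $2$ and $n$, and $\omega$ is trivially a countable union of finite sets, so $\mathbf{QPKW}(\infty,<\aleph_0)$ gives an infinite subfamily with proper non-empty subsets $B_m\subset A_m$; iterating the argument at most $n-1$ times — each step using $\mathbf{QPKW}$ again and noting the sizes strictly decrease — we arrive at an infinite subfamily of singletons, i.e. a partial choice function. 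The main obstacle is bookkeeping in this last step: one must check that the iteration stays within denumerable families and that each application of $\mathbf{QPKW}(\infty,<\aleph_0)$ is legitimate (the index set remains a countable union of finite sets, indeed a subset of $\omega$); this is routine but needs to be stated carefully to avoid a hidden appeal to choice when passing between the successive subfamilies.
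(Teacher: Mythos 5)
Your items (ii), (iii) and (iv), and the second implication of (i), follow essentially the paper's route. Two small remarks on (ii): the step $\mathbf{PKW}(\infty,\leq n)\rightarrow\mathbf{PKW}(\infty,\leq n-1)$ is immediate from the definitions (a family of sets of size at most $n-1$ is already a family of sets of size at most $n$), so no padding with fresh elements is needed; and when you apply the induction hypothesis $\mathbf{PAC}(\leq n-1)$ to $\{B_j: j\in I\}$ you should note that $\mathbf{PAC}$ is stated for a \emph{set} of sets, so you must separately handle the case where the set $\{B_j: j\in I\}$ is finite (then some $B$ occurs for infinitely many $j$ and a single choice from that $B$ already yields a partial choice function). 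These are routine.

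The genuine error is in your proof of $\mathbf{CAC}_{fin}\rightarrow\mathbf{QPKW}(\infty,<\aleph_0)$: you assert that a countable union of finite sets is ``countable in $\mathbf{ZF}$'' and use this to thin $J$ to a denumerable subset. That assertion is false in $\mathbf{ZF}$. In the second Fraenkel model $\mathcal{N}2$ -- cited in this very paper as containing a denumerable family of two-element sets with no partial choice function -- the union of that family is an infinite Dedekind-finite set, hence a countable union of finite sets that is not countable. The step is salvageable only because $\mathbf{CAC}_{fin}$ is available as a hypothesis: either apply $\mathbf{CAC}_{fin}$ to the denumerable family of non-empty finite sets of linear orderings of the $K_k$ to obtain an enumeration of $J$, or, more directly (as the paper does), apply $\mathbf{CAC}_{fin}$ twice -- once to the disjointified family $\{K_k: k\in\omega\}$ to get $f$ with $f(k)\in K_k$, and once to $\{A_{f(k)}: k\in\omega\}$ to get $g$ with $g(k)\in A_{f(k)}$ -- and then take $I=\{f(k): k\in\omega\}$ and $B_{f(k)}=\{g(k)\}$. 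As written, your justification appeals to a $\mathbf{ZF}$ theorem that does not exist, and in a paper whose subject is precisely the failure of such countability statements this needs to be corrected, not glossed over.
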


\begin{proof} (i) For the first implication, assume $\mathbf{CAC}_{fin}$ and fix a family $\mathcal{A}=\{A_j: j\in J\}$ of finite sets such that $\vert A_j\vert\ge 2$ for each $j\in J$. Assume that $J=\bigcup_{n\in\omega}J_n$ where each $J_n$ is a non-empty finite set, and $J_m\cap J_n=\emptyset$ for each pair $m, n$ of distinct members of $\omega$. By $\mathbf{CAC}_{fin}$, we can choose $f\in\prod_{n\in\omega}J_n$ and $g\in\prod_{n\in\omega}A_{f(n)}$. Let $I=\{f(n): n\in\omega\}$. The set $I$ is infinite and, for each $i\in I$, there is a unique $n(i)\in\omega$ such that $i=f(n(i))$. Then, for each $i\in I$, $B_i=\{g(f(n(i))\}$ is a non-empty proper subset of $A_{f(n(i))}$. Hence $\mathbf{CAC}_{fin}$ implies $\mathbf{QPKW}(\infty, <\aleph_0)$.
 
To prove that the second implication of (i) holds, fix a disjoint family $\mathcal{E}=\{E_{i}: i\in\omega\}$ of non-empty at most $n$%
-element sets. Let us assume $\mathbf{QPKW}(\infty,< \aleph_0)$ and suppose that $\mathcal{E}$ does not have a partial choice function.  Via a straightforward induction, for each $k\in n$, we find an infinite subset $N_k$ of $\omega$ and a family $\{D_{k,i}: i\in N_k\}$ of non-empty sets such that, for each $i\in N_k$, $D_{k,i}$ is a proper subset of $E_i$ and, moreover $N_{k+1}\subseteq N_k$ for each $k\in n-1$. To begin the induction, we use $\mathbf{QPKW}(\infty, <\aleph_0)$ to fix an infinite set $N_0\subseteq\omega$ and a family $\{D_{0,i}: i\in N_0\}$ such that, for each $i\in N_0$, $D_{0,i}$ is a non-empty proper subset of $E_i$. Suppose that $k\in n$ is such that we have already defined an infinite set $N_k\subset\omega$ and a family $\{D_{k,i}: i\in\omega\}$ of non-empty sets such that $D_{k,i}\subset E_i$ for each $i\in N_k$. Since $\mathcal{E}$ does not have a partial choice function, we may assume that $1<\vert D_{k,i}\vert$ for each $i\in N_k$. By $\mathbf{QPKW}(\infty,<\aleph_0)$, there exists an infinite set $N_{k+1}\subseteq N_k$ and a family $\{D_{k+1,i}: i\in N_{k+1}\}$ of non-empty sets such that $D_{k+1, i}\subset D_{k, i}$ for each $i\in N_{k+1}$. This terminates our induction. We notice that $\vert D_{k, i}\vert\leq n-i-1$ for each $k\in n$ and each $i\in N_k$. In particular $D_{n-1, i}=\emptyset$ for each $i\in N_{n-1}$. The contradiction obtained shows that $\mathcal{E}$ has a partial choice function if $\mathbf{QPKW}(\infty,< \aleph_0)$  holds. 

By mimicking and modifying a little the proof of (i), we can prove the first equivalence of (ii). The second equivalence of (ii) follows from the first one. It follows from (ii) and (iii) that (iv) holds. 

To prove (iii), we notice that, trivially, $\mathbf{PKW}(\infty ,<\aleph _{0})$ implies both $\mathbf{QPKW}%
(\infty ,<\aleph _{0})$ and $\mathbf{UPKWF}$. On the other hand, given a
family $\mathcal{A}=\{A_{j}: j\in J\}$ of finite sets such that the set $J$ is infinite and $|A_{j}|>1$
for every $j\in I$, we consider the following cases:

(a) $J$ is countable. In this case, $\mathbf{QPKW}(\infty ,<\aleph _{0})$ implies that the conclusion of $\mathbf{PKW}(\infty ,<\aleph _{0})$ holds for the family $\mathcal{A}$.

(b) $J$ is uncountable. For every $n\in \mathbb{N}$ let $J_{n}=\{j\in
J: |A_{j}|=n\}$.  We consider the following subcases:

(b1) For some $n\in \mathbb{N},$ $J_{n}$ is infinite. Then $\mathbf{UPKWF%
}$ implies that there exists an infinite subset $I$ of $J_{n}$
and a family $\{B_{j}: j\in I\}$ of non-empty sets such that, for every $j\in
I$, $B_{j}$ is a proper subset of $A_{j}$.

(b2) For every $n\in \mathbb{N},$ $J_{n}$ is finite. In this case the
conclusion of the statement $\mathbf{PKW}(\infty ,<\aleph _{0})$ for $\mathcal{A}$ follows
from $\mathbf{QPKW}(\infty ,<\aleph _{0})$.
\end{proof}

\begin{proposition}
\label{p06.11} It holds in $\mathbf{ZFA}$ that neither $\mathbf{PKW}(\infty,
<\aleph_0)$ implies $\mathbf{IQDI}$ nor $\mathbf{IQDI}$ implies $\mathbf{QPKW%
}(\infty,<\aleph_0)$, nor $\mathbf{QPKW}(\infty,<\aleph_0)$ implies $\mathbf{%
UPKWF}$, nor $\mathbf{QPKW}(\infty,<\aleph_0)$ implies $\mathbf{PKW}(\infty,
<\aleph_0)$.
\end{proposition}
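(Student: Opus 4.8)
The statement comprises four non-implications, each to be witnessed by a permutation model of $\mathbf{ZFA}$. I would first reduce the count. By Proposition~\ref{p06.10}(iii), $\mathbf{PKW}(\infty,<\aleph_0)$ is equivalent to the conjunction $\mathbf{QPKW}(\infty,<\aleph_0)\wedge\mathbf{UPKWF}$; consequently any model of $\mathbf{ZFA}$ satisfying $\mathbf{QPKW}(\infty,<\aleph_0)$ but not $\mathbf{UPKWF}$ also fails $\mathbf{PKW}(\infty,<\aleph_0)$, so it witnesses both ``$\mathbf{QPKW}(\infty,<\aleph_0)$ does not imply $\mathbf{UPKWF}$'' and ``$\mathbf{QPKW}(\infty,<\aleph_0)$ does not imply $\mathbf{PKW}(\infty,<\aleph_0)$''. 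Thus it suffices to exhibit three permutation models: one of $\mathbf{PKW}(\infty,<\aleph_0)+\neg\mathbf{IQDI}$, one of $\mathbf{IQDI}+\neg\mathbf{QPKW}(\infty,<\aleph_0)$, and one of $\mathbf{QPKW}(\infty,<\aleph_0)+\neg\mathbf{UPKWF}$.

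For the first, I would take the basic Fraenkel model $\mathcal{N}$1 of \cite{hr}, whose set $A$ of atoms is amorphous. Any injection $\omega\to[A]^{<\omega}$ would be hereditarily supported by some finite $E$ and hence would take all its values among the finitely many subsets of $E$; so $[A]^{<\omega}$ is Dedekind-finite, $A$ is not quasi Dedekind-infinite, and $\mathbf{IQDI}$ fails. To see that $\mathbf{PKW}(\infty,<\aleph_0)$ holds, I would take a family $\{A_j:j\in J\}$ of finite sets of size $\geq 2$ supported by a finite set $E$ and argue by a dichotomy: either the (infinite) range of the family consists of sets each fixed by the group of permutations fixing $E$ pointwise, in which case the whole family lies in the submodel $V_E$ of sets hereditarily supported by $E$ --- a model of $\mathbf{AC}$ since $E$ is finite --- where the required infinite subfamily with proper non-empty sub-selections is immediate; or some member has infinite $E$-orbit, and then, by the homogeneity of $\mathcal{N}$1, the family contains the full $E$-orbit of some finite set disjoint from $E$, so fixing a fresh atom $a_0\notin E$ and taking the members ``built over $a_0$'' yields a uniform proper selection supported by $E\cup\{a_0\}$.

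For the second, I would take the second Fraenkel model $\mathcal{N}$2 of \cite{hr}, in which the atoms form a denumerable set of pairs $\{P_n:n\in\omega\}$ and supports are finite; by Remark~\ref{r06.5}, $\mathbf{IQDI}$ holds in $\mathcal{N}$2. But the index set $\omega$ of $\{P_n:n\in\omega\}$ is a countable union of finite sets, while no infinite subfamily of $\{P_n:n\in\omega\}$ admits a selection of non-empty proper (hence singleton) subsets, since such a selection would have a finite support $E$ and would be moved by transposing the two points of any $P_n$ with $P_n\cap E=\emptyset$; thus $\mathbf{QPKW}(\infty,<\aleph_0)$ fails in $\mathcal{N}$2. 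For the third model I would use the variant $\mathcal{N}$ of $\mathcal{N}$2 in which the denumerable set of pairs is replaced by an \emph{uncountable} one, $\{P_i:i\in\omega_1\}$, the group consisting of all pairing-preserving permutations of the atoms, supports again being finite. The family $\{P_i:i\in\omega_1\}$ has no infinite subfamily with a choice function (the transposition argument), so $\mathbf{PAC}(\leq 2)$, hence $\mathbf{UPACF}$, hence $\mathbf{UPKWF}$ (by Proposition~\ref{p06.10}(ii)), all fail in $\mathcal{N}$. The point is that $\mathbf{QPKW}(\infty,<\aleph_0)$ nonetheless holds in $\mathcal{N}$: if $J\in\mathcal{N}$ is a countable union of finite sets, then, the function witnessing this being supported by a finite $E$, each of its finite values is $E$-invariant, so every element of $J$ has finite $E$-orbit; but in $\mathcal{N}$ an object has finite $E$-orbit exactly when it is hereditarily supported by $E^{\ast}$, the closure of $E$ under the pairing (moving an un-pinned pair produces infinitely many images), and $V_{E^{\ast}}$ is a model of $\mathbf{AC}$ because $E^{\ast}$ is finite. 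Enlarging $E$ so as to support also the given family, the whole family and its index set lie in one such $V_{E^{\ast}}$, where an infinite subfamily with proper non-empty sub-selections exists by $\mathbf{AC}$ and therefore belongs to $\mathcal{N}$.

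I expect the principal obstacle to be the verification that $\mathbf{QPKW}(\infty,<\aleph_0)$ holds in $\mathcal{N}$ (and likewise in $\mathcal{N}$1): one must show that ``being a countable union of finite sets'' confines an object to a finitely supported --- hence choiceful --- submodel, which rests on the equivalence, valid in these Fraenkel-type models, between having a finite $E$-orbit and being hereditarily supported by the pairing-closure of $E$ (respectively, by $E$ itself). The remaining points --- checking that the ``fresh atom'' selection in $\mathcal{N}$1 is genuinely uniform and correctly supported, and that $\mathbf{UPACF}$ holds in $\mathcal{N}$1 so that $\mathbf{PKW}(\infty,<\aleph_0)$ indeed holds there --- are routine but must be handled with care.
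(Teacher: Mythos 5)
Your first two non-implications follow the paper's route: the same two models, $\mathcal{N}1$ for $\mathbf{PKW}(\infty,<\aleph_0)\wedge\neg\mathbf{IQDI}$ and $\mathcal{N}2$ for $\mathbf{IQDI}\wedge\neg\mathbf{QPKW}(\infty,<\aleph_0)$, with essentially the paper's verifications (the paper gets $\neg\mathbf{QPKW}$ in $\mathcal{N}2$ from the pairs via Proposition \ref{p06.10}(i), exactly your transposition argument). One caution on $\mathcal{N}1$: the paper does not re-prove $\mathbf{PKW}(\infty,<\aleph_0)$ there but cites de la Cruz--di Prisco for the stronger Form 379, $\mathbf{PKW}(\infty,\infty,\infty)$. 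Your dichotomy sketch tacitly treats the sets $A_j$ as finite sets \emph{of atoms} (``members built over $a_0$''), whereas the principle quantifies over families of arbitrary finite sets; as written this step does not cover the general case, so you should either cite the known result or substantially expand the orbit analysis.

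The genuine gap is in your third model. The paper disposes of both remaining non-implications with Hickman's Model I ($\mathcal{N}24$), where $\mathbf{CAC}_{fin}$ is known to hold (so $\mathbf{QPKW}(\infty,<\aleph_0)$ holds by Proposition \ref{p06.10}(i)) and $\mathbf{PAC}(\leq 2)$ is known to fail (so $\mathbf{UPKWF}$ and hence $\mathbf{PKW}(\infty,<\aleph_0)$ fail by Proposition \ref{p06.10}(ii)--(iii)); everything reduces to citations plus the already-proved Proposition \ref{p06.10}. Your replacement --- an uncountable-pairs Fraenkel model --- hinges on the asserted equivalence ``finite $\mathrm{fix}(E)$-orbit iff hereditarily supported by $E^{\ast}$,'' which you yourself flag as the principal obstacle and do not prove. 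In fact it is false as stated: the set of all atoms lying outside $E^{\ast}$ has a one-element $\mathrm{fix}(E)$-orbit, yet its elements (individual atoms) are not supported by $E^{\ast}$, so it is not \emph{hereditarily} supported by any finite set. Hence ``$J$ is a countable union of finite sets'' does not confine the family to a choiceful submodel $V_{E^{\ast}}$ by the mechanism you describe, and the verification of $\mathbf{QPKW}(\infty,<\aleph_0)$ in your model does not close. Unless you can repair this (e.g.\ by proving $\mathbf{CAC}_{fin}$ outright in your model, which is itself nontrivial for second-order objects with small orbits), you should fall back on the paper's choice of $\mathcal{N}24$.
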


\begin{proof} It was proved in \cite{CruzP} that $\mathbf{PKW}(\infty, \infty, \infty)$ (Form 379 in \cite{hr}) holds in the Basic Fraenkel Model $\mathcal{N}$1 of \cite{hr}. Hence $\mathbf{PKW}(\infty,<\aleph_0)$ holds in $\mathcal{N}$1. Since the set of all atoms of $\mathcal{N}$1 is amorphous in $\mathcal{N}$1, it follows from Corollary \ref{c04.13} that $\mathbf{IQDI}$ fails in $\mathcal{N}$1. Hence 
$\mathbf{PKW}(\infty, <\aleph_0)$ does not imply $\mathbf{IQDI}$ in $\mathbf{ZFA}$.

In the Second Fraenkel Model $\mathcal{N}$2 of \cite{hr}, $\mathbf{CMC}$ holds. Hence, by Theorem \ref{t06.3}, $\mathbf{IQDI}$ also holds in $\mathcal{N}$2. However, there exists in $\mathcal{N}$2 a denumerable family of two-element sets which does not have a partial choice function (see page 178 of \cite{hr}). Therefore, by Proposition \ref{p06.10}, $\mathbf{QPKW}(\infty, <\aleph_0)$ fails in $\mathcal{N}$2. We remark that $\mathbf{UPKWF}$ also fails in $\mathcal{N}$2.

In Hickman's Model I (model $\mathcal{N}$24 in \cite{hr}), $\mathbf{CAC}_{fin}$ holds, so, by Proposition \ref{p06.10}, $\mathbf{QPKW}(\infty, <\aleph_0)$ is true in $\mathcal{N}$24. It is known that $\mathbf{PAC}(\leq 2)$ fails in $\mathcal{N}$24 (see page 200 in \cite{hr}). It follows from Proposition \ref{p06.10} that both $\mathbf{UPKWF}$ and $\mathbf{PKW}(\infty, <\aleph_0)$ are false in $\mathcal{N}$24. 
\end{proof}

Let us recall the following definition which can be found, for instance, in 
\cite{tr}:

\begin{definition}
\label{d0612}  A set $X$ is called \textit{strictly amorphous} if it does
not admit infinite partitions into finite sets having at least two elements.
\end{definition}

It is easy to verify that the following proposition holds:

\begin{proposition}
\label{p0613} It is true in $\mathbf{ZF}$ that $\mathbf{UPKWF}$ implies that
there are no strictly amorphous sets.
\end{proposition}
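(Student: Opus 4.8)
The plan is to prove the contrapositive: assuming there exists a strictly amorphous set $X$, I would exhibit some $n\in\omega\setminus\{0,1\}$ for which $\mathbf{PKW}(\infty,\leq n)$ fails, and hence $\mathbf{UPKWF}$ fails. The natural choice is $n=2$: consider the family $\mathcal{A}=\{A_j : j\in[X]^2\}$ where $A_j=j$ for every $j\in[X]^2$, i.e., the family of all two-element subsets of $X$, each regarded as itself. Since $X$ is (in particular) amorphous, $[X]^2$ is infinite, so $\mathcal{A}$ is an infinite family of two-element sets. Now I would show that this family admits no infinite subfamily with a proper-subset selector; equivalently (by the equivalence $\mathbf{PKW}(\infty,\leq 2)\leftrightarrow\mathbf{PAC}(\leq 2)$ from Proposition~\ref{p06.10}(ii), which is already available), it suffices to show $\mathcal{A}$ has no infinite subfamily with a choice function.

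The key step is this: suppose toward a contradiction that $I\subseteq[X]^2$ is infinite and $g$ is a choice function on $\{A_j:j\in I\}$, so $g(j)\in j$ for each $j\in I$. Then for each $j\in I$ the set $j\setminus\{g(j)\}$ is a singleton, call its element $h(j)$, so each $j\in I$ is split as $j=\{g(j),h(j)\}$. The idea is that the map $j\mapsto\{g(j)\}$ (or, combined suitably, a genuine partition of part of $X$) produces a forbidden configuration. Concretely, let $Y=\bigcup I\subseteq X$; since $I$ is infinite and each member of $I$ has two elements, $Y$ is infinite, so $X\setminus Y$ is finite because $X$ is amorphous. If I can partition $Y$ into infinitely many finite blocks each of size at least $2$, then adjoining the finite set $X\setminus Y$ to one block yields an infinite partition of $X$ into finite sets of size $\geq 2$, contradicting strict amorphousness. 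To build such a partition of $Y$, use the selector: the sets $\{g(j),h(j)\}=j$ for $j\in I$ already cover $Y$, but they may overlap. The cleanest route is to first pass, inside $\mathbf{ZF}$, from the infinite family $\{j:j\in I\}$ of $2$-element subsets of $X$ \emph{with a choice function} to an infinite \emph{pairwise disjoint} subfamily: given a choice function on a family of finite sets one can greedily extract a maximal disjoint subfamily, and it must be infinite (otherwise its union is finite and meets every member of $I$, forcing infinitely many members of $I$ into a finite set, impossible since the $j$'s are distinct $2$-subsets of a finite set would be only finitely many). This disjoint subfamily is an infinite partition of its union into $2$-element sets, and its union is an infinite subset of $X$ whose complement in $X$ is finite; absorbing that finite complement into one block gives the desired infinite partition of $X$ into finite blocks of size $\geq 2$.

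I expect the main obstacle to be the extraction of the infinite disjoint subfamily in $\mathbf{ZF}$ without any choice beyond the already-given choice function on $\{j:j\in I\}$. The point is that with a choice function available on the members, one can well-order a cofinal portion of the process: enumerate nothing, but instead argue by a maximality/Zorn-free argument — define $\mathcal{D}$ to be a disjoint subfamily of $\{j:j\in I\}$ maximal with respect to inclusion among those obtainable by transfinite recursion using the choice function to break ties; more simply, observe that any maximal disjoint subfamily $\mathcal{D}\subseteq\{j:j\in I\}$ has $\bigcup\mathcal{D}$ meeting every $j\in I$, and if $\mathcal{D}$ were finite then $\bigcup\mathcal{D}$ would be a finite subset of $X$ meeting all (infinitely many distinct) members of $I$, which is impossible since a finite set has only finitely many $2$-element subsets. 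So $\mathcal{D}$ is infinite, and the existence of \emph{some} maximal disjoint subfamily is guaranteed in $\mathbf{ZF}$ when we have a choice function on the family (one recurses along an ordinal, at each stage picking, among elements of the family disjoint from what has been chosen so far, one witnessed by the choice function's values — or one simply invokes that a family of finite sets with a choice function can be disjointified). Once this is in hand the rest is routine, and the contradiction with Definition~\ref{d0612} completes the proof; the statement then follows by contraposition since $\mathbf{UPKWF}$ asserts $\mathbf{PKW}(\infty,\leq n)$ for \emph{every} $n\geq 2$, in particular for $n=2$.
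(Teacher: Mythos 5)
Your argument breaks at the step you yourself flagged as the main obstacle, and it breaks irreparably. The family $\mathcal{A}=\{A_j: j\in[X]^2\}$ with $A_j=j$ can never witness the failure of $\mathbf{PAC}(\leq 2)$: for any fixed $x_0\in X$, the subfamily $I_0=\{\{x_0,y\}: y\in X\setminus\{x_0\}\}$ is infinite and carries the choice function $g(j)=x_0$, and both $I_0$ and $g$ are explicitly definable in $\mathbf{ZF}$ from $x_0$ alone, with no use of choice and no dependence on whether $X$ is amorphous. So the hypothesis you are trying to refute --- ``some infinite $I\subseteq[X]^2$ has a choice function'' --- is true for \emph{every} infinite $X$, and no contradiction with strict amorphousness can follow from it. The specific false step is the claim that a maximal disjoint subfamily $\mathcal{D}$ of $I$ must be infinite: maximality only gives that $\bigcup\mathcal{D}$ \emph{meets} every $j\in I$, not that it \emph{contains} every $j\in I$, so ``a finite set has only finitely many $2$-element subsets'' is not applicable; for $I=I_0$ every maximal disjoint subfamily is a single pair. (Whether a maximal disjoint subfamily even exists in $\mathbf{ZF}$ given only a choice function on the members is a further unaddressed issue, but it is moot.)

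The deeper point is that your final step genuinely needs, as input, an infinite \emph{pairwise disjoint} family of finite blocks of size at least two inside $X$ --- essentially an infinite partition of the kind that a strictly amorphous set is defined \emph{not} to possess --- so no choice of non-disjoint family such as $[X]^2$ can substitute for it. Indeed the statement as literally printed appears to have its polarity reversed: in the Basic Fraenkel Model $\mathcal{N}$1, $\mathbf{PKW}(\infty,\infty,\infty)$ and hence $\mathbf{UPKWF}$ hold (this is exactly what is used in Proposition \ref{p06.11}), yet a standard support argument shows that every partition of the set of atoms into finite sets has all of its blocks of size $\geq 2$ contained in a fixed finite support, so the atoms form a strictly amorphous set there. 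What $\mathbf{UPKWF}$ does yield easily is the converse-flavoured assertion: if an amorphous set $X$ had an infinite partition $\Pi$ into finite blocks of size $\geq 2$, then, since an amorphous set cannot be mapped onto an infinite subset of $\omega$, the block sizes would be bounded by some $n$, and $\mathbf{PKW}(\infty,\leq n)$ applied to $\Pi$ would give an infinite $\Pi'\subseteq\Pi$ and non-empty proper $B_P\subset P$ for $P\in\Pi'$; by the disjointness of $\Pi'$, the sets $\bigcup_{P\in\Pi'}B_P$ and $\bigcup_{P\in\Pi'}(P\setminus B_P)$ would be two disjoint infinite subsets of $X$, contradicting amorphousness. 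Thus $\mathbf{UPKWF}$ implies that every amorphous set \emph{is} strictly amorphous; your proof should be rebuilt around that argument, and the target statement re-examined accordingly.
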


\begin{remark}
\label{r0614} By Proposition \ref{p06.4}, $\mathbf{IQDI}$ holds in Levy's
Model I ($\mathcal{N}$6 in \cite{hr}). Since Form 342 of \cite{hr} holds in $%
\mathcal{N}$6, it follows from Proposition \ref{p06.10} that $\mathbf{UPKWF}$
holds in $\mathcal{N}$6. It is known that $\mathbf{IDI}$, $\mathbf{CAC}_{fin}
$ and $\mathbf{KW}(\aleph_0, <\aleph_0)$(Form 358 of \cite{hr}) are false in 
$\mathcal{N}$6 (see page 186 in \cite{hr}). Hence, in $\mathbf{ZFA}$, the
conjunction of $\mathbf{UPKWF}$ and $\mathbf{IQDI}$ implies neither $\mathbf{%
CAC}_{fin}$ nor $\mathbf{IDI}$, nor $\mathbf{KW}(\aleph_0, <\aleph_0)$. We
do not know if it is possible to find a model of $\mathbf{ZF}$ in which both 
$\mathbf{IQDI}$ and $\mathbf{PKW}(\infty, <\aleph_0)$ hold but $\mathbf{CAC}%
_{fin}$ fails. We do not know a model of $\mathbf{ZF}$ in which $\mathbf{%
UPKWF}$ holds but $\mathbf{QPKW}(\infty, <\aleph_0)$ fails.
\end{remark}

It was proved in \cite{kt} that $\mathbf{IDI}$ is equivalent to the
conjunction of $\mathbf{CAC}_{fin}$ and the sentence ``Every infinite
Boolean algebra has a tower''. As an immediate consequence of Theorem \ref%
{t06.8}, taken together with Corollary \ref{c06.9} and Proposition \ref%
{p06.10}, we can state the following final proposition:

\begin{proposition}
The conjunction of $\mathbf{QPKW}(\infty, <\aleph_0)$ and the sentence
``Every infinite Boolean algebra has a tower'' implies $\mathbf{IQDI}$ and
follows from $\mathbf{CAC}_{fin}+\mathbf{IQDI}$.
\end{proposition}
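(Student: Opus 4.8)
The plan is to split the statement into its two halves and reduce each of them to results already in hand, so that no new combinatorial work is needed.

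For the implication ``$\mathbf{QPKW}(\infty,<\aleph_0)$ together with `Every infinite Boolean algebra has a tower' implies $\mathbf{IQDI}$'', I would observe that the $\mathbf{QPKW}(\infty,<\aleph_0)$ hypothesis is actually redundant here: Theorem \ref{t02.18}(v) already asserts that ``Every infinite Boolean algebra has a tower'' alone implies $\mathbf{IQDI}$. Hence this direction is immediate from Theorem \ref{t02.18}(v).

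For the implication ``$\mathbf{CAC}_{fin}+\mathbf{IQDI}$ implies $\mathbf{QPKW}(\infty,<\aleph_0)$ together with `Every infinite Boolean algebra has a tower' '', I would proceed in two steps. First, by the first implication of Proposition \ref{p06.10}(i), $\mathbf{CAC}_{fin}$ implies $\mathbf{QPKW}(\infty,<\aleph_0)$; this yields the first conjunct directly. Second, for the conjunct ``Every infinite Boolean algebra has a tower'', I would feed the derived instance of $\mathbf{QPKW}(\infty,<\aleph_0)$ together with the assumed $\mathbf{IQDI}$ into Theorem \ref{t06.8}(ii), which states precisely that the conjunction of $\mathbf{IQDI}$ and $\mathbf{QPKW}(\infty,<\aleph_0)$ implies that every infinite Boolean algebra has a tower. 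Combining the two steps closes this direction.

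At the level of this proposition there is essentially no obstacle: it is a pure assembly of Theorem \ref{t02.18}(v), Proposition \ref{p06.10}(i) and Theorem \ref{t06.8}(ii). The real content is already carried by those earlier results, above all by Theorem \ref{t06.8}(ii), whose proof uses $\mathbf{IQDI}$ to extract a strictly increasing chain of finite Boolean subalgebras $\mathcal{G}_n$ and then applies $\mathbf{QPKW}(\infty,<\aleph_0)$ to the families $A(n,x)$ of atoms of $\mathcal{G}_{n+1}$ refining a given atom $x$ of $\mathcal{G}_n$ so as to produce, for infinitely many $n$, an element of $\mathcal{G}_{n+1}\setminus\mathcal{G}_n$ and hence a denumerable subset of $\bigcup_{n}\mathcal{G}_n$. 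Since all of that has already been done, the proof of the present proposition should be only a few lines of citation.
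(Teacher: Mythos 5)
Your proposal is correct and is essentially the paper's own argument: the proposition is stated there as an immediate assembly of Proposition \ref{p06.10}(i) (for $\mathbf{CAC}_{fin}\Rightarrow\mathbf{QPKW}(\infty,<\aleph_0)$), Theorem \ref{t06.8}(ii) (for $\mathbf{IQDI}+\mathbf{QPKW}(\infty,<\aleph_0)\Rightarrow$ towers), and the already-recorded fact that ``every infinite Boolean algebra has a tower'' alone implies $\mathbf{IQDI}$ (Theorem \ref{t02.18}(v)). Your observation that $\mathbf{QPKW}(\infty,<\aleph_0)$ is redundant in the first implication is accurate.
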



\begin{thebibliography}{99}
\bibitem{CruzP} de la Cruz, O., di Prisco,  C. A.: Weak choice
principles. Proc. Amer. Math. Soc. 126(2), 867--876 (1998)

\bibitem{fz} Frankiewicz, R., Zbierski, P.: Granice i Luki. PWN
Warsaw (1992)

\bibitem{FrZb} Frankiewicz, R., Zbierski, P.: Hausdorff Gaps and
Limits. Studies in Logic and the Foundations of Mathematics 132,
North-Holland, Amsterdam (1994)

\bibitem{gtw} Good, C., Tree, I., Watson, S.: On Stone's theorem and
the axiom of choice. Proc. Amer. Math. Soc. 126, 1211--1218 (1998)

\bibitem{hod} Hodel, R.: Cardinal Functions I. In: Vaughan, J. E., Kunen, K. (eds.) Handbook of
Set-Theoretic Topology, pp. 1--61, Elsevier
Science Publishers B. V., North-Holland, Amsterdam (1984)

\bibitem{hr} Howard, P., Rubin, J. E.: Consequences of the Axiom of
Choice. Mathematical Surveys and Monographs 59, American
Mathematical Society, Providence RI (1998)

\bibitem{hkrs} Howard, P., Keremedis, K., Rubin, J., Stanley, A.: %
Paracompactness of Metric Spaces and the Axiom of Multiple Choice. Math.
Logic Quart. 46(2), 219--232 (2000)

\bibitem{hkrst} Howard, P., Keremedis, K., Rubin, J., Stanley, A., Tachtsis, E.: Nonconstructive properties of the real line. Math. Logic
Quart. 47, 423--431 (2001)

\bibitem{J1} Jech, T.: The Axiom of Choice. Studies in Logic and the
Foundations of Mathematics 75, North-Holland Publishing Co., Amsterdam (1973)

\bibitem{J2} Jech, T.: Set Theory. The Third Millennium Edition,
revised and expanded. Springer Monographs in Mathematics,
Springer, Berlin (2003)

\bibitem{kk} Keremedis, K.: Non-discrete metrics in $\mathbf{ZF}$ and some notions of finiteness. Math. Logic Quart. 62, 383--390 (2016)

\bibitem{ker} Keremedis, K.: On lightly and countably compact spaces
in $\mathbf{ZF}$. Quaest. Math. 42(5), 579--592 (2019)

\bibitem{kt} Keremedis, K., Tachtsis, E.: Cellularity of infinite Hausdorff
spaces in $\mathbf{ZF}$. Submitted manuscript 

\bibitem{kw} Keremedis, K., Wajch, E.: On Densely Complete Metric
Spaces and Extensions of Uniformly Continuous Functions in $\mathbf{ZF}$.
To appear in J. Convex Anal. (2020)

\bibitem{kb} Koppelberg, S.: Handbook of Boolean Algebras. Vol. 1.
Edited by J. D. Monk and R. Bonnet. North-Holland Publishing Co., Amsterdam (1989)

\bibitem{ss} Steen, L. A., Seebach (Jr.), J. A.: Counterexamples in
Topology. Dower Publications, Inc., New York (1995)

\bibitem{et} Tachtsis, E.: Infinite Hausdorff spaces may lack
cellular or discrete subsets of cardinality $\aleph _{0}$. Accepted
manuscript in Topology Appl. 

\bibitem{ltah} Tachtsis, E.: Private communication

\bibitem{tr} Truss, J. K.: The structure of amorphous sets. Ann.
Pure Appl. Logic 73, 191--233 (1995)

\bibitem{ew} Wajch, E.: Quasi-metrizability of products in $\mathbf{%
ZF}$ and equivalences of $\mathbf{CUT}$(fin). Topology Appl. 241, 62--69 (2018)
\end{thebibliography}
\end{document}